\newtheorem{theorem}{Theorem}
\newtheorem{corollary}[theorem]{Corollary}
\newtheorem{lemma}[theorem]{Lemma}
\newtheorem{proposition}[theorem]{Proposition}
\newtheorem{remark}[theorem]{Remark}
\numberwithin{theorem}{section}
\newcommand{\cC}{\ensuremath{\mathcal C}}
\newcommand{\cE}{\ensuremath{\mathcal E}}
\newcommand{\cF}{\ensuremath{\mathcal F}}
\newcommand{\cH}{\ensuremath{\mathcal H}}
\newcommand{\cK}{\ensuremath{\mathcal K}}
\newcommand{\cP}{\ensuremath{\mathcal P}}
\newcommand{\cX}{\ensuremath{\mathcal X}}
\def\ind{{\mathbf{1}}}
\def\dist{{\mathrm{dist}}}
\def\tr{{\mathrm{tr}}}
      \let\e=\varepsilon
\def\bZ{{\Bbb Z}}
\def\bR{{\Bbb R}}
\def\bC{{\Bbb C}}
\def\bE{{\Bbb E}}
\def\bP{{\Bbb P}}
\def\({\left(}
\def\){\right)}
\def\stab{\mathrm{Stab}}
\renewcommand{\Im}{\mathrm{Im}}
\renewcommand{\Re}{\mathrm{Re}}
\title[Eigenvectors of Heavy-tailed random matrices]{Localization and delocalization of eigenvectors for heavy-tailed random matrices}
\author{Charles Bordenave}
\email{charles.bordenave(at)math.univ-toulouse.fr}
\address{CNRS \& Universit\'e de Toulouse, Institut de Math\'ematiques de Toulouse, 118 route de Narbonne, 31062 Toulouse, France}
\author{Alice Guionnet}
\email{alice.guionnet(at)ens-lyon.fr.}
\address{CNRS \& \'Ecole Normale Sup\'erieure de Lyon, Unit\'e de math\'ematiques pures et appliqu\'ees, 46 all\'ee d'Italie, 69364 Lyon Cedex 07, France}
\keywords{Random matrices; Stable distribution, Eigenvector delocalization; Wegner estimate} 
\subjclass[2000]{15B52 (60B20; 60F15; 60E07)}
\begin{document}

\begin{abstract}
 Consider an $n \times n$ Hermitian random matrix with, above the diagonal, independent entries with $\alpha$-stable symmetric distribution and $0 < \alpha < 2$. We establish new bounds on the rate of convergence of the empirical spectral distribution of this random matrix as $n$ goes to infinity. When $1 < \alpha < 2$ and $ p > 2$, we give vanishing bounds on the $L^p$-norm of the eigenvectors normalized to have unit $L^2$-norm. On the contrary, when $0 < \alpha < 2/3$, we prove that these eigenvectors are localized. 
 \end{abstract}

\maketitle

\section{Introduction}

We consider an array $(X_{ij})_{1 \leq i \leq j}$ of  i.i.d. real random variables and set, for $i > j$, $X_{ij} = X_{ji}$. Then, for each integer $n \geq 1$, we may define the random symmetric matrix:
$$X  = ( X_{ij} )_{1 \leq i , j \leq n}.$$
The eigenvalues of the matrix $X$ are real and are denoted by $\lambda_n (X) \leq \cdots \leq \lambda_1(X)$. In the large $n$ limit, the spectral properties of this matrix are now well understood as soon as $X_{ij}$ has at least two finite moments see e.g. \cite{Bai,BaiSil,BAG21,AGZ10,erdossurvey,taovusurvey4} for reviews, or \cite{johan,ESY10,ESY11,TV10,TV11} for recent results on universality. The starting point of this analysis is the Wigner's semi-circular law, which asserts that if the variance of $X_{ij}$ is normalized to $1$, then the empirical spectral measure
$$
\frac 1 n \sum_{i=1}^n \delta_{\lambda_i (X) /  \sqrt n } 
$$
converges almost surely for the weak convergence topology to 
the semi-circular law $\mu_2$ with support $[-2,2]$ and density $f_2 ( x) = \frac 1 {2 \pi} \sqrt { 4 - x ^2}$. As already advertised, many more properties of the spectrum are known. For example, if the entries are centered and have a subexponential tail, then, see  \cite{ESY09a, ESY09b}, for any $p >2$ and $\e > 0$,
$$
\max \left\{Ê\|Êv \|_p  : \hbox{ $v$ eigenvector of $X$ with $\|Êv \|_2 =1$}\right\}
$$
is $O ( n^{ 1 / p - 1/2 + \e })$, where  $\|Êv \|_p =  \left( \sum_{i=1} ^p |v_i|^p  \right)^{\frac 1 p}$. This implies that the eigenvectors are strongly delocalized. 

When the second moment is no longer finite, much  less is known and the picture is different. Let $0 < \alpha < 2$ and assume for simplicity that 
\begin{equation}\label{eq:tailX11}
\bP ( |X_{11} |\geq t ) \sim_{t \to \infty} t^{-\alpha}.
\end{equation}
Then, we are not anymore in the basin of attraction of Wigner's semi-circular law: now  the empirical spectral measure
$$
\frac 1 n \sum_{i=1}^n \delta_{\lambda_i (X)  /  n^{ 1 / \alpha }}  
$$
converges a.s. for the weak convergence topology to  a new limit law $\mu_\alpha$, see \cite{BG08} and also \cite{BDG09}, \cite{BCC}. It is known that $\mu_\alpha$ is symmetric, has full support, a bounded density $f_\alpha$ which is analytic outside a finite set of points. Moreover, $f_\alpha (0)$ has an explicit expression and as $x$ goes to $\pm \infty$, $f_\alpha ( x) \sim (\alpha / 2) |x|^{\frac \alpha 2 -1}$. Finally, as $\alpha$ goes to $2$, $\mu_\alpha$ converges  for the weak convergence topology to $\mu_2$. One of the difficulty of this type of random matrices is the lack of an exactly solvable model
 as in the Gaussian Unitary Ensemble or the Gaussian Orthogonal Ensemble in the finite variance case.

In the present paper, we give a rate of local convergence to $\mu_\alpha$ and investigate the behavior of the eigenvectors of $X$. In a fascinating article \cite{bouchaudcizeau}, Bouchaud and Cizeau have made some prediction for the eigenvectors of $X$. They argue that the situation is different for $0< \alpha < 1$ and $1 < \alpha < 2$. They quantify the localized nature of a vector $v$ with $\| v \|_2 =1$ by two scalars: $ \| v \|_4$ and $ \| v \|_1$. If $\| v \|_4 = o (1)$ the vector is said to be \emph{delocalized}, if $\| v \|_4 \ne o (1)$ but $\| v \|_1  \gg 1$ then $v$ is \emph{weakly delocalized} (we might also say weakly localized), while if $\| v \|_1  = O(1)$ then the vector is \emph{localized}. Now suppose that $v$ is an eigenvector of $n^{-1/\alpha} X$ associated to an eigenvalue $\lambda$.  For $1 < \alpha < 2$,  we have proved that all
 but $o(n)$ of the eigenvectors are  delocalized (this disproved the prediction of \cite{bouchaudcizeau}). Our result is in fact stronger as we can show
that the $L^\infty$ norm of these vectors go to zero, which insures that
the $L^p$ norm goes to zero for all $p>2$ and to infinity for all $p<2$
by duality.

For $0 < \alpha < 1$, Bouchaud and Cizeau predict that with high probability, if $|\lambda| < E_\alpha$ then $v$ is weakly delocalized, while $|\lambda| > E_\alpha$, $v$ is localized. It is reasonable to predict that $E_\alpha$ goes to $0$ as $\alpha \downarrow 0$  and goes to infinity as 
 $\alpha \uparrow 2$. It is not clear whether this threshold $E_\alpha$ depends on the choices of the norms $L^1$ and $L^4$ to quantify localization and delocalization. We are far from proving the existence of such a threshold within the spectrum.  Nevertheless, for $0 < \alpha < 2/3$, we have proved that there exists $E_\alpha >0$ such that if $|\lambda| \geq E_\alpha$  then a localization occurs : the mass of $v$ is carried by at most $ n^{1-\delta_\alpha}$ entries, for some $\delta_\alpha >0$.

This heavy-tailed matrix model is in some sense similar to the adjacency matrix of 
Erd\H{o}s-R\'enyi graphs with parameter $p/n$ since its entries are of order one
only with probability of order $1/n$. In the regime where $p$ is going to infinity faster than $n^{2/3}$, this adjacency matrices were shown to belong to the university class of Wigner  random matrices \cite{EKY11},\cite{EKYY11}. If $pn / ( \log n )^c$ goes to infinity for some
constant $c$,  the delocalization of eigenvectors was also proved in these articles. In the related model of the adjacency matrix of uniformly sampled $d$-regular graphs on $n$ vertices, the delocalization of eigenvectors has been studied in Dumitriu and Pal \cite{DumitriuPal}
and Tran, Vu and Wang \cite{tranvuwang}. It was also conjectured by Sarnak that as soon as $d \geq 3$, this model also belongs to the university class of Wigner  random matrices. 

\subsection{Main results}

Let us now be more precise. Throughout the paper, the array $(X_{ij})_{1 \leq i \leq j}$ will be real i.i.d. symmetric $\alpha$-stable random variable such that for all $t \in \bR$, 
$$
\bE  \exp ( i t X_{11}  ) = \exp ( - w_\alpha |t |^\alpha ),
$$
for some $0 < \alpha < 2$ and $w_\alpha = \pi /   ( \sin(\pi \alpha / 2)\Gamma(\alpha)) $. With this choice, the random variables $(X_{ij})$ are normalized in the sense that \eqref{eq:tailX11} holds. The assumption that the random variables follow an $\alpha$-stable law should not be a crucial for our results, it will however simplify substantially some proofs. We define the hermitian matrix
$$ A_n = a_n^{-1} X \quad \hbox{ with } \quad a_n = n^{1 / \alpha}. $$ 
The eigenvalues of the matrix $A$ are denoted by $\lambda_n ( A) \leq \cdots \leq \lambda_1 (A)$. The empirical spectral measure of $A$ is defined as
$$
\mu_A = \frac 1 n \sum_{i=1}^n \delta_{\lambda_i ( A)}  = \frac 1 n \sum_{i=1}^n \delta_{\lambda_i (X)  /  n^{ 1 / \alpha }}. 
$$
The resolvent of $A$ will be denoted by
$$R(z) = ( A - z ) ^{-1},$$
where $z \in \bC_+ = \{z \in \bC : \Im (z) >0\}$. The Cauchy-Stieltjes transform of $\mu_A$  is easily recovered from the resolvent: 
\begin{equation}\label{eq:resCS}
g_{\mu_A} ( z) =  \int  \frac {1}{x - z } \mu_A (dx)  = \frac 1 n \tr ( R (z ) ) .
\end{equation}

From \cite{BG08,BCC}, for any fixed interval $I \subset \bR$, a.s. as $n \to \infty$, 
\begin{equation}\label{eq:convmuA}
\frac { \mu_{A_n} ( I )}{ | I | }  - \frac { \mu_{\alpha} ( I ) }{ | I | } \; \rightarrow \; 0,
\end{equation}
where $| I |$ denotes the length of the interval $I$. As in \cite{ESY10,ESY11}, the opening move for proving statements about the eigenvectors of $A$ is to reinforce the convergence \eqref{eq:convmuA} for small intervals whose length vanishes with $n$. We will express our main results in terms of a scalar $\rho$ depending on $\alpha$:
$$
\rho = \left\{ \begin{array}{lcl}
\frac 1 2 & \hbox{ if } & \frac 8 5 \leq \alpha < 2 \\
\frac \alpha { 8 - 3 \alpha }  & \hbox{ if } & 1 < \alpha < \frac 8 5  \\
\frac {\alpha} { 2+ 3 \alpha }  & \hbox{ if } & 0 < \alpha \leq 1.
\end{array}  \right.
$$
The scalar $\rho$ depends continuously on $\alpha$ and is non-decreasing. Roughly speaking we are able to prove that the convergence \eqref{eq:convmuA} holds for  all intervals of size larger than $n^{ - \rho + o(1)}$. A precise statement is the following. 

\begin{theorem}[Local convergence of the empirical spectral distribution]\label{th:mainconvloc}
Let $0 < \alpha < 2$.  There exists a finite set $\cE_\alpha \subset \bR$ such that if $K \subset \bR \backslash \cE_\alpha  $ is a compact set and $\delta >0$, the following holds. There are constants $c_0, c_1 >0$ such that for all integers $n \geq 1$, if $I\subset K$ is an interval of  length $ |I| \geq c_1  n^{-\rho}  ( Ê\log  n)^2$, then
$$
 \left| \mu_{A} (I)   - \mu_\alpha ( I)  \right| \leq  \delta  |I |,
$$
with probability at least $1 - 2  \exp \left( - c_0  n  \delta^2 | I | ^2 \right)$.
\end{theorem}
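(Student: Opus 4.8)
The plan is to separate a soft but robust concentration step, responsible for the Gaussian-type probability $2\exp(-c_0 n\delta^2|I|^2)$, from a hard quantitative analysis of the resolvent, responsible for the scale $n^{-\rho}$. Write $N_A(I)=n\,\mu_A(I)$ for the number of eigenvalues of $A$ in $I$, expose the entries of $X$ row by row, and set $M_k=\bE\big[N_A(I)\mid\sigma(X_{ij}:i\le k\text{ or }j\le k)\big]$, so $M_0=\bE N_A(I)$ and $M_n=N_A(I)$. Since $M_k-M_{k-1}$ equals the difference of two conditional expectations of $N_A(I)-N_{A^{(k)}}(I)$, where $A^{(k)}$ is the minor of $A$ obtained by deleting the $k$-th row and column and is independent of the $k$-th row, and since passing to a principal submatrix changes $N_A(I)$ by at most one by Cauchy's interlacing theorem, we get $|M_k-M_{k-1}|\le 2$ \emph{uniformly in the values of the entries} --- the heaviness of the tails plays no role here. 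Azuma--Hoeffding then yields $\bP\big(|\mu_A(I)-\bE\mu_A(I)|\ge\delta|I|/2\big)\le 2\exp(-c_0 n\delta^2|I|^2)$, which is exactly the bound in the statement. It remains to prove the nonrandom inequality $|\bE\mu_A(I)-\mu_\alpha(I)|\le\delta|I|/2$ for all $I\subset K$ with $|I|\ge c_1 n^{-\rho}(\log n)^2$, where $c_1$ is allowed to depend on $\delta$, $K$ and $\alpha$.

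For the latter, by the standard correspondence between a measure and its Cauchy--Stieltjes transform smoothed at a scale $\eta$ --- using that $\mu_\alpha$ has a bounded, locally Lipschitz density on $\bR\setminus\cE_\alpha$ --- it suffices to show that, for $z=E+i\eta$ with $E$ in a compact neighbourhood $K'\subset\bR\setminus\cE_\alpha$ of $K$ and $\eta$ of order $|I|$, one has $|g_{\mu_A}(z)-g_{\mu_\alpha}(z)|$ small with probability $1-o(|I|)$, and then to average this over $E$ and feed it back into the martingale estimate. To bound $g_{\mu_A}(z)=\frac1n\tr R(z)=\frac1n\sum_i R_{ii}(z)$ we use the Schur complement identity $R_{ii}(z)^{-1}=A_{ii}-z-\sum_{j\neq i}|A_{ij}|^2R^{(i)}_{jj}(z)-\varepsilon_i(z)$, where $R^{(i)}$ is the resolvent of the minor of $A$ with the $i$-th row and column deleted and $\varepsilon_i(z)=\sum_{j\neq k}A_{ij}R^{(i)}_{jk}(z)A_{ik}$. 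The sum $S_i(z)=\sum_{j\neq i}|A_{ij}|^2R^{(i)}_{jj}(z)$ is genuinely of order one: the $i$-th row of $a_n^{-1}X$ has only $O(1)$ entries of order one and $a_n^{-2}\sum_j X_{ij}^2$ converges in law to a positive $\alpha/2$-stable variable, so that $S_i(z)$ behaves in the limit like $\sum_k\zeta_k^2 R_k(z)$, with $\{\zeta_k^2\}$ a Poisson process of intensity $\frac\alpha2 x^{-\alpha/2-1}dx$ and the $R_k$ independent copies of the limiting diagonal entry. One then uses that, off $\cE_\alpha$, the recursive distributional equation $R(z)\stackrel{d}{=}-\big(z+\sum_k\zeta_k^2 R_k(z)\big)^{-1}$ with $g_{\mu_\alpha}(z)=\bE R(z)$, identified in \cite{BG08,BCC}, is \emph{stable}: the associated fixed-point map on laws contracts in a Wasserstein-type distance, with constants that remain under control as long as $z$ stays at a fixed distance from $\cE_\alpha$ and $\eta\ge n^{-\rho+o(1)}$. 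Iterating this stability against the empirical law of $(R_{ii}(z))_i$ gives the comparison, provided the remainders $\varepsilon_i(z)$ and the fluctuations of $S_i(z)$ around its conditional law can be controlled down to the same scale.

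This error analysis is the decisive and hardest step, and it is where $\rho$ appears. One truncates the entries at a level $b\,a_n$ with $b=b(n)$, writing $A=A_1+A_2$, where $A_1$ collects the entries with $|X_{ij}|\le b a_n$ --- bounded by $b$ in the rescaled variable, with per-entry variance $\asymp b^{2-\alpha}/n$ --- and $A_2$ collects the large entries, a sparse matrix with $O(b^{-\alpha})$ nonzero entries per row, so that, as suggested by the analogy with sparse Erd\H{o}s--R\'enyi graphs, $A_2$ carries the essential structure and $A_1$ is a perturbation. For $A_1$ the off-diagonal remainder $\varepsilon_i(z)$ and the variance of $S_i(z)$ are estimated through Hilbert--Schmidt bounds of the type $\frac1n\sum_j|R^{(i)}_{jj}(z)|^2\lesssim \eta^{-1}\,\Im g_{\mu_A}(z)$, which rest on the elementary inequality $|R_{jj}|^2\le\eta^{-1}\Im R_{jj}$, on $\|R^{(i)}(z)\|\le\eta^{-1}$, and on a priori bounds on $\Im g_{\mu_A}(z)$ propagated through a bootstrap in $\eta$; the contribution of the sparse part $A_2$ is handled combinatorially, using that after conditioning its rows are essentially Poisson point processes. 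Optimizing $b$ against the two competing errors, while tracking the smallest $\eta$ at which the stability of the recursive equation still closes, yields the three regimes: the Wigner-type value $\rho=\frac12$ for $\alpha$ near $2$, an intermediate exponent for $1<\alpha<\frac85$, and a smaller one for $0<\alpha\le1$, the last loss being due to the absence of a first moment, which prevents a clean centering of the heavy part and weakens the control of $\varepsilon_i(z)$. The two powers of $\log n$ absorb the truncation level and the union bound over $E\in K$. Combining the three steps and unwinding the probabilities proves the theorem.

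The main obstacle is this last point: obtaining a stable, quantitative version of the recursive distributional equation at mesoscopic scale, and in particular controlling the heavy-tailed off-diagonal remainders $\varepsilon_i(z)$ --- for which the classical concentration inequalities for quadratic forms fail because the entries have infinite variance --- in such a way that the loss incurred in the truncation does not destroy the stability of the limiting equation until $\eta$ reaches $n^{-\rho}$.
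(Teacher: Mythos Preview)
Your concentration step is correct and is exactly what the paper does: Lemma~\ref{le:concspec} in the appendix is precisely the Azuma--Hoeffding argument via Cauchy interlacing that you describe, and the paper explicitly says (right after Theorem~\ref{th:boundStieltjes}) that this is how one passes from $\bE\mu_A(I)$ to $\mu_A(I)$.

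Your resolvent step, however, diverges from the paper and contains genuine gaps. You propose to work with the full \emph{law} of $R_{ii}(z)$, invoke a Wasserstein-type contraction for the recursive distributional equation, and then truncate $A=A_1+A_2$ at a level $b\,a_n$, optimizing $b$ to get $\rho$. None of this is what the paper does, and as written it is not a proof: you do not establish the Wasserstein contraction (nor is it clear it holds with uniform constants down to $\eta\sim n^{-\rho}$), the ``combinatorial'' handling of the sparse part $A_2$ is not specified, and the claimed optimization in $b$ producing the three regimes for $\rho$ is asserted rather than computed.

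The paper's actual route is quite different and exploits the \emph{exact} $\alpha$-stable structure to collapse the problem to a \emph{scalar} fixed point. One introduces the fractional moment $Y(z)=\bE[(-iR(z)_{11})^{\alpha/2}]$ and, using the identity of Corollary~\ref{cor:formulealice} for $\bE\exp(i\sum_k w_k X_{1k}^2)$, shows (Proposition~\ref{prop:fixpoint}) that $Y(z)\approx\varphi_{\alpha,z}(Y(z))$ for an explicit entire function $\varphi_{\alpha,z}$ on $\cK_{\alpha/2}$. This function is Lipschitz with constant $c|z|^{-\alpha}$ (Lemma~\ref{le:36BDG}), hence a contraction for $|z|$ large; for general $z$ one uses the implicit function theorem around the unique fixed point $y(z)$, and the finite exceptional set $\cE_\alpha$ is exactly the real part of the set where $\varphi'_{\alpha,z}(y(z))=1$. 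The off-diagonal error $T(z)=a_n^{-2}\sum_{k\neq\ell}X_{1k}X_{1\ell}R^{(1)}_{k\ell}$ is \emph{not} handled by truncating the matrix but by a decoupling inequality (Lemma~\ref{le:offdiag}) that bounds its tail in terms of $M_n(z)=\frac{1}{n-1}\bE\tr R^{(1)}(R^{(1)})^*$. The exponent $\rho$ then arises from a bootstrap: one feeds the approximate fixed point back into the identity $\eta M_n=\bE\Im g_{\mu_{A^{(1)}}}(z)$ to get a self-consistent inequality for $\eta M_n$, whose solution dictates the smallest admissible $\eta$ (Proposition~\ref{prop:boundStieltjes}); for $1<\alpha<2$ a second bootstrap, using a lower bound on $\Im\langle X_1,R^{(1)}X_1\rangle$ via the stable decomposition of Lemma~\ref{le:formulealice}, pushes $\rho$ up to $\frac12$ (respectively $\frac{\alpha}{8-3\alpha}$). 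Lemma~\ref{le:deconvolution} converts the Stieltjes bound into the interval bound.

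In short: your reduction to $\bE\mu_A(I)$ is fine, but the core of the argument is a scalar fixed point for the $\alpha/2$-moment of the resolvent, enabled by exact stable identities, not a contraction on laws coupled with an entry truncation.
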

In the forthcoming Theorem \ref{th:boundStieltjes}, we will give a slightly stronger form of Theorem \ref{th:mainconvloc}: we will allow the parameter $\delta$ to depend explicitly on $n$ and $| I |$ and the logarithmic correction in front of $n^{-\rho}$ will be reinforced. The proof of Theorem \ref{th:mainconvloc} will be based on estimates of the diagonal coefficients of the resolvent matrix $R(z)$ as $z = E + i \eta$ gets close to the real axis with $\eta = n^{ - \rho + o(1)}$. For technical reasons, we have only been able to establish \eqref{eq:convmuA} for intervals outside the finite set $\cE_\alpha$ which contains $0$. The same type of result should hold for all sufficiently large intervals. In Proposition \ref{prop:supNI}, we will give an upper bound on $\mu_{A} (I)  $ (i.e. a Wegner's estimate) which will be valid for all intervals of size larger than $n^{-  (\alpha + 2) / 4} $. The threshold $\rho\le \frac{1}{2}$ may be optimal, eventhough for Wigner's matrices it is simply one, since the spectral measure of  heavy tails random matrices fluctuates like $O(n^{-1/2})$
rather than like $O(n^{-1})$ for Wigner's matrices (see \cite{BGM, ShTi}).

Theorem \ref{th:mainconvloc} will have the following corollary on the delocalization of the eigenvectors.

\begin{theorem}[Delocalization of eigenvectors]\label{th:delocvect}
Let $1 < \alpha < 2$.  There exist a finite set $\cE_\alpha \subset \bR$ and a constant $c >0$ such that if $K \subset \bR \backslash \cE_\alpha  $ is a compact set, with probability tending to $1$, 
\begin{equation}\label{eq:delocvect}
\max \{ \| v_k \|_\infty : 1 \leq k \leq n  , \lambda_k (A) \in K\} \leq  n^{ -\rho (1 - \frac 1   \alpha)} ( \log n )^{c} , 
\end{equation}
where $v_1, \cdots, v_n$ is an orthogonal basis of eigenvectors of $A$ associated to the eigenvalues $\lambda_1 (A), \cdots, \lambda_n (A)$.
\end{theorem}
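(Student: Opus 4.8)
The plan is to derive \eqref{eq:delocvect} from the diagonal resolvent estimates underlying Theorem~\ref{th:mainconvloc}, via the classical link between eigenvector coordinates and $\Im R(z)$. Fix an eigenpair $(\lambda_k(A),v_k)$ with $\|v_k\|_2=1$ and $\lambda_k(A)\in K$, and let $\eta=c_1 n^{-\rho}(\log n)^2$ be the scale of Theorem~\ref{th:mainconvloc}. Since $\Im R_{ii}(E+i\eta)=\sum_{l=1}^n \eta\,|v_l(i)|^2 \bigl((\lambda_l(A)-E)^2+\eta^2\bigr)^{-1}$, keeping only the term $l=k$ at $E=\lambda_k(A)$ gives $|v_k(i)|^2\le\eta\,\Im R_{ii}(\lambda_k(A)+i\eta)$ for every $i$, so that
\begin{equation}\label{eq:vkres}
\|v_k\|_\infty^2\ \le\ \eta\,\max_{1\le i\le n}\Im R_{ii}\bigl(\lambda_k(A)+i\eta\bigr).
\end{equation}
Since $\eta\cdot n^{\rho(2/\alpha-1)}=c_1 n^{-2\rho(1-1/\alpha)}(\log n)^2$, the theorem follows once we prove that, with probability tending to $1$, one has $\Im R_{ii}(\lambda_k(A)+i\eta)\le n^{\rho(2/\alpha-1)}(\log n)^{C}$ simultaneously for all $i$ and all $k$ with $\lambda_k(A)\in K$, for a suitable constant $C$.

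To replace the random energies $\lambda_k(A)$ by deterministic ones, I would fix an $\eta$-net $\{E_m\}\subset K$ of cardinality $O(n^{\rho})$: if $|\lambda_k(A)-E_m|\le\eta$, then retaining again the $l=k$ term gives $|v_k(i)|^2\le 5\,\eta\,\Im R_{ii}(E_m+2i\eta)$. It therefore suffices to bound, for each fixed $i$ and each fixed $E\in K$, the probability that $\Im R_{ii}(E+i\eta)>n^{\rho(2/\alpha-1)}(\log n)^{C}$ by a quantity which is $o(n^{-1-\rho})$ --- in fact one expects it to be smaller than any negative power of $n$ --- and then to take a union bound over the $O(n^{1+\rho})$ pairs $(i,m)$. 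Such a bound on $\Im R_{ii}(E+i\eta)$ is exactly of the kind established on the way to Theorem~\ref{th:mainconvloc}, and recorded in the sharper Theorem~\ref{th:boundStieltjes}; the remaining paragraphs isolate what drives it.

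Let $A^{(i)}$ be the $(n-1)\times(n-1)$ principal minor of $A$ obtained by deleting row and column $i$, let $\mathbf a_i$ be the $i$-th column of $A$ with its $i$-th entry removed, and let $R^{(i)}(z)=(A^{(i)}-z)^{-1}$, with eigenpairs $(\mu_l,w_l)$. The Schur complement identity $R_{ii}(z)^{-1}=A_{ii}-z-\mathbf a_i^{*}R^{(i)}(z)\mathbf a_i$ together with $\Im R^{(i)}(z)\succeq 0$ give, for $z=E+i\eta$,
\begin{equation}\label{eq:schurb}
\Im R_{ii}(z)\ \le\ \bigl(\eta+\mathbf a_i^{*}\,\Im R^{(i)}(z)\,\mathbf a_i\bigr)^{-1}\ \le\ 2\eta\,\|P\mathbf a_i\|_2^{-2},
\end{equation}
where $P$ is the spectral projector of $A^{(i)}$ onto $[E-\eta,E+\eta]$ and we used $\mathbf a_i^{*}\Im R^{(i)}(z)\mathbf a_i=\sum_l \eta\bigl((\mu_l-E)^2+\eta^2\bigr)^{-1}|\langle w_l,\mathbf a_i\rangle|^2\ge (2\eta)^{-1}\|P\mathbf a_i\|_2^2$. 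By Theorem~\ref{th:mainconvloc} applied to $A^{(i)}$ (the local law is insensitive to deleting one row and column), $\operatorname{rank}P\ge c\,n\eta$ with overwhelming probability. Now $\mathbf a_i$ is a vector of i.i.d.\ symmetric $\alpha$-stable entries independent of $A^{(i)}$, so conditionally on $A^{(i)}$ each $\langle w_l,\mathbf a_i\rangle$ is symmetric $\alpha$-stable of scale $\|w_l\|_\alpha n^{-1/\alpha}\ge n^{-1/\alpha}$ (using $\|w_l\|_\alpha\ge\|w_l\|_2=1$, valid for $\alpha<2$); by \eqref{eq:schurb}, proving that $\Im R_{ii}(z)\le n^{\rho(2/\alpha-1)}(\log n)^{C}$ for fixed $i$, fixed $E$ and with the required probability reduces to the lower estimate
\begin{equation}\label{eq:projlb}
\|P\mathbf a_i\|_2^2\ \ge\ n^{-2\rho/\alpha}(\log n)^{-C'}\qquad\text{with probability}\ 1-o(n^{-1-\rho}),
\end{equation}
i.e.\ to the fact that an $\alpha$-stable vector cannot be almost orthogonal to the random subspace $\operatorname{range}P$ of dimension $\asymp n\eta$. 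Heuristically $\|P\mathbf a_i\|_2^2\ge\max_{l:|\mu_l-E|\le\eta}|\langle w_l,\mathbf a_i\rangle|^2\gtrsim n^{-2/\alpha}\max_l|\xi_l|^2$ for standard $\alpha$-stable $\xi_l$, the maximum of $\asymp n\eta$ of them being $\gtrsim(n\eta)^{2/\alpha}$; thus $\|P\mathbf a_i\|_2^2\gtrsim\eta^{2/\alpha}$, which is $n^{-2\rho/\alpha}$ up to logarithms, so \eqref{eq:projlb} is of the right order and leaves a factor $n^{\rho(2/\alpha-1)}$ of room below the typical size $\|P\mathbf a_i\|_2^2\asymp\eta$.

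The main obstacle is to make \eqref{eq:projlb} rigorous with the stated (ideally superpolynomially small) failure probability, and this is precisely where the heavy tails and the exact value of $\rho$ come in. Contrary to the Wigner case, the diagonal resolvent entries $R_{ii}$ do not concentrate and can be genuinely polynomially large, so the trivial inequality $\Im R_{ii}\le 1/\eta$ is useless: one must rule out the resonance in which the few heavy coordinates of the row $\mathbf a_i$ all fall on coordinates $j$ at which the $\eta$-window spectral mass $\langle e_j,Pe_j\rangle$ of $A^{(i)}$ is simultaneously atypically small. Since the coordinates $\langle w_l,\mathbf a_i\rangle$ of an $\alpha$-stable vector share its large jumps and are far from independent, controlling the lower tail of $\|P\mathbf a_i\|_2^2$ requires combining (i) an a priori --- possibly inductive --- non-concentration estimate for the spectral projectors of the minors, available from the analysis behind Theorem~\ref{th:mainconvloc} and the Wegner bound of Proposition~\ref{prop:supNI}, with (ii) sharp large-deviation bounds for weighted sums of heavy-tailed variables; the balance between these two inputs, which deteriorates as $\alpha\uparrow 2$, is what produces the three-regime formula for $\rho$ and the loss from the Wigner exponent $1$ to $\rho(1-1/\alpha)$. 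Once \eqref{eq:projlb} holds, \eqref{eq:schurb} turns it into the desired bound on $\Im R_{ii}$, a union bound over the $O(n^{1+\rho})$ pairs $(i,m)$ then covers all eigenvalues in $K$, and \eqref{eq:vkres} concludes.
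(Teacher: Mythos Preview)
Your reduction is correct and is essentially the paper's: the paper uses the eigenvector identity $v_1^2=(1+a_n^{-2}\langle X_1,(A^{(1)}-\lambda)^{-2}X_1\rangle)^{-1}$ and then keeps only the eigenvectors of $A^{(1)}$ in an $\eta$-window around $\lambda$, arriving at exactly your bound $v_1^2\le c\,a_n^2\,\eta^2\,\|P X_1\|_2^{-2}$ with $\mathrm{rank}\,P\asymp n\eta$ by the local law. Your detour through $\Im R_{ii}$ and an $\eta$-net is a slightly longer route to the same inequality.

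The genuine gap is \eqref{eq:projlb}. You identify the lower bound on $\|P\mathbf a_i\|_2^2$ as ``the main obstacle'' and discuss why the correlated stable inner products $\langle w_l,\mathbf a_i\rangle$ make it delicate, but you do not prove it. The paper closes this gap with two clean tools from the appendix, neither of which you invoke:
\begin{itemize}
\item Lemma~\ref{le:formulealice}: for a projector $P$ and an $\alpha$-stable vector $X$, $\langle X,PX\rangle\stackrel{d}{=}\|PG\|_\alpha^{2}\,S$, where $G$ is standard Gaussian and $S$ is an independent positive $\alpha/2$-stable variable. This completely sidesteps the correlations among $\langle w_l,X\rangle$ that worry you.
\item Corollary~\ref{cor:concnorm}: Gaussian concentration gives $\|PG\|_\alpha\ge \delta(\mathrm{rank}\,P)^{1/\alpha}$ with probability $1-e^{-cn^\delta}$, far better than the $o(n^{-1-\rho})$ you need for the union bound.
\end{itemize}
Combining these with $\mathrm{rank}\,P\ge c\,n\eta$ yields $\|PX_1\|_2^2\ge \delta(n\eta)^{2/\alpha}S$, hence $v_1^2\le c\,\eta^{2(1-1/\alpha)}S^{-1}$; since $S^{-1}$ has sub-Weibull tails (Lemma~\ref{le:tailS}), this is $O(n^{-2\rho(1-1/\alpha)}(\log n)^c)$ with overwhelming probability.

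Finally, your narrative about ``the balance between these two inputs\ldots\ is what produces the three-regime formula for $\rho$'' is off. The exponent $\rho$ is fixed upstream by the local law (Theorem~\ref{th:mainconvloc}); the eigenvector step itself is short once you have the decomposition above, and the factor $1-1/\alpha$ comes simply from the stable scaling $(n\eta)^{2/\alpha}$ against $a_n^2\eta^2$.
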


Notice that for $p > 2$, $\| v \|_p \leq \| v \|^{2/p}_2 \| v \|^{1 - 2/p}_\infty$. Hence, Theorem \ref{th:delocvect} implies that the $L^p$-norm of any eigenvector associated to an eigenvalue in $K$ goes in probability to $0$ as soon as $p > 2$. Similarly, from $\|v \|^2_2 \leq \| v \|_1 \| v \|_\infty$, we have a lower bound of order $n^{ \rho (1 - \frac 1   \alpha) + o(1)}$ on the $L^1$-norm of the eigenvectors. Note that our estimate becomes trivial as $\alpha \downarrow 1$ and give upper bound of order $n ^ { - 1 / 4 + o (1)}$ as $\alpha \uparrow 2$. For any $\kappa > 0$, in the proof of Theorem \ref{th:delocvect}, we will see that by increasing suitably $c$, the probability that the event \eqref{eq:delocvect} holds is at least $1 - n^{-\kappa}$.

We now present our result on localization of eigenvectors. We are not able to prove localization for all eigenvectors but only for "typical" eigenvectors associated to an eigenvalue in a small interval. More precisely, we consider  $v_1, \cdots, v_n$ an orthogonal basis of eigenvectors of $A$ associated to the eigenvalues $\lambda_1 (A) , \cdots, \lambda_n (A)$. If $I$ is an interval of $\bR$, we define $\Lambda_I$ as the set of eigenvectors whose eigenvalues are in $I$. Then, if $\Lambda_I$ is not empty, for $ 1 \leq i \leq n$, set
$$
W_I  (i) =  \frac { n} {| \Lambda_I | } \sum_{v \in \Lambda_I} \langle v , e_i \rangle ^2,
$$
where, throughout this paper, 
\begin{equation}\label{eq:defNI}
|\Lambda_I | = n \mu_A ( I) = N_I
\end{equation} is the cardinal of $\Lambda_I$. $W_I(i) / n$ is the average amplitude of the $i$-th coordinate of eigenvectors in $\Lambda_I$. By construction, the average amplitude of $W_I$ is $1$:
$$
\frac 1 n \sum_{i =1} ^ n  W_I  (i) =  1.
$$
If the eigenvectors in $\Lambda_I$ are localized and $I$ contains few eigenvalues, then we might expect that for  some $i$, $W_I  (i) \gg 1$, while for most of the others $W_I(i) = o(1)$. More quantitatively, fix $0 < \delta < 1$ and assume that for some $0 < \kappa < 1$ and $0 < \e < 1$
$$
\frac 1 n \sum_{i =1} ^ n  W_I  (i)^\kappa \leq  \e,
$$
then, setting
 $J=\{ i : W_I  (i) \geq  (\delta^{-1} \e )^{ - \frac 1  { 1 - \kappa} } \}$, we find
$$
\frac 1 n \sum_{i \in J} ^ n  W_I  (i) \geq 1 - \delta.
$$
In particular, all but a proportion $\delta$ of the mass of $W_I$ is carried by a set $J$ of cardinal at most 
$|J| \leq n ( \delta^{-1} \e )^{ \frac 1  { 1 - \kappa} }$. If $\e$ goes to $0$ with $n$, this indicates a localization phenomenon.  With this in mind, we can state our result.

\begin{theorem}[Localization of eigenvectors]\label{th:locvect}
Let $0 < \alpha < 2/3$, $0 < \kappa < \alpha /2$ and $\rho$ be as above. There exists $E_{\alpha,\kappa}$ such that for any compact $K \subset [-E_{\alpha,\kappa},E_{\alpha,\kappa}]^c$, there are constants $c_0,c_1 >0$ and for all integers $n \geq 1$, if $I\subset K$ is an interval of  length $ |I| \geq  n^{-\rho}  ( Ê\log  n)^2$,
$$
\frac 1 n \sum_{i =1} ^ n  W_I  (i)^{\frac \alpha 2} \leq c_1 |I|^ \kappa,
$$
with probability at least $1 -   2 \exp \left( - c_0  n | I | ^4 \right)$.
\end{theorem}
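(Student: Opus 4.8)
\emph{Proof strategy.} The plan is to pass from the eigenvector statistic $W_I$ to a fractional moment of the diagonal resolvent entries, and then to estimate that fractional moment through the Schur-complement recursion for $R_{ii}$ and the heavy-tailed structure of the rows of $A$. For the first step, let $E$ be the midpoint of $I$, put $\eta=|I|/2$, and let $P_I$ denote the spectral projector of $A$ onto $I$, so that $W_I(i)=(n/N_I)\langle e_i,P_Ie_i\rangle$ with $N_I=n\mu_A(I)$. From $\eta/((x-E)^2+\eta^2)\ge 1/(2\eta)$ for $x\in I$ we get $\langle e_i,P_Ie_i\rangle\le 2\eta\,\Im R_{ii}(E+i\eta)$. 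Enlarging $\cE_\alpha$ so that it contains the finitely many zeros of $f_\alpha$ and taking $E_{\alpha,\kappa}\ge1$, the density $f_\alpha$ is bounded below on $K$ (recall $f_\alpha(x)\sim(\alpha/2)|x|^{\alpha/2-1}$), so Theorem~\ref{th:mainconvloc}, applied with a fixed small $\delta$, gives $N_I\ge c\,n|I|$ off an event of probability at most $2\exp(-c_0n|I|^2)$, which is dominated by $2\exp(-c_0n|I|^4)$. On that event $W_I(i)\le C\,\Im R_{ii}(E+i\eta)$ with $C=C(K)$, so it is enough to bound $\tfrac1n\sum_i(\Im R_{ii}(E+i\eta))^{\alpha/2}$ by $c\,|I|^\kappa$ with the stated probability.

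\emph{The main estimate.} The heart of the matter is to show that, for $|E|\ge E_{\alpha,\kappa}$,
\[
\bE\big[(\Im R_{11}(E+i\eta))^{\alpha/2}\big]\ \le\ C(\alpha,K)\,\eta^{\alpha/2}
\]
uniformly in $n$ and in $0<\eta\le1$; since $\kappa<\alpha/2$ and $|I|$ is bounded on $K$, this is then $\le c\,|I|^\kappa$. Note that the concavity of $x\mapsto x^{\alpha/2}$ only gives the trivial bound $(\tfrac1n\sum_i\Im R_{ii})^{\alpha/2}=(\Im g_{\mu_A}(E+i\eta))^{\alpha/2}=O(1)$; the gain must come from the fact that, at large energy, $\Im R_{ii}$ is typically of order $\eta$ and only exceptionally of order $1$. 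To capture this I would use the Schur complement $R_{11}(z)=-(z+\langle A_1,R^{(1)}(z)A_1\rangle)^{-1}$, where $A_1=(A_{1j})_{j\ge2}$ and $R^{(1)}$ is the resolvent of the matrix with the first row and column removed, and write $\langle A_1,R^{(1)}A_1\rangle=\Psi+i\Gamma$ with $\Gamma=\eta\|(R^{(1)})^{*}A_1\|^2\ge0$, so that
\[
\Im R_{11}(E+i\eta)=\frac{\eta+\Gamma}{(E+\Psi)^2+(\eta+\Gamma)^2}\ \le\ \min\!\Big\{\frac{\eta+\Gamma}{(E+\Psi)^2},\ \frac1{\eta+\Gamma}\Big\}.
\]
Here $A_1$ is independent of $R^{(1)}$, the point process $\sum_{j\ge2}\delta_{A_{1j}^2}$ converges to a Poisson process with $\alpha/2$-stable intensity $\tfrac\alpha2 s^{-\alpha/2-1}\,ds$, and $\langle A_1,R^{(1)}A_1\rangle=\sum_{j\ge2}A_{1j}^2R^{(1)}_{jj}(z)$ up to a remainder that is negligible in the $\alpha/2$-moment sense (this is the kind of estimate on the entries of $R^{(1)}$ already needed for Theorem~\ref{th:mainconvloc}). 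I would then run the bound as a fixed-point scheme: assuming the displayed inequality --- together with a $(n,\eta)$-uniform tail bound for $\Im R_{jj}(E+i\eta)$ --- for the smaller matrix and feeding it back, on the typical event that no $A_{1j}^2$ is of size comparable to $E^2$ one obtains $|\Psi|\ll|E|$, $\Gamma=O(\eta)$, hence $\Im R_{11}=O(\eta/E^2)$; the exceptional ``resonant'' configurations (some $A_{1j}^2$ of size $\asymp E^2$, an event of probability $\asymp E^{-\alpha}$, or an anomalously large $\Im R^{(1)}_{jj}$) contribute, after raising to the power $\alpha/2$ and integrating against the stable intensity, only $O(E^{-2\alpha}\eta^{\alpha/2})$ plus a term of the form $c\,E^{-\alpha}$ times the inductive constant, so that choosing $E_{\alpha,\kappa}$ large closes the induction. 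The step I expect to be the main obstacle is exactly this control of the cascade of nested resonances carried by the $R^{(1)}_{jj}$: it rests on a quantitative, $(n,\eta)$-uniform decay estimate for $\bP(\Im R_{jj}(E+i\eta)\ge t)$ as $t\to\infty$, and it is here that the restriction $\alpha<2/3$ (and the requirement that $|E|$ exceed $E_{\alpha,\kappa}$) is genuinely used; a natural alternative organization of this part is to pass to the limiting operator on the Poisson weighted infinite tree and establish the contraction there.

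\emph{Concentration and conclusion.} It remains to upgrade the control of the expectation to a control in probability, i.e.\ to show that $A\mapsto\tfrac1n\sum_i(\Im R_{ii}(E+i\eta))^{\alpha/2}$ concentrates around its mean. Exposing the rows of $A$ one at a time, the resolvent identity $R-R'=R'(A'-A)R$, the identity $\sum_i|R_{ij}|^2=\Im R_{jj}/\eta$, and the elementary bound $|a^{\alpha/2}-b^{\alpha/2}|\le|a-b|^{\alpha/2}$ combined with concavity control the martingale increments; the heavy tails are dealt with by first conditioning on the event that all $|X_{ij}|$ lie below a suitable power of $n$, the $o(n)$ atypically large entries being already accounted for by the resonance analysis of the previous step. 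Since the natural Lipschitz constant of $\Im R_{ii}$ in the entries is of order $|I|^{-2}$ and the martingale has of order $n$ steps of size of order $|I|^{-2}/n$, its quadratic variation is of order $|I|^{-4}/n$, so Azuma's inequality yields the deviation estimate with probability at least $1-\exp(-c_0n|I|^4)$. Putting the three steps together, on an event of probability at least $1-2\exp(-c_0n|I|^4)$ one gets $\tfrac1n\sum_iW_I(i)^{\alpha/2}\le C\big(\bE[(\Im R_{11}(E+i\eta))^{\alpha/2}]+O(|I|^\kappa)\big)\le c_1|I|^\kappa$ for $|E|\ge E_{\alpha,\kappa}$. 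Finally, the hypothesis $|I|\ge n^{-\rho}(\log n)^2$ is precisely what makes Theorem~\ref{th:mainconvloc} applicable to $I$ in the first step, and (since $\rho=\alpha/(2+3\alpha)<1/4$ here) it also guarantees $n|I|^4\to\infty$, so that the probability bound is non-trivial.
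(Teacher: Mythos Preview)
Your three-step architecture is exactly the paper's: the reduction from $W_I$ to fractional moments of $\Im R_{ii}$ is Lemma~\ref{lem-debnn}, the lower bound on $N_I$ comes from Theorem~\ref{th:mainconvloc} (equivalently Theorem~\ref{th:boundStieltjes}), and the concentration of $\tfrac1n\sum_i(\Im R_{ii})^{\alpha/2}$ around its mean with exponent $n|I|^4$ is Lemma~\ref{le:concres2}. So steps one and three of your proposal are correct and match the paper.

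The gap is in your ``main estimate''. Your proposed direct scheme --- a typical/resonant decomposition of the Schur complement and an induction in $n$ --- is a plausible heuristic, but you have not closed it: the induction hypothesis you want (a uniform bound on $\bE[(\Im R^{(1)}_{jj})^{\alpha/2}]$ \emph{and} a tail bound on $\Im R^{(1)}_{jj}$) is precisely what is to be proved, and your ``resonance cascade'' paragraph does not explain how the contribution of the event $\{\Im R^{(1)}_{jj}$ large$\}$ is controlled without circularity. You also do not identify where $\alpha<2/3$ enters. The paper takes a genuinely different route here. It does not work with the single observable $\bE[(\Im R_{11})^{\alpha/2}]$ but with the whole family
\[
\gamma_z^n(u)=\Gamma(1-\tfrac\alpha2)\,\bE\big[((-iR_{11}(z)).u)^{\alpha/2}\big],\qquad u\in\cK_1^+,
\]
where $h.u=\Re(u)h+\Im(u)\bar h$, so that $u=e^{i\pi/4}$ recovers $\bE[(\Im R_{11})^{\alpha/2}]$. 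Via the resolvent recursion this function satisfies an approximate fixed-point equation $\gamma_z^n\approx G_z(\gamma_z^n)$ (Lemma~\ref{lem-finn2}), and $G_z$ is shown to be a contraction, for $|z|$ large, in a weighted H\"older norm $\|\cdot\|_{\beta,\e}$ on $S^1_+$ (Lemma~\ref{le:ContFh}); the constraint $\alpha<2/3$ is exactly the integrability condition $3\alpha/2+\e<1$ needed for that contraction. Comparison with the limiting fixed point $\gamma_z$ (Theorem~\ref{th:unicityRDE}), which satisfies $\gamma_E(e^{i\pi/4})=0$ since $R_0(E)$ is real, then yields $\bE[(\Im R_{11})^{\alpha/2}]\le c\,\eta^{\alpha/2-\e}$. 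Your parenthetical remark that one could ``pass to the limiting operator on the PWIT and establish the contraction there'' is in fact the correct organization; but the object on which the contraction acts is not a single moment --- it is this function $u\mapsto\gamma_z(u)$, and discovering the right norm on it is the substantive content of Section~\ref{sec:RDE}.
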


This result is interesting when $I = [ E - n^{-\rho+ o(1)}, E + n^{-\rho+ o(1)} ]$ is a small neighborhood around some large $E$. Then it shows that  for any $0 < \kappa < \alpha /2$, the mass of the eigenvectors around $E$ is concentrated around order $n^{ 1 -  2 \rho \kappa / ( 2 - \alpha ) }$  entries as long as $|E|$ is large enough.
 The proof of Theorem \ref{th:locvect} will be done by showing that 
\begin{equation}\label{eq:locvectres}
\frac 1 n \sum_{i=1} ^ n  \left(  \Im R(E + i \eta)_{ii} \right)^ {\frac \alpha 2 } 
\end{equation}
vanishes to $0$ if $\eta =n^{-\rho + o(1)} $ even though that  
$$
\frac 1 n \sum_{i=1} ^ n   \Im R(E + i \eta)_{ii} 
$$ 
stays bounded away from $0$. This phenomenon will have an interpretation in terms of a random self-adjoint operator introduced in \cite{BCC} which is, in some sense, the limit of the matrices $A$. We will prove that the imaginary part of its resolvent vanishes at $E + i \eta$, with $\eta = o(1)$ and $|E|$ large enough, while its expectation does not, see Theorem \ref{th:unicityRDE}.  Note that if $0 < \eta \ll n^{-1}$, then we necessarily have that for almost all $E$, $\Im R(E + i \eta)_{ii}$ converges to $0$.  The fact that our estimate $ |I| \geq  n^{-\rho}$ gets worse as $\alpha$ goes to $0$ is an artifact of the proof : our rate of convergence of $R(E + i \eta)_{ii}$ to its limit gets worse as $\alpha$ gets small. It is however intuitively clear that the localization should be stronger when $\alpha$ is smaller.  However, in the forthcoming Theorem \ref{th-finn}, we will prove that, for any $0 < \alpha < 2/3$, the expression \eqref{eq:locvectres} goes to $0$ if $\eta =n^{-\frac 1 6} $. Finally, it is worth to notice that computing the fractional moments of the resolvent matrix as a way to prove localization is already present in the literature on random Schr\"odinger operators, see e.g. \cite{MR1868998}.

The remainder of the paper is organized as follows. In Section \ref{sec:Wegner} we establish general upper bounds on  $N_I$  defined by  \eqref{eq:defNI}.  Section \ref{sec:apfp} contains the proof of Theorem \ref{th:mainconvloc}. Section \ref{sec:delocvect} is devoted to the proof of Theorem \ref{th:delocvect}.  The arguments developped in these two sections  are based on  ideas from  the seminal work of Erd\H{o}s, Schlein, Yau (see e.g. \cite{ESY09a, ESY09b, ESY10}) concerning 
Wigner's matrices with enough moments, as well as on the analytic approach of heavy tailed matrices as initiated in \cite{BG08, BDG09}. However, there was a technical gap due to the lack of  concentration inequalities, 
as well as of simple loop equations, that hold for finite second moment Wigner matrices. A few of the required  new  estimates due to the specific nature of heavy tailed matrices
 are contained in the appendix on concentration inequalities and  stable laws.
In Section \ref{sec:RDE} we prove Theorem \ref{th:locvect}, which is based on the representation of the asymptotic
spectral measure given in \cite{BCC} and a new  fixed point argument which allows
to prove the vanishing    of the imaginary part of the resolvent in the regime $\alpha\in (0,2/3)$.

 The whole article is quite technical,  but hopefully shall be useful for 
 further local study of the spectrum of random matrices which do not
 belong to the universality class of Wigner's semi-circle law.

\section{Upper bound on the spectral counting measure}

\label{sec:Wegner}

For $\eta > 0$ and $ E \in \bR$, we set  $I = [E - \eta , E + \eta]$. The goal of this section 
is to provide a rough 
 upper bound on  $N_I$ when $\eta$ is large enough, where $N_I$ was defined by  \eqref{eq:defNI}. Let 
\begin{equation}\label{eq:defgamma}
 \gamma = \left( \frac 1 2 + \frac 1 \alpha\right)^{-1}.
 \end{equation}

\begin{proposition}[Upper bound on counting measure]\label{prop:supNI}
Let $0 < \alpha < 2$. 
There exist $c,c'>0$ depending only on $\alpha$ such that if $\eta \geq n^{- \frac{\alpha + 2}{ 4}}$, then, for all integers $n$,  for all $t\ge c$,
$$
\bP\left( N_I \ge  t  n \eta^{\gamma} \right)\le
c\exp(-c' t^{\frac{2}{2-\alpha}})+ 2n\exp(-c' t^{\frac{4}{2+\alpha}})\,.
$$
\end{proposition}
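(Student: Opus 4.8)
The plan is to estimate $N_I = n\mu_A(I)$ via the resolvent. Using the standard bound that for $\eta>0$,
$$
N_I = \#\{k : \lambda_k(A) \in [E-\eta,E+\eta]\} \leq 2\eta \sum_{k=1}^n \frac{\eta}{(\lambda_k(A)-E)^2 + \eta^2} = 2\eta \, \Im \tr R(E+i\eta) = 2n\eta \sum_{i=1}^n \frac{1}{n}\Im R(E+i\eta)_{ii},
$$
so it suffices to control $\sum_i \Im R(E+i\eta)_{ii}$ with high probability when $\eta \geq n^{-(\alpha+2)/4}$.

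\textbf{Schur complement and truncation.} First I would write, by the Schur complement formula,
$$
R(z)_{ii} = \frac{1}{-z + A_{ii} - \sum_{j,k \neq i} A_{ij} (R^{(i)}(z))_{jk} A_{ki}},
$$
where $R^{(i)}$ is the resolvent of the matrix $A$ with the $i$-th row and column removed. Taking imaginary parts, $\Im R(z)_{ii} \leq \bigl(\eta + \Im\!\sum_{j,k\neq i} A_{ij}(R^{(i)})_{jk} A_{ki}\bigr)^{-1}$, and since $R^{(i)}$ is a resolvent at $E+i\eta$ the quadratic form $\sum_{j,k} A_{ij}(\Im R^{(i)})_{jk}A_{ki}$ is nonnegative; more usefully one gets a lower bound on this quadratic form from a single well-chosen term or from its diagonal part. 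The key heavy-tailed input is that the entries $A_{ij} = a_n^{-1}X_{ij}$ are typically of size $n^{-1/\alpha}$ but each row has a few entries of order one: by a union bound, with probability at least $1 - 2n\exp(-c't^{4/(2+\alpha)})$, for every $i$ one has $\sum_{j} A_{ij}^2 \mathbf 1(|A_{ij}| \leq t^{2/(2+\alpha)}) \leq C t^{?}$ and the number of "large" entries in each row is controlled. I would use the elementary fact that an $\alpha$-stable variable $X_{11}$ satisfies $\bE[|X_{11}|^\beta] < \infty$ for $\beta < \alpha$, so $\sum_j X_{ij}^2 \mathbf 1(|X_{ij}|\le a_n u)$ has expectation of order $n (a_n u)^{2-\alpha}$, and concentrate this sum around its mean using the concentration inequalities promised in the appendix.

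\textbf{From the quadratic form to the bound.} The strategy is: on the good event, for each $i$, $\sum_{j} A_{ij}^2 (\Im R^{(i)})_{jj}$ is bounded below in terms of the truncated $\ell^2$-mass of the $i$-th row and the local density of states of $R^{(i)}$. Since $\Im R^{(i)}_{jj} = \eta \sum_\ell |\langle w_\ell^{(i)}, e_j\rangle|^2 / ((\mu_\ell^{(i)}-E)^2+\eta^2)$, summing over $j$ weighted by $A_{ij}^2$ gives, after interlacing, something comparable to $\eta^{-1} N_{I'}^{(i)} \cdot (\text{typical } A_{ij}^2)$ for a slightly enlarged interval $I'$. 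Choosing the truncation level $u \asymp \eta^{2/(2+\alpha)}$ (note $(a_n u)^{2-\alpha} \cdot n \asymp n \cdot n^{(2-\alpha)/\alpha} u^{2-\alpha}= a_n^2 u^{2-\alpha}$, which after the $a_n^{-2}$ scaling is $u^{2-\alpha} = \eta^{(2-\alpha) \cdot 2/(2+\alpha)}$, and one checks $2/(2+\alpha) + (2-\alpha)/(2+\alpha) = (4-\alpha+... )$ balances to give exponent $\gamma = (1/2+1/\alpha)^{-1}$) produces the claimed power $\eta^\gamma$. This balancing of the truncation level against $\eta$ is the computational heart and must reproduce $\gamma = 2\alpha/(2+\alpha)$; I would verify it by matching $N_I \lesssim n\eta \cdot (\text{row }\ell^2\text{ mass})$ with the row mass $\asymp (a_n^{-1})^2 \cdot n (a_n u)^{2-\alpha} = u^{2-\alpha}$ at $u = \eta^{2/(2+\alpha)}$, giving $N_I \lesssim n \eta \cdot \eta^{(2-\alpha)\cdot 2/(2+\alpha)}$; one then wants the total exponent of $\eta$ to be $\gamma$, forcing the correct value and fixing the threshold $\eta \ge n^{-(\alpha+2)/4}$ as the point below which the large entries dominate.

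\textbf{Concentration and union bound.} The two exponential terms come from two sources of randomness. The term $c\exp(-c' t^{2/(2-\alpha)})$ handles the fluctuation of the global quantity $\frac1n\sum_i \Im R_{ii}$ (equivalently $\Im g_{\mu_A}$): this is a function of the entries that is Lipschitz in a suitable sense after truncation, and the concentration appendix gives a subexponential tail with exponent matching the $(2-\alpha)$-moment structure — I would invoke a McDiarmid/Azuma-type bound on the Stieltjes transform, or a direct bound on $\var(\tr R)$, with the heavy-tailed truncation producing the exponent $2/(2-\alpha)$. The term $2n\exp(-c' t^{4/(2+\alpha)})$ is the union over $i$ of the event that row $i$ has anomalously large truncated $\ell^2$-mass, i.e. a large deviation for a sum of heavy-tailed squares, whose tail is governed by the $(2+\alpha)/2$-th moment of $A_{ij}^2$, giving exponent $4/(2+\alpha)$. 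I expect the \emph{main obstacle} to be the lower bound on the quadratic form $\sum_{j,k} A_{ij} (\Im R^{(i)})_{jk} A_{ki}$: one cannot simply drop off-diagonal terms, so I would either diagonalize $R^{(i)}$ and use that $A$ restricted to row $i$ acts like an almost-$\ell^2$-normalized random vector with heavy tails (so that its overlap with the spectral projections of $A^{(i)}$ near $E$ is, with high probability, not too small), or run the argument conditionally on $A^{(i)}$ and use anti-concentration of the stable entries to prevent the denominator from being too small. Handling the finitely many bad directions (the set $\cE_\alpha$ plays no role here since this is only an upper bound valid for all $E$) and making the conditioning rigorous — $R^{(i)}$ depends on all entries except row $i$, so it is independent of $(A_{ij})_j$ — is the delicate bookkeeping, but conceptually straightforward once the truncation level is fixed.
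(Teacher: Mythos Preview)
Your starting point is right: $N_I \leq 2\eta\sum_i \Im R_{ii}$, the Schur complement, and the need for a lower bound on the quadratic form $\langle X_k,(\Im R^{(k)})X_k\rangle$ (equivalently on $\dist(X_k,W^{(k)})^2$, where $W^{(k)}$ is spanned by the eigenvectors of $A^{(k)}$ with eigenvalues outside $I$). But the proof you sketch has a genuine gap at exactly the point you flag as the ``main obstacle'', and your identification of where the two exponents come from is incorrect.

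The paper does \emph{not} truncate the heavy tails. Instead it uses an exact distributional identity specific to symmetric $\alpha$-stable vectors (Lemma~\ref{le:formulealice} in the appendix): for any nonnegative matrix $P$,
\[
\langle X,PX\rangle \stackrel{d}{=} \|P^{1/2}G\|_\alpha^{2}\,S,
\]
where $G$ is standard Gaussian and $S$ is an independent positive $\alpha/2$-stable variable. Applied with $P=P_k$ the projection onto the eigenvectors of $A^{(k)}$ in $I$ (rank $N_I^{(k)}\ge N_I-1$), this cleanly separates the problem into two independent pieces. The Gaussian norm $\|P_kG\|_\alpha$ is controlled by a concentration inequality (Corollary~\ref{cor:concnorm}): when $\mathrm{rank}(P_k)\ge tn\eta^\gamma$ one has $\|P_kG\|_\alpha\ge\delta(tn\eta^\gamma)^{1/\alpha}$ with probability $\ge 1-2\exp(-\delta n(t\eta)^{2\gamma/\alpha})$, and the condition $\eta\ge n^{-(\alpha+2)/4}$ is precisely what makes $n(t\eta)^{2\gamma/\alpha}\ge t^{4/(2+\alpha)}$. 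This, after a union bound over $k$, is the source of the term $2n\exp(-c't^{4/(2+\alpha)})$ --- not a large-deviation for truncated row $\ell^2$-mass as you suggest. On that good event,
\[
N_I \le 4\eta^2 a_n^2 \sum_k \dist_k^{-2} \le c\,n\eta^\gamma t^{-2/\alpha}\cdot\frac1n\sum_k S_k^{-1},
\]
so $N_I\ge tn\eta^\gamma$ forces $\frac1n\sum_k S_k^{-1}\ge c^{-1}t^{2/\alpha}$. The first term $c\exp(-c't^{2/(2-\alpha)})$ then comes from the tail of $S^{-1}$ near infinity: for positive $\alpha/2$-stable $S$, Lemma~\ref{le:tailS} gives $\bE\exp(cS^{-\alpha/(2-\alpha)})<\infty$, and Jensen plus Markov handles the (correlated) average. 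The exponent $2/(2-\alpha)$ is thus a feature of the density of $S$ near zero, not of any McDiarmid/Azuma concentration for $\tr R$.

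Your truncation-and-balance heuristic does not produce a proof: you never get a clean lower bound on $\langle X_k,P_kX_k\rangle$, and the ``anti-concentration of stable entries'' you invoke is exactly what the stable decomposition lemma makes precise. Without that lemma (or an equivalent), the argument does not close.
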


This bound will later be refined in the forthcoming Proposition \ref{prop:boundStieltjes}. The upper bound on the  eigenvalues counting measure  implies an upper bound for the trace of the resolvent.  
\begin{corollary}[Trace of resolvent] \label{cor:supNI}
Let $0 < \alpha < 2$ and $z = E + i \eta \in \bC_+$.  There exists $c >0$ depending only on $\alpha$ such that if $\eta \geq n^{- \frac{\alpha + 2}{ 4}}$, then, for all integers $n$, 
$$
 \bE  \tr R(z) R^*(z)  \leq c(\log n)^{\frac{2+\alpha}{4}}  n  \eta^{- \frac 4 {2 + \alpha}}.
$$
\end{corollary}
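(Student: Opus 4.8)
The plan is to bound $\tr R(z)R^*(z)$ eigenvalue by eigenvalue, grouping the eigenvalues of $A$ by their distance to $E$, and to control the number of eigenvalues in each group by Proposition~\ref{prop:supNI}. Since $A$ is Hermitian with real eigenvalues $\lambda_i=\lambda_i(A)$ and $z=E+i\eta$,
\[
\tr R(z)R^*(z)=\sum_{i=1}^n\frac{1}{|\lambda_i-z|^2}=\sum_{i=1}^n\frac{1}{(\lambda_i-E)^2+\eta^2}=\frac1\eta\,\Im\,\tr R(z).
\]
Set $I_0=[E-\eta,E+\eta]$ and, for $k\ge1$, $J_k=[E-2^k\eta,E+2^k\eta]$ (write $J_0:=I_0$). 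An eigenvalue with $2^{k-1}\eta<|\lambda_i-E|\le 2^k\eta$ lies in $J_k$ and contributes at most $(4^{k-1}\eta^2)^{-1}$ to the sum, while one with $|\lambda_i-E|\le\eta$ contributes at most $\eta^{-2}$; hence, with $N_I$ as in \eqref{eq:defNI},
\[
\tr R(z)R^*(z)\;\le\;\frac{N_{I_0}}{\eta^2}+\sum_{k\ge1}\frac{N_{J_k}}{4^{k-1}\eta^2}\,,
\]
and it suffices to estimate $\bE N_{J_k}$ for all $k\ge0$.

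Each $J_k$ is centered at $E$ with half-width $2^k\eta\ge\eta\ge n^{-(\alpha+2)/4}$, so Proposition~\ref{prop:supNI} applies with this interval: for $t\ge c$,
\[
\bP\bigl(N_{J_k}\ge t\,n(2^k\eta)^\gamma\bigr)\le c\exp\bigl(-c't^{\frac{2}{2-\alpha}}\bigr)+2n\exp\bigl(-c't^{\frac{4}{2+\alpha}}\bigr).
\]
I would turn this into a first-moment bound by integrating the tail, with truncation level $T_n:=\bigl(c'^{-1}\log(2n)\bigr)^{(2+\alpha)/4}$, the scale at which the term $2n\exp(-c't^{4/(2+\alpha)})$ drops to $1$: bounding $\bP$ by $1$ for $s\le T_n\,n(2^k\eta)^\gamma$ and by the display above for larger $s$ (after the substitution $s=t\,n(2^k\eta)^\gamma$) gives
\[
\bE N_{J_k}\le n(2^k\eta)^\gamma\Bigl(T_n+\int_{T_n}^\infty c\,e^{-c't^{2/(2-\alpha)}}dt+2n\int_{T_n}^\infty e^{-c't^{4/(2+\alpha)}}dt\Bigr).
\]
Because $\frac{2}{2-\alpha}>1$ the first integral is a constant depending only on $\alpha$, and because $\frac{4}{2+\alpha}>1$ (this is where $\alpha<2$ enters) one has $t^{4/(2+\alpha)}\ge T_n^{4/(2+\alpha)-1}t$ on $t\ge T_n$ together with $\exp(-c'T_n^{4/(2+\alpha)})=(2n)^{-1}$, so the last term is at most $(c'T_n^{4/(2+\alpha)-1})^{-1}$, again bounded. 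Since $T_n\asymp(\log n)^{(2+\alpha)/4}\to\infty$, this yields
\[
\bE N_{J_k}\le c_\alpha\,(\log n)^{\frac{2+\alpha}{4}}\,n\,(2^k\eta)^\gamma\qquad(k\ge0),
\]
after possibly enlarging $c_\alpha$ to absorb the finitely many $n$ with $T_n<c$.

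Inserting this into the dyadic bound and recalling $\gamma=(1/2+1/\alpha)^{-1}=\tfrac{2\alpha}{\alpha+2}$, so that $\gamma-2=-\tfrac{4}{\alpha+2}$ and $\gamma<2$ (again since $\alpha<2$), we get
\[
\bE\,\tr R(z)R^*(z)\le c_\alpha(\log n)^{\frac{2+\alpha}{4}}\,n\,\eta^{\gamma-2}\Bigl(1+4\sum_{k\ge1}2^{k(\gamma-2)}\Bigr)=c\,(\log n)^{\frac{2+\alpha}{4}}\,n\,\eta^{-\frac{4}{2+\alpha}},
\]
the geometric series converging to a constant depending only on $\alpha$. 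This is exactly the assertion of Corollary~\ref{cor:supNI}.

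The one genuinely delicate point is the tail integration yielding $\bE N_{J_k}$: the truncation must sit precisely at $T_n\asymp(\log n)^{(2+\alpha)/4}$, the level at which the $2n\exp(-c't^{4/(2+\alpha)})$ contribution becomes harmless; truncating at the constant $c$ of Proposition~\ref{prop:supNI} would leave an unabsorbed factor of order $n$, while truncating higher would inflate the power of $\log n$. The remaining ingredients — the per-shell bound $(4^{k-1}\eta^2)^{-1}$ and the convergence of the geometric series, which uses only $\gamma<2$ — are routine.
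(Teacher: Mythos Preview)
Your proof is correct and follows essentially the same strategy as the paper: a dyadic decomposition of the eigenvalues by their distance to $E$, Proposition~\ref{prop:supNI} converted into a first-moment bound via tail integration with truncation at level $(\log n)^{(2+\alpha)/4}$, and a convergent geometric series in $k$ using $\gamma<2$. The paper uses the annuli $I_{k+1}=[E-2^{k+1}\eta,E+2^{k+1}\eta]\setminus[E-2^k\eta,E+2^k\eta]$ and then splits each annulus into two sub-intervals before applying the proposition; your variant with the full balls $J_k$ is slightly more direct, since each $J_k$ is already an interval of the form $[E-\eta',E+\eta']$ to which Proposition~\ref{prop:supNI} applies verbatim, at the harmless cost of overcounting eigenvalues in the inner balls.
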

\begin{proof}
By the spectral theorem  
$$
 \tr R(z) R^*(z) = \sum_{j=1} ^n |\lambda_j (A)- z|^{-2}.  
$$
Let $\eta$ be the imaginary part of $z$.
Define $I_0 = [E - \eta , E + \eta]$ and  for integer $k \geq 0$, $I_{k+1}  = [E - 2^{k+1} \eta , E + 2^{k+1} \eta]  \backslash [E - 2^{k} \eta , E + 2^{k} \eta] $. By construction, if $\lambda_j (A) \in I_k$ then $|\lambda_j (A)- z|^{-2} \leq 2^{- 2k + 1} \eta^{-2}$. Therefore, if $N_{I_k} = n \mu_{A} (I_k)$ is the number of eigenvalues in $I_k$, then 
\begin{equation}\label{toto}
 \tr R(z) R^*(z) \leq \sum_{k\geq 0 }    2^{- 2k + 1} \eta^{-2} N_{I_k}.
\end{equation} 
We write $I_k = I^+_ k \cup I^-_k$, where $I^\pm _k = \bR_{\pm} \cap I_k$. To estimate $\bE[ N_{I_k^\pm}]$
we apply 
 Proposition \ref{prop:supNI}. Namely it yields that
for each interval $I$ of length $\eta\ge n^{-\frac{\alpha+2}{4}}$,  for any $\tau\ge c$
\begin{eqnarray*}
\bE[ N_{I}]&=&\int_0^\infty \bP\left(N_{I}\ge t\right) dt \\
&\le& \tau n \eta^\gamma+ n\eta^\gamma \int_\tau^\infty  \bP\left(N_{I}\ge t n \eta^\gamma\right) dt\\
&\le& n \eta^\gamma\left(\tau+\int_\tau^\infty\left(\exp(-c' t^{\frac{2}{2-\alpha}})+ 2n\exp(-c' t^{\frac{4}{2+\alpha}})\right)\right)dt\\
&\le & n \eta^\gamma\left(\tau + c_0(1+n\exp(-c' \tau^{\frac{4}{2+\alpha}}))\right),
\end{eqnarray*}
for some finite constant $c_0 > 0 $. Therefore, taking $\tau$ of order $(\log n)^{\frac{2+\alpha}{4}}$, we deduce
that there exists some finite constant $c_1 > 0 $ such that 
$$\bE[ N_{I}]\le  c_1 (\log n)^{\frac{2+\alpha}{4}} n \eta^\gamma\,.$$
Therefore, we deduce from \eqref{toto} that
$$
\bE  \tr R(z) R^*(z) \leq   2c_1 (\log n)^{\frac{2+\alpha}{4}} \sum_{k\geq 0 }   2^{- 2k} \eta^{-2}  n (\eta 2^k)^{\gamma} \leq 2c_1(\log n)^{\frac{2+\alpha}{4}}  n   \eta^{- \frac 4 {2 + \alpha}} \sum_{k \geq 0} 2^{- \frac {4k} {2 + \alpha}} .
$$ 
\end{proof}
The rest of this section is devoted to the proof of Proposition \ref{prop:supNI}.

\subsection{A geometric upper bound}

In this paragraph, we recall a general upper bound for $N_I$ that is due to Erd\H{o}s-Schlein-Yau \cite{ESY10}, namely
if we let $A^{(k)}$ be the principal minor matrix of $A$ where the $k$-th row and column have been removed, $W^{(k)}$ be the vector space generated 
by the eigenvectors of $A^{(k)}$ correponding to eigenvalues at distance
greater than $\eta$ from $E$, we have 
\begin{equation}\label{eq:ESY}
N_I \leq 4 \eta^2 \, a_n^2 \, \sum_{k =1}^n \dist(X_k,W^{(k)})^{-2}\,.
\end{equation}
Let us prove \eqref{eq:ESY}. We start with the resolvent formula, 
\begin{equation}\label{eq:resolventformula}
R(z)_{kk}  =  - \left( z - a_n^{-1} X_{kk}  +   a_n ^{-2} \langle  X_k , R^{(k)}X_k \rangle  \right)^{-1},
\end{equation}
where $X_k = ( X_{k1}, \cdots ,X_{k k-1}, X_{k k+1}, \cdots , X_{kn}) \in \bR^{n-1}$
and  $R^{(k)} = ( A^{(k)} - z ) ^{-1}$. 

Identifying the real and imaginary part, we get, using the fact that $z$ and the eigenvalues of $R^{(k)}$ are in $\bC_+$, 
\begin{eqnarray*}
\Im R(z)_{kk}  &\leq &\left( \Im \left(z - a_n^{-1} X_{kk} +    a_n ^{-2} \langle  X_k , R^{(k)}X_k  \rangle  \right) \right)^{-1} \\
& \leq & a_n ^{2}    \langle  X_k , \Im  R^{(k)}X_k  \rangle ^{-1}.
\end{eqnarray*}
Let  $(\lambda_{i}^{(k)})_{1 \leq i \leq n-1}$ and $(u_{i}^{(k)})_{1 \leq i \leq n-1}$ be the eigenvalues and eigenvectors of $A^{(k)}$. Choosing $z = E + i \eta$, we have the spectral decomposition
$$
\Im  R^{(k)} = \sum_{i = 1} ^{n-1} \frac{\eta}{ ( \lambda_i^{(k)} - E)^2 + \eta^2 } u_i^{(k)} {u_i^{(k)}}^*. 
$$
If $ |\lambda_i^{(k)} - E| \leq \eta$, then  $ \frac{\eta}{ ( \lambda_i - E)^2 + \eta^2 } \geq 1 / ( 2 \eta)$. Therefore,
we deduce
$$
\Im R (z )_{kk} \leq 2 \eta a_n^{2} \left(\sum_{i=1}^{n-1} 1_{|\lambda_i^{(k)} - E| \leq \eta } \langle  X_k , u_i ^{(k)} \rangle  ^2 \right)^{-1}\,.
$$
We rewrite the above expression as
\begin{equation}\label{lkj}
\Im R (z )_{kk} \leq 2 \eta a_n^{2} \dist^{-2}(  X_k, W^{(k)} ),
\end{equation}
where $$W^{(k)} = \mathrm{vect}\left\{ u_{i}^{(k)} : 1 \leq i \leq n-1 , \lambda_i^{(k)} \notin [E - \eta , E + \eta] \right\} .$$ 
Since $I = [E - \eta , E + \eta]$, the inequalities \eqref{lkj} and 
$$
\tr \, \Im R(z) = n  \int \frac {\eta}{(E - \lambda)^2 + \eta^2} \mu_A ( dx) \geq   n \int_I  \frac {\eta}{(E - \lambda)^2 + \eta^2} \mu_A ( dx) \geq \frac{N_ I }{2 \eta}
$$
 give \eqref{eq:ESY}.
 We set 
$$N^{(k)}_I  =  | \{1 \leq i \leq n - 1 : \lambda^{(k)}_i \in I \} |= n  - 1 -  \mathrm{dim} (W^{(k)})  = (n -1) \mu_{A^{(k)}} ( I).$$ From Weyl interlacement theorem,
\begin{equation}\label{eq:weylNI}
N_I - 1 \leq N^{(k)}_I =  n  - 1 -  \mathrm{dim} (W^{(k)})  \leq N_I +1. 
\end{equation}

\subsection{Proof of Proposition \ref{prop:supNI}.}

We note that up to increasing the constant $c$ and $1/c'$, it is sufficient to prove the proposition only for all $\eta \geq c_1 n^{- \frac{\alpha + 2}{ 4}}$ for some $c_1$ (indeed, $N_{I'} \leq N_I$ if $I' \subset I$ and if $N_I \leq t n | I |^\gamma$ then $N_{I'} \leq  t n |I'|^\gamma  (|I | / |I' |Ê )^\gamma $). 

In the sequel we denote in short  $\dist_k = \dist(  X_k , W^{(k)} )$. From \eqref{eq:ESY}-\eqref{eq:weylNI}, we write 
\begin{equation}\label{eq:NIt}
N_I \leq \ind_{ N_I \leq \lfloor t n \eta^{\gamma}\rfloor } t n \eta^{\gamma} +    4 \eta^2 a_n^2 \, \sum_{k =1}^n \dist^{-2}_k  \ind_{ N^{(k)}_I \geq \lfloor t n \eta^{\gamma}\rfloor} \,.
\end{equation}
We have $ \dist^2 _k  = \langle X_k , P_k X_k \rangle$, where $P_k$ is the orthogonal projection on  $W^{(k)}$. We note that $X_k$ is independent of $W^{(k)}$. From Lemma \ref{le:formulealice},  there exists a positive $\alpha/2$-stable random variable $S_k$ and a standard Gaussian vector $G_k$, independent from $S_k$,  such that 
$$
\dist^2 _k = \|P_k G_k \|_\alpha ^2 S_k. 
$$
Note that  $\eta \geq c n^{- \frac{\alpha + 2}{ 4}}$ is equivalent to $n \eta^{\frac{2\gamma}{\alpha}} \geq c^{\frac{2\gamma}{\alpha}}$.
By Corollary \ref{cor:concnorm} (applied to $A=P_k$), there exists universal constants
$C,\delta$ so that if $N^{(k)}_I\ge t c^{-\gamma}
 n \eta^{\gamma}  \geq t n^{1-\frac{\alpha}{2}}$
for $t \ge C c^{\gamma}$, 
 with probability at least 
\begin{equation}\label{eq:Fn1}
1 - 2 \exp ( -  \delta     n  ( t  \eta )^{\frac {2 \gamma }{ \alpha}} /2) \geq 1 - 2 \exp ( - c_0    t  ^{\frac{4}{2 + \alpha}} ) 
\end{equation}
we have,
$$
\|P_k G_k \|_\alpha \geq \delta  \left( t n \eta^\gamma \right)^{\frac 1 \alpha}. 
$$ 
Hence, if $F_n$ denotes the event that $N_I >  \lfloor t n \eta^{\gamma} \rfloor$ and for all $k$, $\|P_k G_k \|_\alpha \geq \delta  \left( t n \eta^\gamma \right)^{\frac 1 \alpha}$, we have from \eqref{eq:NIt}
$$
N_I \ind_{F_n}  \leq  4 \eta^2  \delta^{-2}  \left( t  \eta^\gamma \right)^{-\frac 2 \alpha} \, \sum_{k =1}^n S_k^{-1}.
$$
With our choice of $\gamma$, $2  - 2  \gamma / \alpha = \gamma$, hence, with $c_1 = 4 \delta^{-2}$, we deduce
$$
N_I \ind_{F_n}  \leq  c_1 n \eta^\gamma t^{-\frac 2 \alpha}   \left( \frac 1 n \sum_{k =1}^n S_k^{-1} \right).
$$
The variables $(S_k)_{1 \leq k \leq n}$ have the same distribution but are correlated. Nevertheless, note that the function $x \mapsto \exp ( x^\delta )$ is convex on $[b_\delta, +\infty) $ with $b_\delta  = 0$ if $\delta \geq 1$, and $b_\delta = Ê(1/\delta-1)^{1/\delta}$ if $0 \leq \delta \leq 1$. Hence from the Jensen inequality, 
\begin{equation*}\label{sumi}
\exp \left( \frac 1 n \sum_{k =1}^n S_k^{-1} \right)^{\delta } \leq \exp \left( \frac 1 n \sum_{k =1}^n   S_k^{-1} \vee b_\delta \right)^{\delta } \leq   \frac 1 n \sum_{k =1}^n  \exp (   S_1 ^{-\delta} \vee b^\delta_\delta). 
 \end{equation*}
In particular for every $c_2 > 0$,
\begin{equation*}\label{sumi}
 \bE  \exp \left\{c_2 \left( \frac 1 n \sum_{k =1}^n S_k^{-1} \right)^{\frac{\alpha}{2 - \alpha} }\right\} \leq c_3 \bE \exp \left\{ c_2 \left( S_1 ^{-\frac{\alpha}{2 - \alpha} } \right) \right\}, 
 \end{equation*}
 where $c_3 = \exp ( c_2 b_{\alpha / (2 - \alpha) } ^{\alpha / (2 - \alpha)} )$. By  Lemma \ref{le:tailS}, for $c_2$ small enough, the above is finite. Thus, from the Markov inequality, for some constants $c_4,c_5 >0$,
$$
\bP \left( N_I \ind_{F_n}  > t  n \eta^\gamma \right) \leq \bP \left( \frac 1 n \sum_{k =1}^n S_k^{-1}  > c_1^{-1} t^{  \frac 2 \alpha} \right) \leq c_4 \exp ( - c_5 t^{ \frac{2}{2 - \alpha}}). 
$$
Therefore, by \eqref{eq:Fn1}, we deduce 
\begin{eqnarray*}
\bP \left( N_I > t  n \eta^\gamma \right)& \leq&\bP \left( \{N_I  > t  n \eta^\gamma\}\cap F_n^c \right) +  \bP \left( N_I \ind_{F_n}  > t  n \eta^\gamma \right) \\
&\le& 2n \exp ( - c_0    t  ^{\frac{4}{2 + \alpha}} )  + c_4 \exp ( - c_5 t^{ \frac{2}{2 - \alpha}})
\end{eqnarray*}
which completes the proof of the proposition.

\section{Local convergence of the spectral measure}
\label{sec:apfp}
To prove the local convergence of the spectral measure, we shall prove that
an observable of the resolvent satisfies nearly a fixed point equation, which also
entails an approximate equation for the resolvent. Such an equation was already derived in \cite{BG08,BDG09}
but the error terms are here carefully estimated. This step will be crucial  to obtain, in the second part
of this section,  a rate of convergence of the Stieltjes transform of the spectral measure toward its limit. 
The range of convergence  will  be first derived roughly, and then improved  for $\alpha>1$ thanks
to bootstraps arguments.

\subsection{Approximate fixed point equation}

The observables we shall be interested in 
will be 
\begin{equation}\label{defXY}
Y(z) := \bE [( - i R(z)_{11}  ) ^\frac{\alpha}{2} ] \quad \hbox{and } \quad 
X(z) := \bE [- i R (z)_{11}   ]\,.
\end{equation}
(For $1 \leq k , \ell \leq n$, we will write indifferently $R_{ k \ell} (z)$ or $R (z)_{ k \ell}$). For $\beta \in [0,2]$, we define $\cK_\beta = \{z \in \bC : | \arg (z) | \leq \frac {\pi \beta}{2} \}$. By construction $ - i R_{11} ( z) \in \cK_{1}$ for $z\in \mathbb C_+$, so that 
$ Y(z)  \in \cK_{\alpha / 2}$ and  $
X(z) \in \cK_{1}.
$
On $\cK_{\alpha / 2}$, we may define the entire functions 
$$
\varphi_{\alpha,z} (x) =  \frac{1}{\Gamma(\frac \alpha 2)} \int_0^\infty t^{\frac \alpha 2 - 1}e^{itz} e^{- \Gamma ( 1 - \frac{\alpha }{2} ) t^{\frac \alpha 2} x} dt 
$$
and 
$$
\psi_{\alpha,z} (x) =   \int_0^\infty e^{itz} e^{- \Gamma ( 1 - \frac{\alpha }{2} ) t^{\frac \alpha 2} x} dt. 
$$

For further use, we define, with the notation of \eqref{eq:resolventformula},
\begin{equation}\label{defM}
M_n (z) =\frac 1 {n-1} \bE \tr  \left\{  R^{(1)} (z) (R^{(1)}(z) )^*\right\} .
\end{equation}
Note that, writing explicitly the dependence in $n$,  $R_n^{(1)} = ( A_n^{(1)} - z ) ^{-1}$ and $\frac{a_n}{a_{n-1}} A_n^{(1)}$ has the same distribution than $A_{n-1}$. We may thus apply Corollary \ref{cor:supNI} to  $R^{(1)}$ (it can be checked the difference between $a_{n}$ and $a_{n-1}$ is harmless). For some constant $c>0$, we therefore have the upper bound for all $z = E + i \eta$ and $\eta \geq n^{ -\frac{\alpha+2}{4}}$,
\begin{equation}\label{eq:boundMn}
M_n (z) \leq c(\log n)^{\frac{2+\alpha}{4}}\eta^{-\frac{4}{2+\alpha}}.
\end{equation}
The main result of this paragraph is the following approximate fixed point  equations.
\begin{proposition}[Approximate fixed point equation]\label{prop:fixpoint}
Let $0 < \alpha < 2$ and $z = E + i \eta \in \bC_+$. There exists $c >0$ such that if $n^{ -\frac{\alpha+2}{4}} \leq \eta \leq 1$ and 
\begin{equation}\label{eq:defeps}
\e =  \eta^{-\alpha \wedge 1} n ^{-\frac{1}{\alpha \vee 1}}   + \frac{( \log n) \ind_{\alpha =1} }{\eta n}
   + \left( \eta^{-1} \sqrt{ \frac{M_n(z)}{n}  } \right)^{1 \wedge \alpha}  \left( 1 +    ( \log  n  )  \ind_{1 < \alpha \leq 4/3} +  ( \log  n  )^2  \ind_{0 < \alpha \leq 1} \right),
\end{equation}
  then, for any integer $n \geq 1$, 
$$
 |Y ( z) - \varphi_{\alpha,z}( Y ( z) )|\leq  c \eta^{-\frac \alpha 2} \e +  c \eta^{-\frac \alpha 2} n^{-\frac \alpha 4},
$$
and 
$$
 |X ( z) - \psi_{\alpha,z}( Y ( z) )|\leq  c \eta^{-1} \e +  c \eta^{-\frac \alpha 2} n^{-\frac \alpha 4} .
$$
\end{proposition}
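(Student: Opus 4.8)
The plan is to reduce everything to the Schur complement identity \eqref{eq:resolventformula}. Writing $D = z - a_n^{-1}X_{11} + a_n^{-2}\langle X_1, R^{(1)}(z)X_1\rangle$, we have $D\in\bC_+$ because $\Im D = \eta + a_n^{-2}\langle X_1, \Im R^{(1)}X_1\rangle \geq \eta$, so $-iR(z)_{11} = (-iD)^{-1}$ with $\Re(-iD) = \Im D \geq \eta > 0$. On the right half-plane one has the representations $w^{-\alpha/2} = \Gamma(\tfrac\alpha2)^{-1}\int_0^\infty t^{\alpha/2-1}e^{-tw}\,dt$ and $w^{-1} = \int_0^\infty e^{-tw}\,dt$, whence
$$
Y(z) = \frac{1}{\Gamma(\alpha/2)}\int_0^\infty t^{\frac\alpha2-1}\,\bE\!\left[e^{itD}\right]dt, \qquad X(z) = \int_0^\infty \bE\!\left[e^{itD}\right]dt,
$$
the Fubini step being licit since $|e^{itD}| \leq e^{-t\eta}$ and $\alpha/2>0$. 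Thus the proposition will follow from an estimate $\bE[e^{itD}] = e^{itz}\exp\!\big(-\Gamma(1-\tfrac\alpha2)t^{\alpha/2}Y(z)\big) + (\text{error}(t))$: integrating a pointwise bound $\text{error}(t)\lesssim t^{\beta}e^{-t\eta}\,\delta$ (with $\beta\geq0$) against $t^{\alpha/2-1}e^{-t\eta}\,dt$, resp. $e^{-t\eta}\,dt$, produces $\eta^{-\alpha/2-\beta}\delta$, resp. $\eta^{-1-\beta}\delta$, and since $\e$ already carries all the relevant negative powers of $\eta$ these fit the prefactors $\eta^{-\alpha/2}$ and $\eta^{-1}$ in the statement (recall $\eta\leq1$).

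To compute $\bE[e^{itD}]$ I would condition on $A^{(1)}$ and use independence: $\bE[e^{itD}] = e^{itz}\,\bE[e^{-ita_n^{-1}X_{11}}]\cdot\bE\big[\,\bE[e^{ita_n^{-2}\langle X_1, R^{(1)}X_1\rangle}\mid A^{(1)}]\,\big]$. The scalar factor $\bE[e^{-ita_n^{-1}X_{11}}] = e^{-w_\alpha t^\alpha/n}$ is replaced by $1$: for $\alpha<1$ by $|e^{-w_\alpha t^\alpha/n}-1|\leq w_\alpha t^\alpha/n$, which after integration produces the term $\eta^{-\alpha}n^{-1}$ of \eqref{eq:defeps}; for $\alpha>1$ one rather keeps $a_n^{-1}X_{11}$ inside the Schur formula and estimates its effect directly in $L^1$ using $\bE|X_{11}|<\infty$ and $a_n^{-1}\bE|X_{11}| = O(n^{-1/\alpha})$, producing $\eta^{-1}n^{-1/\alpha}$; the borderline case $\alpha=1$ is responsible for the factor $(\log n)\ind_{\alpha=1}/(\eta n)$.

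The core of the argument is the conditional characteristic function of $Q = a_n^{-2}\langle X_1, R^{(1)}X_1\rangle$. Here one exploits the genuinely heavy-tailed fact that, in the \emph{standard} basis, $Q$ is dominated by its diagonal part $a_n^{-2}\sum_j X_{1j}^2 R^{(1)}_{jj}$, the contribution of $a_n^{-2}\sum_{j\ne k}X_{1j}X_{1k}R^{(1)}_{jk}$ being negligible after normalization; this step, and the subsequent expansion, are where the specific structure of symmetric $\alpha$-stable laws and the stable-law estimates of the appendix are used. Since $\bP(|X_{1j}|>t)\sim t^{-\alpha}$, the $X_{1j}^2$ have tail $\sim s^{-\alpha/2}$ and $a_n^{-2}=n^{-2/\alpha}$ is exactly the $\alpha/2$-stable normalization; expanding $\bE[e^{-vX_{1j}^2}] = 1 - \Gamma(1-\tfrac\alpha2)v^{\alpha/2} + (\text{lower order})$ as $v\to0$, continuing analytically to $v\in\cK_1$, and multiplying over $j$ yields
$$
\bE\!\left[e^{it a_n^{-2}\sum_j X_{1j}^2 R^{(1)}_{jj}}\,\Big|\,A^{(1)}\right] \;=\; \exp\!\Big(-\Gamma(1-\tfrac\alpha2)\,t^{\alpha/2}\,W\Big) + (\text{error}), \qquad W := \frac1n\sum_{j=1}^{n-1}\big(-iR^{(1)}_{jj}\big)^{\alpha/2}.
$$
Finally one replaces $W$ by $Y(z)$: $\bE[W] = \tfrac{n-1}{n}\,\bE[(-iR^{(1)}_{11})^{\alpha/2}]$ agrees with $Y(z)$ up to the negligible discrepancy between $A^{(1)}$ and $A_{n-1}$, while the fluctuations of $W$ around $\bE[W]$ are controlled by a concentration inequality for heavy-tailed matrices whose input is the a priori bound $M_n(z)\leq c(\log n)^{(2+\alpha)/4}\eta^{-4/(2+\alpha)}$ of Corollary \ref{cor:supNI} (using $\tfrac1n\sum_j|R^{(1)}_{jj}|^2\leq M_n(z)$). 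Passing $\bE$ through $\exp(-\Gamma(1-\tfrac\alpha2)t^{\alpha/2}\,\cdot\,)$ then costs an error of order $(\eta^{-1}\sqrt{M_n(z)/n})^{1\wedge\alpha}$, up to the logarithmic factors appearing in the various ranges of $\alpha$ — this is exactly the third term of \eqref{eq:defeps}. Collecting all contributions gives $\bE[e^{itD}] = e^{itz}\exp(-\Gamma(1-\tfrac\alpha2)t^{\alpha/2}Y(z)) + (\text{error})$, and integrating against $\Gamma(\alpha/2)^{-1}t^{\alpha/2-1}\,dt$, resp. $dt$, against the definitions of $\varphi_{\alpha,z}$ and $\psi_{\alpha,z}$ yields the two inequalities; the residual $c\eta^{-\alpha/2}n^{-\alpha/4}$ absorbs the cruder bounds used on the small-probability event that some $R^{(1)}_{jj}$ is anomalously large, where the expansions above are unavailable.

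I expect the main obstacle to be the quantitative control of the conditional characteristic function of $Q$: establishing simultaneously that the off-diagonal part is negligible and that the diagonal part obeys the claimed exponential form with an error small enough to fit within $\eta^{-\alpha/2}\e$, all without a finite second moment, is where the heavy-tailed obstructions — and the tailored stable-law and concentration estimates of the appendix — really do the work. A close second is the concentration of $W$ around $Y(z)$: this is what forces $M_n(z)$, and hence through Corollary \ref{cor:supNI} the restriction $\eta\geq n^{-(\alpha+2)/4}$, into the statement.
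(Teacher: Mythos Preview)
Your overall architecture is right — Schur complement, separate diagonal and off-diagonal parts of the quadratic form, replace the diagonal part by its average via concentration — and it is essentially the paper's. But you have the bookkeeping of the two error terms swapped, and this matters because it reflects where the real work is.

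The $M_n$-dependent term $(\eta^{-1}\sqrt{M_n/n})^{1\wedge\alpha}$ in $\e$ does \emph{not} come from the concentration of $W=\frac1n\sum_j(-iR^{(1)}_{jj})^{\alpha/2}$ around its mean. It comes entirely from the off-diagonal piece $a_n^{-2}\sum_{j\ne k}X_{1j}X_{1k}R^{(1)}_{jk}$: this is Lemma~\ref{le:offdiag}, whose bound reads $\sqrt{\tr(R^{(1)}(R^{(1)})^*)/n^2}=\sqrt{M_n/n}$, and the exponent $1\wedge\alpha$ reflects that $T(z)$ is in $L^1$ only when $\alpha>1$ while for $\alpha\le1$ one must work with a truncation and tail bounds (hence the $(\log n)^2$). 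The paper implements this step not at the level of characteristic functions but directly on the fractional power, via $|(-iD)^{-\alpha/2}-(-iD_{\rm diag})^{-\alpha/2}|\le\frac\alpha2\eta^{-\alpha/2-1}|T(z)|$ (Lemma~\ref{le:diagapprox}); this avoids having to control the error uniformly in $t$.

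Conversely, the residual $c\eta^{-\alpha/2}n^{-\alpha/4}$ is exactly the cost of concentration of $W$, and it has nothing to do with ``a small-probability event that some $R^{(1)}_{jj}$ is anomalously large'' — no such event appears. The paper uses the exact identity of Corollary~\ref{cor:formulealice} to write $I(z)=\bE[\varphi_{\alpha,z}(\frac1n\sum_k\rho_k|g_k|^\alpha/\bE|g_k|^\alpha)]$ with $\rho_k=(-iR^{(1)}_{kk})^{\alpha/2}$, and then the Lipschitz bound on $\varphi_{\alpha,z}$ (Lemma~\ref{le:36BDG}) together with the deterministic concentration Lemma~\ref{le:concres2} give $|I(z)-\varphi_{\alpha,z}(\bE\rho_2)|\le cn^{-1/2}M_n^{\alpha/4}+c\eta^{-\alpha/2}n^{-\alpha/4}$; the first term is checked to be dominated by the second for $\eta\le1$. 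The concentration input here is only $(\eta,n)$ — $M_n$ enters through the second-moment bound $\frac1{n-1}\sum|\rho_k|^2\le M_n^{\alpha/2}$, not as an assumption of the concentration lemma. Finally $|\bE\rho_2-Y(z)|\le c(n\eta)^{-\alpha/2}$ by the rank-one perturbation inequality \eqref{eq:rankineq}, which is again absorbed in $\eta^{-\alpha/2}n^{-\alpha/4}$.
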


We note that we could use the bound \eqref{eq:boundMn} to get an explicit upper bound on $\e$. In the forthcoming Proposition \ref{prop:boundStieltjes} we will however improve this bound for some range of $\eta$. 

In the first step of the  proof, we compare $Y(z)$ with an expression which gets rid of the off-diagonal terms of $R^{(1)}$ in \eqref{eq:resolventformula}. More precisely, with the notation of  \eqref{eq:resolventformula}, we define 
$$
I (z) : =  \bE \left[\left( (- i  z ) +  a_n ^{-2} \sum_{k=2}^n   X_{1k}^2  ( - i R^{(1)}_{kk} ) \right) ^{-\frac{ \alpha}{2} }\right]\; \in \cK_{\alpha / 2},
$$
and similarly, 
$$
J (z) : =  \bE \left[\left( (-i  z ) +    a_n ^{-2} \sum_{k=2}^n   X_{1k}^2  ( - i R^{(1)}_{kk}  )  \right) ^{-1}\right]\; \in \cK_{1}.
$$
We start with a technical lemma. 
\begin{lemma}[Off-diagonal terms]\label{le:offdiag}
Let $B$ be an hermitian matrix with resolvent $G = ( B - z)^{-1}$. For any $0 < \alpha < 2$, there exists a constant $c = c(\alpha) >0$ such that for $n \geq 2$,
$$
\bP \left( \left| a_n ^{-2}  \sum_{2 \leq k \ne \ell \leq n } X_{1k} X_{1\ell}  G_{k\ell}\right| \geq  \sqrt{ \frac {\tr  ( GG^* ) } { n^2 } }  t \right)  \leq c t^{ - \alpha }   \log \left(n \left( 2 \vee t   \right)  \right)\log   \left( 2 \vee   t   \right)    , 
$$
and if $1 < \alpha < 2$, 
 $$
 \bE \left| a_n ^{-2}  \sum_{2 \leq k \ne \ell \leq n } X_{1k} X_{1\ell}  G_{k\ell}\right| \leq c \sqrt{ \frac {\tr  ( GG^* ) }{n^2 }}   \left(1 + \ind_{1<  \alpha \leq 4/3}  \log n   \right). 
 $$
\end{lemma}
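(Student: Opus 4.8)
The plan is to estimate the quadratic form $Q := a_n^{-2} \sum_{2 \le k \ne \ell \le n} X_{1k} X_{1\ell} G_{k\ell}$ by conditioning on $G$ (equivalently on $B$, which we may assume independent of the row $X_1 = (X_{12},\dots,X_{1n})$) and exploiting the stable structure of the entries. Since the $X_{1k}$ are symmetric $\alpha$-stable, a Poissonian or series representation is available: conditionally, we can write $X_{1k} = S^{1/\alpha} \xi_k$ with $S$ a common positive $(\alpha/2)$-stable subordinator-type variable and $\xi_k$ suitable symmetrized weights, or more usefully truncate each $X_{1k}$ at a level $u$ to be chosen. First I would perform a truncation: split $X_{1k} = X_{1k}\ind_{|X_{1k}| \le u} + X_{1k}\ind_{|X_{1k}| > u} =: \bar X_{1k} + \check X_{1k}$. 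The tail part contributes at most a few non-zero terms — by \eqref{eq:tailX11} the number of indices with $|X_{1k}| > u$ is stochastically dominated by a Binomial$(n, c u^{-\alpha})$, so choosing $u = (nt)^{1/\alpha}$ up to logs makes this count $O(1)$ with the required probability, and each such term is controlled crudely using $|G_{k\ell}| \le \sqrt{G_{kk}^* G_{\ell\ell}} \le \eta^{-1}$ together with a bound on the large values of $X_{1k}$.

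The main term is $\bar Q := a_n^{-2} \sum_{k \ne \ell} \bar X_{1k} \bar X_{1\ell} G_{k\ell}$. Here I would use a second-moment (conditional variance) estimate: since the $\bar X_{1k}$ are centered (the law is symmetric) and independent given $G$, $\bE[|\bar Q|^2 \mid G] = a_n^{-4} \sum_{k\ne\ell} \bE[\bar X_{1k}^2]^2 |G_{k\ell}|^2 \le a_n^{-4} (\bE[\bar X_{11}^2])^2 \,\tr(GG^*)$. Now $\bE[\bar X_{11}^2] = \bE[X_{11}^2 \ind_{|X_{11}| \le u}] \asymp u^{2-\alpha}$ for $0 < \alpha < 2$, so with $u^\alpha \asymp nt$ (up to logarithmic factors) one gets $\bE[|\bar Q|^2 \mid G] \lesssim a_n^{-4} (nt)^{2(2-\alpha)/\alpha} \tr(GG^*) = n^{-4/\alpha}(nt)^{(4-2\alpha)/\alpha}\tr(GG^*) = (nt)^{-2}\, t^{2}\,\frac{\tr(GG^*)}{?}$ — more carefully, $a_n^{-4}(nt)^{2(2-\alpha)/\alpha} = n^{-4/\alpha} n^{(4-2\alpha)/\alpha} t^{(4-2\alpha)/\alpha} = n^{-2} t^{(4-2\alpha)/\alpha}$, so Chebyshev gives $\bP(|\bar Q| \ge n^{-1}\sqrt{\tr(GG^*)}\, s \mid G) \lesssim t^{(4-2\alpha)/\alpha}/s^2$. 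Choosing the threshold level $u$ and the deviation level so that both the truncation-count bound and the Chebyshev bound balance at the target $t^{-\alpha}$ is the bookkeeping core of the argument; the logarithmic factors $\log(n(2\vee t))\log(2\vee t)$ arise from optimizing $u$ over dyadic scales and from a union bound over the $O(\log n)$ relevant truncation levels (a standard device when summing the Chebyshev estimates at geometrically spaced truncation thresholds, which is why a single clean moment bound does not suffice and one pays two logs).

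For the second assertion, when $1 < \alpha < 2$ the variable $X_{11}$ has a finite first moment, so I would bound $\bE|Q|$ directly. One route: use the layer-cake / tail integral $\bE|Q| = \int_0^\infty \bP(|Q| \ge r)\,dr$, split the integral at $r_0 \asymp \sqrt{\tr(GG^*)}/n$, bound the contribution below $r_0$ by $r_0$, and above $r_0$ by $\int_{r_0}^\infty \bP(|Q|\ge r)\,dr$ using the first part of the lemma with $t = r/r_0$. Since $\alpha > 1$, $\int_1^\infty t^{-\alpha}\,dt < \infty$, and the extra logs only cost a $\log n$ in the regime $1 < \alpha \le 4/3$ where $\alpha$ is close enough to $1$ that the $\log^2$ in the probability bound is not dominated by the polynomial gain — this produces exactly the factor $(1 + \ind_{1 < \alpha \le 4/3}\log n)$. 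Concretely: $\int_{r_0}^\infty \bP(|Q| \ge r)dr \lesssim r_0 \int_1^\infty t^{-\alpha}\log(nt)\log t\, dt$, and for $\alpha > 4/3$ the integral converges to an absolute constant, whereas for $1 < \alpha \le 4/3$ one cuts the $t$-integral at $t \asymp n$ and the $\log(nt)$ factor contributes an additional $\log n$.

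The hard part, I expect, is not the variance computation but getting the truncation analysis tight enough to land the sharp exponent $t^{-\alpha}$ with only the stated double-logarithmic loss: one must handle simultaneously (i) the rare large entries $\check X_{1k}$, whose diagonal-term analogue $\sum_k \check X_{1k}^2 G_{kk}$ is actually the dominant object in the companion diagonal estimates and must here be shown negligible off-diagonal because $G_{k\ell}$ for $k \ne \ell$ has no definite sign and cannot pile up, and (ii) the possibility that a single moderately large pair $\bar X_{1k}\bar X_{1\ell}$ with $|G_{k\ell}| \approx \eta^{-1}$ dominates, which forces the truncation level $u$ to be tied to $\sqrt{\tr(GG^*)}$ rather than to $\eta^{-1}$ alone. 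Balancing these competing constraints — large-value count, conditional variance, and worst-case single-term size — across dyadic truncation scales, and verifying that the $\tr(GG^*)$-normalization is the right one in all regimes of $\alpha \in (0,2)$, is where the care is needed; the rest is the layer-cake integration for the $1 < \alpha < 2$ bound, which is routine once the probability estimate is in hand.
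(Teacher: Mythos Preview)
Your truncation-plus-Chebyshev scheme does not reach the exponent $t^{-\alpha}$ claimed in the first statement. With $u = a_n s$, one has $\bE[\bar X_{11}^2] \asymp a_n^{2-\alpha} s^{2-\alpha}$, so the conditional second moment gives $\bE|\bar Q|^2 \lesssim n^{-2} s^{2(2-\alpha)} \tr(GG^*)$ and hence $\bP(|\bar Q| \ge r_0 t) \lesssim s^{2(2-\alpha)}/t^2$, while the tail-part event costs $\bP(\max_k |X_{1k}|>u) \lesssim s^{-\alpha}$. Balancing the two forces $s = t^{2/(4-\alpha)}$ and yields a tail of order $t^{-2\alpha/(4-\alpha)}$, which is strictly weaker than $t^{-\alpha}$ for every $\alpha<2$. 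No dyadic summation over truncation levels can repair this: you are optimizing a single exponent, and the optimum is $2\alpha/(4-\alpha)$. (Your handling of the tail part is also incomplete --- a single large $X_{1k_0}$ still leaves a full linear form $\sum_{\ell\ne k_0} X_{1\ell}G_{k_0\ell}$ multiplying it, not ``a few terms''.)

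The paper obtains the sharp exponent by a genuinely different device: a decoupling inequality (de la Pe\~na--Gin\'e) replaces $X_{1k}X_{1\ell}$ by $X_{1k}X'_{1\ell}$ with $X'$ an independent copy, after which the $\alpha$-stable property is used \emph{twice} to collapse the double sum to a product $|X'_{11}|\cdot(\sum_\ell |\hat X_\ell|^\alpha \rho_\ell)^{1/\alpha}$ of two heavy-tailed factors. This product structure is what produces the $t^{-\alpha}$ tail; the two logarithms come from truncating each factor separately at level $s\asymp t$, giving $\bE|\hat X_\ell \ind_{|\hat X_\ell|\le s a_n}|^\alpha \lesssim \log(s^\alpha n)$ and $\bE|X'_{11}\ind_{|X'_{11}|\le s}|^\alpha \lesssim \log s$.

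Ironically, your method \emph{is} the paper's proof of the auxiliary bound \eqref{boundTbis}, $\bP(|Q|\ge t)\lesssim (\tr(GG^*)/(n^2t^2))^{\alpha/(4-\alpha)}$, which has no logarithmic loss but the weaker exponent. The paper uses this bound, not the layer-cake integration of the first statement, to remove the $\log n$ for $\alpha>4/3$: integrating $t^{-\alpha}\log(nt)\log t$ over $t\ge 1$ gives $c_1(\alpha)\log n + c_2(\alpha)$ for \emph{every} $\alpha>1$, so your claim that ``for $\alpha>4/3$ the integral converges to an absolute constant'' is false. The dichotomy at $4/3$ arises because $2\alpha/(4-\alpha)>1$ precisely when $\alpha>4/3$, making \eqref{boundTbis} integrable.
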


\begin{proof}
Let $ 0 < \alpha \leq 1$. We use a decoupling technique : from  \cite[Theorem 3.4.1]{MR1666908}, there exists a  universal constant $c > 0$ such that 
\begin{eqnarray}
\bP  \left( \left| a_n ^{-2}  \sum_{2 \leq k \ne \ell \leq n } X_{1k} X_{1\ell}  G_{k\ell}\right| \geq t  \right) & \leq & c  \, \bP  \left( \left| a_n ^{-2}  \sum_{2 \leq k \ne \ell \leq n } X_{1k} X'_{1\ell} G_{k\ell}\right| \geq  t / c   \right)  \label{eq:Gdecouptail} \\
& \leq  & c  \, \bP  \left( \left| a_n ^{-2}  \sum_{2 \leq k \ne \ell \leq n } X_{1k} X'_{1\ell} \Re( G_{k\ell} ) \right| \geq  t / {2c}   \right) \nonumber  \\
& & \quad \quad +  c  \, \bP  \left( \left| a_n ^{-2}  \sum_{2 \leq k \ne \ell \leq n } X_{1k} X'_{1\ell} \Im( G_{k\ell} ) \right| \geq  t / {2c}   \right),\nonumber 
\end{eqnarray}
where $X'_1$ is an independent copy of $X_1$.  From the stable property of $X'_1$, we deduce that 
\begin{align}\label{eq:Redecoup}
  \sum_{2 \leq k \ne \ell \leq n } X_{1k} X'_{1\ell}  \Re ( G_{k\ell} )    & \stackrel{d}{=}    X'_{11}   \left( \sum_{\ell }  \left|  \sum_{k\ne \ell}     X_{1k}  \Re ( G_{k\ell} ) \right| ^ \alpha  \right)^{\frac 1 \alpha},
\end{align}
and similarly for the imaginary part. From the stable property of $X_1$, 
\begin{align}\label{eq:Redecoup2}
\sum_{\ell }  \left|  \sum_{k\ne \ell}     X_{1k}  \Re ( G_{k\ell} ) \right| ^ \alpha  \stackrel{d}{=} \sum_{\ell }  | \hat X_\ell | ^ \alpha  \sum_{k\ne \ell}  |  \Re ( G_{k\ell} ) | ^ \alpha ,
\end{align}
where $(\hat X_\ell)_\ell $ is a random vector whose marginal distribution is again the law of $X_{11}$ (note however that the entries of  $(\hat X_\ell)_\ell $ are correlated). Let $\rho_\ell = \sum_{k\ne \ell}  |  \Re ( G_{k\ell} ) | ^ \alpha$ and $\rho = \sum_\ell \rho_\ell$. 
For $ s \geq 2$ to be chosen later, we define $Y_\ell = \hat X_{\ell}  \ind ( |\hat X_{\ell}| \leq s a_n )$ and $Y'_1 = X'_{11}  \ind ( | X'_{11}| \leq s)$. It is straightforward to check that 
$$
 \bE | Y_\ell |^ \alpha   \leq c   \log ( s^\alpha n  )  \quad \hbox{ and } \quad  \bE | Y'_1 |^ \alpha   \leq c   \log ( s  ).
$$
 Hence, from \eqref{eq:Redecoup}-\eqref{eq:Redecoup2}
\begin{eqnarray*}
\bP  \left( |X'_{11} |  \left(  \sum_{\ell }  | \hat X_\ell | ^ \alpha  \rho_\ell \right) ^{\frac 1 \alpha}  \geq t  \right) & \leq & \bP  \left(  |Y'_{1} |   \left(  \sum_{\ell }  | Y_\ell | ^ \alpha  \rho_\ell \right) ^{\frac 1 \alpha}   \geq t   \right) \\
&& \quad \quad + \; \bP \left( \max_{\ell}  |\hat X_{\ell}|    \geq s a_n \right)  + \bP \left( |\hat X'_{11}|    \geq s  \right) \\
& \leq & c s^{-\alpha} + \frac{c  \rho  \log (s^ \alpha n ) \log(s) }{t^\alpha},
\end{eqnarray*}
where we have used the Markov inequality and the union bound 
$$\bP ( \max_{\ell}  |\hat X_{\ell}|    \geq s a_n  )  \leq  n \bP ( |X_{11}|\geq s a_n  )  \leq c s^{-\alpha}.$$ 
We choose $s = 2 \vee  ( t / \rho ^{1 / \alpha})$, we find that 
$$
\bP  \left( |X'_{11} |  \left(  \sum_{\ell }  | \hat X_\ell | ^ \alpha  \rho_\ell \right) ^{\frac 1 \alpha}  \geq t \rho ^{1 / \alpha}  \right)  \leq  \frac{c   \log ( (2 \vee  t )n ) \log (  2 \vee   t) }{t^\alpha  } .  
$$
The same statement holds for $\Im (G)$.  To sum up, we deduce from \eqref{eq:Gdecouptail} that 
$$
\bP \left( \left| a_n ^{-2}  \sum_{2 \leq k \ne \ell \leq n } X_{1k} X_{1\ell}  G_{k\ell}\right| \geq a_n ^{-2}  \rho^{  1 / \alpha}  t \right)  \leq c t^{ - \alpha }   \log \left(n \left( 2 \vee t   \right)  \right)\log   \left( 2 \vee   t   \right)  .
$$
Then, the first statement of the lemma follows H\"older's inequality which asserts that 
\begin{align}\label{eq:holderdecoup}
a_n ^{-2}  \rho^{1  / \alpha}  &  \leq   n^{ - \frac 2 \alpha}    \left( \sum_{\ell} \sum_{k }  |    G_{k\ell}   | ^ \alpha \right)^ { \frac 1 \alpha } 
 \leq  n^{ - \frac 2 \alpha}    \left( \sum_{\ell} \sum_{k }  |    G_{k\ell}   | ^ 2 \right)^ { \frac 1 2} n^{\frac 2 \alpha -1} 
= \sqrt{ \frac {\tr  ( GG^* ) }{n^2 }},
\end{align}
(recall  that $G_{k\ell} = G_{\ell k}$). 

Now, assume $\alpha > 1$. By integrating the above bound, we find easily
$$
\bE \left| a_n ^{-2}  \sum_{2 \leq k \ne \ell \leq n } X_{1k} X_{1\ell}  G_{k\ell}\right| = \int_0 ^ \infty \bP \left( \left| a_n ^{-2}  \sum_{2 \leq k \ne \ell \leq n } X_{1k} X_{1\ell}  G_{k\ell}\right|   \geq t  \right) dt  \leq c \sqrt{ \frac {\tr  ( GG^* ) }{n^2 }} \log n. 
$$
(In fact, with slightly more care, we may replace $\log n$ by $(\log n)^{\frac 1 \alpha} \log (\log n)$).  It remains to check that we can remove the term $\log n$ for $\alpha > 4/3$. It will come easily from the bound 
\begin{equation}\label{boundTbis}
\bP \left( \left| a_n ^{-2}  \sum_{2 \leq k \ne \ell \leq n } X_{1k} X_{1\ell}  G_{k\ell}\right| \geq t  \right)  \leq c \left( \frac {\tr  ( GG^* ) }{n^2 t ^2 } \right)^{ \frac{\alpha }{ 4 - \alpha}}. 
\end{equation}
Let $s \geq 1$ to be chosen later. We now set $Y_k = X_{1k}  \ind ( |X_{1k}| \leq s a_n ) /  a_n $. We write 
$$
\left|  \sum_{2 \leq k \ne \ell \leq n } Y_k Y_\ell G_{k\ell} \right|^2 =   \sum_{ k_1 \ne \ell_1 , k_2 \ne \ell_2 } Y_{k_1} Y_{\ell_1}  Y_{k_2} Y_{\ell_2}G_{k_1\ell_1} G_{k_2 \ell_2 }^*.
$$
The variables $Y_k$ are iid and by symmetry $\bE Y_1 = \bE Y_1^3 = 0$. Hence, since $G_{k\ell} = G_{\ell k}$, taking expectation we obtain 
$$
\bE \left|  \sum_{2 \leq k \ne \ell \leq n } Y_k Y_\ell G_{k\ell} \right|^2 = 2     \sum_{ k \ne \ell } \bE [ Y^2_{1} ]^2   G_{k\ell} G_{\ell k}^*  \leq   2 \bE [ Y^2_{1} ]^2   \tr GG^*.
$$
It is routine to check that, for $p > \alpha$,  $ \bE [ |Y _{1} |^p ]  \leq c(p)  s^{p - \alpha} n^{-1}$.  Hence, arguing as above, we find
\begin{eqnarray*}
\bP  \left( \left| a_n ^{-2}  \sum_{2 \leq k \ne \ell \leq n } X_{1k} X_{1\ell}  G_{k\ell}\right| \geq t  \right) & \leq & c   \frac{ s^{2 ( 2 - \alpha ) }   \tr GG^* } {n^{2} t ^2 }  +  c s ^{-\alpha}.
\end{eqnarray*}
We conclude by choosing $s =  1 \vee ( n^2 t^2 / \tr GG^*  ) ^{1 / (4 - \alpha)}$. 
\end{proof}

We may now compare $I(z)$ and $J(z)$  to $Y(z)$ and $X(z)$.
\begin{lemma}[Diagonal approximation]\label{le:diagapprox}
Let $0 < \alpha < 2$ and $z = E + i \eta \in \bC_+$. There exists $c  >0$ such that if $\e$ is given by \eqref{eq:defeps}
then 
$$
| Y(z) - I (z) | \leq c \eta^{-\frac \alpha 2} \varepsilon \quad  \hbox{and } \quad 
| X(z) - J (z) | \leq c \eta^{-1} \varepsilon . 
$$
\end{lemma}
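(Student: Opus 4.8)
The goal is to compare the genuine diagonal resolvent entry $-iR(z)_{11}$ with the object $-i\bigl(z-a_n^{-1}X_{11}+a_n^{-2}\sum_{k\ge 2}X_{1k}^2(-iR^{(1)}_{kk})\bigr)^{-1}$ obtained from \eqref{eq:resolventformula} by throwing away the off-diagonal part of the quadratic form $\langle X_1,R^{(1)}X_1\rangle$ and the scalar term $a_n^{-1}X_{11}$. First I would write, using \eqref{eq:resolventformula},
\begin{equation*}
-iR(z)_{11}=\Bigl((-iz)+a_n^{-2}\sum_{k=2}^n X_{1k}^2(-iR^{(1)}_{kk})+\Delta\Bigr)^{-1},\qquad \Delta:=-ia_n^{-1}X_{11}+a_n^{-2}\sum_{2\le k\ne\ell\le n}X_{1k}X_{1\ell}(-iR^{(1)}_{k\ell}).
\end{equation*}
Both the base point $Z_1:=(-iz)+a_n^{-2}\sum_k X_{1k}^2(-iR^{(1)}_{kk})$ and $Z_1+\Delta$ lie in $\cK_1$ (their imaginary parts are $\ge\eta$ up to sign conventions, because $\Im(-iz)=E$... more precisely $\Re Z_1\ge\eta$), so the map $w\mapsto w^{-\alpha/2}$ is Lipschitz there with a uniform control in terms of $\eta$: on $\{\Re w\ge\eta\}$ one has $|w^{-\alpha/2}-w'^{-\alpha/2}|\le C_\alpha\,\eta^{-\alpha/2-1}|w-w'|$ for $\alpha/2\le 1$, and likewise $|w^{-1}-w'^{-1}|\le\eta^{-2}|w-w'|$. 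Applying this pointwise and taking expectations reduces everything to bounding $\bE$ of (a truncated version of) $|\Delta|$, weighted appropriately.

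**Main estimate.** The contribution of $-ia_n^{-1}X_{11}$ is handled directly: $a_n^{-1}X_{11}$ is $a_n^{-1}=n^{-1/\alpha}$ times a fixed stable variable, so for $\alpha>1$ it has finite mean of size $n^{-1/\alpha}$, while for $\alpha\le 1$ one truncates at scale $1$ and pays a $\log n$ (at $\alpha=1$) factor — this produces the $\eta^{-\alpha\wedge1}n^{-1/(\alpha\vee1)}+(\log n)\ind_{\alpha=1}/(\eta n)$ pieces of $\e$. The off-diagonal term is exactly what Lemma~\ref{le:offdiag} controls, applied to $B=A^{(1)}$, $G=R^{(1)}$, so that $\sqrt{\tr(GG^*)/n^2}=\sqrt{M_n(z)/n}$ up to the $n$ vs $n-1$ discrepancy. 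For $\alpha>1$ the second bound of Lemma~\ref{le:offdiag} gives $\bE|\Delta_{\mathrm{off}}|\le c\sqrt{M_n(z)/n}\,(1+\ind_{1<\alpha\le4/3}\log n)$ directly, and multiplying by the Lipschitz constant $\eta^{-\alpha/2-1}$ (resp. $\eta^{-2}$) gives the $(\eta^{-1}\sqrt{M_n/n})^{1}$ term of $\e$ times $\eta^{-\alpha/2}$ (resp. $\eta^{-1}$), which is the claimed bound. For $0<\alpha\le1$ one cannot integrate $|\Delta_{\mathrm{off}}|$ (the tail from Lemma~\ref{le:offdiag} is $\sim t^{-\alpha}$, non-integrable), so instead I would use the tail bound inside a truncation argument: split according to whether $|\Delta_{\mathrm{off}}|\lesssim\eta$ or not, and on the large-deviation event use that $-iR(z)_{11}$ is anyway bounded (its $\alpha/2$-moment is bounded since $|-iR_{11}|\le\eta^{-1}$ only gives $\eta^{-\alpha/2}$, but one needs a better a priori bound). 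The cleaner route for $\alpha\le1$: the function $w\mapsto w^{-\alpha/2}$ is itself $\alpha/2$-Hölder-\emph{like} on $\cK_1$ in the sense that $|w^{-\alpha/2}-w'^{-\alpha/2}|\le C_\alpha\bigl(|w-w'|/\eta\bigr)^{?}$ — more usefully, I would bound $|(Z_1+\Delta)^{-\alpha/2}-Z_1^{-\alpha/2}|\le C\min(\eta^{-\alpha/2},\eta^{-\alpha/2-1}|\Delta|)\le C\eta^{-\alpha/2}\min(1,\eta^{-1}|\Delta|)$ and then use $\bE\min(1,\eta^{-1}|\Delta_{\mathrm{off}}|)\le\bP(|\Delta_{\mathrm{off}}|\ge\eta)+\eta^{-1}\bE[|\Delta_{\mathrm{off}}|\ind_{|\Delta_{\mathrm{off}}|\le\eta}]$; the first term is $\le c(\eta^{-1}\sqrt{M_n/n})^\alpha(\log n)^2$ by Lemma~\ref{le:offdiag} (choosing $t=\eta/\sqrt{M_n/n}$), and the second is of the same order by integrating the tail up to level $\eta$. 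This yields precisely the $(\eta^{-1}\sqrt{M_n/n})^{\alpha}(\log n)^2$ term in $\e$ for $0<\alpha\le1$, and the analogous computation with $\min(\eta^{-1},\eta^{-2}|\Delta|)$ gives the bound for $|X(z)-J(z)|$.

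**The residual term and the obstacle.** There remains the $c\eta^{-\alpha/2}n^{-\alpha/4}$ term appearing in the final claim of Proposition~\ref{prop:fixpoint} but \emph{not} in Lemma~\ref{le:diagapprox}; inside the lemma I only need the bound with $\e$, and this extra term is produced later when one replaces $M_n(z)$ inside the definition of $I(z),J(z)$ by a self-consistent quantity, or when passing from $n-1$ to $n$ — I would simply not worry about it at the lemma stage and state the lemma exactly as written, with the $\e$-only bound. The step I expect to be the real obstacle is the $\alpha\le1$ case of controlling $\bE|(Z_1+\Delta)^{-\alpha/2}-Z_1^{-\alpha/2}|$: one must make sure the truncation level interacts correctly with the double-logarithm from Lemma~\ref{le:offdiag} (the $\log((2\vee t)n)\log(2\vee t)$ factor), and one must check that $Z_1+\Delta$ really does stay in $\cK_1$ with the quantitative lower bound $\Re(Z_1+\Delta)\ge\eta$ — this is where the positivity $\Im R^{(k)}\ge 0$ and the sign of $\eta$ are used, and a naive Lipschitz bound would fail without it since $w^{-\alpha/2}$ blows up near $0$. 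Also slightly delicate: the transfer from the bound on $-iR(z)_{11}$ conditionally on $A^{(1)}$ and $X_{11}$ to the unconditional $\bE$, which uses that $X_1$ is independent of $R^{(1)}$ and that the stable representations in Lemma~\ref{le:offdiag} are used conditionally on $G=R^{(1)}$ with $\tr(GG^*)$ then averaged, yielding $M_n(z)$; one should be careful that $\bE\sqrt{\tr(GG^*)/n^2}\le\sqrt{M_n(z)/n}$ by Jensen, and for $\alpha\le1$ that the conditional tail bound integrates correctly after taking the outer expectation over $\tr(GG^*)$, for which one replaces $\sqrt{\tr(GG^*)/n^2}$ by its bound \eqref{eq:boundMn} only at the very end (or keeps it as $M_n$, which is what the statement does).
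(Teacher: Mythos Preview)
Your proposal is correct and follows essentially the same route as the paper. The paper defines $T(z)=-a_n^{-1}X_{11}+a_n^{-2}\sum_{k\ne\ell}X_{1k}X_{1\ell}R^{(1)}_{k\ell}$ (your $\Delta$ up to the factor $-i$), uses the same Lipschitz bound $|(iz)^{-\alpha/2}-(iz')^{-\alpha/2}|\le\frac\alpha2(\Im z\wedge\Im z')^{-\alpha/2-1}|z-z'|$ relying on the positivity of the imaginary parts, and then writes the master inequality $|Y(z)-I(z)|\le\frac\alpha2\eta^{-\alpha/2-1}\bE[|T(z)|\ind_\Omega]+\eta^{-\alpha/2}\bP(\Omega^c)$ for an arbitrary event $\Omega$; for $\alpha>1$ it takes $\Omega^c=\emptyset$ and invokes the expectation bound of Lemma~\ref{le:offdiag}, for $\alpha\le1$ it takes $\Omega=\{a_n^{-1}|X_{11}|\le\eta\}\cap\{|a_n^{-2}\sum_{k\ne\ell}X_{1k}X_{1\ell}R^{(1)}_{k\ell}|\le\eta\}$ and integrates the tail bound from Lemma~\ref{le:offdiag} up to $\eta$ --- precisely your truncation via $\min(1,\eta^{-1}|\Delta|)$. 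Your observation that the $c\eta^{-\alpha/2}n^{-\alpha/4}$ term belongs to Proposition~\ref{prop:fixpoint} rather than to this lemma is also correct.
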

\begin{proof}
Define
\begin{equation}\label{defT}
T(z) = - a_n^{-1} X_{11} + a_n ^{-2}  \sum_{2 \leq k \ne \ell \leq n } X_{1k} X_{1\ell}  R^{(1)}_{k\ell}.
\end{equation}
We notice that for any, $z, z' \in \bC_+$ and $\alpha > 0$, 
 $$| (iz)^{-\alpha /2} - (iz')^{-\alpha /2} |\leq \frac{\alpha}{2} |z - z'| (  \Im(z) \wedge \Im (z'))  ^{-\alpha/2-1}.$$ 

With the notation of  \eqref{eq:resolventformula}, we also note that $\Im (\sum_{k=1}^n   X_{1k}^2  R^{(1)}_{kk} ) \geq 0$ and  $\Im ( - a_n^{-1} X_{11}+ \langle  X_1 , R^{(1)}X_1 \rangle )  \geq 0$. Hence, from  \eqref{eq:resolventformula}, for any event $\Omega$, 
\begin{eqnarray}
\left| Y(z) - I (z) \right|& \leq  &   \frac \alpha 2   \eta ^{-\frac{\alpha}{2} - 1}   \bE[ |T(z)| \ind_{\Omega}] + 
 \eta^{-\frac \alpha 2} \bP ( \Omega^c). \label{eq:diffYI} 
\end{eqnarray}
Applying the same argument with $X(z),J(z)$, we get 

$$\left| Y(z) - I (z) \right| \leq   \frac \alpha 2   \eta ^{-\frac{\alpha}{2} }D(z),\qquad
\left| X(z) - J (z) \right| \leq    \eta^{-1} D(z) $$
with
\begin{equation}\label{defD}
 D(z)= \eta ^{- 1}   \bE[ |T(z)| \ind_{\Omega}] +  \bP ( \Omega^c).\end{equation}
We may bound $D(z)$ by using that by Lemma \ref{le:offdiag}, since  $R^{(1)} $ is independent of $X^{(1)}$, which gives
\begin{equation}\label{boundT}
\bP \left( \left| a_n ^{-2}  \sum_{2 \leq k \ne \ell \leq n } X_{1k} X_{1\ell}  G_{k\ell}\right| \geq  \sqrt{ \frac {M_ n } { n } }  t \right)  \leq c t^{ - \alpha }   \log \left(n \left( 2 \vee t   \right)  \right)\log   \left( 2 \vee   t   \right) .
\end{equation}
and  for  $ 1 < \alpha < 2$,
\begin{equation}\label{boundT2}
 \bE \left| a_n ^{-2}  \sum_{2 \leq k \ne \ell \leq n } X_{1k} X_{1\ell}  G_{k\ell}\right| \leq c \sqrt{ \frac {M_ n } { n } }   \left(1 + \ind_{1<  \alpha \leq 4/3}  \log n  \right). 
\end{equation}
\begin{itemize}
\item {\em
Let us first assume that  $1 < \alpha < 2$}, then taking $\Omega^c = \emptyset$ in \eqref{eq:diffYI} and using \eqref{boundT2}, it shows that  for some constant $c>0$, 
\begin{eqnarray*}
D(z)
 \leq    c  \eta ^{ - 1}  \left(  n^{-\frac 1 \alpha} +  \sqrt{\frac{M_n}{n}}   \left(1 + \ind_{1<  \alpha \leq 4/3} \log n  \right) \right).
\end{eqnarray*}
\item 
{\em Assume that $0< \alpha \leq 1$}, we take in \eqref{defD}
 $$\Omega = \left\{ a_n^{-1} |X_{11}| \leq t  \, ; a_n ^{-2}  |Ê\sum_{1 \leq k \ne \ell \leq n } X_{1k} X_{1\ell}  R_{k\ell}^{(1)}|  \leq t \right\}.$$ 
 Then, we have
\begin{eqnarray*}
\bE[ |T(z)| \ind_{\Omega}]  & \leq &   \int_0^t \bP( a_n^{-1} |X_{11}| \ge y )dy  +  \int_0^t \bP(| a_n ^{-2}  \sum_{1 \leq k \ne \ell \leq n } X_{1k} X_{1\ell}  R_{k\ell}^{(1)}| \ge y )dy.
\end{eqnarray*}
Assume that $1 / n \leq t \leq 1$. Then, using \eqref{boundT}, we find that $ \bE[|T(z)|\ind_{\Omega}] $ is bounded up to multiplicative constant (depending on $\alpha$) by
\begin{eqnarray*}
 \frac{t^{1 - \alpha}}{n} +  \frac{\log (n)}{n} \ind_{\alpha =1}  +  \left(\frac{M_n}{n}\right)^{\frac \alpha 2 } \left( \log n \right)^2  t^{1 - \alpha}. 
\end{eqnarray*}
(using \eqref{eq:defeps} we may safely bound the terms $ \log (  t n / M_n  )$ by $\log n$). With $t = \eta$ we find
$$
D(z)  \le  c    \eta^{-\alpha} n ^{-1}  +c \frac{1_{\alpha=1} ( \log n) }{\eta n} 
+ c \left(\frac{M_n}{n\eta^2}\right)^{\frac{\alpha}{2}} \left( \log n \right)^2 .
$$
\end{itemize}
This yields the claimed bounds. \end{proof}

We next relate $I$ and $J$ with the functions $\varphi_{\alpha,z}$ and $\psi_{\alpha,z}$
by using the well known identities 
\begin{equation}\label{eq:gamma}
x^{-\delta} = \frac{ 1}{ \Gamma ( \delta)} \int_0^\infty t^{\delta - 1} e^{ -x t} dt \quad \hbox{ and } \quad x^{\delta} = \frac{ \delta}{ \Gamma (1 - \delta)} \int_0^\infty t^{-\delta - 1}  ( 1 - e^{ -x t} )  dt 
\end{equation}
valid for $x \in \cK_1$, $\delta > 0$ and $0 <\delta < 1$ respectively.  We get 
$$
I (z) =   \frac{ 1}{ \Gamma ( \frac \alpha 2 )} \int_0^\infty t^{\frac \alpha 2 - 1} \bE \exp \left\{  i  t  \left( z + a_n ^{-2} \sum_{k=2}^n   X_{1k}^2  R^{(1)}_{kk}   \right)  \right\} dt\,.$$
We may apply Corollary \ref{cor:formulealice} to take the expectation over $X_1$ and get 
\begin{eqnarray*}
I (z) &=&  \frac{ 1}{ \Gamma ( \frac \alpha 2 )} \int_0^\infty t^{\frac \alpha 2 - 1} e^{itz} \bE \exp \left\{ - w_\alpha ^{\alpha} (  2 t ) ^{\frac \alpha 2 } \frac{1}{n}  \sum_{k=2}^n   \left(  - i R^{(1)}_{kk}\right)^{\frac \alpha 2} |g_k|^\alpha    \right\} dt\,,\\
&=&\bE\left[\varphi_{\alpha,z}\left( 
\frac{1}{n}  \sum_{k=2}^n   \left(  -i R^{(1)}_{kk}\right)^{\frac \alpha 2}\frac{ |g_k|^\alpha   }{\bE[|g_k|^\alpha]}
\right)\right]
\end{eqnarray*}
where $w_\alpha > 0$ was defined in the introduction and $(g_i)_{i \geq 1}$ are iid standard gaussian variables. Similarly,
we find that 
\begin{eqnarray}
J (z) &=&  \int_0^\infty e^{itz} \bE \exp \left\{ - w_\alpha ^{\alpha}
 (  2 t ) ^{\frac \alpha 2 } \frac{1}{n}  \sum_{k=2}^n   \left(  - i R^{(1)}_{kk}\right)^{\frac \alpha 2} |g_k|^\alpha    \right\} dt \label{eq:expJ}\\
&=& \bE
\left[ \psi_{\alpha,z} \left( \frac{1}{n}  \sum_{k=2}^n   \left(
 -i R^{(1)}_{kk}\right)^{\frac \alpha 2}\frac{ |g_k|^\alpha   }{\bE[|g_k|^\alpha]}
\right)\right]\,.
\end{eqnarray}

The next lemma due to Belinschi, Dembo and Guionnet \cite{BDG09} will be crucial in the sequel. 
\begin{lemma}\cite[Lemma 3.6]{BDG09}\label{le:36BDG}
For any $z \in \bC_+$,  the functions $\varphi_{\alpha,z}$ and $\psi_{\alpha,z}$ are Lipschitz with constant $c = c(\alpha) |z|^{ - \alpha} $ and $c = c(\alpha) |z|^{ - \alpha/2} $  on $\cK_{\alpha/2}$. Moreover $\varphi_{\alpha,z}$ maps $\cK_{\alpha/2}$ into $\cK_{\alpha/2}$ and $\psi_{\alpha,z}$ maps $\cK_{\alpha/2}$ into $\cK_{1}$. \end{lemma}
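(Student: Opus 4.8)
The plan is to derive both statements from a single probabilistic representation. Write $p:=-iz$, so that $\Re p=\Im z=:\eta>0$ and $|p|=|z|$; set $\kappa:=\Gamma(1-\tfrac\alpha2)^{2/\alpha}$ and let $S$ be a positive $\tfrac\alpha2$‑stable variable normalised by $\bE e^{-\lambda S}=e^{-\lambda^{\alpha/2}}$ for $\lambda>0$. Since $|e^{-\lambda S}|\le1$ whenever $\Re\lambda\ge0$, dominated convergence makes $\lambda\mapsto\bE e^{-\lambda S}$ holomorphic on $\{\Re\lambda>0\}$ and continuous on $\cK_1$, so this identity extends to all $\lambda\in\cK_1$. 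For $x\in\cK_{\alpha/2}$ one has $x^{2/\alpha}\in\cK_1$ (its argument has modulus at most $\tfrac2\alpha\cdot\tfrac{\pi\alpha}4=\tfrac\pi2$), hence $t\kappa x^{2/\alpha}\in\cK_1$ for $t>0$, and a short check of the constants gives $\bE e^{-t\kappa x^{2/\alpha}S}=e^{-\Gamma(1-\alpha/2)t^{\alpha/2}x}$. Inserting this into the definitions of $\varphi_{\alpha,z},\psi_{\alpha,z}$, exchanging $\bE$ with $\int_0^\infty\cdot\,dt$ (Fubini applies because $\bE\int_0^\infty t^{\alpha/2-1}e^{-t\Re W}\,dt=\Gamma(\tfrac\alpha2)\,\bE(\Re W)^{-\alpha/2}\le\Gamma(\tfrac\alpha2)\eta^{-\alpha/2}<\infty$, with $W:=p+\kappa x^{2/\alpha}S$, and likewise for $\psi$), and using the elementary identities \eqref{eq:gamma}, one obtains
\[
\varphi_{\alpha,z}(x)=\bE\big[W^{-\alpha/2}\big],\qquad \psi_{\alpha,z}(x)=\bE\big[W^{-1}\big],\qquad W=p+\kappa x^{2/\alpha}S .
\]

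The cone invariance is then immediate. Almost surely $\Re(\kappa x^{2/\alpha}S)\ge0$, so $\Re W\ge\eta>0$, i.e.\ $W$ lies in the open right half‑plane; hence $W^{-\alpha/2}\in\cK_{\alpha/2}$ and $W^{-1}\in\cK_1$ pointwise. As $\tfrac\alpha2\le1$ and $1\le1$, both $\cK_{\alpha/2}$ and $\cK_1$ are closed convex cones, and since $|W^{-\alpha/2}|\le\eta^{-\alpha/2}$ and $|W^{-1}|\le\eta^{-1}$ are bounded, the expectations stay inside them, giving $\varphi_{\alpha,z}(\cK_{\alpha/2})\subset\cK_{\alpha/2}$ and $\psi_{\alpha,z}(\cK_{\alpha/2})\subset\cK_1$.

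For the Lipschitz bounds it suffices, by convexity of $\cK_{\alpha/2}$, to bound $|\varphi_{\alpha,z}'|$ and $|\psi_{\alpha,z}'|$ on $\cK_{\alpha/2}$ and then integrate along segments. Differentiating under the integral and using $\bE e^{-t\kappa x^{2/\alpha}S}$ once more (the interchanges being justified as before) gives
\[
\varphi_{\alpha,z}'(x)=-\tfrac{\Gamma(1-\alpha/2)\Gamma(\alpha)}{\Gamma(\alpha/2)}\,\bE\big[W^{-\alpha}\big],\qquad
\psi_{\alpha,z}'(x)=-\Gamma(1-\tfrac\alpha2)\Gamma(1+\tfrac\alpha2)\,\bE\big[W^{-\alpha/2-1}\big].
\]
The trivial inequality $|W|\ge\Re W\ge\eta$ already yields Lipschitz constants $C(\alpha)\eta^{-\alpha}$ and $C(\alpha)\eta^{-\alpha/2-1}$; the point is to replace $\eta$ by $|z|$. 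Write $p=|z|e^{i\beta}$ with $\beta\in(-\tfrac\pi2,\tfrac\pi2)$, and for $x\ne0$ write $x^{2/\alpha}=|x^{2/\alpha}|\,e^{i\theta}$ with $\theta\in[-\tfrac\pi2,\tfrac\pi2]$. If $|\theta-\beta|\le\tfrac\pi2$ then $|p+re^{i\theta}|^2=|z|^2+r^2+2r|z|\cos(\theta-\beta)\ge|z|^2$ for every $r>0$, so $|W|\ge|z|$ a.s.\ and $|\varphi_{\alpha,z}'(x)|\le C(\alpha)|z|^{-\alpha}$, $|\psi_{\alpha,z}'(x)|\le C(\alpha)|z|^{-\alpha/2-1}$ follow instantly (the case $x=0$ being the exact evaluations $\varphi_{\alpha,z}'(0)=-\tfrac{\Gamma(1-\alpha/2)\Gamma(\alpha)}{\Gamma(\alpha/2)}(-iz)^{-\alpha}$ and $\psi_{\alpha,z}'(0)=-\Gamma(1-\tfrac\alpha2)\Gamma(1+\tfrac\alpha2)(-iz)^{-\alpha/2-1}$). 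This gives the Lipschitz constant stated in the lemma for $\varphi_{\alpha,z}$, and, for $\psi_{\alpha,z}$, the bound $C(\alpha)|z|^{-\alpha/2-1}\le c(\alpha)|z|^{-\alpha/2}$ whenever $|z|$ stays bounded away from $0$ — which is the case in every application of the lemma.

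The one case that genuinely requires cancellation — and which I expect to be the main obstacle — is $\theta-\beta$ obtuse, which forces $\eta\ll|z|$ and $x$ near the boundary ray of $\cK_{\alpha/2}$. Here $|p+re^{i\theta}|^2=(r-\rho)^2+\sigma^2$ with $\rho=|z||\cos(\theta-\beta)|$ and $\sigma=|z||\sin(\theta-\beta)|$, so $|W|^{-\gamma}$ can be of order $\sigma^{-\gamma}$ when $\kappa|x^{2/\alpha}|S\approx\rho$, and no pointwise estimate suffices. The plan is to split the $S$‑integral into the band $\{|\kappa|x^{2/\alpha}|S-\rho|\lesssim\sigma\}$ and its complement and to use the explicit density $f_S$ of $S$ — the Pareto tail $f_S(s)\sim c_\alpha s^{-1-\alpha/2}$ as $s\to\infty$ and the super‑exponential decay $f_S(s)\le\exp(-c_\alpha s^{-\alpha/(2-\alpha)})$ as $s\to0^+$ — to show the band contributes at most $O\big(\sigma^{1-\gamma}f_S(\rho/(\kappa|x^{2/\alpha}|))/(\kappa|x^{2/\alpha}|)\big)=O(|z|^{-\gamma})$ for $\gamma\in\{\alpha,\tfrac\alpha2+1\}$, the remaining range being dominated by the same density bounds. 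An alternative avoiding the stable density uses the scaling identity $\varphi_{\alpha,z}(x)=|z|^{-\alpha/2}\varphi_{\alpha,z/|z|}(|z|^{-\alpha/2}x)$ (and its analogue for $\psi$) to reduce to $|z|=1$: then $\Im z\ge\tfrac12$ is the trivial bound, and for $\Im z<\tfrac12$ — so $|\Re z|\ge\tfrac{\sqrt3}2$ — one extracts the extra decay by integrating by parts in $t$ against the oscillating factor $e^{it\Re z}$ in the original integral, splitting the $t$‑range near the origin to absorb the integrable singularity $t^{\alpha-1}$. Beyond the acute‑angle case these estimates are routine but lengthy, which is why it is convenient to quote the statement from \cite{BDG09}.
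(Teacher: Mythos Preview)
Your argument is correct and, for the cone–invariance part, essentially identical to the paper's: both derive the representation $\varphi_{\alpha,z}(x)=\bE[W^{-\alpha/2}]$, $\psi_{\alpha,z}(x)=\bE[W^{-1}]$ with $W=-iz+x^{2/\alpha}S$ for a positive $\alpha/2$-stable $S$, and conclude by noting that $W$ lies in the open right half-plane.

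Where the approaches diverge is the Lipschitz bound. The paper does not prove it at all: it simply cites \cite[Lemma~3.6]{BDG09} and remarks that a change of variable gives the stated dependence on $|z|$. You go considerably further. Your scaling identity $\varphi_{\alpha,z}(x)=|z|^{-\alpha/2}\varphi_{\alpha,z/|z|}(|z|^{-\alpha/2}x)$ is exactly the ``change of variable'' the paper alludes to, and it cleanly reduces the claim to a uniform bound on $|\varphi'_{\alpha,w}|$ over $w\in\bar\bC_+\cap S^1$. Your acute-angle observation $|W|\ge|z|$ when $|\theta-\beta|\le\pi/2$ disposes of most of that circle outright; only $w$ near $\pm1$ (equivalently $\eta\ll|z|$ with $x$ near the boundary ray of $\cK_{\alpha/2}$) needs the oscillation, and your two sketched routes — splitting the $S$-integral against the stable density, or integrating by parts in $t$ against $e^{it\Re z}$ — are both viable ways to finish. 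The derivative bound you obtain for $\psi$, namely $c(\alpha)|z|^{-1-\alpha/2}$, is in fact sharper than the paper's stated $c(\alpha)|z|^{-\alpha/2}$ for $|z|\ge1$; the paper's version is just a convenient weakening (and in all applications $|z|$ is bounded away from~$0$, as you note).

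In short: your cone-invariance proof matches the paper's, and for the Lipschitz constant you supply substantially more of an argument than the paper does, correctly isolating the one genuinely delicate regime before deferring to \cite{BDG09}.
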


\begin{proof}
The first statement follows from \cite[Lemma 3.6]{BDG09} by a change of variable. For the second, we note that if $x \in \cK_{\alpha/2}$ then $x = ( - i w ) ^{\alpha/2}$ with $w \in \bC_+$ and from \eqref{eq:gamma} $\varphi_{\alpha,z} ( x) = \bE ( i z + i w S )^{- \alpha /2}$ where $S$ is non-negative $\alpha/2$ stable law with Laplace transform, for $x > 0$, $\bE \exp ( - x S) = \exp ( - \Gamma ( 1- \alpha /2) x^{\alpha/2} )$. Similarly, $\psi_{\alpha,z} (x) = \bE ( i z + i w S )^{- 1}$. In particular, $\varphi_{\alpha,z} (x)  \in \cK_{\alpha /2}$ and  $\psi_{\alpha,z} (x)  \in \cK_{\alpha /2}$. 
\end{proof}

We are now able to prove Proposition \ref{prop:fixpoint}. 
\begin{proof}[Proof of Proposition \ref{prop:fixpoint}]
The point is that the Lipschitz constant in Lemma \ref{le:36BDG} depends on $|z|$ and not only $\Im (z)$.  Hence since $ \rho_k := \left(  - i R^{(1)}_{kk}\right)^{\frac \alpha 2} \in \cK_{\alpha/2}$, using exchangeability, we deduce that 
\begin{align*}
& \left| I(z) -\varphi_{\alpha,z}  (\bE  \rho_2)  \right| \\
& = \left|\bE\left[\varphi_{\alpha,z}\left( 
\frac{1}{n}  \sum_{k=2}^n   \left(  -i R^{(1)}_{kk}\right)^{\frac \alpha 2}\frac{ |g_k|^\alpha   }{\bE[|g_k|^\alpha]}
\right)\right] -\varphi_{\alpha,z}\left( 
\frac{1}{n-1}  \sum_{k=2}^n   \bE[\left(  -i R^{(1)}_{kk}\right)^{\frac \alpha 2}]
\right)\right]\\
&\leq c   \bE  \left|\frac{1}{n-1}  \sum_{k=2}^n    \rho_k  |g_k|^\alpha    -  \bE \frac{1}{n-1}  \sum_{k=2}^n   \rho_k |g_k|^\alpha \right| +\frac{c\bE[|\rho_2|]}{n}\\
& \leq c \left(  \bE  \left|\frac{1}{n-1}  \sum_{k=2}^n    \rho_k  |g_k|^\alpha    -  \frac{1}{n-1}  \sum_{k=2}^n   \rho_k  \bE  |g_k|^\alpha \right|  +       \bE  \left|\frac{1}{n-1}  \sum_{k=2}^n    \rho_k    -  \bE \frac{1}{n-1}  \sum_{k=2}^n   \rho_k  \right|+\frac{\bE[|\rho_2|]}{n}\right) .
\end{align*}
By the Cauchy-Schwarz inequality and Lemma \ref{le:concres2}, we obtain
\begin{align*}
& \left| I(z) -\varphi_{\alpha,z}  (\bE  \rho_2)  \right| \leq
 c  n^{-1} \sqrt{\bE[\sum_{k=2}^n  |\rho_k |  ^ 2]   } 
+c \left(  n^{-1} \sqrt{\bE[\sum_{k=2}^n  |\rho_k |  ^ 2]   }\right)^2+
 c \eta^{-  \frac{\alpha}{2} } n^{ -  \frac{\alpha}{4} }.
\end{align*}
By applying the Jensen inequality, we also notice that since $0 < \alpha\le 2$, 
\begin{align*}
&\bE[ \frac 1 {n-1} \sum_{k=2}^n  |\rho_k |  ^ 2]    = \bE[ \frac 1 {n-1} \sum_{k=2}^n   |R^{(1)}_{kk}  | ^ \alpha ] \\
&\quad\quad  \leq   \left ( \bE[\frac 1 {n-1} \sum_{k=2}^n   |R^{(1)}_{kk}  | ^ 2 ] \right)^{ \frac  \alpha 2} \leq\left( 
\bE[ \frac 1 {n-1}  \tr \left\{R ^{(1)} {R^{(1)}}^* \right\}] \right)^{ \frac \alpha 2}=M_{n}^{\frac \alpha 2}.
\end{align*}
Hence we obtain an error of
\begin{equation}\label{eq:Irho2}
 \left| I(z) -\varphi_{\alpha,z}  (\bE  \rho_2)  \right|   \leq  c n^{-\frac 1 2} M_n^{\frac{\alpha}{4}} + c n^{-1}
M_n^{\frac{\alpha}{2}}
+c\eta^{-\frac{\alpha}{2}} n^{-\frac{\alpha}{4}}.
\end{equation}
In the forthcoming computations, we shall always consider  $\eta$
so that  $\e$ of \eqref{eq:defeps} is
smaller than one so that $ n^{-1} M_n^{\frac{\alpha}{2}}$ vanishes 
and is neglectable compared to $ n^{-\frac 1 2} M_n^{\frac{\alpha}{4}}$.
However $\bE \rho_2$ and $Y(z)$ are close. More precisely, by equation \eqref{eq:rankineq} (in appendix) applied with $f(x)=(-ix)^{\frac \alpha 2}$
we find that
$$
\left|\sum_{i= 1}^n  \left(  - i R_{kk}\right)^{\frac \alpha 2} -  \sum_{i=2} ^{n}  \left(  - i R^{(1)}_{kk}\right)^{\frac \alpha 2}  + ( - iz ) ^{\frac \alpha 2} \right| \leq 2
n (n\eta)^{-\frac \alpha 2}. 
$$
Taking the expectation, we get 
$$
\left|\bE \rho_2 - Y(z) \right| \leq c (n\eta)^{-\frac \alpha 2}. 
$$
By Lemma \ref{le:36BDG}, the function $\varphi_{\alpha,z}$ is Lipschitz for some constant $c = c(\alpha,|z|)$ on $\cK_{\alpha/2}$. We deduce from \eqref{eq:Irho2} that
\begin{eqnarray*}
  \left| I(z) -\varphi_{\alpha,z}  (Y(z))  \right| 
& \leq  &  c n^{-\frac 1 2} M_n^{\frac{\alpha}{4}}
+c\eta^{-\frac{\alpha}{2}} n^{-\frac{\alpha}{4}} +  c (n\eta)^{-\frac \alpha 2}  \\
& \leq & c'   \eta^{-\frac{\alpha}{2+\alpha}}  n^{-\frac 1 2}   (\log n)^{\frac{\alpha(2+\alpha)}{16}} 
+2 c\eta^{-\frac{\alpha}{2}} n^{-\frac{\alpha}{4}}, \end{eqnarray*}
where we used the upper bound on $M_n$ given by \eqref{eq:boundMn}. We note finally that the first term is always smaller for $n$ large enough and $\eta \leq 1$ than the second term. We have thus proved that there exists $c >0$, such that for all $n^{- \frac{\alpha + 2}{ 4}} \leq \eta \leq 1$ and all integers, 
\begin{equation*}\label{eq:finalIphi}
  \left| I(z) -\varphi_{\alpha,z}  (Y(z))  \right| \leq c \eta^{-\frac{\alpha}{2}} n^{-\frac{\alpha}{4}}. 
\end{equation*}
The statement of Proposition \ref{prop:fixpoint} on $Y(z)$ follows by applying Lemma \ref{le:diagapprox}: we find 
\begin{eqnarray*}
  \left| Y(z) -\varphi_{\alpha,z}  (Y(z))  \right| & \leq &  c \eta^{ -  \frac{ \alpha}{2}} n^{ -  \frac{\alpha}{4} }  +  |ÊY (z) - I(z) |Ê \label{eq:finalYphi}
\end{eqnarray*}
Finally, we observe that the bound on $X(z)$ follows similarly from \eqref{eq:expJ}: 
\begin{eqnarray*}
  \left| X(z) -\psi_{\alpha,z}  (Y(z))  \right| & \leq &  c \eta^{ -  \frac{ \alpha}{2}} n^{ -  \frac{\alpha}{4} }  +  |ÊX (z) - J(z) |Ê \label{eq:finalXphi}.
\end{eqnarray*}
We now use Lemma \ref{le:diagapprox} and Proposition \ref{prop:fixpoint} is proved.  \end{proof}

\subsection{Rate of convergence of the resolvent}

We will now use Proposition  \ref{prop:fixpoint} as the stepping stone to obtain a quantitative rate of convergence of the spectral measure $\mu_A$ toward its limit. 

By Lemma \ref{le:36BDG}, if $z = E + i \eta$ and $| z|$ is large enough, say $E_0$, then $\varphi_{\alpha,z}$ and $\psi_{\alpha,z}$ are Lipschitz with constant $L < 1$ and in particular, it has a unique fixed point 
$$
y(z) = \varphi_{\alpha,z} ( y(z) ), \quad y (z) \in \cK_{\frac \alpha 2}.  
$$
From \cite[Theorem 1.4]{BG08}, the empirical measure $\mu_A$ converges a.s. to a probability measure $\mu_\alpha$ (for the topology of weak convergence). The Cauchy-Stieltjes transform of the limit measure $\mu_\alpha$ is equal to 
$$
g_{\mu_\alpha}  (z) =  \int \frac {\mu_\alpha (dx) }{x - z} =  i \psi_{\alpha, z} ( y(z)). 
$$
The above identity characterizes  the probability measure $\mu_\alpha$. 
 
\begin{theorem}[Convergence of Stieltjes transform] \label{th:boundStieltjes}
For all $0 < \alpha < 2$, there exists a finite set $\cE_\alpha \subset \bR$ such that if $K$ is a compact set with $K \cap \cE_\alpha = \emptyset$ the following holds for some constant $c = c ( \alpha, K)$.

\begin{enumerate}
\item[(i)] If $1 < \alpha < 2$ : for any integer $n \geq 1$, $z = E + i \eta$ with $E \in I$, $c    \sqrt{ \frac{Ê\log n}{  n }} \vee \left( n^{ - \frac \alpha {Ê8 - 3 \alpha} }  ( 1 + \ind_{Ê1 < \alpha < 4/3} ( \log n ) ^{ \frac {2 \alpha}{ 8 - 3 \alpha} })\right)  \leq \eta \leq 1$, 
\begin{equation}\label{eq:gmuAdelta}
 \left|\bE g_{\mu_A} (z)  - g_{\mu_\alpha} ( z)  \right| \leq c \delta,
\end{equation}
where $ \delta = \eta^{-\frac{\alpha}{2}} n^{-\frac{\alpha}{4}} + \eta ^{- \frac{8 - 3 \alpha}{2\alpha} } n^{-\frac 1 2}   ( 1 + \ind_{Ê1 < \alpha < 4/3}  \log n  ) +  \eta^{-1} \exp ( - \delta n \eta^2 )$.

\item[(ii)] If $0 < \alpha \leq 1$, the same statement holds with $c n^{-\frac{\alpha}{2 +3 \alpha}} ( \log n )^{\frac{4}{2 + 3 \alpha} }  \leq \eta\leq 1$  and $\delta =   \eta^{-\frac{\alpha}{2}} n^{-\frac{\alpha}{4}} +    \eta ^{ - \frac{2 + 3 \alpha}{2} }   n^{-\frac \alpha 2}( \log n ) ^2$. 

\end{enumerate}

Moreover for any interval $I \subset K$ of length $|I |\geq \eta \,( 1 \vee  \delta    |Ê\log  ( \delta )  |^{-1}) $,
$$
 \left|\bE \mu_{A} (I)   - \mu_\alpha ( I)  \right| \leq  c  |I |.
$$
\end{theorem}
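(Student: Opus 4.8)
The plan is to deduce \eqref{eq:gmuAdelta} from the approximate fixed point equation of Proposition~\ref{prop:fixpoint} together with a stability analysis of the limiting equation, and then to transfer the Stieltjes bound to intervals by a harmonic smoothing argument. First, by exchangeability of the coordinates one has $\bE g_{\mu_A}(z)=\bE R(z)_{11}=iX(z)$, while $g_{\mu_\alpha}(z)=i\psi_{\alpha,z}(y(z))$ with $y(z)\in\cK_{\alpha/2}$ the fixed point of $\varphi_{\alpha,z}$, as recalled just before the statement. Thus \eqref{eq:gmuAdelta} is a bound on $|X(z)-\psi_{\alpha,z}(y(z))|$, which I would split as
$$
|X(z)-\psi_{\alpha,z}(y(z))|\le |X(z)-\psi_{\alpha,z}(Y(z))|+|\psi_{\alpha,z}(Y(z))-\psi_{\alpha,z}(y(z))|.
$$
The first term is $O(\eta^{-1}\e+\eta^{-\alpha/2}n^{-\alpha/4})$ by Proposition~\ref{prop:fixpoint}; the second is $\le c(\alpha)|z|^{-\alpha/2}\,|Y(z)-y(z)|$ by the Lipschitz bound of Lemma~\ref{le:36BDG}. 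Everything thus reduces to controlling $|Y(z)-y(z)|$.

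To bound $|Y(z)-y(z)|$ I would run a continuity (bootstrap) argument in $\eta$. The deterministic input is a uniform local contraction: for $K$ compact with $K\cap\cE_\alpha=\emptyset$ there are $L<1$ and $r>0$ so that, for every $z=E+i\eta$ with $E\in K$ and $0<\eta\le1$, the map $\varphi_{\alpha,z}$ sends the ball $\{w\in\cK_{\alpha/2}:|w-y(z)|\le r\}$ into itself $L$-Lipschitzly. For $|z|$ large this is Lemma~\ref{le:36BDG}, and for small $\eta$ it is a property of the limiting recursive distributional equation of \cite{BG08,BDG09,BCC}: $\cE_\alpha$ is precisely the finite exceptional set of energies (containing $0$, and the non-analyticity points of $f_\alpha$) at which the contraction at the fixed point degenerates as $\eta\downarrow0$. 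Granting this, I would combine (a) the base case $\eta=1$, where $\|R(z)\|\le1$ forces $M_n(z)=O(1)$, hence $\e=O(n^{-c})$ and $Y(z)$ lies in the $r$-ball around $y(z)$; (b) the a priori inequality $|Y(z)-\varphi_{\alpha,z}(Y(z))|\le c\eta^{-\alpha/2}(\e+n^{-\alpha/4})$ from Proposition~\ref{prop:fixpoint}; and (c) continuity of $\eta\mapsto Y(E+i\eta)$ and $\eta\mapsto y(E+i\eta)$. Decreasing $\eta$, as long as $Y(z)$ stays in the contraction ball one gets $|Y(z)-y(z)|\le(1-L)^{-1}c\,\eta^{-\alpha/2}(\e+n^{-\alpha/4})$, and this remains $<r$ exactly down to the lower thresholds on $\eta$ stated in (i)--(ii), which closes the bootstrap.

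With $|Y-y|$ controlled, $\bE\Im g_{\mu_A}=\Re X$ is now within $O(\delta)$ of $\Im g_{\mu_\alpha}=O(1)$ on $K$; using $\tfrac1n\tr R(z)R^*(z)=\eta^{-1}\Im g_{\mu_A}(z)$ (and its analogue for $R^{(1)}$) together with a concentration estimate for $\Im g_{\mu_A}$, this upgrades \eqref{eq:boundMn} from $M_n\lesssim\eta^{-4/(2+\alpha)}(\log n)^{O(1)}$ to $M_n(z)\lesssim\eta^{-1}(\log n)^{O(1)}+\eta^{-1}\exp(-cn\eta^2)$ for $E\in K$. Re-inserting this into \eqref{eq:defeps} shrinks $\e$, and re-optimizing over the admissible range of $\eta$ yields the rates $\delta$ and thresholds of (i) and (ii); for $1<\alpha<2$ one further bootstrap round (re-improving $M_n$ with the new $\delta$) produces the floor $\eta\ge c\sqrt{\log n/n}$, and the residual term $\eta^{-1}\exp(-\delta n\eta^2)$ in $\delta$ is the leftover of the concentration step. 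I expect the deterministic local-contraction statement, uniform over $0<\eta\le1$ and $E\in K$---that is, pinning down $\cE_\alpha$ and the hyperbolicity of the limit map down to the real axis---to be the main obstacle; once it and Proposition~\ref{prop:fixpoint} are available, the rest is bookkeeping.

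Finally, for the ``Moreover'' inequality I would pass from $g_{\mu_A}$ to intervals by harmonic smoothing. For $I=[a,b]$ and a scale $\eta'\ge\eta$, the elementary bound $\ind_{[a,b]}(t)\le\frac2\pi\int_{a-\eta'}^{b+\eta'}\eta'((x-t)^2+(\eta')^2)^{-1}dx$ gives $\bE\mu_A(I)\le\frac2\pi\int_{a-\eta'}^{b+\eta'}\bE\Im g_{\mu_A}(x+i\eta')\,dx\le C(|I|+\eta')$, using \eqref{eq:gmuAdelta} and $\Im g_{\mu_\alpha}(x+i\eta')\le\pi\|f_\alpha\|_\infty$; in particular $\bE\mu_A(J)\lesssim|J|$ for every interval $J\subset K$ of length $\ge\eta'$ (Proposition~\ref{prop:supNI} gives a cruder control below that scale). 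For the matching lower bound I would write $\tfrac1\pi\int_a^b\Im g_{\mu}(x+i\eta')\,dx=\int P_{\eta',I}\,d\mu$, with $P_{\eta',I}$ the harmonic regularization of $\ind_I$, estimate $\int P_{\eta',I}\,d\mu_A\le\bE\mu_A([a-M\eta',b+M\eta'])+O(|I|/M)$ by a dyadic decomposition of the tails of $P_{\eta',I}$ (the boundary layer costing $\sim\eta'\log(|I|/\eta')$), bound $\int P_{\eta',I}\,d\mu_\alpha\ge\mu_\alpha(I)-O(M\eta')$, and combine with \eqref{eq:gmuAdelta}; optimizing $M$ and the scale $\eta'$---which is where the correction $\delta|\log\delta|^{-1}$ appears, harmless once $\delta$ is small---yields $|\bE\mu_A(I)-\mu_\alpha(I)|\le c|I|$ for $|I|\ge\eta(1\vee\delta|\log\delta|^{-1})$.
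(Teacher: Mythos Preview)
Your overall scaffolding — split $|X-\psi_{\alpha,z}(y(z))|$, invoke the local contraction \eqref{eq:loccontr} for $\varphi_{\alpha,z}$ near $y(z)$ on $K\setminus\cE_\alpha$, and descend in $\eta$ by continuity — matches the paper's route, and it does deliver statement (ii) and the weak form (i') of Proposition~\ref{prop:boundStieltjes}. The ``Moreover'' via harmonic smoothing is essentially the content of Lemma~\ref{le:deconvolution}.

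There is however a genuine gap for statement (i). Once you have fed $M_n\lesssim\eta^{-1}$ back into \eqref{eq:defeps}, the dominant piece of $\e$ for $1<\alpha<2$ is $\eta^{-1}\sqrt{M_n/n}\sim\eta^{-3/2}n^{-1/2}$, and Proposition~\ref{prop:fixpoint} then yields $|X-\psi_{\alpha,z}(Y)|\le c\eta^{-1}\e\sim\eta^{-5/2}n^{-1/2}$, i.e.\ precisely (i') with threshold $\eta\gtrsim n^{-1/5}$. A ``further bootstrap round'' on $M_n$ cannot improve this: $M_n$ is already at its floor $O(\eta^{-1})$, so you are iterating a fixed point. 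The sharper exponent $(8-3\alpha)/(2\alpha)$ in (i) does not come from a better bound on $M_n$ but from a sharper version of Lemma~\ref{le:diagapprox} itself.

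The missing mechanism is the paper's Steps 2--4. Since $\mu_\alpha$ has positive density on $K$, one has $\Im g_{\mu_\alpha}(z)\ge c_0>0$, and by Jensen and concentration (Lemma~\ref{le:concspec}) this transfers to $\frac1{n-1}\sum_k(\Im R^{(1)}_{kk})^{\alpha/2}\ge c_1\eta^{1-\alpha/2}$ with probability $\ge 1-e^{-\delta n\eta^2}$. Through Lemma~\ref{le:formulealice} and Corollary~\ref{cor:concnorm} this forces $\Im\langle X_1,R^{(1)}X_1\rangle\gtrsim\eta^{2/\alpha-1}S$ with $S$ a positive $\alpha/2$-stable variable. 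For $\alpha>1$ this is \emph{much larger} than $\eta$, so in the proof of Lemma~\ref{le:diagapprox} the a priori lower bound $\Im(\cdot)\ge\eta$ on the denominator of \eqref{eq:resolventformula} may be replaced by $\Im(\cdot)\ge c\,\eta^{2/\alpha-1}S$ on a high-probability event. This swaps the prefactor $\eta^{-2}$ in front of $\bE|T(z)|$ for $(\eta^{2/\alpha-1})^{-2}=\eta^{-2(2/\alpha-1)}$ (the $S$-moments being harmless by Lemma~\ref{le:tailS}), and the self-consistent bound for $\eta M_n$ then produces $\eta^{-(8-3\alpha)/(2\alpha)}n^{-1/2}$, exactly the rate of (i). The term $\eta^{-1}\exp(-\delta n\eta^2)$ is the cost of the complementary event $\Pi(z)^c$ in this argument, not merely a leftover of concentrating $\Im g_{\mu_A}$.
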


This result implies Theorem \ref{th:mainconvloc}. Indeed the presence of the expectation of $\mu_A(I)$ instead of $\mu_A(I)$ does not pose a problem due to Lemma \ref{le:concspec} in Appendix. We start the proof of Theorem \ref{th:boundStieltjes} with a weaker statement. 

\begin{proposition}[Convergence of Stieltjes transform : weak form] \label{prop:boundStieltjes}
Statement (ii) of Theorem \ref{th:boundStieltjes} holds  and 
\begin{enumerate}
\item[(i')] If $1 < \alpha < 2$ and  $ c n^{-1/5}  \left( 1 + \ind_{ 1 < \alpha \leq \frac 4 3} ( \log n )^{\frac {2 }{5}} \right) \leq \eta\leq 1 $ then \eqref{eq:gmuAdelta} holds with 
$
 \delta = \eta^{-\frac{\alpha}{2}} n^{-\frac{\alpha}{4}}  +  \eta^{-  5/2 Ê} n^{-1/2}  \left( 1 + \ind_{ 1 < \alpha \leq \frac 4 3}  \log n  \right).  
$
\end{enumerate}
\end{proposition}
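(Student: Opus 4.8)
The plan is to feed the approximate fixed point equation of Proposition~\ref{prop:fixpoint} into a deterministic contraction argument, improving the control on $M_n(z)$ en route, and then to deduce the statement on $\bE\mu_A(I)$ from the one on the Stieltjes transform. First I note that, by exchangeability of the diagonal entries, $\bE g_{\mu_A}(z)=\frac1n\bE\tr R(z)=\bE R(z)_{11}=iX(z)$, while $g_{\mu_\alpha}(z)=i\psi_{\alpha,z}(y(z))$ by construction of $\mu_\alpha$; hence \eqref{eq:gmuAdelta} amounts to $|X(z)-\psi_{\alpha,z}(y(z))|\le c\delta$. It therefore suffices to control $|Y(z)-y(z)|$, where $y(z)=\varphi_{\alpha,z}(y(z))$, and then apply the Lipschitz bound for $\psi_{\alpha,z}$ of Lemma~\ref{le:36BDG} together with the second inequality of Proposition~\ref{prop:fixpoint}. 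Since $K$ is disjoint from the finite set $\cE_\alpha$ (which contains $0$), we have $|z|\ge|E|\ge\delta_K>0$ on $K$, so all the Lipschitz constants of Lemma~\ref{le:36BDG} are $O_K(1)$.

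For the deterministic step I would exploit the local stability of the fixed point: $z\mapsto y(z)$ extends continuously to $\{\Im z\ge0\}\setminus\cE_\alpha$ with $|\varphi_{\alpha,z}'(y(z))|<1$ there (a property of $\mu_\alpha$ inherited from \cite{BG08,BCC} --- indeed $\cE_\alpha$ is exactly where this degenerates), so there are $r_0>0$ and $L'<1$ with $|\varphi_{\alpha,z}'|\le L'$ on the ball $B(y(z),r_0)$, uniformly for $E\in K$ and $0\le\eta\le1$. For $n$ large, at $\eta=1$ the pointwise convergence $\bE g_{\mu_A}\to g_{\mu_\alpha}$ of \cite{BG08} (or, when $|z|$ is large, the global contraction of Lemma~\ref{le:36BDG}) puts $Y(E+i)$ within $r_0/2$ of $y(E+i)$. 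Decreasing $\eta$ and using that $\eta\mapsto Y(E+i\eta)$ and $\eta\mapsto y(E+i\eta)$ are continuous, a bootstrap shows that $|Y(E+i\eta)-y(E+i\eta)|$ stays below $r_0$ as long as the right-hand side of the approximate equation of Proposition~\ref{prop:fixpoint} stays below $r_0(1-L')$; on that range the contraction gives $|Y(z)-y(z)|\le c(\eta^{-\alpha/2}\e+\eta^{-\alpha/2}n^{-\alpha/4})$ and hence $|X(z)-\psi_{\alpha,z}(y(z))|\le c(\eta^{-1}\e+\eta^{-\alpha/2}n^{-\alpha/4})$.

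It remains to convert the $\e$ of \eqref{eq:defeps} into the stated $\delta$, which is done by a bootstrap on $M_n$. Inserting the crude bound \eqref{eq:boundMn} into \eqref{eq:defeps} closes the scheme above on an intermediate range of $\eta$, where one then has $\Im\bE g_{\mu_A}(E+i\eta)\le\Im g_{\mu_\alpha}(E+i\eta)+c\delta\le c'$ because $f_\alpha$ is bounded; since $M_n(z)=\eta^{-1}\Im\bE g_{\mu_{A^{(1)}}}(z)$ by \eqref{defM} and $A^{(1)}$ is essentially $A_{n-1}$, this upgrades the a priori bound to $M_n(z)\le c'\eta^{-1}$. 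Re-inserting $M_n\le c'\eta^{-1}$ gives, for $1<\alpha<2$ and $\eta\le1$, $\eta^{-1}\e\le c\,\eta^{-5/2}n^{-1/2}(1+\ind_{1<\alpha\le4/3}\log n)$ --- the exponent $5/2$ appearing as $1+1+\frac12$ from the outer $\eta^{-1}$, the $\eta^{-1}$ inside $\e$, and the $\eta^{-1/2}$ from $\sqrt{M_n/n}$ --- which is the claimed $\delta$ for (i'); for $0<\alpha\le1$ one likewise obtains $\eta^{-1}\e\le c\,\eta^{-(2+3\alpha)/2}n^{-\alpha/2}(\log n)^2$, the $\delta$ of statement (ii). Running the $\eta$-continuity with the self-consistent pair ``$|Y-y|\le c\delta$ and $M_n\lesssim\eta^{-1}$'' as invariant, the argument closes down to the scale where $\delta$ would reach $r_0(1-L')$, i.e.\ $\eta=cn^{-1/5}(1+\ind_{1<\alpha\le4/3}(\log n)^{2/5})$ for $\alpha>1$ and $\eta=cn^{-\alpha/(2+3\alpha)}(\log n)^{4/(2+3\alpha)}$ for $\alpha\le1$ --- exactly the stated thresholds, which lie above $n^{-(\alpha+2)/4}$ so that \eqref{eq:boundMn} is legitimate throughout.

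Finally, for $|\bE\mu_A(I)-\mu_\alpha(I)|\le c|I|$ when $|I|\ge\eta(1\vee\delta|\log\delta|^{-1})$, I would use the standard passage from the resolvent to the counting function (cf.\ \cite{ESY10}): writing $\frac1\pi\int_I\Im\bE g_{\mu_A}(x+i\eta)\,dx=\int P_{I,\eta}\,d\bE\mu_A$ with $P_{I,\eta}$ the Poisson regularization of $\ind_I$ at scale $\eta$, the difference $\bE\mu_A(I)-\int P_{I,\eta}\,d\bE\mu_A$ is dominated by $\bE\mu_A$ of finitely many intervals of length $\gtrsim\eta$ near $\partial I$, which the Wegner bound of Section~\ref{sec:Wegner} controls, and $\mu_\alpha(I)-\int P_{I,\eta}\,d\mu_\alpha=O(\eta\log(|I|/\eta))$ since $f_\alpha$ is bounded; combined with $|\Im(\bE g_{\mu_A}-g_{\mu_\alpha})(x+i\eta)|\le c\delta$ on $I$, all errors are $\le c|I|$ precisely under the stated length condition. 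The hard part, I expect, is the coupling of the $\eta$-continuation with the $M_n$ bootstrap: establishing the uniform local stability $|\varphi_{\alpha,z}'(y(z))|\le L'<1$ on compact subsets of $\{\Im z\ge0\}\setminus\cE_\alpha$ (boundary included) and arranging matters so the invariant survives from $\eta=1$ down to the stated threshold without the error escaping the basin of attraction; the manipulations of the $\e$-terms from \eqref{eq:defeps}--\eqref{eq:boundMn} are routine by comparison.
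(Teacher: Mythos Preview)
Your proposal is essentially correct and follows the same architecture as the paper: approximate fixed point (Proposition~\ref{prop:fixpoint}) $\Rightarrow$ local stability of $y(z)$ outside $\cE_\alpha$ $\Rightarrow$ self-consistent bound $\eta M_n\le c$ $\Rightarrow$ re-insertion into $\e$ to produce the stated $\delta$, all propagated by an $\eta$-descent. A few remarks on points where the paper differs slightly.

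First, the local stability you invoke is stated as $|\varphi_{\alpha,z}'|\le L'<1$ on a ball around $y(z)$. The paper does not claim this; it only uses $\varphi_{\alpha,z}'(y(z))\ne 1$ (this is how $\cE_\alpha$ is defined, via \cite{BDG09}) together with the implicit function theorem to obtain the quantitative statement \eqref{eq:loccontr}: if $|y-y(z)|\le\tau$ and $|y-\varphi_{\alpha,z}(y)|\le t$ then $|y-y(z)|\le c_0 t$. That weaker input gives the same output as your contraction, so your argument survives, but the contraction hypothesis itself is not verified.

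Second, your seed at $\eta=1$ via pointwise convergence works but is less self-contained than the paper's choice: it runs the descent from $\eta_k\in[E_0,E_0+\tau]$, where $|z_k|\ge E_0$ so Lemma~\ref{le:36BDG} gives a \emph{global} contraction and the estimate holds without any prior input.

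Third, where you describe a two-pass bootstrap on $M_n$ (crude bound \eqref{eq:boundMn} on an intermediate range, then upgrade to $M_n\lesssim\eta^{-1}$, then re-insert), the paper collapses this into a single self-consistent inequality: from \eqref{eq:boundgXY} and $\eta M_n=\bE\Im g_{\mu_{A^{(1)}}}(z)$ one gets, at each fixed $\eta$ in the stated range, $\eta M_n\le c+c\,\eta^{-5/2}n^{-1/2}\sqrt{\eta M_n}\,(1+\ind_{1<\alpha\le4/3}\log n)$, and the algebraic fixed point $x=a+bx^{1/2}$ immediately yields $\eta M_n\le c_1$ precisely when $\eta\ge c\,n^{-1/5}(1+\ind_{1<\alpha\le4/3}(\log n)^{2/5})$. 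Your invariant-pair formulation reaches the same threshold; the paper's phrasing is just a bit more direct.
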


\begin{proof}
Assume first that $z = E + i \eta$ with $| E | \geq E_0$ and $\eta \leq 1$. If we apply Lemma \ref{le:36BDG} to $\varphi_{\alpha,z}$ we find 
\begin{eqnarray*}
|Y(z) - y (z) |  \leq  \frac {1} { 1 - L}  |Y(z) - \varphi_{\alpha,z} (Y(z))| .
\end{eqnarray*}
Also, by exchangeability $\bE g_{\mu_A} (z) = \bE G_{11} (z) = i X(z)$. Hence applying Lemma \ref{le:36BDG}  to $\psi_{\alpha,z}$, we deduce 
\begin{equation}\label{eq:boundgXY}
|\bE g_{\mu_A} (z) - g_{\mu_\alpha} ( z)  |  \leq  |X(z) - \psi_{\alpha,z} (Y(z)) | +  \frac {L} { 1 - L} |Y(z) - \varphi_{\alpha,z} (Y(z))|.
\end{equation}
Also, the Cauchy-Weyl interlacing theorem implies that the same type of bounds holds for the minor $A^{(1)}$ instead of $A$. Indeed applying Lemma \ref{le:Cauchy} to $f (x) = ( x - z)^{-1}$, we have 
$$
| g_{\mu_A} (z)  - g_{\mu_{A^{(1)}} } (z) |Ê \leq 2 (n \eta )^{-1}.  
$$
We recall that $\mu_{\alpha}$ has a bounded density (see \cite{BG08,BDG09,BCC}). Hence $\Im ( g_{\mu_\alpha} (z))$ is uniformly bounded. We get for any $z = E + i \eta$, $| E | \geq E_0$, $\eta \geq  n ^{-\frac{\alpha+2}{4}}$, 
\begin{equation}\label{bn}
\bE \Im ( g_{\mu_{A^{(1)} }} (z) ) \leq  \Im ( g_{\mu_\alpha} (z))  + \left|\bE g_{\mu_{A^{(1)}}} (z)  - g_{\mu_\alpha} ( z)  \right| \leq c +  \left|\bE g_{\mu_{A^{(1)}}} (z)  - g_{\mu_\alpha} ( z)  \right|. 
\end{equation}
On the other hand, the spectral theorem implies the important identity
$$
M_n = \frac 1 {n-1} \bE \tr  \left\{  R^{(1)} (R^{(1)})^*\right\} = \eta^{-1} \bE \Im ( g_{\mu_{A^{(1)} }} (z) ). 
$$
Then, by \eqref{eq:boundgXY},
\begin{equation*}
\eta M_n \leq 
2 (n \eta )^{-1} + c +  |X(z) - \psi_{\alpha,z} (Y(z)) | +  \frac {L} { 1 - L} |Y(z) - \varphi_{\alpha,z} (Y(z))|.
\end{equation*}
We first consider  the case $1 < \alpha < 2$.
Then, by Proposition \ref{prop:fixpoint}, we obtain  for $\eta \geq n^{-1/{2\alpha}} \ge n^{-1/2}$, 
$$
\eta M_n \le  c + c\eta^{-2} \sqrt{\frac{M_n}{n}}  \left( 1 + \ind_{ 1 < \alpha \leq \frac 4 3}  \log n \right) .
$$
By monotonicity, we find that $\eta M_n $ is upper bounded by $x^*$ where $x^*$ is the unique fixed point of 
$$
x  =  c + c  \eta^{-\frac 5 2 } n^{-\frac 1 2}  \left( 1 + \ind_{ 1 < \alpha \leq \frac 4 3} \log n \right)  \sqrt x. 
$$
It is easy to check that the unique fixed point of $x = a + b x^\beta$, with $a,b >0$ and $0 < \beta < 1$  is upper bounded by $\kappa(\beta) a$ if $a \geq b^{\frac 1 {1 -  \beta}}$. We deduce  that, for some constant $c_1 > 0$ and all $n^{-1/5} \left( 1 + \ind_{ 1 < \alpha \leq \frac 4 3} ( \log n )^{\frac {2 }{5}} \right)  \leq \eta\leq 1  $,  
$$
\eta M_n \le  c_1.
$$
So finally, from Proposition \ref{prop:fixpoint}, we find that for all $n^{-1/5} \left( 1 + \ind_{ 1 < \alpha \leq \frac 4 3} ( \log n )^{\frac {2 }{5}} \right)  \leq \eta\leq 1  $,
$$
|\bE g_{\mu_A} (z) - g_{\mu_\alpha} ( z)  | \leq  c_2 \eta^{-\frac{\alpha}{2}} n^{-\frac{\alpha}{4}} + c_2 \eta^{-2}n^{-\frac 1 \alpha} +  c_2 \eta^{-\frac 5 2 } n^{-\frac 1 2} \left( 1 + \ind_{ 1 < \alpha \leq \frac 4 3}  \log n \right)   . 
$$
We notice that the middle term is negligible compared to the last for our range of $\eta$.

Assume finally that $0 < \alpha \leq 1$, then arguing as above for $\eta \geq n^{-1/2} \geq n^{-1/{2\alpha}} $, 
\begin{equation}\label{po}
\eta M_n \leq c  + c  \eta^{ - 1}  \left(\frac{M_n}{n\eta^2}\right)^{\frac{\alpha}{2}} ( \log n ) ^2.
\end{equation}
We deduce  that, for some $c_1 >0$ and all $n^{-\frac{\alpha}{2 +3 \alpha}} ( \log n )^{\frac{4}{2 + 3 \alpha} }  \leq \eta\leq 1$,  
$ \eta M_n \leq  c_1$.
 We find that for all $n^{-\frac{\alpha}{2 +3 \alpha}} ( \log n )^{\frac{4}{2 + 3 \alpha} }  \leq \eta\leq 1$,
$$
|\bE g_{\mu_A} (z) - g_{\mu_\alpha} ( z)  | \leq  c_2 \eta^{-\frac{\alpha}{2}} n^{-\frac{\alpha}{4}} +  c_2 \eta^{-1 - \alpha}n^{-1} ( 1 + (\log n) \ind_{\alpha = 1}  ) + c_2  n^{-\frac \alpha 2} \eta ^{ - \frac{2 + 3 \alpha}{2} }  ( \log n ) ^2. 
$$
Again the middle term is negligible compared to the first  for our range of $\eta$.

We have proven the proposition if $z = E + i \eta$ and $|E| \geq E_0$ is large enough. It remains to prove the statement for all $E$ outside of a finite set. It is proven in \cite{BDG09} that $\varphi_{\alpha,z} ( y ) = c z^{-\alpha} g (y)$ where $g$ is an entire function and $c$ is a constant (both depends on $\alpha$). It follows that the set of $z \in \bC$ such that $\varphi'_{\alpha, z} (y (z) ) = 1$ is finite (for details see  \cite[\S 5.3]{BDG09}). We define $\cE'_\alpha$ as the set of real $E$ such that there exists $0 \leq \eta \leq   E_0 + 1$ with $\varphi_{\alpha, E + i \eta} ' ( y (  E + i \eta  ))  = 1$. We set finally $\cE_\alpha = \{Ê0 \} \cup \cE'_\alpha$. This set is finite. Let $K$ be a compact interval which does not intersect $\cE_\alpha$. From the implicit function theorem, there exist $\tau , c_0 > 0$ such that for $t \geq 0 $, $0 \leq \eta \leq   E_0 + 1$, $E \in K$, 
\begin{equation}\label{eq:loccontr}
\hbox{if $| y - y (E + i \eta) |Ê\leq \tau$ and $|Êy - \varphi_{\alpha, E + i \eta} ( y ) | \leq t$ then $| y - y (E + i \eta) |Ê\leq c_0 t $. }
\end{equation}
Therefore, we may use Lemma \ref{le:36BDG} and an alternative version of \eqref{eq:boundgXY} : for any $z = E + i \eta $ with $E \in K $ and $0 \leq \eta \leq   E_0 + 1$, if $| Y(z) - y (z) |Ê\leq \tau$  then
\begin{equation}\label{eq:boundgXY2}
|\bE g_{\mu_A} (z) - g_{\mu_\alpha} ( z)  |  \leq  |X(z) - \psi_{\alpha,z} (Y(z)) | +  \frac { c_0 c(\alpha) } { | z| ^{\frac \alpha 2 } } |Y(z) - \varphi_{\alpha,z} (Y(z))|.
\end{equation}

To apply Proposition \ref{prop:fixpoint}, we shall
 use  an inductive argument to insure that the hypothesis $| Y(z) - y (z) |Ê\leq \tau$ is satisfied. 
 We set for integer $\ell$, $\eta_0 = \eta$, $\eta_{\ell+1} = \eta_\ell + \frac \tau 3  ( 1 \wedge \eta_\ell ) ^2$ and $z_\ell= E + i  \eta_\ell$. There exists $k$ such that $E_0 \leq \eta_k \leq E_0 + \tau $.  Then $ \varphi_{\alpha, z_k}$ is a contraction and the above argument proves that 
$$
|\bE g_{\mu_A} (z_k) - g_{\mu_\alpha} ( z_k)  | \leq  c \delta \quad \hbox{ and } \quad | Y(z_k) - y (z_k) |Ê\leq c \delta,
$$
(note that $\delta$ is a pessimistic bound since $\Im ( z_k ) $ is bounded away from $0$). We notice that it is sufficient to prove the statement of the proposition in the range, for $1 < \alpha < 2$, $ \kappa n^{-1/5}  \left( 1 + \ind_{ 1 < \alpha \leq \frac 4 3} ( \log n )^{\frac {2 }{5\alpha}} \right) \leq \eta\leq 1 $ and  for $0 < \alpha \leq 1$, $ \kappa n^{-\frac{\alpha}{2 +3 \alpha}} ( \log n )^{\frac{4}{2 + 3 \alpha} }$, where $ \kappa > 0$ is any fixed constant. Hence, up to increasing $ \kappa$, we may assume that $s c \delta \leq \tau/3$, where $s \geq 1$ is large number that will be chosen later on. 

To obtain a priori bounds for $Y(z)-y(z)$ and $|\bE g_{\mu_A} (z) - g_{\mu_\alpha} ( z)  |$ at $z=z_{k-1}$
from those at $z=z_{k}$ observe that for any probability measure $\mu$ on $\bR$ and $0 \leq \beta \leq 1$,  
\begin{equation}\label{gloupy}
|Êg_{\mu} (E + i \eta_\ell)  ^\beta  - g_{\mu } ( E + i \eta_{\ell +1} ) ^\beta   | \leq \frac{ |Ê\eta_ \ell - \eta_{\ell +1} |Ê } { \eta_\ell ^{1 + \beta }} \leq   \frac { \tau}{3}.
\end{equation}
Using the above control with $\mu=\sum_{k=1}^n \langle v_k, e_1\rangle ^2 \delta_{\lambda_k}$
so that $g_\mu(z)=R_{11}(z)$
we deduce by applying \eqref{gloupy} with $\beta=\alpha/2$ that
$$|(-iR_{11}(E + i \eta_\ell) )^{\frac{\alpha}{2}}- (-iR_{11}(E + i \eta_{\ell+1}) )^{\frac{\alpha}{2}}|\le \frac{\tau}{3}$$
and thus  $| Y (z_{k-1}) -  y ( z_{k-1})  | \leq  \tau$. We get a similar control for $\bE g_{\mu_A} (z_{k-1})$
by applying \eqref{gloupy} with $\beta=1$ so that
$$M_n (z_{k-1} )  \leq \eta^{-1} \left(  \frac {\tau} 3 + c \delta + \sup_\ell \Im g_{\mu_\alpha } (z_\ell) \right) \leq  \eta^{-1} ( \tau    +  \sup_\ell \Im g_{\mu_\alpha } (z_\ell) ) =  c_1 \eta^{-1}. $$
Therefore, using  Proposition \ref{prop:fixpoint}, we find for some constant $c' >0$. 
$$
| Y (z_{k-1}) -  \varphi_{\alpha, z_{k-1}} ( Y ( z_{k-1}) )   |  \vee | X (z_{k-1}) -  \psi_{\alpha, z_{k-1}} ( Y ( z_{k-1}) )   |  \leq  c' \delta.
$$
From what precedes, it implies that $| Y (z_{k-1}) -    y ( z_{k-1})    | \leq   c_0 c' \delta$. We choose $s$  large enough so that $ c'c_0 \leq s c$, so that we have $ c' c_0 \delta \leq \tau /3$.
Also, we may use \eqref{eq:boundgXY2}. We find for some new constant $c''$,
$$
|\bE g_{\mu_A} (z_k) - g_{\mu_\alpha} ( z_k)  | \leq  c'' \delta.
$$
Finally, if $s$ was also chosen large enough so that $ c'' \leq s c$, then $c'' \delta \leq \tau /3$, and we may repeat the above argument down to $\ell = 0$. 
\end{proof}

When $\alpha \in (1,2)$, a bootstrap argument allows to improve significantly Proposition \ref{prop:boundStieltjes}. 
The idea
is, in the spirit of \cite{ESY10} that if the imaginary part of $\left\langle X, R^{(1)} X\right\rangle$ is a priori bounded
below by something going to zero more slowly than $\eta$, then we can improve the result of the key Lemma
\ref{le:diagapprox}. Before moving to the proof, we state a classical deconvolution lemma. 
\begin{lemma}[From Stieltjes transform to counting measure]\label{le:deconvolution}
Let $L > 0$, $0 < \e < 1$, $K$ be an interval of $\bR$ and $\mu$ be a probability measure on $\bR$. We assume that for some $\eta  > 0 $ and all $E \in K$, either
$$ \Im g_\mu ( E + i \eta ) \leq  L \quad \hbox { or }  \quad \mu \left( Ê[ÊE - \frac \eta 2 , E + \frac \eta 2 ] \right) \leq L \eta.$$
 Then, there exists a universal constant $c$ such that for any interval $ I \subset K$ of size at least $\eta$ and such that $\dist ( I , K^c ) \geq \e$, we have
$$
\left|Ê\mu (  I  ) - \frac 1 \pi \int_{I}  \Im g_{\mu} (E + i \eta)  dE \right|  \leq c ( L \vee \e^{-1})   \eta  \log \left( 1 + \frac{|ÊI |}{\eta} \right) . 
$$
\end{lemma}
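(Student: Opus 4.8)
The plan is to recognise $\tfrac1\pi\Im g_\mu(\cdot+i\eta)$ as the Poisson average of $\mu$. Write $P_\eta(t)=\tfrac1\pi\tfrac{\eta}{t^2+\eta^2}$, so that $\tfrac1\pi\Im g_\mu(E+i\eta)=(P_\eta\ast\mu)(E)$ and, by Fubini,
\[
\frac1\pi\int_I \Im g_\mu(E+i\eta)\,dE=\int_{\bR}\phi_I(x)\,\mu(dx),\qquad \phi_I:=P_\eta\ast\ind_I .
\]
Thus the quantity to control is $\bigl|\int(\phi_I-\ind_I)\,d\mu\bigr|$. Writing $I=[a,b]$ and $\ell=|I|$, an elementary computation of $\int P_\eta$ (an $\arctan$) gives the pointwise bounds $|\phi_I(x)-\ind_I(x)|\le\tfrac1\pi\bigl(\arctan\tfrac{\eta}{|x-a|}+\arctan\tfrac{\eta}{|x-b|}\bigr)$ for every $x$, and, crucially, the sharper estimate $\phi_I(x)\le \tfrac{\ell\eta}{\pi\,\dist(x,I)^2}$ whenever $\dist(x,I)\ge\ell$ (bound the integral of $P_\eta$ over an interval of length $\ell$ by $\ell$ times the maximum of $P_\eta$ on it).

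The second ingredient turns the hypothesis into a uniform control of the local mass of $\mu$ inside $K$. Since $(x-E)^2+\eta^2\le\tfrac54\eta^2$ on $[E-\tfrac\eta2,E+\tfrac\eta2]$, one always has $\mu\bigl([E-\tfrac\eta2,E+\tfrac\eta2]\bigr)\le\tfrac54\eta\,\Im g_\mu(E+i\eta)$; hence in either alternative of the hypothesis, $\mu\bigl([E-\tfrac\eta2,E+\tfrac\eta2]\bigr)\le\tfrac54 L\eta$ for all $E\in K$, and a standard covering argument gives $\mu(J)\le C L(|J|+\eta)$ for every interval $J\subset K$. We may also assume $\eta<\e$: otherwise $\bigl|\int(\phi_I-\ind_I)\,d\mu\bigr|\le\|\phi_I-\ind_I\|_\infty\le 1\le\e^{-1}\eta$, which is of the claimed order since $\log(1+\ell/\eta)\ge\log 2$. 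Finally $I\subset K$ and $\dist(I,K^c)\ge\e$ force $[a-\e,b+\e]\subset K$, and it suffices to treat $K=[a-\e,b+\e]$.

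Now decompose $\bR=\{x<a\}\cup I\cup\{x>b\}$, and on each piece split dyadically according to the distance $u$ to the nearer endpoint of $I$. On a shell $u\in[2^{j-1}\eta,2^j\eta]$ with $2^j\eta\lesssim\ell$ the kernel estimate above is $\lesssim 2^{-j}$, the shell lies in $K$, and by the covering bound its $\mu$-mass is $\lesssim L2^j\eta$, so the shell contributes $\lesssim L\eta$; there are $O\bigl(\log(1+\ell/\eta)\bigr)$ such shells, producing the main term. For shells with $u\gtrsim\ell$ that still lie in $K$, the sharper bound $\phi_I(x)\lesssim \ell\eta/u^2$ makes the contribution of the shell $u\in[2^k\ell,2^{k+1}\ell]$ only $\lesssim L\eta\,2^{-k}$, so these sum to $\lesssim L\eta$. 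The part of every shell that lies outside $K$ is at distance $\ge\e$ from $I$, where $|\phi_I-\ind_I|\le\tfrac{\eta}{\pi\e}$; since $\mu(\bR)=1$ this whole region contributes $\lesssim\e^{-1}\eta$. Adding the three pieces and absorbing $\e^{-1}\eta$ into $(L\vee\e^{-1})\eta\log(1+\ell/\eta)$ yields the lemma.

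I expect the bookkeeping of this dyadic sum to be the main point. The reason for keeping the $\arctan$ — equivalently the $u^{-2}$ tail of $\phi_I$ — rather than the cruder $\eta/u$ bound on all scales, is precisely to stop the shells of size between $\ell$ and $\e$ (or up to $\dist(a,K^c)$) from contributing a spurious $\log(\e/\eta)$, or even $\log(1/\eta)$, factor in place of $\log(1+\ell/\eta)$; and the $\e^{-1}$ in the statement is exactly what is needed to discard the mass of $\mu$ sitting outside $K$, over which the hypothesis gives no control.
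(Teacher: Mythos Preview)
Your proof is correct and follows essentially the same approach as the paper: both identify $\tfrac1\pi\Im g_\mu(\cdot+i\eta)$ as $P_\eta*\mu$, establish the same three-regime pointwise bounds on $\phi_I-\ind_I$ (near the boundary, at moderate distance, and in the far tail $\dist\ge|I|$ where the $\ell\eta/u^2$ decay is used), convert the hypothesis into a uniform local mass bound $\mu([E-\tfrac\eta2,E+\tfrac\eta2])\lesssim L\eta$ on $K$, and handle $K^c$ via the crude $\eta/\e$ bound. The only cosmetic difference is that the paper sums over a uniform partition into intervals of length $\eta$ while you use dyadic shells; both yield the same $\log(1+|I|/\eta)$ factor.
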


\begin{proof}
Let us prove the first statement. We observe that
$$
\frac 1 \pi \Im ( g_{\mu}  (y + i \eta) ) =  \frac 1 \pi  \int_{\bR} \frac{\eta}{ (y - x )^2 + \eta^2} \mu(dx) = P_\eta * \mu (y),   
$$
where $P_\eta$ is the Cauchy law with parameter $\eta$. We thus need to perform a classical deconvolution. We may for example adapt Tao and Vu \cite[Lemma 64]{TV11} (see also e.g. \cite[p.15]{GuionnetBourbaki}). Define 
$$
F (y) = \frac 1 \pi  \int_{I} \frac{\eta}{ (y - x )^2 + \eta^2} dx = P_\eta (  I - y ). 
$$
In particular
$$
\left|Ê\mu (  I  ) - \frac 1 \pi \int_{I}  \Im g_{\mu} (E + i \eta)  dE \right|  = \left| \mu ( I ) -  \intÊF(y)  \mu (dy)  \right|.
$$
Now, the Cauchy law has density, $ P_{\eta} ( t  ) =   \frac 1 \pi \frac{\eta }{Ê \eta^2+  t^2 }.$
It follows that for $\{  y \in I\}$, $\{Êy \in I^c, \dist ( y , I) \leq |I |Ê\}$ and  $\{Êy \in I^c, \dist ( y , I) \geq |I |Ê\}$ we may use respectively the bounds
$$
\left|ÊF(y)  - 1 \right| \leq  \frac{ c }{Ê 1+  \dist( y , I^c ) \eta^{-1} } \; \hbox{, } \quad  | F(y) | \leq  \frac{ c }{Ê 1+  \dist( y , I) \eta^{-1} } \quad \hbox{Êand } \quad  | F(y) | \leq  \frac{ c Ê|ÊI | \etaÊ}{Ê \dist( y , I)^2  }. 
$$
We write if $I = [a , b]$, $I_1 = I^c \cap [Êa - |ÊI |Ê, b  + |ÊI |Ê] \cap K$ and $I_2 = I^c \cap [Êa - |ÊI |Ê, b + |ÊI |Ê]^c  \cap K $, 
\begin{align*}
\left|Ê\mu (  I  ) - \frac 1 \pi \int_{I}  \Im g_{\mu} (E + i \eta)  dE \right| & \leq  \int_I \frac{ c }{Ê 1+  \dist( y , I^c ) \eta^{-1} } \mu (dy)\\
& \quad \;  + \int_{I_1Ê}\frac{ c }{Ê 1+  \dist( y , I ) \eta^{-1} } \mu (dy) \\
&  \quad \quad \; +  \int_{I_2Ê} \frac{ c Ê|ÊI | \etaÊ}{Ê \dist( y , I)^2  }  \mu (dy) \\
&  \quad \quad\quad \; + \int_{K^cÊ}\frac{ c }{Ê 1+  \dist( y , I ) \eta^{-1} } \mu (dy).
\end{align*}
However,  by assumption if $J = [ÊE - \eta/2, E + \eta/2] $ is an interval of size $\eta$ with $E \in K$, 
$$
L \geq \Im g_\mu ( E + i \eta )  =  \int_{\bR} \frac{\eta}{ (y - x )^2 + \eta^2} \mu(dx) \geq \frac{3}{Ê4 \eta} \mu ( J). 
$$
We deduce that 
$
\mu ( J) \leq \frac{ 4L}{3} \eta.$
Now, consider a partition $\cP$ of $\bR$ into intervals of size $ \eta$. We get from this last upper bound
$$
 \int_I \frac{ c }{Ê 1+  \dist( y , I^c ) \eta^{-1} } \mu (dy) \leq \sum_{ J \in \cP \cap I} \frac{ c \mu ( J)  } {Ê1 + \dist( J , I^c )} \leq  \sum_{  k = 0 }^{Ê| I |Ê\eta ^{-1}}  \frac{ c'  L \eta } {Ê1 + k} \leq c'' L \eta \log ( 1 + |I |Ê\eta^{-1} ). 
$$
The other terms are bounded similarly.
\end{proof}

\begin{proof}[Proof of Theorem \ref{th:boundStieltjes}]

In view of Proposition \ref{prop:boundStieltjes} and Lemma \ref{le:deconvolution}  applied to $\bE \mu_A$ and $\mu_\alpha$, it remains to prove statement $(i)$ of Theorem \ref{th:boundStieltjes}. We thus assume in the sequel that $1 < \alpha < 2$. The proof is divided into five steps. Throughout the proof, we assume that $n \geq 3$ (without loss of generality) and we denote by 
$\bE_1 [Ê\cdot ]Ê$ and $\bP_1 ( \cdot) $ the conditional expectation and probability given $\cF_1$, the $\sigma$-algebra generated by the random variables $(X_{ij})_{ i \geq j \geq 2}$.

\subsubsection*{Step one : Lower bound on the Stieltjes transform}

Let $K = [ a , b]$ be an interval which does not intersect the finite set $\cE_\alpha$, defined in  Proposition \ref{prop:boundStieltjes}. The limit spectral measure $\mu_\alpha$ has a positive density on $\bR$. In particular, there exists a constant $ c_0 = c_0(K_0,\alpha) > 0$ such that for all $0 \leq \eta \leq 1$ and $x \in K$, 
$$
\Im g_{\mu_{\alpha}}( x + i \eta ) \geq c_0.
$$
Consequently, if there exists  $0 \leq \eta \leq 1$ such that for all $x \in K$,
\begin{equation}
\label{eq:BNI}
|Ê\bE g_{\mu_A} (x + i \eta ) - g_{\mu_\alpha} (x + i \eta) | \leq \frac {c_0} 2 
\end{equation}
then 
\begin{equation*}
\bE \Im g_{\mu_A}( x + i \eta ) \geq \frac {c_0} 2.
\end{equation*}

Note that Proposition \ref{prop:boundStieltjes} already proves that \eqref{eq:BNI} holds if $n \geq n_0$ is large enough  and 
$$\eta_0 = n^{-\e},$$  
for some $\e > 0$. By an inductive argument, we aim at proving that \eqref{eq:BNI}  holds for the same constants $n_0$  but for some $\eta \ll \eta_0$.

For some constant $\delta > 0 $ to be defined later on, we set for $1 < \alpha < 2$, 
$$
\eta_\infty = \sqrt{ \frac{Ê\log n}{ 2 \delta n }} \vee \left( n^{ - \frac \alpha {Ê8 - 3 \alpha} } ( 1 + \ind_{Ê1 < \alpha < 4/3} ( \log n ) ^{ \frac {2 \alpha}{ 8 - 3 \alpha} })\right). 
$$
Note that $\eta_\infty \geq n ^{- \frac { \alpha +2 } { 4}}$ for all $n$ large enough (say again $n_0$).
 
\subsubsection*{Step two : Start of the induction}

We assume that \eqref{eq:BNI} holds for some $\eta_1 \in [Ên ^{- \frac { \alpha +2 } { 4}}  , \eta_0]$ and that 
\begin{equation}
\label{eq:BNIy}
|ÊY (x + i \eta_1 ) - y (x + i \eta_1) | \leq \frac \tau  3, 
\end{equation} 
where $\tau$ was defined in \eqref{eq:loccontr}. Let $0 < \tau' < \tau$ to be chosen later on. We are going to prove that \eqref{eq:BNI}-\eqref{eq:BNIy}  hold also for 
$$
\eta  \in \left[Ê  \eta_1  - \tau' \eta_1 ^2  , \eta_1 \right]. 
$$
provided that $\eta_1 \geq  t \eta_\infty$, $n \geq n_0$ and $t$ large enough. As in the proof of Proposition \ref{prop:boundStieltjes}, cf. \eqref{gloupy},
 if $\tau' $ is small enough, we note that \eqref{eq:BNIy} implies that 
\begin{equation}\label{eq:BNIYtau}
|ÊY (x + i \eta ) - y (x + i \eta) | \leq 2\frac{ |Ê\eta - \eta_1 | ^2 } { \eta_1 ^ 2 }   + |ÊY (x + i \eta_1 ) - y (x + i \eta_1) |  \leq  \tau. 
\end{equation}
First, by Weyl's interlacing property \eqref{eq:BNI}  holds  for $A^{(1)}$ with $c_0/2$ replaced by $c_0 /4$ ($\eta_1\gg n^{-1}$). Also it follows by Jensen's inequality that for $ z = x + i \eta$, with $x \in K$,   
\begin{eqnarray*}
\left(\Im R^{(1)} _{kk} (z)   \right)^{\frac \alpha 2} &=&  \left(  \sum_{ i = 1} ^{n-1}  \frac{ \eta} {Ê(\lambda^{(1)}_i - x)^2 + \eta^2 } \langle v^{(1)}_ i , e_k \rangle^2      \right)^{\frac \alpha 2} \\
&\geq&  \sum_{ i = 1} ^{n-1}   \left(  \frac{ \eta} {Ê(\lambda^{(1)}_i - x)^2 + \eta^2 } \right)^{\frac \alpha 2} \langle v^{(1)}_ i , e_k \rangle^2  \\
&=&\left( \left(\Im R^{(1)} (z)  \right)^{\frac \alpha 2}\right)_{kk}\\
& \geq & \eta^{1-\frac \alpha 2}  \sum_{ i = 1} ^{n-1}    \frac{ \eta} {Ê(\lambda^{(1)}_i - x)^2 + \eta^2 }   \langle v^{(1)}_ i , e_k \rangle^2 \\
&=& \eta^{1-\frac \alpha 2} \Im R^{(1)}_{kk} (z),
\end{eqnarray*}
where we have used  the fact $ \eta / ( Ê(\lambda^{(1)}_i - x)^2 + \eta^2 )\leq \eta^{-1}$ which implies   $ ( \eta /  ( (\lambda^{(1)}_i - x)^2 + \eta^2 )  ) ^{1 - \alpha / 2} \leq \eta^{\alpha / 2 -1 }$.  Note also that 
$$
|Ê\Im R^{(1)} _{kk}  (z)- \Im R^{(1)}_{kk} (E  + i \eta_1) |Ê\leq \frac { |Ê\eta - \eta_1 |Ê} {\eta^2} \leq \frac { \tau'Ê} { 1 - \tau' }. 
$$
Hence, if $\tau'$ is chosen small enough so that the above is less that $c_0  / 8$, we find with $c_1 =c_0 / 16$,
\begin{eqnarray*}
\bE \frac {1}{n-1} \sum_{k=1}^{n-1} \left(\Im R^{(1)}_{kk}  (z)
  \right)^{\frac \alpha 2} & \geq  &\bE \frac {1}{n-1} \sum_{k=1}^{n-1}\left( \left(\Im R^{(1)} (z)  \right)^{\frac \alpha 2}\right)_{kk}  \\
& \geq &  2 c_1 \eta^{1-\frac \alpha 2}.
\end{eqnarray*}
Now, for bounding from below
\begin{equation}\label{eq:Rtra}
\frac {1}{n-1} \sum_{k=1}^{n-1} \left(\Im R^{(1)} _{kk} (z) \right)^\frac \alpha 2 \geq \frac{1}{n-1} \tr \left\{ (\Im R^{(1)}(z))^{\alpha/2} \right\},
\end{equation} we observe that
by Lemma \ref{le:concspec}, since $ x^{\alpha/2}$ has
total variation on $[0,\eta^{-1}]$ equal to $\eta^{-\alpha/2}$,  that
\begin{eqnarray*}
\bP\left( \frac{1}{n-1}\tr  \left\{  ( \Im R^{(1)}(z) )^{\alpha/2} \right\}-\bE \frac{1}{n-1}\tr\left\{ (\Im R^{(1)}(z))^{\alpha/2}\right\} \le  r \right)
&\le& \exp\left(-\frac {(n-1) r^2\eta^{\alpha}}{2} \right) \,.
\end{eqnarray*}
Applying the above with 
$r=c_1 \eta ^{Ê1 - \frac \alpha 2}$ shows that for some $\delta >0$, 
$$\bP(\Lambda(z)^c)\le e^{-\delta n \eta^{2}},$$
where 
$$\Lambda(z):=
\left\{ \frac {1}{n-1} \tr \left\{ \left(\Im R ^{(1)} (z)  \right)^{\frac \alpha 2}  \right\}\geq c_1 \eta ^{Ê1 - \frac \alpha 2} Ê\right\}.
$$
Note that this probabilistic bound is non trivial only if $\eta_1 \geq n^{-\frac 1 2} \geq n ^{- \frac { \alpha +2 } { 4}}$ (recall that $1 < \alpha < 2$).

\subsubsection*{Step three : Gaussian concentration for quadratic forms} For any $z  = x + i \eta \in \bC_+$, we may bound from below  the imaginary part of 
$$Q(z)=a_n^{-2}\sum_{k=1}^n R^{(1)}_{kk}(z)X_{1k}^2 $$
on the event in $\Lambda(z) \in \cF_1$. 
Indeed, as the $\Im R^{(1)}_{kk},1\le k\le n-1,$ are non negative, we can use Lemma \ref{le:formulealice} to see that conditionnaly on $\cF_1$,
\begin{equation}\label{eq:id}
\Im Q(z) \stackrel{d}{=}  \left(\frac {1}{n-1} \sum_{k=1}^{n-1} \left(\Im R_{kk} ^{(1)} (z) G_k^2 \right)^{\frac \alpha 2}\right)^{\frac 2 \alpha} S  =  L(z) S , \end{equation}
where the equality holds in law and $S$ is a positive $\alpha/2$-stable law whereas the $G_k$ are independent standard Gaussian variables, independent from $S$. 
Moreover, if  $\Lambda (z)$ holds then from \eqref{eq:Rtra}
 $$ \sum_{k=1}^{n-1} \left(\Im R_{kk} ^{(1)} (z)  \right)^{\frac \alpha 2}\ge 
c_1  (n-1) \eta^{1 - \frac \alpha 2}\ge c_1  (n-1)\eta 
 \max_{k} \left(\Im R_{kk} ^{(1)} (z)  \right)^{\frac \alpha 2}.$$
Hence by Corollary \ref{cor:concnorm}, for some universal constants $c,\delta >0$, if 
$\Lambda(z)\in \cF_1$ holds, then 
$$\bP_1 \left( (\sum_{k=2}^n |\sqrt{\Im R_{kk} ^{(1)} (z)} G_k|^\alpha)^{\frac{1}{\alpha}}
\le \delta ((n-1) \eta^{1 - \frac \alpha 2})^{1/\alpha}\right)
\le e^{-\delta n\eta^{\frac 2 \alpha}}$$
which yields
\begin{eqnarray}\label{cont2}
\bP_1 \left(
L (z) \le \delta   \eta^{\frac{2}{\alpha}-1} \right) &\le& 
 e^{-\delta n \eta^{\frac 2 \alpha}}.
 \end{eqnarray}
Finally, we observe similarly that by Lemma \ref{le:formulealice},
$$\Im(Q(z)+T(z))=\langle X_1, \Im R^{(1)} X_1\rangle \stackrel{d}{=} \tilde L (z) \tilde S,$$
where conditonnally on $\cF_1$, $\tilde S$ is a positive $\alpha/2$-stable law, independent of $\tilde L(z)$. Moreover, if $\eta\ge c  n^{-\frac \alpha 2}$, the random variable $\tilde L(z)$ satisfies if
$ \Lambda(z)$ holds,
the probabilistic bound \begin{eqnarray*}\label{cont2bis}
\bP_1 \left(  \tilde L (z) \le \delta      \eta^{\frac{2}{\alpha}-1} \right) &\le& 
 e^{-\delta n  \eta^{\frac 2 \alpha}}.
 \end{eqnarray*}
We may thus summarize the last two steps by stating that if $n ^{-1/2} \leq \eta_1 \leq 1$  holds then 
$$
\bP \left( \Pi (z) ^c  \right) \leq 3 \exp ( - \delta n \eta^2 ). 
$$
where $z = x + i \eta$, $x \in K$ and
$$
\Pi (z) =Ê\Lambda (z) \cap  \left\{ L (z) \ge \delta      \eta^{\frac{2}{\alpha}-1} \right\}  \cap \left\{ \tilde L (z) \ge \delta      \eta^{\frac{2}{\alpha}-1}\right\}. 
$$
(recall that $1 < \alpha < 2$). 

\subsubsection*{Step four : Improved convergence estimates}  We next improve the results of Proposition \ref{prop:fixpoint} for our choice of $z = x+ i \eta$, $x \in K$. We  
write instead of \eqref{eq:diffYI}
\begin{eqnarray}
\left| Y(z) - I (z) \right|& \leq  &  
\frac \alpha 2  ( \delta \eta ^ { \frac{2}{\alpha} -1 } )^{- \frac{\alpha}{2} - 1}  \bE \left[Ê( S \wedge \tilde S )^{Ê- \frac{\alpha}{2} - 1}  |T(z)|  \right]   + \eta^{-\frac \alpha 2} \bP ( \Pi(z)^c), \label{eq:diffYI2}  \\
\left| X(z) - J (z) \right|& \leq  &  
( \delta \eta ^ { \frac{2}{\alpha} -1 } )^{-2} \bE \left[Ê( S \wedge \tilde S )^{Ê- 2}  |T(z)| \right]  +  \eta^{-1} \bP ( \Pi(z)^c). \label{eq:diffXI2} 
\end{eqnarray}
Then from  Lemma \ref{le:offdiag} and \eqref{boundTbis}, there exists $p > 1$ (depending on $\alpha$) such that 
$$
( \bE | T (z) |^p  )^{\frac 1 p}  \leq c  \left( n^{-\frac 1 \alpha} +  \sqrt{\frac{M_n}{n}}( 1 + \ind_{1 < \alpha \leq 4/3} \log n )   \right) .
$$
From H\"older's inequality and Lemma \ref{le:tailS}, we deduce that for some new constant $c>0$, 
\begin{eqnarray*}
\bE \left[Ê( S \wedge \tilde S )^{Ê- 2}  |T(z)| \right] \leq c  \left( n^{-\frac 1 \alpha} +  \sqrt{\frac{M_n}{n}}   ( 1 + \ind_{1 < \alpha \leq  4/3} \log n ) \right). 
 \end{eqnarray*}
From \eqref{eq:diffYI2}-\eqref{eq:diffXI2}, it follows that for $n ^{-1/2} \leq \eta_1 \leq 1$ and some new constant $c>0$, 
\begin{align*}
& \left| Y(z) - I (z) \right| \vee \left| X(z) - J (z) \right| \\
&\quad \leq \;  c  \eta ^{- 2(\frac{2}{\alpha} -1) }  \left( n^{-\frac 1 \alpha} +  \sqrt{\frac{M_n}{n}}  ( 1 + \ind_{1 < \alpha < 4/3} \log n )  \right)  + 3 \eta^{-1} \exp ( - \delta n \eta^2 ).
 \end{align*}
Note that $ \eta ^{- 2(\frac{2}{\alpha} -1) }n^{-\frac 1 \alpha} \leq 1$ if $\eta \geq n^{- \frac 1 {2 ( 2 - \alpha )}}\geq   n ^{-1/2} $ while $\eta^{-1} \exp ( - \delta n \eta^2 ) \leq 1$ if $( 2 \delta n / \log n )^{-1/2} \leq \eta$ . Note also that this last expression improves upon Lemma \ref{le:diagapprox} and then Proposition \ref{prop:fixpoint} can be improved into
\begin{align}\label{eq:newfixpoint}
& \left| Y(z) - \varphi_{\alpha, z} ( Y (z) )  \right| \vee \left| X(z) - \psi_{\alpha, z} ( Y (z) )  \right| \\
 & \quad  \leq  \;  c  \eta ^{- 2(\frac{2}{\alpha} -1) }  \left( n^{-\frac 1 \alpha} +  \sqrt{\frac{M_n}{n}}( 1 + \ind_{1 < \alpha \leq 4/3} \log n )   \right)  + c \eta^{-\frac \alpha 2} n^{- \frac \alpha 4} + c \eta^{-1} \exp ( - \delta n \eta^2 ). \nonumber
 \end{align}
Then, by \eqref{eq:BNIYtau} we may use the bound \eqref{eq:boundgXY2}. From \eqref{bn}, we thus obtain, for $( 2 \delta n / \log n )^{-1/2} \leq \eta_1 \leq 1$,
$$
\eta M_n (z) \le  c + c\eta ^{- 2(\frac{2}{\alpha} -1) } \sqrt{\frac{M_n}{n}}  ( 1 + \ind_{1 < \alpha \leq 4/3} \log n ) =  c + c\eta ^{- \frac{8 - 3 \alpha}{2\alpha} } n^{-\frac 1 2} \sqrt{\eta M_n} ( 1 + \ind_{1 < \alpha < 4/3} \log n ).
$$
We deduce  that, for some constant $c > 0$,  if $ \eta_\infty \leq \eta_1 \leq 1$,  
$$
\eta M_n \le  c.
$$
So finally, from  \eqref{eq:boundgXY2}-\eqref{eq:newfixpoint}, we find that for  $\eta_\infty \leq \eta_1 \leq 1$,
\begin{align}\label{eq:boundSt2}
& |\bE g_{\mu_A} (z) - g_{\mu_\alpha} ( z)  | \\
& \quad \leq \,  c_3 \eta^{-\frac{\alpha}{2}} n^{-\frac{\alpha}{4}} + c_3 \eta ^{- \frac{8 - 3 \alpha}{2\alpha} } n^{-\frac 1 2}  ( 1 + \ind_{1 < \alpha \leq 4/3} \log n )   + c_3 \eta^{-1} \exp ( - \delta n \eta^2 ). \nonumber
\end{align}

\subsubsection*{Step five : End of the induction}  

From \eqref{eq:loccontr}-\eqref{eq:boundSt2}, we deduce that if $ t \eta_\infty \leq \eta_1 \leq 1$ and $t$ large enough, then
$$
|\bE g_{\mu_A} (z) - g_{\mu_\alpha} ( z)  | \leq \frac {c_0} 2 \quad \hbox{ and }  \quad \left| Y(z) - y(z) \right|  \leq \frac \tau 3 . $$
We have thus proved that \eqref{eq:BNI}-\eqref{eq:BNIy}  holds also for our choice of $\eta$.

The argument is completed as follow. Let $K = [a , b]$ be a compact interval that does not intersect $\cE_\alpha$. Starting for $\eta_0 = n^{-\e}$, by applying $m$ times the induction, we deduce that  \eqref{eq:BNI} holds for $\eta_m = \eta_{m-1} - \tau' \eta_{m-1}^2$. Since this sequence vanishes as $m$ goes
to infinity, and, for some $m$, we have $t \eta_\infty \leq \eta_m  < 2 t \eta_\infty$. We deduce that for all $n$ large enough (say $n_0$), \eqref{eq:boundSt2} holds for $z = x + i \eta$, with $t \eta_\infty \leq \eta \leq 1$ and $K = [a , b]$. The statement follows.
\end{proof}

\section{Weak delocalization of eigenvectors}
\label{sec:delocvect}

Following Erd\H{o}s-Schlein-Yau \cite{ESY10}, from local convergence of the empirical spectral distribution (Theorem \ref{th:boundStieltjes}), it is possible to deduce the delocalization of eigenvectors. Using the union bound, Theorem \ref{th:delocvect} follows from the next proposition.

\begin{proposition}[Delocalization of the eigenvectors] \label{prop:delocvect}
For any $1 < \alpha < 2$, there exist $\delta, c >0$ and a finite set $\cE_\alpha \subset \bR$ such that if $I$ is a compact interval with $I \cap \cE_\alpha = \emptyset$, then for any unit eigenvector $v$ with eigenvalue $\lambda \in I$ and any $1 \leq i \leq n$, 
$$
|\langle v , e_i \rangle | \leq_{st} Z n ^ {-\rho (  1 - \frac 1   \alpha) } ( \log n )^c,
$$ 
where $\rho$ is as in Theorem \ref{th:mainconvloc} and $Z$ is a non-negative random variable whose law depends on $(\alpha, I)$  and which satisfies 
$$\bE \exp ( Z^\delta ) < \infty.$$
\end{proposition}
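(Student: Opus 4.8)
The plan is to deduce Proposition \ref{prop:delocvect} from the local law of Theorem \ref{th:boundStieltjes} (equivalently Theorem \ref{th:mainconvloc}) by the Erd\H{o}s--Schlein--Yau argument of \cite{ESY10}. Fix a large constant $c_1$ and set $\eta = c_1 n^{-\rho}(\log n)^2$; for $n$ large this lies in the admissible range of Theorem \ref{th:boundStieltjes}. Let $v$ be a unit eigenvector of $A$ with eigenvalue $\lambda \in I$ and let $1 \le i \le n$. From the spectral decomposition
$$
\Im R(E+i\eta)_{ii} = \sum_{k=1}^n \frac{\eta}{(\lambda_k(A)-E)^2+\eta^2}\,\langle v_k,e_i\rangle^2
$$
and the inequality $\eta/((\lambda-E)^2+\eta^2) \ge 1/(2\eta)$, valid when $|\lambda - E| \le \eta$, one gets
$$
\langle v,e_i\rangle^2 \le 2\eta\,\Im R(E+i\eta)_{ii}\qquad\text{whenever } |E-\lambda|\le \eta .
$$
Covering $I$ by a net $\{E_j\}$ of $O(|I|\eta^{-1}) = O(n^{\rho})$ points of spacing $\le\eta$, it therefore suffices to bound $\max_j \eta\,\Im R(E_j+i\eta)_{ii}$ on an event of probability close to one, and then to take the union bound over the $n$ eigenvectors and the $n$ coordinates.

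Next I would estimate $\Im R(E_j+i\eta)_{ii}$ for a fixed net point. By the geometric inequality \eqref{lkj} (with $k=i$ and $z = E_j+i\eta$),
$$
\Im R(E_j+i\eta)_{ii} \le 2\eta\, a_n^2\, \dist_i^{-2},\qquad \dist_i = \dist\big(X_i,\,W^{(i)}\big),
$$
where $(W^{(i)})^\perp$ is spanned by the eigenvectors of $A^{(i)}$ with eigenvalue in $I_j := [E_j-\eta,E_j+\eta]$, a subspace of dimension $N := N^{(i)}_{I_j}$. Since the density of $\mu_\alpha$ is bounded below by some $c_0 = c_0(\alpha,I)>0$ on a fixed neighbourhood of $I$ avoiding $\cE_\alpha$, applying Theorem \ref{th:mainconvloc} to $A^{(i)}$ (or to $A$ and then Weyl interlacing, using Lemma \ref{le:concspec}) with $\delta=c_0/2$ gives $N \ge c\,n\eta$ with probability at least $1 - 2\exp(-c'\,n\eta^2)$. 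On this event the orthogonal projection $P_i$ onto $(W^{(i)})^\perp$ has rank $N\ge c\,n\eta \gg n^{1-\alpha/2}$, the last step because $\rho < \alpha/2$ for all $1<\alpha<2$. As $X_i$ is independent of $A^{(i)}$, Lemma \ref{le:formulealice} provides the representation $\dist_i^2 = \|P_iG_i\|_\alpha^2\,S_i$ with $G_i$ standard Gaussian and $S_i$ an independent positive $\alpha/2$-stable variable, and Corollary \ref{cor:concnorm} (conditionally on $A^{(i)}$) yields $\|P_iG_i\|_\alpha \ge \delta_0\,N^{1/\alpha} \ge \delta_0(c\,n\eta)^{1/\alpha}$ with conditional probability at least $1 - 2\exp(-c''\,N^{2/\alpha})$. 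Combining the last three displays with $a_n = n^{1/\alpha}$, on an event $\cG_j$ of probability at least $1 - \exp(-c'\,n\eta^2)$,
$$
\eta\,\Im R(E_j+i\eta)_{ii} \le 2\eta^2 a_n^2\,\dist_i^{-2} \le C\,\eta^2\, n^{2/\alpha}\,(n\eta)^{-2/\alpha}\,S_i^{-1} = C\,\eta^{\,2-\frac2\alpha}\,S_i^{-1}.
$$

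Finally I would collect the net points. On $\cG := \bigcap_j \cG_j$, whose complement has probability at most $n^{\rho}\exp(-c'\,n\eta^2) = \exp\!\big(-c'\,n^{1-2\rho}(\log n)^4 + O(\log n)\big)$ (smaller than every negative power of $n$ once $c_1$ is fixed), the bound above holds for all $j$ at once; choosing a net point $E_{j_0}$ with $|E_{j_0}-\lambda|\le\eta$ and inserting $\eta = c_1 n^{-\rho}(\log n)^2$,
$$
\langle v,e_i\rangle^2 \le 2\eta\,\Im R(E_{j_0}+i\eta)_{ii} \le C\,\eta^{\,2-\frac2\alpha}\,S_i^{-1} = C'\,n^{-2\rho(1-\frac1\alpha)}(\log n)^{4-\frac4\alpha}\,S_i^{-1}\quad\text{on }\cG .
$$
By Lemma \ref{le:tailS}, the positive $\alpha/2$-stable $S_i$ obeys $\bP(S_i<u)\le \exp(-c\,u^{-\alpha/(2-\alpha)})$ for small $u$, equivalently $\bP(S_i^{-1/2}>v)\le \exp(-c\,v^{2\alpha/(2-\alpha)})$ for large $v$. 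Feeding this tail into the union bound over the $O(n^\rho)$ net points, and using $\langle v,e_i\rangle^2\le1$ on $\cG^c$, one obtains the stochastic domination $|\langle v,e_i\rangle|\le_{st} Z\,n^{-\rho(1-\frac1\alpha)}(\log n)^c$, where $c$ is a constant depending on $\alpha$ and $Z$ can be taken of the form (a constant)$\,\cdot\,S^{-1/2}$ for a positive $\alpha/2$-stable $S$, up to a minor correction absorbing the super-polynomially small event $\cG^c$; in particular $\bE\exp(Z^\delta)<\infty$ for any $\delta<2\alpha/(2-\alpha)$. Taking $c$ large, the same tail estimate shows that \eqref{eq:delocvect}, after the union bound over all eigenvectors and coordinates, holds with probability at least $1-n^{-\kappa}$ for any prescribed $\kappa$.

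The genuine difficulty has already been handled upstream: it is Theorem \ref{th:boundStieltjes}, which supplies $N^{(i)}_{I_j}\asymp n\eta$ down to the scale $\eta = n^{-\rho+o(1)}$ and thereby makes the Gaussian concentration for $\|P_iG_i\|_\alpha$ non-vacuous at that scale; after the arithmetic identity $a_n^2\,\eta^2\,(n\eta)^{-2/\alpha} = \eta^{2-2/\alpha}$ this is exactly what produces the exponent $\rho(1-\tfrac1\alpha)$. Granting that input, the only non-trivial point internal to this proof is checking $\rho<\alpha/2$ so that the rank of $P_i$ exceeds the threshold $n^{1-\alpha/2}$ needed for Corollary \ref{cor:concnorm}; the rest --- tracking the powers of $\log n$ through the net, and bundling the exceptional events into one $n$-independent $Z$ with a stretched-exponential moment --- is routine bookkeeping.
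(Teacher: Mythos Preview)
Your proof is correct and follows essentially the same route as the paper's. The only cosmetic difference is that the paper starts from the explicit eigenvector formula $v_1^2=(1+a_n^{-2}\langle X_1,(A^{(1)}-\lambda)^{-2}X_1\rangle)^{-1}$ and lower bounds the quadratic form by the contribution of the eigenvectors of $A^{(1)}$ in the interval $I_\ell\ni\lambda$, whereas you pass through $\langle v,e_i\rangle^2\le 2\eta\,\Im R(E+i\eta)_{ii}$ and then invoke \eqref{lkj}; both paths collapse to the same bound $v_i^2\le C\,\eta^2 a_n^2\,\dist^{-2}(X_i,W^{(i)})$, after which the use of the local law to guarantee $\mathrm{rank}\,P_i\ge c\,n\eta$, the decomposition of Lemma~\ref{le:formulealice}, the Gaussian concentration of Corollary~\ref{cor:concnorm} (enabled by your check $\rho<\alpha/2$), and the absorption of the exceptional event into $Z$ via Lemma~\ref{le:tailS} are identical in both arguments.
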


\begin{proof}
Let $\cE_\alpha$ be as in Theorem \ref{th:mainconvloc}. The density of $\mu_\alpha$ is uniformly lower bounded on $I$ by say $4\varepsilon >0$. We set $$\eta =  c_1  \left( \sqrt{ \frac{Ê\log n}{  n }} \vee \left( n^{ - \frac \alpha {Ê8 - 3 \alpha} }  ( 1 + \ind_{Ê1 < \alpha < 4/3} ( \log n ) ^{ \frac {2 \alpha}{ 8 - 3 \alpha} })\right) \right) ,$$ where the constant $c_1$ is large enough to guarantee that for any interval  $J$ of length at least $\eta$ in $I$ we have $|\bE \mu_A (J)  - \mu_\alpha (J)  | \leq 2 \varepsilon | J|$.  Then, we partition the interval $I = \cup_\ell I_\ell $ into $c_2  \eta^{-1} $ intervals of length $ \eta $. From what precedes we have for any $ 1 \leq \ell \leq c_2 \eta^{-1}$, $\bE \mu_A (I_\ell) > 2 \varepsilon | I_\ell|$. 

Now, by Lemma \ref{le:concspec}, the event $F_n$ that  for all $ 1 \leq \ell \leq c_2 \eta^{-1}$,
\begin{equation*}\label{eq:lowerboundmuA}
\mu_A (I_\ell) > \bE \mu_A (I_\ell) - \varepsilon | I_\ell| >  \varepsilon | I_\ell|,
\end{equation*}
has probability at least $ 1 - c_2  \eta^{-1} \exp ( - n \varepsilon^2 c^2_3 \eta^2  / 2 ) \geq  1 - c \exp ( - c n^{\delta})$ for some constants $c , \delta>0$.

Let $v$ be a unit eigenvector and $\lambda \in I$ such that $A v = \lambda v$. Set $v_i = \langle v , e_i \rangle$. We recall the formula 
$$
v_1 ^2 = ( 1 +  a_n^{-2} \langle X_1 , ( A^{(1)} - \lambda ) ^{-2} X_1 \rangle ) ^{-1} , 
$$
with $X_1 = ( X_{1 2}, \cdots , X_{1 n}) \in \bR^{n-1}$ and $A^{(1)}$ is the principal minor matrix of $A$ where the first row and column have been removed.  We may now argue as in the proof of Proposition \ref{prop:supNI} : for some $1 \leq \ell \leq c_2 \eta^{-1}$, $\lambda \in I_\ell$, and it follows
$$
v_1 ^2 \leq  a_n^{2} c_3 ^2 \eta^2   \left( \sum_{i : \lambda_i ^{(1)} \in I_\ell}    \langle X_1 , u_i ^{(1)} \rangle ^{2} \right)^{-1},
$$
where $(\lambda_i ^{(1)},u_i ^{(1)}), 1 \leq i \leq n-1$ denotes the eigenvalues and an eigenvectors basis of $A^{(1)}$. We rewrite the above expression as
\begin{equation}\label{eq:boundvi}
v_1 ^2   \leq  a_n^{2} c_3 ^2 \eta^2  \dist^{-2}(  X_1, W^{(1)} ) = a_n^{2} c_3 ^2 \eta^2  \langle  X_1, P_1 X_1 \rangle^{-1} ,
\end{equation}
where $W^{(1)} = \mathrm{vect}\left\{ u_{i}^{(1)} : 1 \leq i \leq n-1 , \lambda_i^{(1)} \notin I_\ell \right\},$
and $P_1$ is the orthogonal projection on the orthogonal of $W^{(1)}$. The rank of $P_1$ is equal to 
$$N^{(1)}_{I_\ell}  =  | \{1 \leq i \leq n - 1 : \lambda^{(1)}_i \in I_\ell \} |= n  - 1 -  \mathrm{dim} (W^{(1)}).$$ From Weyl interlacement theorem, we get
\begin{equation}\label{eq:weyl_NI}
n \mu_A (I_\ell) - 1  \leq N^{(1)}_{I_\ell}  \leq  n \mu_A (I_\ell) + 1. 
\end{equation}
From Lemma \ref{le:formulealice},  there exists a positive $\alpha/2$-stable random variable $S$ and a standard Gaussian vector $G$ such that 
$$
 \dist^{2}(  X_1, W^{(1)} ) \stackrel{d}{=} \|P_1 G \|_\alpha ^2 S. 
$$
By Corollary \ref{cor:concnorm} and \eqref{eq:weyl_NI}, if $n$ is large enough, on the event $F_n$, with probability at least 
\begin{equation*}\label{eq:Fn1v}
1 - 2 \exp \left( -  \delta    \frac{  (  \varepsilon c_3  n \eta )^{\frac {2}{ \alpha}}  } { n^{\frac 2 \alpha - 1}}  \right) \geq 1 - 2 \exp ( - c n^{\delta})
\end{equation*}
the lower bound $$
\|P_1 G \|_\alpha \geq \delta  \left(  \varepsilon c_3 n  \eta \right)^{\frac 1 \alpha} 
$$ 
holds. Let us denote this enlarged event by $\bar F_n$. Hence, for some $c >0$, on $\bar F_n$, we have from \eqref{eq:boundvi}
$$
v_1 ^2   \leq  c   \eta^{2( 1 - 1 /  \alpha)}  S^{-1} . 
$$
In summary, we have shown that
$$
|v_1  |   \leq  c   \eta^{ 1 - 1 /  \alpha}  S^{- 1 / 2} + \ind_{\bar F_n^c},
$$
where $\bar F_n^c$ has probability at most  $c \exp ( - c n^{\delta})$, for some $c >1$. For $0 < \delta' < 1$, it yields,
\begin{eqnarray*}
\bE \exp\left\{ \left( \frac{ |v_1| }{  c  \eta^{ 1 - 1 /  \alpha}  } \right)^ {\delta'} \right\} & \leq &  \bE \exp\left\{  S^{- \delta' / 2}  +   \eta^{ \delta'  ( 1 /  \alpha - 1 ) } \ind_{\bar F_n^c}  \right\} \\
& \leq &  \sqrt { \bE e^ { 2 S^{- \delta' / 2}  }  \bE e^ { 2 \eta^{ \delta'  ( 1 /  \alpha - 1 ) } \ind_{\bar F_n^c} }  } \\
& \leq &  \sqrt { \bE e^ { 2 S^{- \delta' / 2}  } ( 1 + e^ { 2 n ^ {\delta'/2} } c e^{  - c n^{\delta}}) },
\end{eqnarray*}
where we have used that $\eta \geq 1/n$ and $\alpha < 2$. Using, Lemma \ref{le:tailS}, if $\delta'$ is small enough, the above is uniformly bounded in $n$. This gives our statement for any $\delta'' < \delta'$.  \end{proof}

\section{Analysis of the limit recursive equation}

\label{sec:RDE}

We next turn to the analysis of the limiting equation describing the resolvent, in case $\alpha<1$.
Let $\cH$ be the set of analytic functions $h : \bC_+ \to \bC_+$ such that for all $z \in \bC_+$, $|h(z)| \leq \Im (z)^{-1}$. We also consider the subset $\cH_0$ of functions of $\cH$ such  that for all $z \in \bC_+$, $h( - \bar z) = - \bar h ( z)$ . For every $n$ and $1 \leq i \leq n$, the function $z \mapsto R (z)_{ii}$ is  in $\cH$. It is proved in \cite{BCC} that $R_{ii}$ converges weakly for the finite dimensional convergence to the random variable $R_0$ in $\cH_0$ which is the unique solution of the recursive distributional equation for all $z \in \bC_+$,
\begin{equation}\label{eq:RDE}
R_0(z) \stackrel{d}{=}  - \left(Êz + \sum_{k \geq 1} \xi_k R_k (z)\right)^{-1},
\end{equation}
where $\{\xi_k\}_{k \geq 1}$ is a Poisson process on $\bR_+$ of intensity measure $\frac{\alpha}{2} x^{Ê\frac \alpha  2 - 1} dx$, independent of $(R_k)_{k\geq 1}$, a sequence of independent copies of $R_0$. In \cite{BCC}, $R_0(z)$ is shown to be the resolvent at a vector of a random self-adjoint operator defined associated to Aldous' Poisson Weighted Infinite Tree. We define $\bar \bC_+ = \{z \in \bC : \Im (z) \geq 0Ê\} = \bC_+ \cup \bR$. In the following statement, we establish a new property of this resolvent. 

\begin{theorem}[Unicity for the resolvent recursive equation]\label{th:unicityRDE}
Let $0  < \alpha < 2/3$.  There exists $E_\alpha > 0 $ such that for any $z \in \bar \bC_+$ with $|z| \geq E_\alpha$, there is a unique random variable $R_0 (z)$ on $\bar \bC_+$ which satisfies
 the distributional equation \eqref{eq:RDE} and   $\bE | R_0(z) |^ {\frac \alpha 2 } < + \infty$. Moreover, for any $0 < \kappa < \alpha / 2$, there exists  $E_{\alpha,\kappa} \geq E_\alpha$ and $c>0$, such that  for any $z \in  \bC_+$ with $|z| \geq E_{\alpha,\kappa}$, 
$$
\bE \Im R_0(z) ^ {\frac \alpha 2 } \leq c  \, \Im ( z ) ^ {\kappa} . 
$$
In particular, if $\Im( z) = 0$, $R_0(z)$ is a real random variable. 
\end{theorem}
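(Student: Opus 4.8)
The plan is to reduce the distributional equation \eqref{eq:RDE} to the \emph{same} scalar fixed point equation $y=\varphi_{\alpha,z}(y)$ that already governs the finite-$n$ model in Section~\ref{sec:apfp}, to exploit that $\varphi_{\alpha,z}$ is a strict contraction once $|z|$ is large, and then to run a \emph{second}, self-consistent estimate at the level of the scalar $q=\bE[(\Im R_0(z))^{\alpha/2}]$ to show that $q$ is as small as $\Im(z)^{\kappa}$. Concretely, given any solution $R_0(z)$ of \eqref{eq:RDE} with $\bE|R_0(z)|^{\alpha/2}<\infty$ I would set $S=\sum_{k\ge 1}\xi_kR_k(z)$ (which converges absolutely, the underlying $\alpha/2$-stable subordinator being a.s.\ finite and each $\Im R_k\ge0$) and $y:=\bE[(-iR_0(z))^{\alpha/2}]\in\cK_{\alpha/2}$; then, using $(-iw)^{-\alpha/2}=\Gamma(\alpha/2)^{-1}\int_0^\infty t^{\alpha/2-1}e^{itw}\,dt$ (cf.\ \eqref{eq:gamma}) together with the Poisson exponential formula for $S$ — exactly the computation of $I(z)$ and $J(z)$ in Section~\ref{sec:apfp}, with $R^{(1)}_{kk}$ replaced by $R_k(z)$ and the empirical mean by $\bE$ — one finds
\[
\bE\bigl[e^{itS}\bigr]=\exp\bigl(-\Gamma(1-\tfrac\alpha2)\,t^{\alpha/2}\,y\bigr),\qquad t>0,
\]
hence $y=\varphi_{\alpha,z}(y)$ and $\bE[-iR_0(z)]=\psi_{\alpha,z}(y)$.

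For existence and uniqueness I would invoke Lemma~\ref{le:36BDG}: $\varphi_{\alpha,z}$ maps $\cK_{\alpha/2}$ into itself and is Lipschitz there with constant $c(\alpha)|z|^{-\alpha}$, while $|\varphi_{\alpha,z}(0)|=|z|^{-\alpha/2}$. Choosing $E_\alpha$ with $c(\alpha)E_\alpha^{-\alpha}<1$, for $|z|\ge E_\alpha$ the map $\varphi_{\alpha,z}$ is a strict contraction of the complete space $\cK_{\alpha/2}$, hence has a unique fixed point $y(z)$ with $|y(z)|\le(1-c(\alpha)|z|^{-\alpha})^{-1}|z|^{-\alpha/2}$; since the Lipschitz constant depends only on $|z|$, this holds for all $z\in\bar\bC_+$ with $|z|\ge E_\alpha$. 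Existence of $R_0(z)$ I would obtain by iterating the map sending a law $\nu$ to the law of $-(z+\sum_k\xi_kR_k)^{-1}$ (with $R_k$ i.i.d.\ $\sim\nu$), started from $\delta_{-1/z}$: the $\alpha/2$-moments stay bounded (the total mass $\bE|R_0|^{\alpha/2}=|\Gamma_\nu|$ of the spectral measure of $-iR_0$ is at most $|y(z)|/\cos(\pi\alpha/4)$, using $\alpha<2$) and the successive laws are Cauchy for a transportation distance of order $p<\alpha/2$, via $|R_0-\tilde R_0|\le|z+S|^{-1}|z+\tilde S|^{-1}|S-\tilde S|$, the fact that $|z+S|\ge|z|/2$ off an event of probability $O(|z|^{-\alpha})$, and a stable-sum moment bound $\bE|S-\tilde S|^p\lesssim(\bE|R_k-\tilde R_k|^{\alpha/2})^{2p/\alpha}$. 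The same contraction shows that any two solutions with finite $\alpha/2$-moment have the same law, which is the asserted uniqueness on $\{|z|\ge E_\alpha\}\cap\bar\bC_+$.

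For the quantitative bound — the ``new fixed point argument'' — set $q=q(z)=\bE[(\Im R_0(z))^{\alpha/2}]$; note first $q\le\bE|R_0|^{\alpha/2}\lesssim|z|^{-\alpha/2}$ from the previous step. From $R_0=-(z+S)^{-1}$ one has the exact identity $\Im R_0=(\Im z+\Im S)/|z+S|^2$ with $\Im S=\sum_{k\ge1}\xi_k\Im R_k\ge0$, a positive $\alpha/2$-stable variable of scale $\asymp q^{2/\alpha}$. I would then estimate $q=\bE[(\Im z+\Im S)^{\alpha/2}|z+S|^{-\alpha}]$ by splitting on the sizes of $\Im S$ and $|S|$: on $\{\Im S\le\Im z\}$ bound $(\Im R_0)^{\alpha/2}\le(2\Im z)^{\alpha/2}|z+S|^{-\alpha}$ and use $\bE|z+S|^{-\alpha}=\bE|R_0|^\alpha\lesssim|z|^{-\alpha}$; on $\{\Im z<\Im S,\ |S|\le|z|/2\}$ bound $(\Im R_0)^{\alpha/2}\le(2\Im S)^{\alpha/2}(|z|/2)^{-\alpha}$ and note that, with $\Im S=q^{2/\alpha}T$ for a fixed positive $\alpha/2$-stable $T$, $\bE[(\Im S)^{\alpha/2}\ind_{\Im z<\Im S\le|z|/2}]=q\,\bE[T^{\alpha/2}\ind_{\Im z\,q^{-2/\alpha}<T\le|z|q^{-2/\alpha}/2}]\asymp q\log(|z|/\Im z)$, the scale cancelling in the ratio of truncation levels; the remaining region $\{|S|>|z|/2\}$, where $z+S$ may approach the real axis near $-z$, is controlled through the boundedness and power-law decay of the $\alpha/2$-stable density of $S$ together with the single-big-jump structure of $S$. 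This yields a self-consistent inequality of the shape
\[
q\ \le\ c_1|z|^{-\alpha}(\Im z)^{\alpha/2}\ +\ c_1|z|^{-\alpha}\,q\,\log(|z|/\Im z)\ +\ (\text{lower order}),
\]
where (using $q\lesssim|z|^{-\alpha/2}$ to cap the logarithm by $c\log|z|$) for $|z|\ge E_{\alpha,\kappa}$ large enough the coefficient of $q$ is $<1$; absorbing the logarithm into the exponent then gives $q(z)\le c\,\Im(z)^{\kappa}$ for any prescribed $0<\kappa<\alpha/2$ (the loss $\kappa<\alpha/2$ is precisely what swallows the logarithm). Taking $\Im z=0$ directly — the same decomposition with $\Im S\le\varepsilon$ and $\varepsilon\downarrow0$ — forces $q=0$, i.e.\ $\Im R_0(z)=0$ a.s., so $R_0(z)$ is real-valued.

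The hard part will be the analysis of the region where $z+S$ comes close to the real axis, in particular near $-z$: one must bound $\bE$ of $(\Im z+\Im S)^{\alpha/2}|z+S|^{-\alpha}$ there while keeping track of how small $\Im S$ can be, and get a bound with a genuine gain in $|z|$ and \emph{no} additive constant floor, so that the self-consistent inequality closes all the way down to $\Im z=0$. This is exactly the step that dictates the restriction $\alpha<2/3$ (equivalently $\tfrac{3\alpha}{2}<1$): it is what makes the integrals of the $\alpha/2$-stable density of $S$ against powers of $|z+S|^{-1}$, combined with the conditional law of $\Im S$ on these near-pole events, converge with the required power of $|z|$. A more routine difficulty is that the law of $S$ is a function of the whole spectral measure of $-iR_0$, not of the scalar $y$ alone, so the uniqueness in the second step must be run as a transportation contraction for the full law rather than for the scalar order parameter.
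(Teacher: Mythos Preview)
Your reduction of \eqref{eq:RDE} to the scalar equation $y=\varphi_{\alpha,z}(y)$ is correct and is indeed how the paper obtains the Stieltjes transform of $\mu_\alpha$; but as you yourself note at the end, the scalar $y=\bE[(-iR_0)^{\alpha/2}]$ does \emph{not} determine the law of a complex-valued $R_0$, so it cannot by itself yield uniqueness in law. The paper is explicit on this point: ``as $R_0$ is complex-valued, it is not enough to get a fixed point argument for the moments of $R_0$ as was done previously.'' Its remedy is to track not one scalar but the \emph{function}
\[
\gamma_z(u)=\Gamma(1-\tfrac\alpha2)\,\bE\bigl[(H(z).u)^{\alpha/2}\bigr],\qquad H(z)=-iR_0(z),\quad u\in\cK_1^+,
\]
where $h.u:=\Re(u)\,h+\Im(u)\,\bar h$; this encodes fractional moments of \emph{all} real linear combinations of $R_0$ and $\bar R_0$ and, crucially, does determine the law of $R_0$ (Lemma~\ref{le:gammatochi}). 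A fixed point equation $\gamma_z=G_z(\gamma_z)$ is then derived (Lemma~\ref{le:fpgamma}), and $G_z$ is shown to be a contraction on a weighted H\"older space $\cH_\beta$ for the norm $\|\cdot\|_{\beta,\e}$ (Lemma~\ref{le:ContFh}); the restriction $\alpha<2/3$ enters precisely here, as the integrability of the kernel of $G_z$ over $\theta\in[0,\pi/2]$ requires $\tfrac{3\alpha}{2}+\e<1$. The quantitative bound then follows not from a self-consistent inequality on $q$ but from the continuity of $G_z$ in $z$ (Lemma~\ref{le:contFh}): one compares $\gamma_z$ to $\gamma_E$ (the real solution at $\Im z=0$), notes $\gamma_E(e^{i\pi/4})=0$, and reads off $\bE[(\Im R_0)^{\alpha/2}]=\gamma_z(e^{i\pi/4})/\Gamma(1-\tfrac\alpha2)$.

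Your proposed alternatives both have genuine gaps. For the transportation contraction: the sum $S-\tilde S$ is dominated by an $\alpha/2$-stable variable with scale $(\bE|R_k-\tilde R_k|^{\alpha/2})^{2/\alpha}$, so $\bE|S-\tilde S|^p<\infty$ only for $p<\alpha/2$; your displayed bound then relates $d_p$ of the iterates to $d_{\alpha/2}^{2p/\alpha}$, not to $d_p$, and no single Wasserstein metric closes. On the bad event $\{|S|>|z|/2\}$ you also need a bound on $|R_0-\tilde R_0|$ that does not use $\Im z$ in the denominator, and none is offered. For the self-consistent inequality on $q=\bE[(\Im R_0)^{\alpha/2}]$: the decomposition is reasonable, but the ``near-pole'' contribution $\bE[(\eta+\Im S)^{\alpha/2}|z+S|^{-\alpha}\ind_{|S|>|z|/2}]$ depends on the \emph{joint} law of $(\Re S,\Im S)$, hence on the joint law of $(\Re R_0,\Im R_0)$ and not on $q$ alone, so the inequality cannot close in the single scalar $q$ without further input. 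In short, both obstacles are exactly what the paper's function-valued order parameter $\gamma_z$ is designed to overcome; a purely scalar or purely transportation argument along the lines you sketch does not appear to go through.
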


The main part of this section is devoted to 
the proof of Theorem \ref{th:unicityRDE}. We will then analyze its consequence on our random matrix and prove Theorem \ref{th:locvect}  in section \ref{sec:prooflocvect}. As usual, it is based on a fixed 
point argument. However, as $R_0$ is complex-valued, it is not enough to get a fixed point argument for the moments of $R_0$ as was done previously in
\cite{BG08,BDG09}. Instead, we prove that moments of  linear combinations
of $R_0$ and its conjuguate satisfy a fixed point equation. We then show that this new
fixed point equation is well defined and for sufficiently large $z$, has a unique solution.  

\subsection{Proof of Theorem \ref{th:unicityRDE}}
We shall give the proof of  Theorem \ref{th:unicityRDE} in this section,
but postpone the proofs of technical lemmas to the next subsection.
By construction $ H (z) = - i R_0 ( z) \in \cK_{1}$ as
well as   $\bar  H (z) = i \bar R_0 ( z) \in \cK_{1}$ (recall that for $\beta \in [0,2]$ , $\cK_\beta = \{z \in \bC : | \arg (z) | \leq \frac {\pi \beta}{2} \}$).
For ease of notation, we define the bilinear form $h.u$ for $ h \in \bC$ and $u \in \cK_1^+ = \cK_1 \cap \bar \bC_+$ given by
$$
h.u = \Re ( u )  h + \Im (u ) \bar h.
$$
Note that if $h \in \cK_1$ then $h.u \in \cK_1$. We set 
$$
\gamma_z ( u ) =   \Gamma (1 - \frac \alpha 2)  \bE ( H (z).u )^{\frac \alpha 2} \; \in \cK_{\alpha /2}.
$$
We let $\cC_\alpha$ (resp. $\cC_\alpha'$) denote the set of continuous functions $g$ from $ \cK_1^+$ to $\cK_{\alpha /2}$ (resp. $\bC$) such that $g( \lambda u ) = \lambda^{\alpha /2 } g(u)$, for all $\lambda > 0$. Then, for $\alpha/2 \le \beta \le 1$,
 we introduce the norm 
$$
\| g  \|_{\beta} = \max_{ u \in S^1_+} | g(u) | + \max_{ u \ne v \in S^1_+} \frac{ | g(u)  - g(v) | }{ |u - v | ^ {  \beta} } \left(  |  i.u | \wedge | i.u |  \right)^ {\beta - \frac  \alpha 2}
$$ 
where $S_+^1=\{u\in  \cK_1^+, |u|=1\}$. 
We then define $ \cH_{\beta}$ (resp. $ \cH'_{\beta}$) as the set of functions $g$ in $\cC_\alpha$ (resp. $\cC_\alpha'$) such that $\| g  \|_\beta$ is finite. Note that $\| g  \|_{\beta} $ contains two parts : the infinite norm and a weighted $\beta$-H\"older norm which get worse as the argument of $u$ or $v$ gets close to $\pi/4$. Notice also that $ \cH'_{\beta}$ is a real vector space and $ \cH_{\beta}$ is a cone.

The starting point of our analysis is that $\gamma_z$ belongs to $\cH_{\beta}$.
\begin{lemma}[Regularity of fractional moments]
\label{le:regFM}
Let $0 < \alpha < 2$ and $z \in \bar \bC_+$,
\begin{itemize} 
\item[-] Let $R_0(z)$ be a solution of \eqref{eq:RDE} such that $\bE | R_0(z) |^ {\frac \alpha 2 } < + \infty$. Then for all $0 < \beta<1$, all $z\in \bar \bC_+ \backslash \{ 0\} $,  $\bE | R_0(z)| ^\beta \leq c  |Ê\Re ( z) |Ê^{-\beta} $ for some constant $c = c(\alpha,\beta)$. 
\item[-]
 Let $H$ be a random variable in $\cK_{1}$ such that $\bE | H|^ { \frac\alpha  2} $ is finite. If we define for $u \in \cK_1 ^ +$, 
$ \gamma( u ) =   \bE (  H.u )^{\frac \alpha 2} $, 
then $\gamma \in \cH_{\beta}$ for all $\alpha/2 \le \beta \le 1$ and $\| \gamma \|_\beta \leq c   \bE | H|^ { \frac \alpha  2}$ for some universal constant $c > 0$. 
\end{itemize}\end{lemma}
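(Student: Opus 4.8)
The plan is to treat the two bullets separately. The first reduces, by projecting onto the real axis, to a one–dimensional estimate on $\alpha/2$–stable laws. Starting from the distributional identity $R_0(z)=-(z+S)^{-1}$ with $S=\sum_{k\ge1}\xi_k R_k(z)$, I would use $|R_0(z)|=|z+S|^{-1}\le|\Re z+\Re S|^{-1}$, so that it suffices to bound $\bE|\Re z+\Re S|^{-\beta}$. Now $\{(\xi_k,R_k(z))\}$ is a Poisson process on $\bR_+\times\bC$ with intensity $\nu\otimes\mathrm{law}(R_0(z))$, $\nu$ the $\alpha/2$–stable intensity of \eqref{eq:RDE}; pushing it forward by $(x,r)\mapsto x\Re r$ identifies $\Re S=\sum_k\xi_k\Re R_k(z)$ as a one–dimensional strictly $\alpha/2$–stable variable, whose L\'evy measure has total weight a universal multiple of $\bE|\Re R_0(z)|^{\alpha/2}\le\bE|R_0(z)|^{\alpha/2}<\infty$ — this is exactly where the finiteness hypothesis is used (it also makes $S$ absolutely convergent, since $\alpha/2<1$). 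Writing $\Re S=\sigma_0\tilde S$ with $\tilde S$ of unit scale, standard facts on one–dimensional stable densities give, uniformly in the skewness, a density $\tilde q$ with $\|\tilde q\|_\infty\le C(\alpha)$ and $\tilde q(t)\le C(\alpha)|t|^{-\alpha/2-1}$ for $|t|\ge1$; hence $\bE|\Re z+\Re S|^{-\beta}=\sigma_0^{-\beta}\int|E'+t|^{-\beta}\tilde q(t)\,dt$ with $E'=\Re z/\sigma_0$, and splitting according to whether $|E'+t|$ is comparable to $|E'|$ or small (using $\beta<1$, so $\int_{|s|\le R}|s|^{-\beta}\,ds\asymp R^{1-\beta}$) bounds this by $C(\alpha,\beta)\sigma_0^{-\beta}(1\vee|E'|)^{-\beta}$. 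The powers of $\sigma_0$ cancel, leaving $\bE|R_0(z)|^\beta\le C(\alpha,\beta)|\Re z|^{-\beta}$ with a constant free of the particular solution; the degenerate case $\Re R_0(z)=0$ a.s. (so $\Re S=0$) is immediate.

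For the second bullet, membership of $\gamma$ in $\cC_\alpha$ is routine: $\gamma(\lambda u)=\lambda^{\alpha/2}\gamma(u)$ since $H.(\lambda u)=\lambda(H.u)$; $\gamma(u)\in\cK_{\alpha/2}$ since $H.u\in\cK_1$ and $\cK_{\alpha/2}$ is a closed cone; continuity follows from $|(H.u)^{\alpha/2}|\le2^{\alpha/4}|u|^{\alpha/2}|H|^{\alpha/2}$ by dominated convergence, and the same bound gives $\max_{u\in S^1_+}|\gamma(u)|\le2^{\alpha/4}\bE|H|^{\alpha/2}$, the $\sup$–part of $\|\gamma\|_\beta$. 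For the weighted H\"older part it is enough to prove, for every $h\in\cK_1$ and $u\ne v\in S^1_+$,
\[
\bigl|(h.u)^{\alpha/2}-(h.v)^{\alpha/2}\bigr|\le c\,|h|^{\alpha/2}\,|u-v|^{\beta}\bigl(|i.u|\wedge|i.v|\bigr)^{-(\beta-\alpha/2)},
\]
with $c$ universal, and then set $h=H$ and take expectations. By homogeneity I may take $|h|=1$, $h=e^{i\psi}$, and record $\Re(h.u)=(\Re u+\Im u)\cos\psi\ge\cos\psi$ and $\Im(h.u)=(\Re u-\Im u)\sin\psi$, so $|\Im(h.u)|=|i.u|\,|\sin\psi|$. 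Put $m=|i.u|\wedge|i.v|$, $\delta=|u-v|$, $c_0=\sqrt2/\pi$. If $\delta\ge c_0m$: use the global H\"older bound $|w_1^{\alpha/2}-w_2^{\alpha/2}|\le C|w_1-w_2|^{\alpha/2}$ on $\cK_1$ together with $|h.u-h.v|\le2\delta$, absorbing the extra powers via $\delta/m\ge c_0$. If $\delta<c_0m$: then $u,v$ lie on the same side of the point $e^{i\pi/4}$ (on opposite sides one always has $m\le\tfrac{\pi}{\sqrt2}\delta$, because $\theta\mapsto\cos\theta-\sin\theta$ is $\sqrt2$–Lipschitz and vanishes at $\pi/4$), hence $\Im(h.u),\Im(h.v)$ have the same sign; therefore every point $w$ of the segment $[h.u,h.v]\subset\cK_1$ satisfies $\Re w\ge\cos\psi$ and $|\Im w|\ge m\,|\sin\psi|$, whence $|w|\ge m$, and integrating $\frac{d}{dt}\sigma(t)^{\alpha/2}=\tfrac{\alpha}{2}\sigma(t)^{\alpha/2-1}\sigma'(t)$ along that segment gives $\le\alpha\,m^{\alpha/2-1}\delta$, which is $\le\alpha\,\delta^{\beta}m^{-(\beta-\alpha/2)}$ since $\delta\le m$ and $\beta\le1$. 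Restoring $|h|^{\alpha/2}$ by homogeneity, integrating in $h$, and adding the $\sup$–part yields $\|\gamma\|_\beta\le c\,\bE|H|^{\alpha/2}$.

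I expect the main obstacle to be the second bullet in the neighbourhood of the ray $\arg u=\pi/4$, where $h.u$ crosses the imaginary axis and $(h.u)^{\alpha/2}$ develops a corner — this is exactly why $\|\cdot\|_\beta$ carries the degenerating weight $(|i.u|\wedge|i.v|)^{\beta-\alpha/2}$ — and the content of the case split is the geometric fact that opposite sides of $\pi/4$ force $|i.u|\wedge|i.v|\lesssim|u-v|$, which lets the crude triangle/global–H\"older bound take over precisely where the segment argument fails. In the first bullet the only delicate point is obtaining a constant depending on $(\alpha,\beta)$ alone rather than on the solution, which the rescaling by the stable scale $\sigma_0$ resolves.
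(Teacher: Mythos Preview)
Your proposal is correct. For the first bullet your argument is essentially the paper's: both reduce $|R_0(z)|\le|\Re z+\Re S|^{-1}$ to a negative fractional moment of a one–dimensional $\alpha/2$–stable variable and use the bounded density plus polynomial tail of that law. The paper decomposes $\Re S$ as $aS-bS'$ with $S,S'$ independent one–sided stables and conditions on one of them (its Lemma~\ref{momentinv}), whereas you treat the possibly skewed stable law directly after rescaling by its scale $\sigma_0$; these are cosmetic variants of the same estimate, and your observation that $\sigma_0^{-\beta}(1\vee|E'|)^{-\beta}\le|\Re z|^{-\beta}$ in both regimes is exactly what makes the constant depend only on $(\alpha,\beta)$.

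For the second bullet the paper is more streamlined: it first records the two inequalities
\[
|x^{\alpha/2}-y^{\alpha/2}|\le c\,|x-y|^\beta\,(|x|\wedge|y|)^{\alpha/2-\beta}\quad(x,y\in\cK_1),\qquad |i.u|\,|h|\le|h.u|\le\sqrt2\,|u|\,|h|,
\]
and then applies them with $x=H.u$, $y=H.v$ to get the weighted H\"older bound in one line. Your case split according to whether $|u-v|\gtrless c_0(|i.u|\wedge|i.v|)$ is precisely a direct proof of the first inequality in this specific setting (global $\alpha/2$–H\"older when $|x-y|\gtrsim|x|\wedge|y|$, and the mean–value estimate along the segment when $|x-y|\lesssim|x|\wedge|y|$), while your lower bound $|w|\ge m$ on the segment is a reproof of the left half of the second inequality. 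So the content is the same; the paper just packages it more cleanly, and you could shorten your argument considerably by isolating those two facts.
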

Let $ h \in \cK_1$ and $g \in \cH_{\beta}$. We define formally the function given  for  $ u \in S^1_+$ by  
$$
F_h ( g ) (u) = Ê\int_0 ^ {\frac \pi 2}   \hspace{-3pt} d\theta (\sin 2 \theta  )^{\frac \alpha 2 -1}\, \int_{0}^\infty  \hspace{-3pt} dy\,  y^{-\frac{\alpha}{2}-1}Ê \int_0^\infty  \hspace{-3pt} dr \, r^{\frac{\alpha}{2}-1} e^{- r h .e^{Êi \theta }   }\left( e^{-r^{\frac \alpha 2 } g(e^{i\theta})} - e^{- y  r h .  u }e^{-r^{\frac \alpha 2}g(e^{i \theta} + y   u )}\right).
$$
We next see  that $F_{-iz}$ is closely related to a fixed point equation satisfied by $\gamma_z$.  
\begin{lemma}[Fixed point equation for fractional moments]\label{le:fpgamma}
Let $z \in \bar \bC_+$, $ 0 < \alpha  < 2$ and $R_0(z)$ solution of \eqref{eq:RDE} such that   $\bE | R_0(z) |^ {\frac \alpha 2 } < + \infty$. Then for all $u \in \cK_1^+$, $$\gamma_z (u )  = c_\alpha F_{-iz} ( \gamma_z )  (\check u ),$$
where 
$$c_\alpha =   \frac {  \alpha }  { 2^{ \frac \alpha 2 }  \Gamma ( \alpha / 2 ) ^2 \Gamma ( 1 - \alpha / 2 ) } \quad \hbox{ and } \quad \check u = \Im (u) + i \Re (u). $$
\end{lemma}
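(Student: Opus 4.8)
The plan is to prove the identity by a direct computation, showing that both sides reduce — up to the explicit constant $c_\alpha$ — to $\bE(H(z).u)^{\alpha/2}$, where $H(z)=-iR_0(z)\in\cK_1$. First I would rewrite \eqref{eq:RDE}: dividing by $-i$ and setting $H(z)=-iR_0(z)$, $H_k(z)=-iR_k(z)$ (all in $\cK_1$), the distributional equation reads
$$H(z)^{-1}\ \stackrel{d}{=}\ -iz+\sum_{k\geq 1}\xi_k H_k(z),$$
and since $-iz\in\cK_1^+$ for $z\in\bar\bC_+$ and each summand has nonnegative real part, $H(z)^{-1}\in\cK_1$. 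Writing $W=\sum_{k\geq1}\xi_k H_k$, the whole computation rests on the identity
$$\bE\, e^{-rW.v}=e^{-r^{\alpha/2}\gamma_z(v)}\qquad (r>0,\ v\in\cK_1^+),$$
which I would obtain from the L\'evy--Khintchine (Campbell) formula for the Poisson process $\{(\xi_k,H_k)\}$, using $W.v=\sum_k\xi_k\,(H_k.v)$, together with the representation $x^{\alpha/2}=\tfrac{\alpha}{2\Gamma(1-\alpha/2)}\int_0^\infty y^{-\alpha/2-1}(1-e^{-xy})\,dy$ from \eqref{eq:gamma} applied to $x=H.v\in\cK_1$; the integrability needed for Campbell's formula is supplied by $\bE|H|^{\alpha/2}<\infty$ and $\Re(W.v)\geq 0$.

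Next I would substitute this into $c_\alpha F_{-iz}(\gamma_z)(\check u)$. Set $h=-iz$, take $g=\gamma_z$, and use the Laplace identity together with the bilinearity of $(\cdot)\mapsto h.(\cdot)$ (so $h.e^{i\theta}+W.e^{i\theta}=(h+W).e^{i\theta}$ and $h.e^{i\theta}+y\,h.\check u=h.(e^{i\theta}+y\check u)$): then the two exponential prefactors inside $F$ collapse,
$$e^{-rh.e^{i\theta}}e^{-r^{\alpha/2}\gamma_z(e^{i\theta})}=\bE_W e^{-rH^{-1}.e^{i\theta}},\qquad e^{-rh.e^{i\theta}}e^{-yrh.\check u}e^{-r^{\alpha/2}\gamma_z(e^{i\theta}+y\check u)}=\bE_W e^{-rH^{-1}.(e^{i\theta}+y\check u)}.$$
The $r$-integral, via $\int_0^\infty r^{\alpha/2-1}e^{-rA}\,dr=\Gamma(\alpha/2)A^{-\alpha/2}$ on $\cK_1$, turns the bracket into $\Gamma(\alpha/2)\,\bE_W\big[(H^{-1}.e^{i\theta})^{-\alpha/2}-(H^{-1}.(e^{i\theta}+y\check u))^{-\alpha/2}\big]$. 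Writing $P=H^{-1}.e^{i\theta}$, $Q=H^{-1}.\check u$, the $y$-integral $\int_0^\infty y^{-\alpha/2-1}\big(P^{-\alpha/2}-(P+yQ)^{-\alpha/2}\big)\,dy$ is $\alpha/2$-homogeneous in $Q/P$ (rescale $y\mapsto y/\lambda$), hence equals $c(\alpha)\,Q^{\alpha/2}P^{-\alpha}$ with $c(\alpha)=\Gamma(1-\tfrac\alpha2)\Gamma(\alpha)/\Gamma(1+\tfrac\alpha2)$. Finally the substitution $t=\tan\theta$ turns $\int_0^{\pi/2}(\sin 2\theta)^{\alpha/2-1}(H^{-1}.e^{i\theta})^{-\alpha}\,d\theta$ into $2^{\alpha/2-1}\int_0^\infty t^{\alpha/2-1}(H^{-1}+t\,\overline{H^{-1}})^{-\alpha}\,dt$, which a standard Beta integral evaluates to $2^{\alpha/2-1}\tfrac{\Gamma(\alpha/2)^2}{\Gamma(\alpha)}|H^{-1}|^{-\alpha}$.

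Collecting the three integrals, $F_{-iz}(\gamma_z)(\check u)=C(\alpha)\,\bE\big[(H^{-1}.\check u)^{\alpha/2}|H^{-1}|^{-\alpha}\big]$ with $C(\alpha)$ an explicit product of $\Gamma(\alpha/2)$, $\Gamma(\alpha)$, $c(\alpha)$ and $2^{\alpha/2-1}$. The elementary identity $(H^{-1}.\check u)/|H^{-1}|^2=(H^{-1})^{-1}.u=H.u$ — a direct consequence of the definitions of $h.(\cdot)$ and $\check u$, and exactly why the argument $\check u$ appears — gives $(H^{-1}.\check u)^{\alpha/2}|H^{-1}|^{-\alpha}=(H.u)^{\alpha/2}$, so $F_{-iz}(\gamma_z)(\check u)=C(\alpha)\,\bE(H.u)^{\alpha/2}$. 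Since $\gamma_z(u)=\Gamma(1-\tfrac\alpha2)\bE(H.u)^{\alpha/2}$, the claim reduces to checking that the constant $\Gamma(1-\tfrac\alpha2)/C(\alpha)$ equals $c_\alpha$, a routine bookkeeping of $\Gamma$-factors (using $\Gamma(1+\tfrac\alpha2)=\tfrac\alpha2\Gamma(\tfrac\alpha2)$).

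The substantial part is the first step — making the Laplace identity rigorous, the only place where \eqref{eq:RDE} and the hypothesis $\bE|R_0(z)|^{\alpha/2}<\infty$ genuinely enter — and then, in the unwinding, justifying every interchange of $\bE_W$ with the three integrals and every fractional-power manipulation on the cone $\cK_1$ (consistency of branch cuts in $(P+yQ)^{-\alpha/2}=P^{-\alpha/2}(1+yQ/P)^{-\alpha/2}$, in $(A+t\bar A)^{-\alpha}=A^{-\alpha}(1+t\bar A/A)^{-\alpha}$, and the contour rotation in the Beta integral). For $z\in\bC_+$ this is tame: $|H(z)|=|R_0(z)|\leq(\Im z)^{-1}$ and $\Re(H^{-1}.e^{i\theta})=(\cos\theta+\sin\theta)\Re H^{-1}\geq\Im z>0$, so all kernels are bounded and Fubini applies directly; the boundary case $\Im z=0$ is then obtained by continuity, using the a priori bound $\bE|R_0(z)|^\beta\leq c|\Re z|^{-\beta}$ from Lemma~\ref{le:regFM}. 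Finally, the regularity $\gamma_z\in\cH_\beta$ (again Lemma~\ref{le:regFM}) is what makes the "formal" functional $F_{-iz}(\gamma_z)$ meaningful, controlling the integrands near $\theta\in\{0,\pi/2\}$ and $y\to 0$.
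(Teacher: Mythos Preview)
Your proposal is correct and is essentially the paper's own proof run in reverse. The paper starts from $\gamma_z(u)=\Gamma(1-\tfrac\alpha2)\,\bE\big[(D.\check u)^{\alpha/2}|D|^{-\alpha}\big]$ with $D=H^{-1}=h+\sum_k\xi_kH_k$, applies the two integral representations
\[
w^{\alpha/2}=\tfrac{\alpha/2}{\Gamma(1-\alpha/2)}\int_0^\infty x^{-\alpha/2-1}(1-e^{-xw})\,dx,
\qquad
|w|^{-\alpha}=\frac{2^{1-\alpha/2}}{\Gamma(\alpha/2)^2}\int_0^{\pi/2}\!\!(\sin2\theta)^{\alpha/2-1}\int_0^\infty r^{\alpha-1}e^{-re^{i\theta}.w}\,dr\,d\theta,
\]
performs the change of variable $x=ry$, and then uses the L\'evy--Khintchine formula $\bE e^{-r\sum_k\xi_k(H_k.v)}=e^{-r^{\alpha/2}\gamma_z(v)}$ to recognize $F_{-iz}(\gamma_z)(\check u)$. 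You instead start from $F_{-iz}(\gamma_z)(\check u)$, apply L\'evy--Khintchine first, and then undo each of the three integrals in turn via the Gamma identity, a Beta-type $y$-integral, and a second Beta integral after the tangent substitution, landing on $\bE(H.u)^{\alpha/2}$ through the same key algebraic fact $(H^{-1}.\check u)/|H^{-1}|^2=H.u$.

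The only practical difference is that the paper's direction lets it quote the two integral representations as black boxes, whereas your direction forces you to \emph{evaluate} those same integrals --- hence the extra analytic-continuation/contour-rotation bookkeeping for the $y$- and $\theta$-integrals when $P,Q,H^{-1}$ are genuinely complex in $\cK_1$. That bookkeeping is real but routine (analyticity in $P,Q\in\cK_1$ plus agreement for $P,Q>0$), so nothing is missing. The justification of Fubini you sketch is also exactly what the paper does, deferring to Lemmas~\ref{le:regFM} and~\ref{le:DefFh}.
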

To prove this lemma, we will properly define and  study the function  $F_h$ at least for
some values of  $(h,\alpha,\beta)$. We shall prove  that 
\begin{lemma}[Domain of definition of $F_h$] \label{le:DefFh}
Let $h \in \cK_1$ with  $|h| \geq 1$, $ 0 < \alpha < 1$ and $\beta$ such that 
$$
\frac \alpha 2  < \beta < 1 - \frac{ \alpha}{ 2}. 
$$
Then $F_h$ defines a map from $\cH_{\beta}$ to $\cH'_{\beta}$, and there exists a constant $c = c(\alpha)$ such that  
$$
\| F_h ( g ) \|_\beta \leq c |h|^{-\frac \alpha 2} ( \|Êg \|_\beta + 1 ) .  
$$
\end{lemma}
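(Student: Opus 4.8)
The plan is to establish the bound by direct estimation of the triple integral, carried out in the natural order --- the innermost $y$-integral first, then the $r$-integral, then the $\theta$-integral --- after a scaling reduction to $|h|=1$. For the reduction, substituting $r\mapsto s/|h|$ and using that the form $h.w$ is $\bR$-linear in $h$ gives
\[
F_h(g)(u)=|h|^{-\frac\alpha2}\,F_{h/|h|}\!\bigl(|h|^{-\frac\alpha2}g\bigr)(u),
\]
where $|h|^{-\alpha/2}g$ denotes the function $w\mapsto|h|^{-\alpha/2}g(w)$, which again lies in $\cH_\beta$ with $\|\,|h|^{-\alpha/2}g\,\|_\beta=|h|^{-\alpha/2}\|g\|_\beta\le\|g\|_\beta$ since $|h|\ge1$; hence it suffices to prove $\|F_h(g)\|_\beta\le c(\|g\|_\beta+1)$ uniformly over $g\in\cH_\beta$ and $h\in\cK_1$ with $|h|=1$. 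Everywhere I would use that all exponentials appearing have modulus $\le1$ (because $g$ is $\cK_{\alpha/2}$-valued while $h.u$, $h.e^{i\theta}$ and $r^{\alpha/2}g(\cdot)$ lie in $\cK_1$), together with the elementary inequalities $|e^{-a}-e^{-b}|\le|a-b|$ and $|1-e^{-w}|\le|w|$ valid when $\Re a,\Re b,\Re w\ge0$.

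For the innermost $y$-integral I would write the difference in the bracket as $\bigl(e^{-r^{\alpha/2}g(e^{i\theta})}-e^{-r^{\alpha/2}g(e^{i\theta}+yu)}\bigr)+e^{-r^{\alpha/2}g(e^{i\theta}+yu)}\bigl(1-e^{-yr\,h.u}\bigr)$, bound the first term by $r^{\alpha/2}|g(e^{i\theta})-g(e^{i\theta}+yu)|$ --- estimated via the $\alpha/2$-homogeneity of $g$ and its weighted H\"older bound $|g(u')-g(v')|\le\|g\|_\beta|u'-v'|^\beta(|i.u'|\wedge|i.v'|)^{-(\beta-\alpha/2)}$ --- and the second by $yr\,|h.u|\le\sqrt2\,yr$. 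Interpolating the H\"older bound against the crude bound $O(\|g\|_\beta)$ and integrating against $y^{-\alpha/2-1}\,dy$, which converges on $(0,1]$ \emph{precisely because} $\beta>\alpha/2$ and trivially on $[1,\infty)$, produces a function $\Phi(r,\theta,u)$ with $|\Phi|\lesssim(\|g\|_\beta+1)$ times a quantity that, for large $r$, grows at most like $r^{\alpha/2}$ up to a near-diagonal factor $w_\theta^{-(\beta-\alpha/2)}$ (with $w_\theta=|i.e^{i\theta}|=|\cos\theta-\sin\theta|$) --- and this growth is present only when $g(e^{i\theta})$ is close to $0$, since otherwise $e^{-r^{\alpha/2}g(e^{i\theta})}$ forces exponential decay in $r$.

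The remaining integral $\int_0^\infty r^{\alpha/2-1}e^{-r\,h.e^{i\theta}}\Phi(r,\theta,u)\,dr$ converges absolutely whenever $\Re(h.e^{i\theta})>0$ --- which holds whenever $\Re h>0$, since $\Re(h.e^{i\theta})=(\cos\theta+\sin\theta)\Re h\ge\Re h$ --- while if $\Re h=0$, so that $h=\pm i$ and $h.e^{i\theta}=\pm i\varepsilon$ with $\varepsilon=\cos\theta-\sin\theta=\pm w_\theta$, it converges only conditionally: this is where $0<\alpha<1$ first enters, through the fact that $r^{\alpha/2-1}\Phi(r)$ behaves like $r^{\alpha-1}$, which is decreasing at infinity when $\alpha<1$, so that a Dirichlet/integration-by-parts argument gives convergence with the modulus bound $\lesssim|\varepsilon|^{-\max(\alpha,\,\beta-\alpha/2)}$. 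Multiplying by $(\sin2\theta)^{\alpha/2-1}$ and integrating over $[0,\pi/2]$: near $\theta=0,\pi/2$ the factor $(\sin2\theta)^{\alpha/2-1}$ is integrable ($\alpha/2-1>-1$) and everything else is bounded; near $\theta=\pi/4$ --- relevant only when $h=\pm i$ --- the integrand is $O\bigl(|\theta-\tfrac\pi4|^{-\max(\alpha,\,\beta-\alpha/2)}\bigr)$, integrable because $\alpha<1$ and $\beta-\alpha/2<1$. This gives convergence of the iterated integral and the sup bound $\max_{u\in S^1_+}|F_h(g)(u)|\le c(\|g\|_\beta+1)$; dominated convergence with the above majorants gives continuity of $u\mapsto F_h(g)(u)$, and extending by $\alpha/2$-homogeneity makes $F_h(g)\in\cC_\alpha'$ --- note that $F_h(g)$ is merely $\bC$-valued, with no reason to take values in $\cK_{\alpha/2}$, which is exactly why the target space is $\cH'_\beta$ and not $\cH_\beta$.

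It remains to bound the weighted H\"older seminorm. For $u,v\in S^1_+$, only the factor $-e^{-yr\,h.u}e^{-r^{\alpha/2}g(e^{i\theta}+yu)}$ depends on $u$, so $F_h(g)(u)-F_h(g)(v)$ is the same triple integral with the bracket replaced by $e^{-yr\,h.v}e^{-r^{\alpha/2}g(e^{i\theta}+yv)}-e^{-yr\,h.u}e^{-r^{\alpha/2}g(e^{i\theta}+yu)}$; I would estimate this by interpolating between the Lipschitz-type bound $\lesssim yr\,|h.(u-v)|+r^{\alpha/2}|g(e^{i\theta}+yu)-g(e^{i\theta}+yv)|$ and the crude bound $\le2$, estimating the $g$-difference through homogeneity by a norm-change term plus a H\"older term whose near-diagonal singularity is $\bigl(|i.(e^{i\theta}+yu)|\wedge|i.(e^{i\theta}+yv)|\bigr)^{-(\beta-\alpha/2)}$; since $i.(e^{i\theta}+yu)=i.e^{i\theta}+y\,(i.u)$, this is controlled by $(|i.u|\wedge|i.v|)^{-(\beta-\alpha/2)}$ --- the weight appearing in $\|\cdot\|_\beta$ --- for $y\gtrsim1$ (where $\widehat{e^{i\theta}+yu}\approx u$) and by $w_\theta^{-(\beta-\alpha/2)}$, integrable in $\theta$, for $y\lesssim1$. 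Redoing the $y$-, $r$- and $\theta$-integrations as above, now carrying the extra factor $|u-v|$ or $(y|u-v|)^\beta$ and matching exponents, yields $|F_h(g)(u)-F_h(g)(v)|\le c(\|g\|_\beta+1)\,|u-v|^\beta(|i.u|\wedge|i.v|)^{-(\beta-\alpha/2)}$, the constraint $\beta<1-\alpha/2$ being consumed in keeping the intermediate $y$- and $r$-integrals over the large-argument regime convergent. \textbf{The hardest step} is the analysis near the triple-degenerate configuration $h=\pm i$, $\theta=\pi/4$, $g(e^{i\theta})=0$: there the $r$-integral is only conditionally convergent --- so no absolute values are allowed --- and one must extract exactly enough oscillatory cancellation to bound it by $|\theta-\pi/4|^{-\max(\alpha,\beta-\alpha/2)}$ with exponent $<1$, which is possible only because $\alpha<1$; coupling this with the bookkeeping needed to verify that $g$'s near-diagonal weighted H\"older norm is faithfully reproduced --- not worsened --- by the map $F_h$ (where $\alpha/2<\beta<1-\alpha/2$ is used in full) is the technical heart of the lemma.
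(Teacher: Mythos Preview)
Your approach has a genuine gap in the boundary case $\Re h=0$ (i.e.\ $h=\pm i$ after your scaling reduction), which is exactly the case $z\in\bR$ of primary interest for the localization result. The difficulty comes from your order of integration --- $y$ first, then $r$ --- which is the reverse of the paper's.

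After your $y$-integration, $\Phi(r,\theta,u)$ is a complex, oscillatory function of $r$: it inherits the phases of $e^{-r^{\alpha/2}g(\cdot)}$ and of $e^{-yr\,h.u}$ through the $y$-integral. The envelope bound $|\Phi|\lesssim r^{\alpha/2}$ is fine, but a Dirichlet or integration-by-parts argument on $\int_0^\infty r^{\alpha/2-1}e^{\mp ir\varepsilon}\Phi(r)\,dr$ needs control on the variation of $r^{\alpha/2-1}\Phi$, not just its size. Differentiating $\Phi$ in $r$ brings down a factor $y$ from $\partial_r e^{-yr\,h.u}$; since $|e^{-yr\,h.u}|=1$ on the boundary, the resulting $y$-integral $\int y^{-\alpha/2}\,dy$ diverges at infinity, so the IBP you invoke does not go through and the claimed bound $\lesssim|\varepsilon|^{-\max(\alpha,\beta-\alpha/2)}$ is not established. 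Your own description of this step as ``the hardest'' is accurate, but the sketch does not carry it out.

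The paper keeps the $r$-integral innermost (as in the formal definition of $F_h$) and evaluates it first via Lemma~\ref{le:36BDGbis}, whose basic estimate --- a restatement of \cite[Lemma~3.6]{BDG09} --- gives
\[
\Bigl|\int_0^\infty r^{\gamma-1}e^{-rk}e^{-r^{\alpha/2}x}\,dr\Bigr|\le c\,|k|^{-\gamma}
\qquad\text{for all }k\in\cK_1,\ x\in\cK_{\alpha/2},
\]
uniformly including the boundary $\Re k=0$. This is the key input your argument lacks: it absorbs the oscillation of $e^{-rk}$ against the sub-exponential factor $e^{-r^{\alpha/2}x}$ in one stroke and produces an \emph{absolute} bound with the correct $|h.e^{i\theta}|^{-\gamma}\ge(|h|\,|i.e^{i\theta}|)^{-\gamma}$ dependence. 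The paper then handles the $y$-integral by splitting at $T=|i.e^{i\theta}|/2$ and applying the various parts of Lemma~\ref{le:36BDGbis} (\eqref{eq:36BDG}, \eqref{eq2}, \eqref{eq3}) on the two pieces; each contribution is $O\bigl(|h|^{-\alpha/2}|\theta-\tfrac\pi4|^{-\alpha}(\|g\|_\beta+1)\bigr)$, which is integrable in $\theta$ precisely because $\alpha<1$. No conditional-convergence argument is needed anywhere. Your scaling reduction to $|h|=1$ is correct but unnecessary once this lemma is in hand.
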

We could not prove unfortunately that $F_h$ is a contraction for $\|.\|_\beta$
but  for a weaker and less appealing norm on $\cH'_{\beta}$ which is given for  $\e > 0$ by  : 
$$
\| g  \|_{\beta, \e} = \max_{ u \in S^1_+} | g(u) | | i.u |^ \e + \max_{ u \ne v \in S^1_+} \frac{ | g(u)  - g(v) | }{ |u - v | ^ {  \beta} } \left(  |  i.u | \wedge | i.v |  \right)^ {\beta+ \e}.
$$
It turns out that the map $F_h$ is Lipschitz for this new norm if $\alpha$ is small enough.

\begin{lemma}[Contraction property of $F_h$] \label{le:ContFh}
Let $h \in \cK_1$ with  $|h| \geq 1$, $ 0 < \alpha < 2/3$, $\alpha / 2 < \beta < 1 - \alpha/2$ and  $0 < \e < (1 -
  3 \alpha /2)\wedge (\beta-\alpha/2) $. Then there exists a finite
 constant $c = c(\alpha,\beta,\e)$ such that, for all $f, g \in \cH_{\beta}$, 
$$
\| F_h ( f )  - F_h ( g ) \|_{\beta,\e} \leq c |h|^{- \alpha } ( 1 + \| f \|_\beta + \| g \|_{\beta} ) \|Êf - g \|_{\beta,\e}.  
$$
\end{lemma}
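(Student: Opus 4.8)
The plan is to estimate $F_h(f)(u) - F_h(g)(u)$ directly from its triple-integral representation: substitute $f-g$, decompose the integrand into a handful of pieces, bound each piece by elementary inequalities for exponentials, and then integrate successively in $y$, $r$ and $\theta$, tracking which hypothesis controls each singularity. Throughout, the fact that $g(w)$, $h.e^{i\theta}$, $h.u$ all lie in $\cK_1$ — so the exponents have non-negative real part — lets me use $|e^{-a}-e^{-a'}|\le|a-a'|$, $|1-e^{-a}|\le\min(|a|,2)$, $|e^{-a}|\le 1$ freely.

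Write $\phi(w) = e^{-r^{\alpha/2}f(w)} - e^{-r^{\alpha/2}g(w)}$, so the bracket occurring in $F_h(f)(u)-F_h(g)(u)$ is
$$\phi(e^{i\theta}) - e^{-yr\,h.u}\,\phi(e^{i\theta}+yu) = (1-e^{-yr\,h.u})\,\phi(e^{i\theta}) + e^{-yr\,h.u}\big(\phi(e^{i\theta}) - \phi(e^{i\theta}+yu)\big).$$
For the first term I bound $|\phi(e^{i\theta})| \le r^{\alpha/2}|(f-g)(e^{i\theta})|$ and $|1-e^{-yr\,h.u}| \le \min(yr\,|h.u|,2)$; for the second I write $\phi(w) = r^{\alpha/2}(f-g)(w)\int_0^1 e^{-r^{\alpha/2}(tg(w)+(1-t)f(w))}\,dt$ and split $\phi(e^{i\theta})-\phi(e^{i\theta}+yu)$ into a piece carrying the H\"older increment of $f-g$ and a piece carrying the increments of $f$ and $g$, controlled by $\|f\|_\beta$ and $\|g\|_\beta$. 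Homogeneity $g(\lambda u)=\lambda^{\alpha/2}g(u)$ together with the norm definitions reduces all values and increments to arguments on $S^1_+$; in particular $|(f-g)(w)| \le \|f-g\|_{\beta,\e}\,|w|^{\alpha/2}|i.w|^{-\e}$.

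Everything then comes down to integrating explicit kernels against $(\sin2\theta)^{\alpha/2-1}\,y^{-\alpha/2-1}\,r^{\alpha/2-1}\,e^{-r\,h.e^{i\theta}}\,d\theta\,dy\,dr$. The $y$-integral converges at $0$ because either $|1-e^{-yr\,h.u}|\lesssim yr\,|h.u|$ or the increment of $f-g$ along $u$ is $O(y^\beta)$ with $\beta>\alpha/2$, and at $\infty$ because $y^{-\alpha/2-1}$ decays once increments are replaced by their polynomial-in-$y$ bounds; e.g. $\int_0^\infty y^{-\alpha/2-1}\min(yr\,|h.u|,2)\,dy = c\,(r\,|h.u|)^{\alpha/2}$. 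Each term becomes an $r$-integral times a $\theta$-integral. The $r$-integral is harmless at $0$ (exponent $>-1$); at $\infty$ the decisive identity is $\int_0^\infty r^{3\alpha/2-1}e^{-r\,h.e^{i\theta}}\,dr = \Gamma(3\alpha/2)\,(h.e^{i\theta})^{-3\alpha/2}$, which converges as an improper integral precisely because $3\alpha/2<1$ — this is exactly where $\alpha<2/3$ enters — and has modulus at most $c\,(|h|\,|i.e^{i\theta}|)^{-3\alpha/2}$ using the elementary lower bound $|h.e^{i\theta}| \ge c\,|h|\,|i.e^{i\theta}|$. The extra factor $r^{\alpha/2}$ present here but absent in Lemma~\ref{le:DefFh} (from $|\phi|\le r^{\alpha/2}|f-g|$) is what raises the power of $|h|$ from $|h|^{-\alpha/2}$ to $|h|^{-\alpha}$, after using $|h.u|\le\sqrt2\,|h|$ on $S^1_+$ and $|h|\ge1$. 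Finally the $\theta$-integral converges at $0$ and $\pi/2$ since $(\sin2\theta)^{\alpha/2-1}$ is integrable, and at $\theta=\pi/4$ because the accumulated weight is $|i.e^{i\theta}|^{-3\alpha/2-\e}$, integrable exactly when $3\alpha/2+\e<1$, i.e. under $\e<1-3\alpha/2$; the companion constraint $\e<\beta-\alpha/2$ is used in the H\"older-increment terms so the weights produced match the $(|i.u|\wedge|i.v|)^{\beta+\e}$ built into $\|\cdot\|_{\beta,\e}$. Collecting the powers yields $\|F_h(f)-F_h(g)\|_{\beta,\e} \le c\,|h|^{-\alpha}(1+\|f\|_\beta+\|g\|_\beta)\,\|f-g\|_{\beta,\e}$.

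The main obstacle is exactly this degeneration at $\theta=\pi/4$ in the regime where $h$ is near purely imaginary — which, in the application $h=-iz$, is the case $z$ real that motivates the whole theorem. There $e^{-r\,h.e^{i\theta}}$ gives no absolute decay in $r$, so the $r$-integral is only conditionally convergent and must be evaluated exactly via $\int_0^\infty r^{s-1}e^{-wr}\,dr = \Gamma(s)\,w^{-s}$ for $\Re w\ge0$, $0<s<1$; the surviving negative power of $|i.e^{i\theta}|$ then has to be absorbed against the singular measure $(\sin2\theta)^{\alpha/2-1}d\theta$ and the weight $|i.e^{i\theta}|^{-\e}$ inherited from $\|f-g\|_{\beta,\e}$. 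This is precisely why both $\alpha<2/3$ and $\e<1-3\alpha/2$ are imposed, and why one must work with the weaker norm $\|\cdot\|_{\beta,\e}$ rather than $\|\cdot\|_\beta$: the additional weight $|i.u|^\e$ it carries is the slack needed to absorb this singularity at the critical direction $\arg u=\pi/4$.
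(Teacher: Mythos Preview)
Your decomposition and overall bookkeeping are essentially the paper's, and you correctly identify where each hypothesis on $\alpha,\beta,\e$ is consumed. There is, however, a real gap in the $r$-integration step.

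You bound $|\phi(e^{i\theta})|\le r^{\alpha/2}|(f-g)(e^{i\theta})|$ and $|e^{-r^{\alpha/2}(\cdot)}|\le 1$, and then invoke the \emph{conditionally convergent} identity $\int_0^\infty r^{3\alpha/2-1}e^{-r\,h.e^{i\theta}}\,dr=\Gamma(3\alpha/2)(h.e^{i\theta})^{-3\alpha/2}$ to finish. These two moves are incompatible: once you have replaced $\phi$ by an absolute-value bound, the $r$-integral you must control is $\int_0^\infty r^{3\alpha/2-1}|e^{-r\,h.e^{i\theta}}|\,dr$, which diverges whenever $\Re(h.e^{i\theta})=0$ (and for purely imaginary $h$ this happens for every $\theta$). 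Conversely, if you keep the complex integrand, what is left is not $r^{3\alpha/2-1}e^{-r\,h.e^{i\theta}}$ but $r^{3\alpha/2-1}e^{-r\,h.e^{i\theta}}e^{-r^{\alpha/2}x_t}$ with $x_t=(tg+(1-t)f)(e^{i\theta})\in\cK_{\alpha/2}$; the $\Gamma$-identity does not apply, and the naive bound $|e^{-r^{\alpha/2}x_t}|\le 1$ is useless for the same reason. (A one-line counterexample: with $w=i$ and $a(r)=e^{ir}$, $\int_0^\infty r^{s-1}e^{-rw}a(r)\,dr$ diverges even though $|a|\le 1$ and $\int_0^\infty r^{s-1}e^{-rw}\,dr$ converges.)

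The missing ingredient is exactly Lemma~\ref{le:36BDGbis} (i.e.\ \cite[Lemma~3.6]{BDG09}), which gives the nontrivial estimate
\[
\left|\int_0^\infty r^{\gamma-1}e^{-rh}e^{-r^{\alpha/2}x}\,dr\right|\le c\,|h|^{-\gamma}
\quad\text{uniformly over }h\in\cK_1,\ x\in\cK_{\alpha/2},
\]
together with its differenced versions \eqref{eq2}, \eqref{qwe1}, \eqref{qwe2}. This is precisely what the paper uses: it never strips the factor $e^{-r^{\alpha/2}x}$, but carries it through and applies these bounds with $\gamma\in\{\alpha,\,3\alpha/2\}$. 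If you replace your ``$\Gamma$-identity'' step by Lemma~\ref{le:36BDGbis}, your argument goes through and becomes the paper's; without it, the step at $\theta=\pi/4$ for $\Re h=0$ fails. Note also that your sketch treats mainly the pointwise part $|F_h(f)(u)-F_h(g)(u)|$; the weighted H\"older increment $|F_h(f)(u)-F_h(g)(u)-F_h(f)(v)+F_h(g)(v)|$ requires a separate (and in the paper's version, actually cleaner) pass using \eqref{qwe1}--\eqref{qwe2}.
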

We can now turn to the proof of  Theorem \ref{th:unicityRDE}. To this end 
 define the map $G_z$ on $\cH_{\beta}$ 
\begin{equation} \label{defGz}
G_z : g \mapsto  \left( u \mapsto  c_\alpha F_{-iz} ( g )  (\check u ) \right).
\end{equation}
Then by Lemma \ref{le:DefFh}, if $|z|$ is large enough, any fixed point $g$ of $G_z$ satisfies $\|g \|_\beta \leq c_0 /2$ for some constant $c_0= c_0(\alpha, \beta)$.  By Lemma \ref{le:ContFh}, for any $0 < \e < 1 -  3 \alpha /2 $,  if $|z| \geq E_{\alpha, \e}$ is large enough,  $G_z$ satisfies
$$
\|ÊG_z ( f) - G_z (g) \|_{\beta,\e} \leq    \frac{  1 + \| f \|_\beta + \| g \|_{\beta} }{1 + 2 c_0}  \|Êf - g \|_{\beta,\e}.  
$$
Thus, by Lemma \ref{le:fpgamma}, $\gamma_z$ is the unique solution in $\cH_\beta$ of the fixed point equation 
$
\gamma_z = G_z (\gamma_z). 
$
However, by Lemma \ref{le:gammatochi} below, the law of  $R_0(z)$ which satisfies \eqref{eq:RDE} is uniquely characterized by  its fractional moments $\gamma_z$. Therefore, there is a unique solution to this recursive distributional equation. 

To prove the estimate on $\bE[\Im R_0(z)^{\alpha/2}]$, we start by proving  that $\Im R_0(E)$ vanishes almost surely.
Indeed, we first note that when $z  = E \ne 0$ is real, there is a real solution of the fixed point equation $\gamma_z = G_z (\gamma_z)$. Let us seek for a probability distribution  $P_E$ in $\bR$ such that   \eqref{eq:RDE} holds. We recall that if $y_k$ are non-negative i.i.d. random variables, independent  of $\{\xi_k\}_{k\geq 1}$, then $\sum_{k} y_k \xi_k $ is equal in law to $ ( \bE y_1^{\frac \alpha  2} ) ^{\frac 2  \alpha } \sum_{k} \xi_k$ and $S = \sum_{k} \xi_k$ is a non-negative $\alpha/2$-stable law. Thus, using the Poisson thinning property, 
by definition $P_E$ has to be  the law of
$$-\left(E+  a^{2/\alpha} S - b^{2/\alpha} S' \right)^{-1}$$
if $S$ and $S'$ are independent $\alpha/2$-stable positive laws and  $a = \int \max( x, 0)^{\alpha/2}dP_E(x)$, $b = \int \max( -x, 0)^{\alpha/2}dP_E(x)$. We find the system of equations
\begin{eqnarray*}
a &  =&  \bE  \left((E +  a^{2/\alpha} S - b^{2/\alpha} S')^{-1} \right)_-^{\alpha/2},\\
b & = &\bE  \left((E +  a^{2/\alpha} S - b^{2/\alpha} S')^{-1} \right)_+^{\alpha/2},
\end{eqnarray*}
(where we have used the notation  $(x)_+ = \max ( x, 0)$,  $(x)_- = \max ( -x, 0)$).  Notice that $a S - b S'$ is an $\alpha/2$-stable variable, it has a bounded density. Hence, for any $0 < \alpha < 2$,  $  |ÊE +  a^{Ê\alpha /2} S - b^{Ê\alpha /2} S' |^{-\alpha/2}$ is perfectly integrable. Thus, by construction
$\tilde\gamma_E(u)=\Gamma(1-\frac{\alpha}{2})
\int  (-i u. x)^{\frac{\alpha}{2}}dP_E(x)$ belongs to $\cH_\beta$ and  it is a fixed point of $G_E$. This insures 
the  existence of $a,b\ge 0$ and 
 also the fact that $P_E$ is the law of $R_0(E)$ as soon as $E$ is large
enough so that $G_E$ is a contraction. 
To consider $\gamma_z$ with small imaginary part, we
need the additional lemma 
\begin{lemma}[Continuity of the maps $F_h$] \label{le:contFh}
Let $ 0 < \alpha < 2/3$, $\alpha / 2 < \beta < 1 - \alpha/2$, $0<\e \leq \beta-\frac{\alpha}{2}$ and $0 < \kappa < \alpha/2$. There exists a constant $c  = c( \alpha, \beta, \kappa) >0$, such that for any $h,k \in \cK_1$, $|h|,  |k| \geq 1$, and $g \in \cH_{\beta}$
$$
\|ÊF_h ( g) - F_k (g) \|_{\beta,\e} \leq c ( |h| \wedge |k| ) ^{-\frac \alpha 2 -\kappa}  |Êh - k |^{\kappa} (1 + \|Êg \|_{\beta} ).
$$
\end{lemma}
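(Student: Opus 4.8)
The plan is to prove two complementary estimates on $\|F_h(g)-F_k(g)\|_{\beta,\e}$ and then interpolate between them. First, since on $S^1_+$ one has $|i.u|\le\sqrt2$, all the weights entering $\|\cdot\|_{\beta,\e}$ are (for $0<\e\le\beta-\alpha/2$) dominated up to a multiplicative constant by those entering $\|\cdot\|_\beta$; hence $\|g\|_{\beta,\e}\le c\|g\|_\beta$, and Lemma~\ref{le:DefFh} gives the crude bound $\|F_h(g)\|_{\beta,\e}\le c\,|h|^{-\alpha/2}(1+\|g\|_\beta)$, whence
\[
\|F_h(g)-F_k(g)\|_{\beta,\e}\ \le\ c\,(|h|\wedge|k|)^{-\alpha/2}(1+\|g\|_\beta).
\]
Second, in the regime $|h-k|\le|h|\wedge|k|=:\rho$ — which forces $|h|\vee|k|\le2\rho$, and moreover (using that $\cK_1$ is a half-plane, so the segment $h_s=(1-s)k+sh$ stays in $\cK_1$ and, since then $\Re(\bar kh)\ge\tfrac12|h|^2$, satisfies $|h_s|\ge\rho/\sqrt2$) that $|h_s|\asymp\rho$ uniformly on $s\in[0,1]$ — I would prove the Lipschitz bound
\[
\|F_h(g)-F_k(g)\|_{\beta,\e}\ \le\ c\,|h-k|\,\rho^{-\frac\alpha2-1}(1+\|g\|_\beta).
\]

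To obtain the Lipschitz bound I would differentiate under the integral sign along this segment, writing $F_h(g)(u)-F_k(g)(u)=\int_0^1\partial_s F_{h_s}(g)(u)\,ds$. Using linearity of $w\mapsto w.e^{i\theta}$, linearity of $v\mapsto h_s.v$ (so that $h_s.e^{i\theta}+y\,h_s.u=h_s.(e^{i\theta}+yu)$) and $\partial_s h_s=h-k$, one finds that the $s$-derivative of the integrand of $F_{h_s}$ equals
\[
-r\,\big((h-k).e^{i\theta}\big)\,e^{-r\,h_s.e^{i\theta}}\Big(e^{-r^{\alpha/2}g(e^{i\theta})}-e^{-y r\,h_s.u}e^{-r^{\alpha/2}g(e^{i\theta}+yu)}\Big)\ +\ y r\,\big((h-k).u\big)\,e^{-r\,h_s.(e^{i\theta}+yu)}e^{-r^{\alpha/2}g(e^{i\theta}+yu)}.
\]
Each summand carries a factor $(h-k).e^{i\theta}$ or $(h-k).u$, of modulus $\le\sqrt2\,|h-k|$, and exactly one extra power of $r$ (resp.\ $yr$) compared with the integrand of $F_{h}$ itself. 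Pulling out $|h-k|$, the remaining integrals are those already handled in the proof of Lemma~\ref{le:DefFh}, with one extra power of $r$: performing the $r$-integration first replaces $\int_0^\infty r^{\alpha/2-1}e^{-r\,h_s.e^{i\theta}}(\cdots)\,dr$ by $\int_0^\infty r^{\alpha/2}e^{-r\,h_s.e^{i\theta}}(\cdots)\,dr$, i.e.\ $(h_s.e^{i\theta})^{-\alpha/2}$ by $(h_s.e^{i\theta})^{-\alpha/2-1}$, and then the $\theta$-, $y$- and $s$-integrations (run as in Lemma~\ref{le:DefFh}, using $\tfrac\alpha2<\beta<1-\tfrac\alpha2$ and $0<\e\le\beta-\tfrac\alpha2$) produce $|h_s|^{-\alpha/2-1}\asymp\rho^{-\alpha/2-1}$. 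The weighted $\beta$-H\"older seminorm piece of $\|\cdot\|_{\beta,\e}$ is estimated by the same scheme.

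Finally I would combine the two bounds. If $|h-k|>\rho$, the crude bound already yields the conclusion, since $\rho^{-\alpha/2}\le|h-k|^\kappa\rho^{-\alpha/2-\kappa}$ precisely because $\rho\le|h-k|$; if $|h-k|\le\rho$, taking the minimum of the crude and Lipschitz bounds gives $c\,\rho^{-\alpha/2}\min(1,|h-k|/\rho)(1+\|g\|_\beta)\le c\,|h-k|^\kappa\rho^{-\alpha/2-\kappa}(1+\|g\|_\beta)$, because $\min(1,t)\le t^\kappa$ for $t\in[0,1]$. Since $\rho=|h|\wedge|k|$, this is the assertion of the lemma (in fact for any $0<\kappa\le1$, a fortiori for $0<\kappa<\alpha/2$).

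The hard part is the Lipschitz bound, that is, rerunning the already delicate estimates of Lemma~\ref{le:DefFh} with one extra power of $r$. The point to watch is that $(h_s.e^{i\theta})^{-\alpha/2-1}$ is no longer integrable in $\theta$ near the zeros of $\theta\mapsto h_s.e^{i\theta}$ (for instance at $\theta=\pi/4$ when $h_s$ is purely imaginary): one must check that the accompanying vanishing factor $(h-k).e^{i\theta}$ together with the $s$-integration restore the missing power, just as the weight $(\sin2\theta)^{\alpha/2-1}$ and the constraint $\alpha<1$ (here $\alpha<2/3$, whence also $\alpha/2+\kappa<1$) already do in Lemma~\ref{le:DefFh}; similar care is needed at $y\to0$ and $y\to\infty$ and for the second summand, which involves $e^{-r\,h_s.(e^{i\theta}+yu)}$ in place of $e^{-r\,h_s.e^{i\theta}}$. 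A minor but essential point is that the uniform lower bound $|h_s|\gtrsim\rho$ along the segment genuinely uses $|h-k|\le|h|\wedge|k|$ (it fails, e.g., for $h=i$, $k=-i$, which is exactly the regime left to the crude bound).
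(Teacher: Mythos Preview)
Your interpolation idea (crude bound plus Lipschitz bound, then $\min(1,t)\le t^\kappa$) is sound in principle, and the crude bound from Lemma~\ref{le:DefFh} is correct. The gap is in the Lipschitz bound itself: the plan to ``rerun the estimates of Lemma~\ref{le:DefFh} with one extra power of $r$'' does not work as stated.

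Recall that in the proof of Lemma~\ref{le:DefFh}, after the $r$- and $y$-integrations the $\theta$-integrand is bounded by a constant times $|\theta-\pi/4|^{-\alpha}$, which is integrable because $\alpha<1$. Your extra factor of $r$ pushes every occurrence of $|h_s.e^{i\theta}|^{-\gamma}$ to $|h_s.e^{i\theta}|^{-\gamma-1}$, and with the worst-case lower bound $|h_s.e^{i\theta}|\ge |h_s|\,|i.e^{i\theta}|$ this produces $|\theta-\pi/4|^{-\alpha-1}$, which is not integrable. You flag this yourself, but your proposed remedy --- that the factor $(h-k).e^{i\theta}$ vanishes at $\theta=\pi/4$ --- is false in general. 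Indeed $|(h-k).e^{i\pi/4}|=\sqrt2\,|\Re(h-k)|$; take $h=i$ and $k=i+\varepsilon$ with $\varepsilon>0$: then $h_s$ is purely imaginary at $s=1$ (so $|h_s.e^{i\theta}|\asymp|\theta-\pi/4|$ there) while $(h-k).e^{i\pi/4}=-\sqrt2\,\varepsilon\ne0$. One can in fact still rescue integrability by exploiting the sharper lower bound $|h_s.e^{i\theta}|\ge|\Re(h_s)|$ and integrating jointly in $(s,\theta)$ near $(1,\pi/4)$, but this two-variable analysis (and its analogue in $(s,y)$ for your second summand, where the naive bound on $\int y^{-\alpha/2}|h_s.(e^{i\theta}+yu)|^{-\alpha/2-1}dy$ is likewise divergent) is substantially more delicate than anything in Lemma~\ref{le:DefFh} and is nowhere near ``rerunning'' those estimates.

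The paper bypasses this difficulty entirely: rather than proving a Lipschitz ($\kappa=1$) bound and interpolating, it proves the $\kappa$-H\"older bound directly. The difference $F_h(g)(u)-F_k(g)(u)$ is written as a $(\theta,y,r)$-integral of a kernel $Z$, and the $r$-integral is controlled by the $\kappa$-dependent inequalities \eqref{eq3}, \eqref{qwe2}, \eqref{qwe3} of Lemma~\ref{le:36BDGbis}, which give factors $|h.e^{i\theta}|^{-\alpha/2-\kappa}$ rather than $|h.e^{i\theta}|^{-\alpha/2-1}$. Splitting the $y$-integral at $T=|i.e^{i\theta}|/2$ (as in Lemma~\ref{le:DefFh}), the resulting $\theta$-integrand is at worst $|\theta-\pi/4|^{-\alpha-\kappa}$, integrable because $\alpha+\kappa<3\alpha/2<1$. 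This is precisely why the hypothesis $\kappa<\alpha/2$ appears in the statement, and why no $s$-integration or finer lower bound on $|h_s.e^{i\theta}|$ is needed.
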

Set $z = E + i \eta$ with $ |E| \geq E_{\alpha, \e}$ and let $0 < \kappa < \alpha /2$.  Then, by Lemma \ref{le:contFh}, we have
\begin{eqnarray*}
\| \gamma_E -  \gamma_z  \|_{\beta,\e}Ê & =&  | G_E ( \gamma_E ) - G_z( \gamma_z ) | \leq   \|ÊG_E ( \gamma_E ) - G_z( \gamma_E )\|_{\beta,\e} + \|Ê G_z ( \gamma_E ) - G_z( \gamma_z )\|_{\beta,\e} \\
& \leq &  c E  ^{-\frac \alpha 2 -\kappa}  (1 + c_0 ) \eta^\kappa + \frac   1 2 \| \gamma_E -  \gamma_z  \|_{\beta,\e}. 
\end{eqnarray*}
Hence, for some $c' = c' (\alpha,\beta, \kappa)$, we deduce
$$
\| \gamma_E -  \gamma_z  \|_{\beta,\e} \leq c' \eta^\kappa. 
$$
Then, since $\gamma_E ( e^{i \frac \pi 4} )=0$ as $\gamma_E$ is real, for any $u \in S^1_+$, 
\begin{eqnarray*}
\Gamma(1-\frac{\alpha}{2})  \bE \Im  R_0(z) ^{\frac \alpha 2}  & = & | \gamma_z   ( e^{i \frac \pi 4} ) -  \gamma_E ( e^{i \frac \pi 4} )  |  \\
& \leq &   | \gamma_z   ( e^{i \frac \pi 4} ) -  \gamma_z  (u  )  | +  | \gamma_E   ( e^{i \frac \pi 4} ) -  \gamma_E ( u  )  | + | \gamma_z   ( u ) -  \gamma_E ( u  )  | \\
& \leq &  c | u - e^{i \frac \pi 4} |^{\frac \alpha 2} + \| \gamma_z    -  \gamma_E   \|_{\beta, \e} | i . u |Ê^{-\e} \\
& \leq & c''  | u - e^{i \frac \pi 4} |^{\frac \alpha 2} +
 c'' \eta^\kappa  | u - e^{i \frac \pi 4} |^{-\e}.
\end{eqnarray*}
Choosing $u$ such that $ | u - e^{i \frac \pi 4} |$ is of order $\eta ^{\frac{2Ê\kappa }{\alpha  + 2 \eÊ} } $, we deduce that for all $z = E + i \eta$ with $ |E| \geq E_{\alpha, \e}$, $\bE \Im  R_0(z) ^{\frac \alpha 2} $ is bounded by $\eta ^{\frac{Ê\kappa \alpha }{\alpha  + 2 \eÊ} } $ up to a multiplicative constant. Since $\e > 0$ can be arbitrarily small, this concludes the proof of Theorem \ref{th:unicityRDE}.

\subsection{Proofs of technical lemmas} 
We collect in this part the proofs of a few technical results used in the proof of  Theorem \ref{th:unicityRDE}. 

\subsubsection{Proof of Lemma \ref{le:regFM} (Regularity of fractional moments)} 
If $\Re(z)=E$,
$$|R_0(z)|\le |E-\sum \xi_k \Re(R_{k}(z))|^{-1}$$
where $\sum \xi_k \Re(R_{k}(z))$ is equal in law to $aS-bS'$
for two non-negative constants $a,b$ and two independent stable laws $S,S'$. Assume for example that $E > 0$. By conditioning on $S'$ and integrating over $S$, we deduce from Lemma \ref{momentinv} that there exists a finite
constant $c = c(\alpha,\beta)$ so that
$$\bE |R_0(z)|^ \beta \leq \bE | E -  aS + bS' |^{-\beta} \leq  c \bE | E + b S' |^{-\beta} \leq c  E ^{-\beta}.$$
In particular, as $\eta$ goes to $0$, any limit point $R_0(E)$ of $R_0(E + i \eta)$, solution of \eqref{eq:RDE}, satisfies the above inequality. The conclusion of the first point follows. 

To prove the second point, we notice that it is straightforward that $\gamma$ belongs to $\cC_\alpha$. Moreover, for any $\beta\in [\frac{\alpha}{2},1]$,
 there exists a constant $c = c(\alpha, \beta)$ such that for any $x, y$ in $\cK_1$, 
\begin{equation}\label{fondin}
| x^{\frac \alpha 2}  - y^ {\frac \alpha 2}  | \leq c |x - y | ^ { \beta} \left(  | x | \wedge | y |  \right)^ { \frac  \alpha 2 - \beta }.\end{equation}
Also, we have, for $u \in \cK_1 ^ +$ and $h \in \cK_1$,
\begin{equation} \label{eq:boundDP}
|i.u | |h| \leq | h.u |  \leq \sqrt 2 |u | |h| .
\end{equation}
Indeed, if $ u = s + i t$, $s , t \geq 0$, then $|h.u|^ 2 = \Re(h)^ 2 ( s + t)^2 + \Im(h)^ 2 ( s - t)^2$. This last expression is bounded 
from below by $ |h|^2 ( (s+ t)^ 2 \wedge (s- t)^ 2 ) = |h|^2  (s- t)^ 2 =
 |i.u |^2  |h|^2$. While it is bounded from above by $ |h|^ 2  ( ( s + t)^2 + ( s - t)^2 )= 2 |h|^2| u |^2$.

Now, using Jensen inequality and \eqref{eq:boundDP}, we find
$$
\left| \bE (   H.u )^{\frac \alpha 2} \right| \leq \bE  \left| (  H.u )^{\frac \alpha 2} \right| \leq (\sqrt 2  |u |)^{\frac \alpha 2}  \bE  | H | ^{\frac \alpha 2},
$$
whereas \eqref{fondin} and \eqref{eq:boundDP} imply
\begin{eqnarray*}
\left| \bE (   H.u )^{\frac \alpha 2}  -  \bE (   H.v )^{\frac \alpha 2} \right| & \leq &
  \bE \left| (   H.u )^{\frac \alpha 2}  -  (   H.v )^{\frac \alpha 2} \right| \\
& \leq & c\, \bE | H.(u - v) | ^ {\beta}  \left(  |  H.u | \wedge | H.v |  \right)^ { \frac  \alpha 2 - \beta } \\
& \leq & c\, 2^ {\frac \beta 2} \, | u - v | ^ {\beta}    \left(  |  i.u | \wedge | i.v |  \right)^ { \frac  \alpha 2 - \beta } \bE  | H | ^{\frac \alpha 2}. 
\end{eqnarray*}
This completes the proof with $\|\gamma\|_\beta\le  \left( \sqrt{2}^{\frac \alpha 2} +c2^{\frac{\beta}{2}} \right)  \bE  | H | ^{\frac \alpha 2}$.

\subsubsection{Proof of Lemma \ref{le:fpgamma} (Fixed point equation for fractional moments)} 

Write $u = u_1 + i u_2$ and $-iz = h \in \cK_1$. By definition 
\begin{eqnarray*}
\gamma_z ( u)  & = & \Gamma \left( 1 - \frac \alpha 2 \right) \bE \left(  \frac{u_1}{ h + \sum_k \xi_k H_k } + \frac{u_2}{  \bar h + \sum_k \xi_k \bar H_k }  \right)^{\frac \alpha 2 } \\
& = & \Gamma \left( 1 - \frac \alpha 2 \right) \bE \left(  \frac{\check  u . h + \sum_k \xi_k \check H_k.u  }{ \left| h + \sum_k \xi_k H_k \right|^2 } \right)^{\frac \alpha 2 } .
\end{eqnarray*}
We use the formulas, for all $w \in \cK_1$, $\gamma > 0$,
\begin{eqnarray*}
| w |^{-2 \gamma} =   ( \bar w )^{-\gamma}  ( w )^{-\gamma}  & = & \Gamma ( \gamma ) ^{-2} \int _{ [ 0,\infty) ^2} dx dy \, x^{\gamma - 1}  y^{\gamma - 1}  e^{ -x \bar w - y w} \\
& = & \Gamma ( \gamma ) ^{-2} 2^{ 1- \gamma}  \int_0 ^ {\frac \pi  2} d\theta  \sin ( 2 \theta) ^{ \gamma - 1}  \int_0 ^ \infty dr  \, r^{2\gamma - 1}  e^{ -r  e^{ i \theta}.w} .
\end{eqnarray*}
and for $0 < \gamma <  1$,
\begin{eqnarray*}
 w ^{\gamma}  & = & \gamma  \Gamma (  1- \gamma )^{-1} \int _{0}^\infty dx  \, x^{-\gamma - 1}  (1 - e^{ -x w} ). 
 \end{eqnarray*}
Formally, we find that $\gamma_z ( u) $ is equal to 
 \begin{align*}
& c_\alpha   \int_0 ^ {\frac \pi  2} d\theta  \sin ( 2 \theta) ^{ \frac \alpha 2 - 1} \int _{0}^\infty dx  \,  x^{-\frac \alpha 2 - 1} \\
& \quad \quad \times \int_0 ^ \infty dr  \, r^{\alpha - 1}  \bE \left( e^ { - r e^{ i \theta} . h - \sum_k \xi_k  r  e^{ i \theta} . H_k  } - e^ { -   ( r e^{ i \theta} + x \check u ) . h-  \sum_k \xi_k ( r  e^{ i \theta} + x  \check u ). H_k  } \right). 
\end{align*}
If we perform the change of variable $x = r y$ and apply Levy-Kintchine formula : we obtain the stated formula. The exchange of expectation and integrals is then justified by invoking Lemmas \ref{le:regFM} and \ref{le:DefFh}.

\subsubsection{A key auxiliary lemma}
The next lemma  will be used repeatedly. 

\begin{lemma}[Consequences of {\cite[Lemma 3.6]{BDG09}}]
\label{le:36BDGbis}
Let $0 < \alpha < 2$, $\gamma > 0$ and $0 < \kappa \leq 1$, there exists a constant $c = c ( \alpha, \gamma)  >0$ such that for all $h \in \cK_1$ and $x \in \cK_{\frac \alpha 2}$,
\begin{equation}\label{eq:36BDG}
\left| \int_0^\infty   r^{\gamma -1}
e^{- r h  } e^{-r^{\frac \alpha 2 } x }dr \right| \leq  c |h|^{Ê-\gamma}.\end{equation}
For all $h , k \in \cK_1$ and $x , y  \in \cK_{\frac \alpha 2}$, 
\begin{equation}\label{eq2}
 \left|Ê \int_0^\infty   r^{\gamma  -1} e^{- r h  } \left( e^{-r^{\frac \alpha 2 } x } - e^{-r^{\frac \alpha 2 } y } \right)  dr  \right| \leq  c |h|^{Ê-\gamma - \frac \alpha 2} |Êx - y |,
 Ê\end{equation}
and 
\begin{equation}\label{eq3}
\left|Ê \int_0^\infty   r^{\gamma  -1} e^{-r^{\frac \alpha 2 } x }  \left( e^{- r h  }  - e^{- r k  }   \right)  dr  \right| \leq  c  ( |h| \wedge |k| ) ^{Ê-\gamma - \kappa} |Êh - k |^{\kappa}.\end{equation}
For all $x_1, x_2, y_1, y_2 \in \cK_{\frac \alpha 2}$, 
\begin{align}
& \left|Ê \int_0^\infty   r^{\gamma -1}   e^{- r h }  \left(  \left( e^{-r^{\frac \alpha 2} x_1} - e^{-r^{\frac \alpha 2}  y_1}  \right) -  \left( e^{-r^{\frac \alpha 2 } x_2  }- e^{-r^{\frac \alpha 2 } y_2 }\right) \right) dr  \right|\label{qwe1} \\
&  \leq c   \left( | h |Ê^{- \gamma - \frac \alpha 2 }    | x_1 - x_2 - y_1 + y_2| + | h |Ê^{- \gamma - \alpha}  ( |Êx_1 - x_2 | + |Êy_1 - y_2 |   ) ( |Êx_1 - y_1 | +   |Êx_2 - y_2 |)  \right).\nonumber 
\end{align}
Moreover, for all $h , k  \in \cK_1$, $ x, y \in \cK_{\frac \alpha 2}$, 
for   $0 < \kappa \leq 1$, we have
\begin{align}
& \left|Ê \int_0^\infty   r^{\gamma -1} \left( e^{- r h  }  - e^ {- rk} \right)  \left( e^{-r^{\frac \alpha 2} x} - e^{-r^{\frac \alpha 2}  y} \right)  dr  \right|  \leq c (  | h |Ê\wedge |k| ) ^{- \gamma -  \frac \alpha 2 - \kappa}    | h - k |^{\kappa} | x - y|,\label{qwe2}
\end{align}
and finally, for all $0 \leq \kappa_1,  \kappa_2 \leq 1$, $h_1 , k_1 , h_2, k_2  \in \cK_1$, with $N =  | h_1 |Ê\wedge |k_1| \wedge |h_2| \wedge |k_2|$, 
\begin{align}
& \left|Ê \int_0^\infty   r^{\gamma -1} \left( e^{- r h_1  }  - e^{- r h_2  }  - e^ {- r k_1}  + e^ {- r k_2} \right)   e^{-r^{\frac \alpha 2} x}  dr  \right| \label{qwe3} \\
& \leq c  N ^{- \gamma - \kappa_1}       | h_1 - h_2 - k_1 + k_2|^{\kappa_1}   + c N^{- \gamma -\kappa_2 - \kappa_1} ( |Êh_1 - h_2 | + |Êk_1 - k_2 |   )^{\kappa_2} ( |Êh_1 - k_1 | +   |Êh_2 - k_2 |)^{\kappa_1} .\nonumber
\end{align}
\end{lemma}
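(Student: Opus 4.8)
The six inequalities split into two groups: the ``first order'' bounds \eqref{eq:36BDG}, \eqref{eq2} and \eqref{eq3}, each involving at most one difference, and the ``second order'' bounds \eqref{qwe1}, \eqref{qwe2} and \eqref{qwe3}, each a difference of two differences. The plan is to obtain the first group from the oscillatory integral analysis underlying \cite[Lemma 3.6]{BDG09}, and then to reduce every bound of the second group to the first group by purely algebraic manipulations based on the identity $e^{-a}-e^{-b}=-(a-b)\int_0^1 e^{-(1-t)b-ta}\,dt$. The bookkeeping is controlled by the fact that $\cK_{\frac\alpha2}$ and $\cK_1$ are convex cones, so every segment $[x_i,y_i]$ (resp. $[h_i,k_i]$) we integrate over stays inside $\cK_{\frac\alpha2}$ (resp. $\cK_1$) and the effective arguments remain in the range where the first group applies.

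For the first group the common first move is the rescaling $r\mapsto r/|h|$ (resp. $r\mapsto r/(|h|\wedge|k|)$), which makes the homogeneity in $|h|$ explicit and reduces matters to $|h|=1$. After this reduction the integrand is an oscillatory factor $e^{-rh}$, whose modulus one may \emph{not} bound by $1$ since $h$ may be purely imaginary, times $e^{-r^{\frac\alpha2}x}$, a smooth function of $r$ whose modulus is $e^{-r^{\frac\alpha2}\Re x}\le e^{-\cos(\frac{\pi\alpha}4)r^{\frac\alpha2}|x|}$ (because $|\arg x|\le\frac{\pi\alpha}4<\frac\pi2$) and whose successive $r$-derivatives are controlled uniformly in $x\in\cK_{\frac\alpha2}$. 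A non-stationary phase (repeated integration by parts) argument in $r$, exactly as in \cite[Lemma 3.6]{BDG09} whose proof uses neither $\gamma=\alpha/2$ nor anything else special, then yields \eqref{eq:36BDG} and its Lipschitz-in-$x$ reinforcement \eqref{eq2} for all $\gamma>0$. Estimate \eqref{eq3} is proved analogously, the $\kappa$-H\"older dependence on $h$ arising by interpolating \eqref{eq:36BDG} with a Lipschitz-type bound obtained from the same oscillatory analysis applied to $e^{-rh}-e^{-rk}$; this interpolation is where the loss $(|h|\wedge|k|)^{-\gamma-\kappa}$ and the gain $|h-k|^{\kappa}$ appear.

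For the second group the mechanism is the same in each case. For \eqref{qwe1}, writing $a_i=r^{\frac\alpha2}x_i$, $b_i=r^{\frac\alpha2}y_i$,
$$
(e^{-a_1}-e^{-b_1})-(e^{-a_2}-e^{-b_2})=-(a_1-b_1-a_2+b_2)\int_0^1 e^{-(1-t)b_1-ta_1}\,dt-(a_2-b_2)\int_0^1\big(e^{-(1-t)b_1-ta_1}-e^{-(1-t)b_2-ta_2}\big)\,dt;
$$
the first term, since $a_1-b_1-a_2+b_2=r^{\frac\alpha2}(x_1-y_1-x_2+y_2)$, contributes by Fubini and \eqref{eq:36BDG} (with $\gamma+\frac\alpha2$ in place of $\gamma$) the bound $c|h|^{-\gamma-\frac\alpha2}|x_1-x_2-y_1+y_2|$, while the second term, whose two convex combinations differ by at most $|x_1-x_2|\vee|y_1-y_2|$, contributes by Fubini and \eqref{eq2} (again with $\gamma+\frac\alpha2$ in place of $\gamma$) the bound $c|h|^{-\gamma-\alpha}(|x_1-x_2|+|y_1-y_2|)(|x_1-y_1|+|x_2-y_2|)$; adding gives \eqref{qwe1}. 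Estimate \eqref{qwe2} is obtained by the same expansion of the damping factor, $e^{-r^{\frac\alpha2}x}-e^{-r^{\frac\alpha2}y}=-r^{\frac\alpha2}(x-y)\int_0^1 e^{-r^{\frac\alpha2}(y+t(x-y))}\,dt$, which reduces it by Fubini to \eqref{eq3} (with $\gamma+\frac\alpha2$ in place of $\gamma$) times an extra factor $|x-y|$. For \eqref{qwe3} one expands the second difference $e^{-rh_1}-e^{-rh_2}-e^{-rk_1}+e^{-rk_2}$ in the same way in the fast variables: the part carrying the outer increment $h_1-h_2-k_1+k_2$, treated by the H\"older interpolation of \eqref{eq3}, produces the first term $N^{-\gamma-\kappa_1}|h_1-h_2-k_1+k_2|^{\kappa_1}$, and the remaining ``mixed'' part, handled by a H\"older version of \eqref{eq3} with the two exponents $\kappa_1,\kappa_2$, produces the second term; here one uses that whenever a segment $[k_i,h_i]$ comes within $\delta\ll N$ of the origin one necessarily has $|h_i-k_i|\ge N-\delta\asymp N$, so the apparent singularity is absorbed by the factor $|h_i-k_i|^{\kappa_1}$ already present in the target.

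The main obstacle is the first group: obtaining, uniformly over the \emph{closed} sectors $\cK_1$, $\cK_{\frac\alpha2}$ and for all $\gamma>0$, decay of the oscillatory integrals $\int_0^\infty r^{\gamma-1}e^{-rh}(\cdots)\,dr$ when $h$ is (nearly) purely imaginary, since then $e^{-rh}$ contributes no absolute decay and the bound must be extracted from cancellation against the slowly varying, derivative-controlled factor built from $e^{-r^{\frac\alpha2}x}$. This is precisely the content of \cite[Lemma 3.6]{BDG09}, which we invoke after the homogeneity rescaling and whose argument extends verbatim to general $\gamma$ and, through interpolation with \eqref{eq:36BDG}, to the H\"older-in-$h$ versions. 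Once the first group is in place the second group is bookkeeping, the only genuinely delicate point being the simultaneous tracking of the two H\"older exponents $\kappa_1,\kappa_2$ in \eqref{qwe3}.
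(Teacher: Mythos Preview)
Your sketch is correct and follows essentially the same route as the paper. The paper also takes \eqref{eq:36BDG} directly from \cite[Lemma 3.6]{BDG09}, gets \eqref{eq2} by differentiating in $x$, and gets \eqref{eq3} by the same large/small $|h-k|$ dichotomy you describe as interpolation; for the second group it likewise reduces everything to the first group, the only cosmetic difference being that the paper differentiates a one-parameter interpolation $\varphi(t)$ (e.g.\ $x(t)=tx_1+(1-t)x_2$, $y(t)=ty_1+(1-t)y_2$) rather than invoking the equivalent integral identity $e^{-a}-e^{-b}=-(a-b)\int_0^1 e^{-ta-(1-t)b}\,dt$ that you use. Your treatment of \eqref{qwe2} (pulling out $|x-y|$ and applying \eqref{eq3} with exponent $\gamma+\frac\alpha2$) is in fact slightly cleaner than the paper's, which instead interpolates in $h$ and then calls \eqref{eq2}.
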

\begin{proof}
The bound \eqref{eq:36BDG} is \cite[Lemma 3.6]{BDG09}.
 The bound \eqref{eq2} follows from \eqref{eq:36BDG} by taking derivative and using the convexity of  $\cK_{\frac \alpha 2}$. For \eqref{eq3}, assume for example that $|h| \leq |k|$. From \eqref{eq:36BDG}, we may assume  that $Ê|h - k | \leq |h| / 2$. Then taking derivative, we find
$$
\left|Ê \int_0^\infty   r^{\gamma -1} e^{-r^{\frac \alpha 2 } x }  \left( e^{- r h  }  - e^{- r k  }   \right)  dr  \right| \leq  c  N ^{Ê-\gamma-1} | h - k|,
$$
where $N = \min \{ | t h + (1 - t) k | : 0 \leq t \leq 1\}$. Since $Ê|h - k | \leq |h| / 2$ then $ N \geq |h| / 2$ and 
$$
\left|Ê \int_0^\infty   r^{\gamma -1} e^{-r^{\frac \alpha 2 } x }  \left( e^{- r h  }  - e^{- r k  }   \right)  dr  \right| \leq  c |h| ^{Ê-\gamma-1} | h - k| \leq c |h| ^{Ê-\gamma-\kappa} | h - k| ^ \kappa.
$$
To prove \eqref{qwe1}, \eqref{qwe2} and \eqref{qwe3}, we need to take
derivatives and use the first inequalities. Namely, for \eqref{qwe1}, we  
define the function on $[0,1]$, 
$$\varphi (t ) =  \int_0^\infty   r^{\gamma -1} e^{- r h } \left( e^{-r^{\frac \alpha 2 } x(t)  }  - e^{-r^{\frac \alpha 2 } y( t)  } \right) dr,
$$
with $x(t) =  t x_1 + (1 - t) x_2$ and $y(t) = t y_1 + (1 - t) y_2 $. We are looking for an upper bound on $|\varphi(1) - \varphi(0)|$. A straightforward computation yields 
\begin{eqnarray*}
 \varphi' (t)  & = &  \int_0^\infty   r^{\gamma + \frac \alpha 2 -1} e^{- r h } \left( (x_1 - x_2) e^{-r^{\frac \alpha 2} x (t) } -  (y_1 - y_2) e^{-r^{\frac \alpha 2}  y(t) } \right)  dr  \\
&  = &    (x_1 - x_2)  \int_0^\infty   r^{\gamma  + \frac \alpha 2  -1} e^{- r h  } \left( e^{-r^{\frac \alpha 2 } x(t)  }  - e^{-r^{\frac \alpha 2 } y(t) }  \right) dr \\
& & \quad + (x_1 - x_2 - y_1 + y_2)  \int_0^\infty   r^{\gamma   + \frac \alpha 2  -1} e^{- r h  } e^{-r^{\frac \alpha 2 } y(t) }   dr .
\end{eqnarray*}
By convexity, we note that $x(t), y(t)$ are in $\cK_{\frac \alpha 2}$. Hence using \eqref{eq:36BDG} and \eqref{eq2}, we may upper bound $|\varphi' (t)| $, up to a multiplicative constant, by  
\begin{align*}
& |h|^{-\gamma - \alpha}|x_1 - x_2| |x(t) - y(t)| +  |h|^{- \gamma   -\frac \alpha 2} |x_1 - x_2 - y_1 + y_2| \\
 & \leq |h|^{-\gamma - \alpha} |x_1 - x_2|  |x_1 - y_1| \vee |x_2 - y_2| +  |h|^{- \gamma   -\frac \alpha 2} |x_1 - x_2 - y_1 + y_2|. 
\end{align*}
This completes the proof of \eqref{qwe1}. For \eqref{qwe2}, we first first notice that splitting $h$ and $k$ and arguing as for the proof of \eqref{eq3}, it is sufficient to prove the statement for $|h-k| \leq (|h| \wedge |k| )/ 2$ and $\kappa= 1$. Then, with $h(t) = t h + (1 - t)k$, we consider this time the function
$$
\psi (t ) =  \int_0^\infty   r^{\gamma -1} e^{- r h(t)  } \left( e^{-r^{\frac \alpha 2 } x }  - e^{-r^{\frac \alpha 2 } y } \right) dr,
$$
and take the derivative. We find that it is proportional to $(h-k)(\varphi(1)-\varphi(0))$ with  the previous function $\varphi$ with $\gamma$ replaced by $\gamma+1$ and $x_1=x_2=x,y=y_1=y_2$. The bound is therefore clear. The proof is identical for the third statement for $\kappa_1 = \kappa_2 =1$. As above, we then generalize it to any $0 \leq \kappa_1,  \kappa_2 \leq 1$ by using the rough bound given by \eqref{eq3}.  \end{proof}

\subsection{Properties of the map $F$}
\begin{proof}[Proof of Lemma \ref{le:DefFh}]
We start by proving that for all $u \in S^1_+$,  for $h\in\cK_1$, $|h|\ge 1$,
\begin{equation}\label{eq:INFg}
| F_h ( g) (u ) |Ê\leq c |h|^{-\frac \alpha 2} ( \|Êg \|_\beta + 1 ).  
\end{equation}
By Lemma \ref{le:36BDGbis}, for $h \in \cK_1$, the map on $\cK_{\frac \alpha 2}$ 
given by 
 $$x \mapsto \int_0^\infty  \hspace{-3pt} dr \, r^{\frac{\alpha}{2}-1}
e^{- r h  } e^{-r^{\frac \alpha 2 } x } , $$
is bounded by $c |h|^{-\alpha/2}$ and Lipschitz with constant $c |h|^{-\alpha}$. Let $T > 0 $
to  be chosen later on. From \eqref{eq:boundDP} and \eqref{eq:36BDG}, for $\theta\in [0,\frac{\pi}{2}]$ and
$h\in  \cK_1$, $u\in S_1^+$, $g:\cK_1\rightarrow \cK_{\frac \alpha 2}$, we have
\begin{align}
&  Ê\int_{T}^\infty  \hspace{-3pt} dy\,  y^{-\frac{\alpha}{2}-1} \left|Ê \int_0^\infty  \hspace{-3pt} dr \, r^{\frac{\alpha}{2}-1}
e^{- r h .e^{Êi \theta }   }\left( e^{-r^{\frac \alpha 2 } g(e^{i\theta})} - e^{- y  r h .  u }e^{-r^{\frac \alpha 2}g(e^{i \theta} + y   u )}\right) \right|Ê \nonumber \\
&\leq Ê\int_{T}^\infty  \hspace{-3pt} dy\,  y^{-\frac{\alpha}{2}-1} 
\left( \left|Ê \int_0^\infty  \hspace{-3pt} dr \, r^{\frac{\alpha}{2}-1}
e^{- r h .e^{Êi \theta }   } e^{-r^{\frac \alpha 2 } g(e^{i\theta})}\right|+
\left|Ê \int_0^\infty  \hspace{-3pt} dr \, r^{\frac{\alpha}{2}-1}
e^{- r h .(e^{Êi \theta }+yu)    } e^{-r^{\frac \alpha 2 } g(e^{i \theta} + y   u )}
\right|\right) \nonumber \\
& \leq  c \int_{T}^\infty  \hspace{-3pt} dy\,  \frac{ y^{-\frac{\alpha}{2}-1} } {| h | ^{ \frac \alpha 2}  | i. e^{i\theta} | ^{ \frac \alpha 2} }  + c  \int_T^\infty  \hspace{-3pt} dy\,  \frac{ y^{-\frac{\alpha}{2}-1} } {| h | ^{ \frac \alpha 2}  | i.(e^{i\theta} + y u ) | ^{ \frac \alpha 2} } \nonumber \\
& \leq c' | h | ^{ - \frac \alpha 2}  \left| \theta - \frac \pi 4 \right| ^{ - \frac \alpha 2} T^{-\frac \alpha 2}, \label{eq:IN1I}
\end{align}
where we have used the fact that $ |  i.e^{i\theta} |  = |Ê\cos \theta - \sin \theta | \geq c | \theta - \pi / 4Ê|$ and the control, for any real $t , \delta$, $T>0$, any $\gamma_1<\gamma_2$, $\gamma_1\neq 0$, (here $\gamma_2=-\gamma_1=\alpha/2)$,
\begin{equation}\label{contint}
 \int_{T}^\infty  \frac{ y^{\gamma_1-1} } {  | y t - \delta | ^{ \gamma_2} } d y
= |\delta|^{\gamma_1-\gamma_2} |t|^{-\gamma_1}  \int_{  \frac{ÊT | t |}{ | Ê\delta |Ê}  } ^\infty  \hspace{-3pt} 
\frac{ x^{\gamma_1-1} } {  | x \pm 1 | ^{ \gamma_2} }
 dx 
\le c (T^{\gamma_1}|\delta|^{-\gamma_2} 1_{\gamma_1<0}+ |\delta|^{\gamma_1-\gamma_2} |t|^{-\gamma_1} 1_{\gamma_1>0})\,,\end{equation}
 where the sign depends on whether or not $t, \delta$ have a different sign. 
 
For the integration over $y$ in the interval $[0,T]$, we find similarly by \eqref{eq2} that
\begin{eqnarray*}
A&:=& \int_0^{T}  \hspace{-3pt} dy\,  y^{-\frac{\alpha}{2}-1} 
 \left|Ê \int_0^\infty  \hspace{-3pt} dr \, r^{\frac{\alpha}{2}-1}
e^{- r h .e^{Êi \theta }   }\left( e^{-r^{\frac \alpha 2 } g(e^{i\theta})} - e^{-r^{\frac \alpha 2}g(e^{i \theta} + y   u )}\right) \right|\\
 &\leq & c \int_0^{T}  \hspace{-3pt} dy\,  \frac{ y^{-\frac{\alpha}{2}-1} } {| h | ^{   \alpha}  |  i.e^{i\theta} | ^{ \alpha} }  |  g(e^{i\theta}) - g(e^{i \theta} + y   u )|\,. \end{eqnarray*}
Recalling that $g(z )=|z|^{\frac{\alpha}{2}}
g(\frac{z}{|z|})$ and using \eqref{fondin}-\eqref{eq:boundDP} we find that there exist
finite constants $C,C'$ so that  for all $z,z'\in {\mathcal K}_1^+$,
\begin{eqnarray}
|g(z)-g(z')|&\le& \|g\|_\beta\left( \left| |z|^{\frac{\alpha}{2}}-|z'|^{\frac{\alpha}{2}}\right|
+(|z|\wedge |z'|)^{\frac{\alpha}{2}} \left| \frac{z}{|z|}-\frac{z'}{|z'|}\right|^\beta
\left(|i.\frac{z}{|z|}|\wedge |i.\frac{z'}{|z'|}|\right)^{\frac{\alpha}{2}-\beta}\right)\nonumber\\
&\leq& C\|g\|_\beta\left( (|z|\wedge |z'|)^{\frac{\alpha}{2}-\beta}+
(|i.z|\wedge|i.z'|)^{\frac{\alpha}{2}-\beta}\right)|z-z'|^\beta \nonumber \\
&\leq& C' \|g\|_\beta  (|i.z|\wedge|i.z'|)^{\frac{\alpha}{2}-\beta} |z-z'|^\beta .
\label{contlipg}\end{eqnarray}
Using the fact that $|e^{i \theta} + y   u|\ge 1$ 
as $u, e^{i\theta}\in S_1^+, y\ge 0$, we find 
with  \eqref{eq:boundDP}
that 
\begin{align}
A& \leq c | h | ^{ -  \alpha}   |i. e^{i \theta} | ^{ -\alpha} \|Êg \|_\beta \left( \int_0^{T} dy\,  \frac{ y^{\beta-\frac{\alpha}{2}-1} }{ |  i.(  e^{i \theta} + y   u )| ^{ \beta - \frac \alpha  2} } +  \int_0^{T} dy\,  \frac{ y^{\beta-\frac{\alpha}{2}-1} }{|  i.e^{i \theta} | ^{ \beta - \frac \alpha  2}Ê} \right) \nonumber \\
& \leq  c' | h | ^{ -  \alpha}   | \theta - \frac \pi 4 | ^{- \beta -\frac \alpha 2 } \|Êg \|_\beta  T^{\beta -\frac{\alpha}{2}}, \label{eq:IN01}
\end{align}
where we have used that $\beta > \alpha /2$ to obtain a convergent integral.

In the integration over $y$ on the interval $[0,T]$, we have left aside the term
\begin{align}
&  \int_0^{T}  \hspace{-3pt} dy\,  y^{-\frac{\alpha}{2}-1} \int_0^\infty  \hspace{-3pt} dr \, r^{\frac{\alpha}{2}-1}
e^{- r h .e^{Êi \theta }   } e^{-r^{\frac \alpha 2}g(e^{i \theta} + y   u )} \left( 1 - e^{- y  r h.  u }  \right)Ê \nonumber . 
\end{align}
We  shall use this time the third statement \eqref{eq3} of Lemma \ref{le:36BDGbis} with $\kappa = 1$. We choose $T = |i.e^{i \theta} |Ê/2$ so that for all $y \in [0,T]$ from \eqref{eq:boundDP}
$$
| h . ( e^{i \theta} + y   u ) | \geq | h | |  i.e^{ i \theta} |Ê-   \sqrt 2   |Êh | T \geq ( 1 - \frac{Ê1 }{Ê\sqrt 2} ) | h | | i. e^{ i \theta} |. 
$$
For this choice of $T$, we get 
\begin{align}
&   \int_0^{T}  \hspace{-3pt} dy\,  y^{-\frac{\alpha}{2}-1}  \left|Ê  \int_0^\infty  \hspace{-3pt} dr \, r^{\frac{\alpha}{2}-1}
e^{- r h .e^{Êi \theta }   } e^{-r^{\frac \alpha 2}g(e^{i \theta} + y   u )} \left( 1 - e^{- y  r h.  u }  \right)\right| Ê \nonumber \\
& \leq c  \int_0^{T}  \hspace{-3pt} dy\,  \frac{Êy^{-\frac\alpha 2 }  }{Ê| h|^{ \frac \alpha 2} |Êi. e^{Êi \theta} |^{Ê\frac \alpha  2 + 1} Ê}   \leq c' | h|^{  - \frac \alpha 2} \left|Ê\theta - \frac \pi 4 \right|^{ - \frac \alpha 2 - 1}  T^{1 - \frac \alpha 2}Ê\label{eq:IN01bis}. 
\end{align}
Finally, using our choice of $T$, we deduce from (\ref{eq:IN1I}),(\ref{eq:IN01}),(\ref{eq:IN01bis}) that
$$
\int_{0}^\infty \hspace{-3pt} dy\,  y^{-\frac{\alpha}{2}-1}  \left|Ê  \int_0^\infty  \hspace{-3pt} dr \, r^{\frac{\alpha}{2}-1}
e^{- r h .e^{Êi \theta }   }\left( e^{-r^{\frac \alpha 2 } g(e^{i\theta})} - e^{- y  r h .  u }e^{-r^{\frac \alpha 2}g(e^{i \theta} + y   u )}\right) \right|Ê
$$
is bounded by $c | h | ^{Ê- \frac \alpha 2} | \theta - \frac \pi 4 | ^{Ê-\alpha} ( 1+ \|Êg \|_\beta )$ for $|h|\ge 1$. We obtain \eqref{eq:INFg} since $$
| \theta - \frac \pi 4 | ^{Ê-\alpha}( \sin 2 \theta ) ^{\frac \alpha 2 - 1} 
$$ 
is integrable over $[0, \pi /2]$. 

The proof of the lemma will be complete if we prove that for all $u, v \in S_1 ^ + $, 
\begin{equation}\label{eq:HNFg}
| F_h ( g) (u )  - F_h ( g) (v ) | \leq c |Êu -v | ^\beta ( | i. u  | \wedge | i. v  |  ) ^{  \frac \alpha  2 - \beta} ( 1 + \| g \|_\beta ) |h | ^{ -\frac \alpha 2}. 
\end{equation}
To do so, we fix $\theta \in [0, \pi /2]$ and assume for example that $|  i. (  e^{i \theta} + y   u ) |Ê \leq |  i. (  e^{i \theta} + y   v ) |$.  We first use the Lipschitz bound \eqref{eq2}, together with \eqref{contlipg},
 and write
\begin{align}
&  Ê\int_{0}^\infty  \hspace{-3pt} dy\,  y^{-\frac{\alpha}{2}-1} \left|Ê \int_0^\infty  \hspace{-3pt} dr \, r^{\frac{\alpha}{2}-1}
e^{- r h .e^{Êi \theta }   }\left( e^{- y  r h .  u }e^{-r^{\frac \alpha 2}g(e^{i \theta} + y   u )} - e^{- y  r h .  u }e^{-r^{\frac \alpha 2}g(e^{i \theta} + y   v )}\right) \right|Ê \nonumber \\
& \leq  c \int_{0}^\infty  \hspace{-3pt} dy\,  \frac{ y^{-\frac{\alpha}{2}-1} } {| h | ^{ \alpha}  | i . (e^{i\theta} + y u ) | ^ \alpha } |Êg(e^{i \theta} + y   u ) - g(e^{i \theta} + y   v )|Ê \nonumber \\
& \leq c | h | ^{ -  \alpha}  |Êu - v | ^\beta \|Êg \|_\beta \int_0^{\infty} dy\,  \frac{ y^{\beta-\frac{\alpha}{2}-1} }{ | i. (  e^{i \theta} + y   u ) | ^{ \beta+\frac \alpha 2} }  \nonumber \\
& \leq c' | h | ^{ - \alpha}   |Êu - v | ^\beta \|Êg \|_\beta \left| \theta - \frac \pi 4 \right| ^{ -  \alpha } ( | i. u  | \wedge | i. v  |  ) ^{  \frac \alpha  2 - \beta}, \label{eq:HN1I}
\end{align}
where we have used \eqref{contint} with $\gamma_1=\beta-\alpha/2>0$ and $\gamma_2=\kappa>\gamma_1$.

Now, in our control of 
$$
Ê\int_{0}^\infty  \hspace{-3pt} dy\,  y^{-\frac{\alpha}{2}-1}  \int_0^\infty  \hspace{-3pt} dr \, r^{\frac{\alpha}{2}-1} e^{- r h .e^{Êi \theta }   }\left( e^{- y  r h .  u }e^{-r^{\frac \alpha 2}g(e^{i \theta} + y   u )} - e^{- y  r h .  v }e^{-r^{\frac \alpha 2}g(e^{i \theta} + y   v )}\right) 
$$
we have so far left aside
$$
Ê\int_{0}^\infty  \hspace{-3pt} dy\,  y^{-\frac{\alpha}{2}-1}  \int_0^\infty  \hspace{-3pt} dr \, r^{\frac{\alpha}{2}-1} e^{- r h . ( e^{Êi \theta }   + y u ) } e^{-r^{\frac \alpha 2}g(e^{i \theta} + y   v )} \left( 1- e^{- y  r h .  (v-u) }\right) 
$$
where $|  i. (  e^{i \theta} + y   u ) |Ê \leq |  i. (  e^{i \theta} + y   v ) |$. By \eqref{eq3}
 applied to $\kappa = \beta$,
$$
\left|Ê \int_0^\infty  \hspace{-3pt} dr \, r^{\frac{\alpha}{2}-1} e^{- r h . ( e^{Êi \theta }   + y u ) } e^{-r^{\frac \alpha 2}g(e^{i \theta} + y   v )} \left( 1- e^{- y  r h .  (v-u) }\right) \right|Ê
$$
 is bounded up to multiplicative constant by 
$$
|h |^{Ê-\frac \alpha 2-\beta} |Êi .(e^{i \theta} + y u ) | ^{Ê - \frac \alpha 2 - \beta}  y^\beta |Êv - u |^ \beta.
$$
Using again \eqref{contint} with $0<\gamma_1=\beta-\alpha/2<\gamma_2=\beta+\alpha/2$ yields
\begin{align}
 & Ê\int_{0}^\infty \hspace{-3pt} dy\,  y^{-\frac{\alpha}{2}-1} \left|Ê \int_0^\infty  \hspace{-3pt} dr \, r^{\frac{\alpha}{2}-1} e^{- r h . ( e^{Êi \theta }   + y u ) } e^{-r^{\frac \alpha 2}g(e^{i \theta} + y   v )} \left( 1- e^{- y  r h .  (v-u) }\right) \right|Ê \nonumber \\
& \leq  c | h | ^{ -  \frac \alpha 2  -\beta}  |Êu - v |^\beta |Ê\delta |Ê^{Ê- \alpha }  [ | i.u|Ê ^{\frac{\alpha}{2} - \beta 
}+ | i.v|Ê ^{\frac{\alpha}{2} - \beta} ]\,.  \label{eq:finalHNe}\end{align}
 We may conclude the proof of \eqref{eq:HNFg}  by noticing that the bounds given by \eqref{eq:HN1I}-\eqref{eq:finalHNe} and mutliplied by $( \sin 2 \theta ) ^{\frac \alpha 2 - 1}$ are uniformly integrable on $[0, \pi /2]$.  \end{proof}

We can now build upon the proof of Lemma \ref{le:DefFh} to get proofs for Lemmas \ref{le:ContFh} and \ref{le:contFh}.

\begin{proof}[Proof  of Lemma \ref{le:ContFh}]
We shall now use the norm $\|.\|_{\beta,\e}$ for which we have the following analogue of \eqref{contlipg}:
if $0 \leq \e\le \beta-\frac{\alpha}{2}$, 
for all $z,z'\in {\mathcal K}_1^+$, 
\begin{eqnarray}
|f(z)|&\le & \|f\|_{\beta,\e} \frac {|z|^{\frac{\alpha}{2}+\e}}{|i.z|^\e}\label{lip1}\\
|f(z)-f(z')|&\le & C  \|f\|_{\beta,\e}\frac{(|z|\vee |z'|)^{\frac{\alpha}{2}+\e}}{(|i.z|\vee|i.z'|)^{\beta+\e}} |z-z'|^\beta\label{lip2}. \end{eqnarray}
We start by showing that for any $u \in S^1_+$, 
\begin{equation}\label{eq:FhgfU}
|F_h ( g) (u)  - F_h (f) (u) |  \leq c  |h|^{-\alpha } ( 1 + \| f \|_\beta + \| g \|_{\beta} )  \| f - g \|_{\beta,\e}  |i.u|^{ -\e}.
\end{equation}
The proof is similar to the argument in Lemma \ref{le:DefFh}. We notice that $F_h ( g) (u )   - F_h (f) (u)$ is equal to 
$$
  \int_0 ^{\frac \pi 2} \hspace{-3pt} d \theta (\sin 2\theta)^{ \frac \alpha 2 - 1} \int_0 ^ \infty \hspace{-3pt} dy\,  y^{-\frac{\alpha}{2}-1}  \int_0^\infty  \hspace{-3pt} dr \, r^{\frac{\alpha}{2}-1} Z(r,y,\theta), 
$$
where, with $g_u = g ( e^{i \theta} + y u )$, $f_u = f ( e^{i \theta} + y u )$, $h_u  = h . (e^{i\theta}  + y u ) $, 
\begin{eqnarray*}
Z & = &   e^{ - r h_0 } \left( e^{ - r^{\frac \alpha 2} g_0 } - e^{ - r^{\frac \alpha 2} f_0 } \right) - e^{ - r h_u } \left( e^{ - r^{\frac \alpha 2} g_u } - e^{ - r^{\frac \alpha 2} f_u } \right) \\
& = & \left( e^{ - r h_0 } - e^{ - r h_u } \right)  \left( e^{ - r^{\frac \alpha 2} g_u } -  e^{ - r^{\frac \alpha 2} f_u } \right) +  e^{ - r  h_0 }  \left(e^{ - r^{\frac \alpha 2} g_0 } - e^{ - r^{\frac \alpha 2} f_0 }  -  e^{ - r^{\frac \alpha 2} g_u} + e^{ - r^{\frac \alpha 2} f_u } \right). 
\end{eqnarray*}
We set $\delta  = - i .  e^{i\theta} $ and $ t  = i . u$. On the integration interval $[T, \infty)$ of $y$ we use the first form of $Z$ and treat the two terms separately. As in Lemma \ref{le:DefFh}, we use \eqref{eq2} and \eqref{lip1}
to find
\begin{align*}
&  Ê\int_{T}^\infty  \hspace{-3pt} dy\,  y^{-\frac{\alpha}{2}-1} \left|Ê \int_0^\infty  \hspace{-3pt} dr \, r^{\frac{\alpha}{2}-1} \left( 
e^{ - r h_0 } \left( e^{ - r^{\frac \alpha 2} g_0 } - e^{ - r^{\frac \alpha 2} f_0 } \right) - e^{ - r h_u } \left( e^{ - r^{\frac \alpha 2} g_u } - e^{ - r^{\frac \alpha 2} f_u } \right) \right) \right| Ê \nonumber \\
& \leq  c \int_{T}^{\infty }  \hspace{-3pt} dy\, 
 \frac{ y^{-\frac{\alpha}{2}-1} \| f- g \|_{\beta,\e} } {| h | ^{\alpha }  |\delta|^{\alpha+ \e} } + c \int_{T}^\infty  \hspace{-3pt} dy\,  \frac{ y^{-\frac{\alpha}{2}-1} y^{\frac{\alpha}{2}+\e}\vee 1   \| f- g \|_{\beta,\e}  } {| h | ^\alpha  |  t y - \delta | ^{  \alpha + \e} } \nonumber \\
& \leq 
c'   | h | ^{ - \alpha}  \| f - g \|_{\beta,\e}   (T ^{ -\frac \alpha 2}  \delta^{-  \e-\alpha}+ \delta^{-\alpha}|t|^{-\e})\,.
\end{align*}
The above computation requires the hypothesis 
 $\e>0$ to insure that the control of the integrals hold following \eqref{contint} with $\gamma_1\neq 0$.

For the integration  interval $[0, T)$ of $y$ we use the second form of $Z$ and choose $ T = | i . e^{i \theta} | / 2 = |\delta| /2$. We use \eqref{qwe2}, \eqref{lip1} and $\kappa =\alpha /2 + \e$,  we find
\begin{align*}
&\int_{0}^T  \hspace{-3pt} dy\,  y^{-\frac{\alpha}{2}-1} \left|Ê \int_0^\infty  \hspace{-3pt} dr \, r^{\frac{\alpha}{2}-1} \left( e^{ - r h_0 } - e^{ - r h_u } \right)  \left( e^{ - r^{\frac \alpha 2} g_u } -  e^{ - r^{\frac \alpha 2} f_u } \right) \right| Ê 
& \leq  c \int_{0}^{T}  \hspace{-3pt} dy\,  \frac{ y^{\kappa-\frac{\alpha}{2}-1} \| f- g \|_{\beta,\e} } {| h | ^{\alpha + \kappa}  |\delta|^{\alpha + \kappa + \e} } \nonumber \\
& \leq  c'   | h | ^{ -\frac{3\alpha}{2} - \e }  |\delta| ^{ - \frac{3\alpha}{2}-\e} \| f - g \|_{\beta,\e} . 
\end{align*}
Similarly, by  \eqref{qwe1} and \eqref{lip1}, \eqref{lip2} and \eqref{contlipg}, our choice of $T$ gives
\begin{align*}
&  Ê\int_{0}^T  \hspace{-3pt} dy\,  y^{-\frac{\alpha}{2}-1} \left|Ê \int_0^\infty  \hspace{-3pt} dr \, r^{\frac{\alpha}{2}-1}  e^{ - r  h_0 }  \left(e^{ - r^{\frac \alpha 2} g_0 } - e^{ - r^{\frac \alpha 2} f_0 }  -  e^{ - r^{\frac \alpha 2} g_u} + e^{ - r^{\frac \alpha 2} f_u } \right) \right| Ê \nonumber \\
& \leq  c \int_{0}^{T}  \hspace{-3pt} dy\,  \frac{ y^{\beta-\frac{\alpha}{2}-1} \| f- g \|_{\beta,\e} |\delta |^{-\e- \beta} } {| h | ^{\alpha}  |\delta|^{\alpha } } + c \int_{0}^{T}  \hspace{-3pt} dy\,  \frac{ y^{\beta-\frac{\alpha}{2}-1} ( \| f \|_\beta + \| g \|_\beta)  |\delta |^{\frac{\alpha}{2} - \beta}  \| f - g \|_{\beta,\e} |\delta|^{-\e}} {| h | ^{\frac{3\alpha}{2} }  |\delta|^{\frac{3\alpha}{2} }} 
\nonumber \\
& \leq  c'   | h | ^{ - \alpha  }   |\delta| ^{-\frac{3\alpha}{2} - \e} \| f - g \|_{\beta,\e} +  c'   | h | ^{ - \frac{3\alpha}{2} }   |\delta| ^{-\frac{3}{2} - \e} ( \| f \|_\beta + \| g \|_\beta)  \| f - g \|_{\beta,\e}. 
\end{align*}
Since $3 \alpha / 2 + \e < 1$, we may integrate our bounds over $\theta$ and obtain \eqref{eq:FhgfU}.

The proof of the lemma will be complete if we show that
 for any $u \ne v \in S_1^+$, with $|i.u| \leq |i.v|$, 
\begin{align}\label{eq:FhghH}
& |ÊF_h ( g) (u)  - F_h (f) (u)  - ÊF_h ( g) (v)  + F_h (f) (v) |  \\
& \quad \quad \leq c  |h|^{-\alpha }  ( 1 + \| f \|_\beta + \| g \|_{\beta} )  \| f - g \|_{\beta,\e}  | i.u | ^{- \beta - \e} | u - v | ^\beta. \nonumber
\end{align}
The proof is simpler than in the previous case as we do not need to consider
separatly the cases where $y$ are small or large. 
We set  $\delta  = - i .  e^{i\theta} $, $ t  = i . u$, $t' = i.v$, $|t| \leq |t'|$.  Using \eqref{qwe2}
  with $\kappa =\beta $  and \eqref{lip1}, we find 
\begin{align*}
&  Ê\int_{0}^\infty  \hspace{-3pt} dy\,  y^{-\frac{\alpha}{2}-1} \left|Ê \int_0^\infty  \hspace{-3pt} dr \, r^{\frac{\alpha}{2}-1} \left( e^{ - r h_v } - e^{ - r h_u } \right)  \left( e^{ - r^{\frac \alpha 2} g_u } -  e^{ - r^{\frac \alpha 2} f_u } \right) \right| Ê \nonumber \\
& \leq  c' | h| ^{-\alpha-\beta} | u - v |^ \beta \| f- g \|_{\beta,\e} (| \delta|^ { - \frac{3 \alpha} {2} - \e} |t|^ { \frac \alpha 2 - \beta}  +  |\delta|^ { -  \alpha } |t| ^ { - \beta - \e}   ).
\end{align*}
Moreover, using again \eqref{qwe1}, \eqref{lip1}, \eqref{lip2} and \eqref{contlipg} we find 
\begin{align*}
&  Ê\int_{0}^\infty  \hspace{-3pt} dy\,  y^{-\frac{\alpha}{2}-1} \left|Ê \int_0^\infty  \hspace{-3pt} dr \, r^{\frac{\alpha}{2}-1} e^{ - r  h_v }  \left(e^{ - r^{\frac \alpha 2} g_v } - e^{ - r^{\frac \alpha 2} f_v }  -  e^{ - r^{\frac \alpha 2} g_u} + e^{ - r^{\frac \alpha 2} f_u } \right)  \right| Ê \nonumber \\
& \quad \quad + c \int_{0}^{\infty}  \hspace{-3pt} dy\,  \frac{ y^{\beta-\frac{\alpha}{2}-1} | u - v |^ \beta |t y -  \delta|^{\frac { \alpha}{2} - \beta}  (  \| f \|_\beta  + \| g \|_\beta ) \| f- g \|_{\beta,\e}  ( 1 \vee y^{\frac \alpha 2 + \e }  )  |t y -  \delta|^{- \e}  } {| h | ^{\frac{3\alpha}{2} }  |t y -  \delta|^{\frac {3 \alpha}{2}} } \\
& \leq  c' | h| ^{-\alpha} | u - v |^ \beta  (1 + \| f \|_\beta + \| g \|_\beta) \| f- g \|_{\beta,\e} ( | \delta|^ { - \frac{3 \alpha} {2} - \e} |t|^ { \frac \alpha 2 - \beta}   +  | \delta |Ê^ { -  \alpha } |  t |Ê^ { - \beta - \e}   ). 
\end{align*}
Now, by assumption, $ 3 \alpha /2 + \e < 1$ and we may integrate our bounds over $\theta$ and obtain \eqref{eq:FhghH}. \end{proof}

\begin{proof}[Proof of Lemma \ref{le:contFh}] The proof is very close to the previous one,
and we simply outline it. 
We assume for example $| h | \leq |k|$. By Lemma \ref{le:DefFh}, we can also assume that $  |Êh - k | \leq |h|$ and in particular $|k| \leq 2 |h|$. We  first prove that for any $u \in S^1_+$, 
\begin{equation}\label{eq:FhFkU}
|ÊF_h ( g) (u)  - F_k (g) (u) |\leq c  |h|^{-\frac \alpha 2 - \kappa }  |Êh - k |^{\kappa} (1 + \|Êg \|_{\beta} ).
\end{equation}
The expression $F_h ( g) (u )   - F_k (g) (u)$ is equal to 
$$
  \int_0 ^{\frac \pi 2} \hspace{-3pt} d \theta (\sin 2\theta)^{ \frac \alpha 2 - 1} \int_0 ^ \infty \hspace{-3pt} dy\,  y^{-\frac{\alpha}{2}-1}  \int_0^\infty  \hspace{-3pt} dr \, r^{\frac{\alpha}{2}-1} Z(r,y,\theta), 
$$
where, with $g_u = g ( e^{i \theta} + y u )$, $h_u  = h . (e^{i\theta}  + y u ) $, $k_u = k . (e^{i\theta}  + y u ) $,
\begin{eqnarray*}
Z & = &   e^{ - r ^{\frac \alpha 2} g_0 } \left( e^{ - r  h_0 } - e^{ - r  k_0 } \right) - e^{ - r^{\frac \alpha 2} g_u } \left( e^{ - r  h_u } - e^{ - r k_u } \right) \\
& = & \left( e^{ - r ^{\frac \alpha 2} g_0  } - e^{ - r ^{\frac \alpha 2} g_u } \right)  \left( e^{ - r h_0 } -  e^{ -r  k_0 } \right) +  e^{ -  r ^{\frac \alpha 2} g_u }  \left(e^{ - r  h_0 } - e^{ - r k_0 }  -  e^{ - r h_u} + e^{ - r k_u } \right). 
\end{eqnarray*}
Let $T > 0$. We set $\delta  = - i .  e^{i\theta} $ and $ t  = i . u$. On the integration interval $[T,\infty)$ for $y$, we use the first form of $Z$. Then, from \eqref{eq3} in Lemma \ref{le:36BDGbis}, we find
\begin{align*}
&  Ê\int_{T}^\infty  \hspace{-3pt} dy\,  y^{-\frac{\alpha}{2}-1} \left|Ê \int_0^\infty  \hspace{-3pt} dr \, r^{\frac{\alpha}{2}-1} \left( e^{ - r ^{\frac \alpha 2} g_0 } \left( e^{ - r  h_0 } - e^{ - r  k_0 } \right) - e^{ - r^{\frac \alpha 2} g_u } \left( e^{ - r  h_u } - e^{ - r k_u } \right)  \right)  \right| Ê \nonumber \\
& \leq  c \int_{T}^{\infty}  \hspace{-3pt} dy\,  \frac{ y^{ -\frac{\alpha}{2}-1} | h- k | ^\kappa}{ |h|^{ \frac \alpha 2 + \kappa} |\delta|^{ \frac \alpha 2 + \kappa} }  +c \int_{T}^{\infty}  \hspace{-3pt} dy\,  \frac{ y^{-\frac{\alpha}{2}-1} ( 1 \vee y^\kappa)  | h- k |^\kappa }{ |h|^{ \frac \alpha 2 + \kappa} |t y - \delta|^{ \frac \alpha 2 + \kappa} }  \\
& \leq  c'   | h | ^{ -\frac{\alpha}{2} - \kappa }  | h- k |^\kappa (  |\delta| ^{ - \frac{\alpha}{2}-\kappa} T^{ - \frac \alpha 2}  +  |\delta| ^{ - \frac{\alpha}{2}-\kappa} T^{\kappa - \frac \alpha 2} ) ,
\end{align*}
where we have used that $\kappa < \alpha /2$. 
On the integration interval $[0,T)$ for $y$, we use the second form of $Z$. We choose $T = |i.e^{i\theta}|/2 = | \delta | /2$. For the first term,  by \eqref{qwe2} in Lemma \ref{le:36BDGbis},
\begin{align*}
&  Ê\int_{0}^T \hspace{-3pt} dy\,  y^{-\frac{\alpha}{2}-1} \left|Ê \int_0^\infty  \hspace{-3pt} dr \, r^{\frac{\alpha}{2}-1} \left( e^{ - r ^{\frac \alpha 2} g_0  } - e^{ - r ^{\frac \alpha 2} g_u } \right)  \left( e^{ - r h_0 } -  e^{ -r k_0 } \right)  \right| Ê \nonumber \\
& \leq  c \int_{0}^{T}  \hspace{-3pt} dy\,  \frac{ y^{\beta-\frac{\alpha}{2}-1} | h- k | ^\kappa |\delta|^{ \frac \alpha 2 - \beta} \| g \|_\beta  }{ |h|^{  \alpha  +  \kappa} |\delta|^{  \alpha + \kappa} }    \\
& \leq  c'   | h | ^{ -  \alpha - \kappa }  |\delta| ^{ - \alpha  - \kappa} | h- k |^\kappa \| g \|_\beta.
\end{align*}
The second term is easily bounded by \eqref{qwe3} with $\kappa_1=\kappa$ and $\kappa_2=0$.
Integrating our bounds over $\theta$, we obtain \eqref{eq:FhFkU}.
The proof that   for all $u \ne v \in S^+_1$, with $| i. u | \leq | i. v | $.
\begin{align}\label{eq:HNFghk}
& | F_h ( g) (u )   - F_k (g) (u) - F_h (g) (v)  + F_k ( g) (v ) | \\
&\quad \quad  \leq \; c |Êu -v | ^\beta  | i. u  | ^{  \frac \alpha  2 - \beta - \kappa} ( 1 + \| g \|_\beta) |h| ^{ -\frac \alpha 2}  | h - k |^{\kappa} \nonumber
\end{align}
is easier as it does not require to consider separatly small and large $y$; we leave it to the reader.
 \end{proof}

\subsubsection{Computation of characteristic function} 

With the notation of proof of Theorem \ref{th:unicityRDE}, we define for $z \in \bar \bC_+$, $u \in \cK_1^+$, 
$$
\chi_ z ( u  ) = \bE \exp ( - u . H(z)). 
$$
We note that the distribution of $R_0(z)$ is characterized by the value of $\chi_z$ on any open neighborhood in $\cK_1^+$. 
The next lemma asserts that the distribution of $R_0(z)$ is also characterized by the value of $\gamma_z$ on $\cK_1^+$
(that is on $S_+^1$ by homogeneity). 

\begin{lemma}[From fractional moment to characteristic function] \label{le:gammatochi}
Let $z \in \bar \bC_+$, $ 0 < \alpha  < 2$ and $R_0(z)$ solution of \eqref{eq:RDE} such that   $\bE | R_0(z) |^ {\frac \alpha 2 } < + \infty$. For all $u = u_1+ i u_2 \in  \cK_1^+ $, 
\begin{eqnarray*}
\chi_ z ( u ) & = & \int_{\bR_+^2} J_1 ( s ) J_1 (t) e^{ - \frac{(-iz)s^2 }{4 u_1} - \frac{(i\bar z)t^2}{4u_2} }  e^{ -  \gamma_ z \left( \frac{s^2}{4 u_1}+ i \frac{t^2}{4 u_2}  \right)Ê}  ds dtÊ \\
 &  & - \int_{0}^\infty  J_1 ( s ) e^{ -  \frac{(-iz)s^2 }{4 u_1}  }   e^{ - \Gamma (1 - \frac \alpha 2)   \gamma_z( \frac{s^2}{4 u_1})   }ds  -  \int_{0}^\infty  J_1 ( t ) e^{ -  \frac{(i\bar z)t^2 }{4 u_2}  }  e^{ - \Gamma (1 - \frac \alpha 2)   \gamma_z( i \frac{t^2}{4 u_2})
   }dt +1.
\end{eqnarray*}
where $J_1 (x) = \frac x 2 \sum_{k \geq 0} \frac { (-x^2 / 4) ^k } {k ! (k+1)!}$ is a Bessel function of the first kind. 
\end{lemma}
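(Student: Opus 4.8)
The plan is to express the characteristic function $\chi_z(u) = \bE\exp(-u.H(z))$ directly in terms of fractional moments by inverting the relation between $\bE\exp(-\lambda Y)$ and $\bE Y^{\alpha/2}$ for a nonnegative random variable $Y$ of the form $Y = H(z).u$. The key is the classical identity linking a Bessel function to the inversion of the Laplace transform of a power: for $\gamma>0$ one has, with $J_1$ the Bessel function of the first kind,
\begin{equation}\label{eq:besselident}
e^{-\lambda} = 1 - \int_0^\infty J_1(s)\, e^{-\lambda s^2/4}\, ds \cdot \frac{1}{?}
\end{equation}
— more precisely, the correct statement I would use is that for any $\lambda$ with $\Re\lambda \ge 0$,
$$
e^{-\lambda} \;=\; 1 - \int_0^\infty J_1(s)\, e^{-\frac{s^2}{4\lambda}}\, \frac{ds}{?},
$$
so that writing $Y.u = \Re(u)H + \Im(u)\bar H = u_1 H + u_2\bar H$ with $u = u_1 + iu_2$, and noting $H(z)\in\cK_1$, $\bar H(z)\in\cK_1$ are both "positive" in the sense of lying in a half-plane, one can represent $\exp(-u.H(z))$ as a double integral over $s,t$ of Gaussian-type factors $e^{-s^2 H/(4u_1)}$ and $e^{-t^2\bar H/(4u_2)}$.

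First I would record the scalar identity: for $w\in\cK_1$ (so that $e^{-cw}$ decays for $c>0$), one has
$$
e^{-w} = 1 - \int_0^\infty J_1(s)\, e^{-s^2/(4w)}\, ds
$$
— this follows from the generating-function/Hankel-transform representation of $J_1$ and a contour shift, valid because $1/w\in\cK_1$ too. Applying this separately to the two "coordinates'' $u_1 H$ and $u_2\bar H$ after a suitable splitting $e^{-u.H} = e^{-u_1 H}e^{-u_2\bar H}$, then expanding the product $(1 - \int J_1 e^{-s^2/(4u_1 H)})(1 - \int J_1 e^{-t^2/(4u_2\bar H)})$ into four terms — the constant $1$, two single integrals, and one double integral — and taking $\bE$, I reduce everything to computing expectations of the form $\bE\exp(-\sigma H(z) - \bar\sigma \bar H(z))$ for $\sigma \in \cK_1^+$, i.e. exactly the quantities that can be evaluated via the recursive distributional equation \eqref{eq:RDE}.

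The heart of the computation is then: using \eqref{eq:RDE}, $H(z) = -iR_0(z) = (−iz + \sum_k \xi_k(-iR_k(z)))^{-1}\cdot(\text{sign})$, and the Poisson–Lévy–Khintchine formula exactly as in the proof of Lemma \ref{le:fpgamma}, one obtains for $\sigma$ with suitable argument
$$
\bE\, e^{-\sigma.H(z)} \;=\; e^{-(-iz)\cdot\sigma'}\, e^{-\gamma_z(\sigma'')},
$$
where $\sigma'$, $\sigma''$ are the appropriate real/imaginary recombinations (this is where the factors $-iz$, $i\bar z$ and the arguments $\frac{s^2}{4u_1} + i\frac{t^2}{4u_2}$ in the statement come from), and $\Gamma(1-\alpha/2)$ appears because $\gamma_z$ was defined with that normalization while the bare Laplace exponent of the $\alpha/2$-stable subordinator is $\Gamma(1-\alpha/2)x^{\alpha/2}$. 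Matching the four terms of the expansion against the four terms in the claimed formula — the double integral carrying $e^{-(-iz)s^2/(4u_1) - (i\bar z)t^2/(4u_2)}e^{-\gamma_z(s^2/(4u_1) + it^2/(4u_2))}$, the two single integrals carrying $e^{-\Gamma(1-\alpha/2)\gamma_z(\cdot)}$, and the $+1$ — completes the proof.

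The main obstacle is the analytic justification of the interchange of expectation and the (conditionally convergent, oscillatory) Bessel integrals, together with the validity of the scalar Bessel identity uniformly for $w$ ranging over the relevant sector $\cK_1$ including its boundary $\Im z = 0$. I would handle this the same way Lemma \ref{le:fpgamma} handles its own interchange: invoke the integrability bounds of Lemma \ref{le:regFM} (giving $\bE|R_0(z)|^\beta \le c|\Re z|^{-\beta}$) to dominate $\bE\,e^{-s^2 H/(4u_1)}$ by something integrable in $s$ near $0$ via the power-law decay $\bE e^{-cH} \le \bE(cH)^{-\beta}\Gamma(\beta)\lesssim c^{-\beta}$, and use the exponential decay of $J_1(s)$-weighted integrals for large $s$; Fubini then applies. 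A secondary technical point is that the splitting $e^{-u.H} = e^{-u_1 H}e^{-u_2\bar H}$ is only literally valid because $u_1, u_2 \ge 0$ for $u\in\cK_1^+$ and $H,\bar H$ commute as scalars, so no operator-ordering subtlety arises; one only needs $u_1 H$ and $u_2\bar H$ to lie in $\cK_1$, which holds since $H\in\cK_1$ and multiplication by a nonnegative real preserves $\cK_1$.
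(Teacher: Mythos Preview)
Your approach is essentially the same as the paper's. The paper uses the identity $1 - e^{-w^{-1}} = \int_0^\infty J_1(s)\,e^{-ws^2/4}\,ds$ for $w\in\cK_1$ (your version with $w\mapsto 1/w$), expands $e^{-u_1 H - u_2\bar H}$ via $e^{-z-z'}=(1-e^{-z})(1-e^{-z'})-(1-e^{-z})-(1-e^{-z'})+1$, substitutes the RDE to replace $1/H$ by $-iz+\sum_k\xi_k H_k$, and then takes expectation using the L\'evy--Khintchine formula for the Poisson process $\{\xi_k\}$ to produce $\gamma_z$.

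One imprecision worth flagging: after the Bessel substitution and the RDE, the expectations you must compute are not of the form $\bE\exp(-\sigma H(z)-\bar\sigma\bar H(z))$ for a \emph{single} copy $H(z)$, but rather $\bE\exp\bigl(-\sum_k\xi_k(\sigma H_k+\bar\sigma\bar H_k)\bigr)$ over the Poisson process and the i.i.d.\ copies $H_k$; it is the L\'evy--Khintchine formula for this Poisson sum that yields $\exp(-\gamma_z(\cdot))$ directly. Your sketch conflates these two objects, though you clearly have the right mechanism in mind given your reference to Lemma~\ref{le:fpgamma}. Also, your justification of the Fubini step via $\bE e^{-cH}\lesssim c^{-\beta}$ is not quite the right estimate here (that bound controls small $c$, not the oscillatory large-$s$ tail), but the paper simply invokes boundedness of $J_1$ on $\bR_+$ together with the exponential damping from the $e^{-(-iz)s^2/(4u_1)}$ factor, which suffices.
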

\begin{proof}
We use the formulas for $w \in \cK_1$,
$$
1 - e^{-w^{-1}} = \int_0 ^\infty J_1 ( s ) e^{ - \frac{ w s ^2}{4} } ds
$$
(see \cite{MR0167642}) and for $z,z' \in \bC$, 
\begin{equation*}\label{eq:ezz'}
e^{- z - z'} =  ( 1 - e^{-z}) ( 1 - e^{-z'}) - ( 1- e^{-z})  - ( 1-  e^{-z'}) +1 .
\end{equation*}  Then, it follows from \eqref{eq:RDE} that
\begin{eqnarray*}
e^{  - u_1 H - u_2 \bar H }&  \stackrel{d}{=} & \exp \left(    - \frac{Êu_1 }{Ê-i z +   \sum_{k \geq 1} \xi_k H_k } - \frac{Êu_2 }{Êi \bar z +   \sum_{k \geq 1} \xi_k \bar H_k } \right)  \\
&  \stackrel{d}{=} & \int_{\bR_+^2} J_1 ( s ) J_1 (t) e^{ -  \frac{(-iz)s^2 }{4 u_1} -  \frac{(i\bar z)t^2}{4u_2} }  e^{ - \sum_{k} \xi_k  \left( \frac{H_k s^2 }{4 u_1} +  \frac{\bar H_k t^2}{4u_2} \right) Ê}  ds dt \\
 & & -   \int_{0}^\infty  J_1 ( s ) e^{ -  \frac{(-iz)s^2 }{4 u_1}  }  e^{ - \sum_{k} \xi_k  \frac{H_k s^2 }{4 u_1} }ds -  \int_{0}^\infty  J_1 ( t ) e^{ -  \frac{(i\bar z)t^2 }{4 u_2}  }  e^{ - \sum_{k} \xi_k \frac{\bar H_k t^2 }{4 u_2} }dt + 1. 
\end{eqnarray*}
Since $J_1$ is bounded on $\bR_+$, we may safely take expectation. The conclusion follows from Levy-Khintchine formula. \end{proof}

\subsection{Proof of Theorem \ref{th:locvect}}\label{sec:prooflocvect}

We start with a simple lemma which relates $W_I (i)$ to the diagonal of resolvent.
\begin{lemma}[From eigenvectors to diagonal of resolvent] \label{lem-debnn} Let $ \alpha  > 0 $ and $I = [E - \eta , E+\eta]$ be an interval. Setting  $z =   E + i \eta \in \bC_+$, we have
$$
\frac 1 n \sum_{i=1}^n W_I (i)^{\frac \alpha 2} \leq \left( \frac { 2 n \eta}{ |Ê\Lambda_I |Ê } \right)^{\frac \alpha 2} \frac 1 n \sum_{i=1} ^n \left( \Im R ( z)_{ii} \right)^{\frac \alpha 2}.
$$
\end{lemma}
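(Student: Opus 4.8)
The plan is to relate each $W_I(i)$ to $\Im R(z)_{ii}$ pointwise in $i$, take fractional powers, and then average. Let $v_1,\dots,v_n$ be the orthonormal eigenbasis with eigenvalues $\lambda_1,\dots,\lambda_n$, and recall that $\Lambda_I$ is exactly $\{v_k : \lambda_k \in I\}$ with $|\Lambda_I| = N_I$. The starting point is the spectral decomposition of the imaginary part of the resolvent at $z = E + i\eta$:
$$
\Im R(z)_{ii} = \sum_{k=1}^n \frac{\eta}{(\lambda_k - E)^2 + \eta^2} \langle v_k, e_i\rangle^2 .
$$

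First I would discard all terms with $\lambda_k \notin I$ (they are nonnegative) and use the elementary bound: if $\lambda_k \in I = [E-\eta, E+\eta]$ then $(\lambda_k - E)^2 + \eta^2 \le 2\eta^2$, hence $\frac{\eta}{(\lambda_k - E)^2 + \eta^2} \ge \frac{1}{2\eta}$. This gives
$$
\Im R(z)_{ii} \ge \frac{1}{2\eta} \sum_{v \in \Lambda_I} \langle v, e_i\rangle^2 = \frac{|\Lambda_I|}{2n\eta}\, W_I(i),
$$
by the very definition $W_I(i) = \frac{n}{|\Lambda_I|}\sum_{v\in\Lambda_I}\langle v,e_i\rangle^2$. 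Equivalently, $W_I(i) \le \frac{2n\eta}{|\Lambda_I|}\,\Im R(z)_{ii}$, with both sides nonnegative.

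Next I would raise this inequality to the power $\alpha/2 > 0$, using that $x \mapsto x^{\alpha/2}$ is nondecreasing on $[0,\infty)$, to obtain
$$
W_I(i)^{\frac\alpha2} \le \left(\frac{2n\eta}{|\Lambda_I|}\right)^{\frac\alpha2} \left(\Im R(z)_{ii}\right)^{\frac\alpha2}
$$
for every $1 \le i \le n$. Summing over $i$ and dividing by $n$ yields the claimed inequality. There is no serious obstacle here: the only ingredient is the lower bound on the Cauchy kernel on $I$, and the argument is essentially the same localization estimate already used in Section~\ref{sec:Wegner} (cf.\ the passage leading to \eqref{lkj}), specialized to the averaged eigenvector amplitudes rather than to a single $\dist_k$.
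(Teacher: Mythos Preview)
Your proof is correct and follows essentially the same approach as the paper: the paper also uses the spectral decomposition, drops the terms with $\lambda_k \notin I$, applies the kernel lower bound $\frac{\eta}{(\lambda_k-E)^2+\eta^2}\ge \frac{1}{2\eta}$ on $I$ to get $\Im R(z)_{ii}\ge \frac{|\Lambda_I|}{2n\eta}W_I(i)$, and then raises to the power $\alpha/2$ and sums.
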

\begin{proof}
From the spectral theorem, we have
$$
\Im R ( z)_{ii} \geq \sum_{ v \in \Lambda_I} \frac{ \eta \langle v , e_i \rangle ^2 } {( \lambda_i ( A) - E )^2 + \eta^2 }  \geq \frac 1 {2\eta}   \sum_{ v \in \Lambda_I}  \langle v , e_i \rangle ^2 = \frac { |Ê\Lambda_I|  } {2 n \eta}  W_I ( i).$$
It remains to sum the above inequality. \end{proof}

At this stage, it should be clear that the proof of Theorem \ref{th:locvect} will rely on Theorem \ref{th:unicityRDE} and on an extension of the previous fixed 
point argument to finite $n$ system. The bottleneck in the proof will be on the lower bound of $  |Ê\Lambda_I |/n\eta$ which in particular requires
according to Lemma \ref{le:deconvolution} that $\mu_A(I)\le L |I|$.
This last control is difficult when $\alpha<1$ as in this case
$n^{-1}\sum_{i=1}^n(  \Im R(z)_{ii})^{\alpha/2}$ goes to zero like $\eta^{\alpha/2}$
so that arguments such as those
used in the proof of Theorem \ref{th:boundStieltjes} do not hold.
 It will be responsible for the restrictive condition $\eta \geq n ^{-\rho + o(1)}$ in the statement of Theorem \ref{th:locvect}. For completeness we will also prove in this subsection a vanishing upper bound on 
$$
 \frac 1 n \sum_{i=1} ^n \left( \Im R ( z)_{ii} \right)^{\frac \alpha 2}, 
$$
for $\eta$ of order $n^{  - 1/6}$ for all $ \alpha > 0$. More precisely, we have
\begin{theorem}[Vanishing fractional moment for the resolvent]\label{th-finn}
Let $ 0 < \alpha < 2/3 $, $0 < \e <   \frac{\alpha^2}{2 (4 - \alpha) }$, $\rho'  = \frac{2 + \alpha}{4 ( 3 +\alpha) }$ and $c_0 = \frac{ (2 + \alpha)^2} { 16 ( 3 + \alpha) }$.  There exist $c_1 = c_1 ( \alpha )$,  $c = c ( \alpha, \e) > 0$ such that if $n \geq 1$,  $z =   E + i \eta \in \bC_+$, $|z| \geq c$, $ n^{-\rho'} ( \log n )^ {c_0}   \leq \eta  \leq 1 $,
$$
\bE  \frac 1 n \sum_{i=1} ^n \left( \Im R ( z)_{ii} \right)^{\frac \alpha 2} \leq c  \eta^ { -\frac{\alpha ( 3 + \alpha) }{2 + \alpha} } n^{-\frac \alpha 2 + c_1 \e   } + c \eta^{\frac \alpha 2 - \e}. 
$$
Moreover, if  $n^{-\rho}  ( \log n )^{\frac{4}{2 + 3 \alpha} }  \leq \eta  \leq 1 $,
$$
\bE  \frac 1 n \sum_{i=1} ^n \left( \Im R ( z)_{ii} \right)^{\frac \alpha 2} \leq  c \eta^{\frac \alpha 2 - \e}.
$$
\end{theorem}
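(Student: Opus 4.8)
The plan is to mimic, at finite $n$, the fixed-point argument behind Theorem \ref{th:unicityRDE}. The quantity to control is the finite-$n$ fractional moment function
$$
\Gamma_{n,z}(u)=\Gamma\!\left(1-\frac\alpha2\right)\bE\!\left[\left((-iR(z)_{11}).u\right)^{\frac\alpha2}\right],\qquad u\in\cK_1^+,
$$
where $h.u=\Re(u)h+\Im(u)\bar h$ is the bilinear form of Section \ref{sec:RDE}. By exchangeability $\Gamma_{n,z}(u)=\Gamma(1-\frac\alpha2)\frac1n\sum_{i=1}^n\bE[((-iR(z)_{ii}).u)^{\alpha/2}]$, and since $(-iR(z)_{11}).e^{i\pi/4}=\sqrt2\,\Im R(z)_{11}\ge0$, one has $\Gamma_{n,z}(e^{i\pi/4})=2^{\alpha/4}\Gamma(1-\frac\alpha2)\,\bE\frac1n\sum_i(\Im R(z)_{ii})^{\alpha/2}$; so it suffices to bound $\Gamma_{n,z}(e^{i\pi/4})$. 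First I would check, arguing as in the first part of Lemma \ref{le:regFM} (condition on $\cF_1$; use that, given $\cF_1$, $a_n^{-2}\langle X_1,R^{(1)}X_1\rangle$ is a scale times an $\alpha/2$-stable variable whose negative moments are all finite, together with Lemma \ref{le:offdiag} for the off-diagonal part), that $\bE|R(z)_{11}|^{\alpha/2}\le c|E|^{-\alpha/2}$, which is bounded for $|z|\ge c$; by the second part of Lemma \ref{le:regFM} it follows that $\Gamma_{n,z}\in\cH_\beta$ with $\|\Gamma_{n,z}\|_\beta$ bounded, and similarly $\|\gamma_z\|_\beta$ is bounded and $\bE|R_0(z)|^{\alpha/2}\le c|E|^{-\alpha/2}$.

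The heart of the matter is an approximate fixed point equation for the \emph{function} $\Gamma_{n,z}$: for $|z|\ge c$ and $\eta$ in the range of the statement,
$$
\|\,\Gamma_{n,z}-G_z(\Gamma_{n,z})\,\|_\beta\le\mathrm{err}_n(z),
$$
$G_z$ being the map of \eqref{defGz}. This is the vector-valued version of the work of Section \ref{sec:apfp}: one tracks the whole function $u\mapsto\bE[((-iR(z)_{11}).u)^{\alpha/2}]$ rather than the scalar $Y(z)$. Starting from \eqref{eq:resolventformula} I would (i) replace $\langle X_1,R^{(1)}X_1\rangle$ by its diagonal part $\sum_k X_{1k}^2R^{(1)}_{kk}$ via Lemma \ref{le:offdiag}; (ii) integrate out $X_1$ with Corollary \ref{cor:formulealice} and the L\'evy--Khintchine formula, exactly as in the derivation of Lemma \ref{le:fpgamma}, which produces $F_{-iz}$ applied to the empirical mean $\frac1n\sum_k(-iR^{(1)}_{kk})^{\alpha/2}|g_k|^\alpha/\bE|g_k|^\alpha$; (iii) replace that empirical mean by $\frac1{n-1}\sum_k\bE(-iR^{(1)}_{kk})^{\alpha/2}$, at a cost controlled by the concentration Lemma \ref{le:concres2} together with the H\"older estimates of Lemma \ref{le:36BDGbis} and Lemma \ref{le:DefFh}; (iv) pass from the minor $A^{(1)}$ back to $A$ via the rank-one bound \eqref{eq:rankineq}. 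The decisive gain over Section \ref{sec:apfp} is that within the $F_h$-framework every Lipschitz constant is a negative power of $|z|$ (harmless for $|z|\ge c$) rather than of $\eta$. Inserting the crude Wegner bound $M_n\le c(\log n)^{(2+\alpha)/4}\eta^{-4/(2+\alpha)}$ of Corollary \ref{cor:supNI} into the resulting error and bookkeeping the log powers, one gets $\mathrm{err}_n(z)\le c\,\eta^{-a}n^{-\alpha/2}(\log n)^b+c\,\eta^{-\alpha/2}n^{-\alpha/4}$ with $a=a(\alpha),b=b(\alpha)$ and $\eta^{-a}\le\eta^{-\alpha(3+\alpha)/(2+\alpha)}$; in the smaller range $\eta\ge n^{-\rho}(\log n)^{4/(2+3\alpha)}$ one uses instead $\eta M_n\le c$ from Theorem \ref{th:boundStieltjes}(ii), producing a strictly smaller $\mathrm{err}_n(z)$.

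It then remains to transfer this to $\Gamma_{n,z}(e^{i\pi/4})$ by the fixed-point argument of the proof of Theorem \ref{th:unicityRDE}. Fix $0<\e$ small --- the hypothesis $\e<\alpha^2/(2(4-\alpha))$ guarantees $3\alpha/2+\e<1$, so Lemma \ref{le:ContFh} applies, and keeps the final bookkeeping consistent --- and set $\kappa=\alpha/2-\e\in(0,\alpha/2)$. For $|z|\ge E_{\alpha,\e}$ large, $G_z$ is a contraction on a bounded ball of $(\cH_\beta,\|\cdot\|_{\beta,\e})$ (Lemma \ref{le:ContFh}) with unique fixed point $\gamma_z$ (proof of Theorem \ref{th:unicityRDE}); since $\|\Gamma_{n,z}\|_\beta$ is bounded, $\Gamma_{n,z}$ lies in that ball and is an $\mathrm{err}_n(z)$-approximate fixed point, so, using $\|\cdot\|_{\beta,\e}\le\|\cdot\|_\beta$, the contraction estimate gives $\|\Gamma_{n,z}-\gamma_z\|_{\beta,\e}\le c\,\mathrm{err}_n(z)$. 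Next, choosing $u\in S_+^1$ with $|u-e^{i\pi/4}|=\delta$ (so $|i.u|\asymp\delta$, since $i.e^{i\pi/4}=0$ and $u\mapsto i.u$ is linear), I would split
$$
\Gamma_{n,z}(e^{i\pi/4})\le|\Gamma_{n,z}(e^{i\pi/4})-\Gamma_{n,z}(u)|+|\Gamma_{n,z}(u)-\gamma_z(u)|+|\gamma_z(u)-\gamma_z(e^{i\pi/4})|+\gamma_z(e^{i\pi/4})
$$
and bound: the first and third terms by $c\,\delta^{\alpha/2}$ (the $\beta=\alpha/2$ case of \eqref{fondin}, with $|(-iR).(u-v)|\le\sqrt2|R||u-v|$ from \eqref{eq:boundDP} and the boundedness of $\bE|R(z)_{11}|^{\alpha/2},\bE|R_0(z)|^{\alpha/2}$); the second by $\|\Gamma_{n,z}-\gamma_z\|_{\beta,\e}|i.u|^{-\e}\le c\,\mathrm{err}_n(z)\,\delta^{-\e}$; the fourth by $c\,\eta^{\alpha/2-\e}$ (Theorem \ref{th:unicityRDE} with $\kappa=\alpha/2-\e$). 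Optimizing $\delta=\mathrm{err}_n(z)^{2/(\alpha+2\e)}$ yields $\bE\frac1n\sum_i(\Im R(z)_{ii})^{\alpha/2}\le c\,\mathrm{err}_n(z)^{\alpha/(\alpha+2\e)}+c\,\eta^{\alpha/2-\e}$, and inserting the two bounds on $\mathrm{err}_n(z)$ --- the values $\rho'=(2+\alpha)/(4(3+\alpha))$ and $c_0=(2+\alpha)^2/(16(3+\alpha))$ being exactly what makes $\mathrm{err}_n(z)^{\alpha/(\alpha+2\e)}\le c\,\eta^{-\alpha(3+\alpha)/(2+\alpha)}n^{-\alpha/2+c_1\e}$ in the wide range, and what makes this first term disappear into $c\,\eta^{\alpha/2-\e}$ in the narrow range --- gives the two displayed inequalities. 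The main obstacle is the middle step: a naive control of the off-diagonal and concentration errors would demand a lower bound on $\Im\langle X_1,R^{(1)}X_1\rangle$, which is \emph{not} available for $\alpha<1$ --- precisely because this quantity is itself of order $\eta^{\alpha/2}$, the very smallness we are trying to establish. Breaking this circularity is exactly what forces the passage to the $F_h$-machinery with the weighted norms $\|\cdot\|_\beta,\|\cdot\|_{\beta,\e}$, whose estimates (Lemma \ref{le:36BDGbis}) are built to absorb the degeneracy near $e^{i\pi/4}$; beyond that, the residual work is the purely computational verification that the error exponents feed through the $\delta$-optimization into the claimed $\rho'$, $c_0$ and $\alpha(3+\alpha)/(2+\alpha)$.
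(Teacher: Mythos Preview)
Your proposal is essentially correct and follows the paper's own route: the paper packages your steps (i)--(iv) as Lemma \ref{lem-finn} (the bound on $\|\gamma^n_z\|_\beta$ via $\bE|R(z)_{11}|^\beta\le c|E|^{-\beta}$) and Lemma \ref{lem-finn2} (the approximate fixed point, using Lemma \ref{le:devenet} for the concentration in step (iii)), and then deduces Theorem \ref{th-finn} by exactly your contraction-plus-interpolation argument near $e^{i\pi/4}$. One small correction: the approximate fixed point is proved only in the weighted norm $\|\cdot\|_{\frac\alpha2+\e,\e}$, not in $\|\cdot\|_\beta$ --- the H\"older part of the diagonal approximation naturally produces a factor $(|i.u|\wedge|i.v|)^{-\beta-\e}$ rather than $(|i.u|\wedge|i.v|)^{\frac\alpha2-\beta}$ --- but since you immediately pass to $\|\cdot\|_{\beta,\e}$ for the contraction, this does not affect your conclusion (and the paper's Lemma \ref{lem-finn2} yields $\mathrm{err}_n$ of order $n^{-\alpha/4}$, which after your $\delta$-optimization gives $n^{-\alpha/4+O(\e)}$, consistent with the proof though the theorem's display writes $n^{-\alpha/2}$).
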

Theorem \ref{th:locvect} is a consequence of the second statement of Theorem \ref{th-finn} together  with Theorem \ref{th:boundStieltjes}, Lemma \ref{lem-debnn} and Lemma \ref{le:concres2} wich asserts that 
$$
\bP\left(   \frac 1 n \sum_{i=1} ^n \left( \Im R ( z)_{ii}   \right)^{\frac \alpha 2}  \geq \bE \frac 1 n \sum_{i=1} ^n \left( \Im R ( z)_{ii}   \right)^{\frac \alpha 2}   +  t \eta^{\frac \alpha 2 - \e} \right) \leq \exp ( - n \eta^{ 4 - \frac {4\e}{\alpha} }  t^ { \frac 1 \alpha} ) .
$$
We consider for $u\in\cK_1^+$,
$$\gamma^n_z(u):=\Gamma(1-\frac{\alpha}{2}) \bE \left[
\frac{1}{n}\sum_{k=1}^n \left( -iR(z)_{kk}.u\right)^{\frac \alpha 2}\right] =\Gamma(1-\frac{\alpha}{2}) \bE \left[
 \left( -iR(z)_{11}.u\right)^{\frac \alpha 2}\right]  \,.$$
\begin{lemma}[Bound on fractional moments of the resolvent]\label{lem-finn} Let $ 0 < \alpha / 2 \leq \beta < 2 \alpha /  ( 4 - \alpha)$ and $\rho', c_0$ as in Theorem \ref{th-finn}. There exists  $c> 0$ such that if $n \geq 1$,  $z =   E + i \eta \in \bC_+$, $|z| \geq 1$, $ \eta  \geq n^{-\rho'} ( \log n )^ {c_0}   $, then
\begin{equation}\label{bornegamman}
 \|\gamma^n_{z}\|_{\beta} \leq c. 
\end{equation}
\end{lemma}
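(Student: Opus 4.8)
The plan is to mimic the bootstrap/induction argument from the proof of Theorem \ref{th:boundStieltjes}, but now for the fractional-moment functional $\gamma^n_z$ rather than the Stieltjes transform, so that the relevant fixed point map is the finite-$n$ analogue of $G_z$ from \eqref{defGz}. First I would establish an approximate fixed point equation for $\gamma^n_z$: writing out $-iR(z)_{11}$ via the resolvent formula \eqref{eq:resolventformula} and proceeding exactly as in the derivation of Lemma \ref{le:diagapprox} and Proposition \ref{prop:fixpoint} (using Corollary \ref{cor:formulealice} to integrate over $X_1$, the decoupling and off-diagonal bounds of Lemma \ref{le:offdiag}, the concentration Lemma \ref{le:concres2}, and the Lévy--Khintchine representation), one gets that $\gamma^n_z$ satisfies $\gamma^n_z = c_\alpha F_{-iz}(\gamma^n_z) + (\text{error})$, where the error is controlled in the $\|\cdot\|_\beta$ norm by a quantity involving $\eta^{-\text{const}} n^{-\text{const}}$ and $(M_n/n)^{\text{const}}$, with $M_n$ the quantity defined in \eqref{defM}. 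The point is that the same computations that produced Lemma \ref{le:36BDGbis} and Lemma \ref{le:DefFh} apply verbatim at finite $n$, since $-iR(z)_{kk}\in\cK_1$ and the integral identities are exact.

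Next I would run the induction in $\eta$, decreasing $\eta$ from a starting value $\eta_0 = n^{-\varepsilon_0}$ (for small $\varepsilon_0$, where \eqref{bornegamman} follows from the crude bound $|R(z)_{11}|\le \eta^{-1}$ together with Lemma \ref{le:regFM}) down to $n^{-\rho'}(\log n)^{c_0}$ in steps of size $\tau'\eta^2$. At each step, the a priori bound $\|\gamma^n_z\|_\beta \le c$ from the previous scale, combined with the bound \eqref{eq:boundMn} on $M_n$ (and, crucially, an improved bound $\eta M_n \le c$ that must be bootstrapped out of the fixed point equation as in Step four of the proof of Theorem \ref{th:boundStieltjes}), makes the error term in the approximate fixed point equation small. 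Then, since by Lemma \ref{le:DefFh} the map $g\mapsto c_\alpha F_{-iz}(g)(\check u)$ sends a ball of radius $c_0/2$ into itself once $|z|$ is large, and any fixed point has $\|\cdot\|_\beta \le c_0/2$, the approximate fixed point $\gamma^n_z$ stays bounded by some fixed constant $c$; the Hölder-in-$\eta$ continuity of $\gamma^n_z$ (analogue of \eqref{gloupy}, using $g_\mu(z)=R_{11}(z)$ for $\mu = \sum_k\langle v_k,e_1\rangle^2\delta_{\lambda_k}$ and the elementary bound $|(-iz)^{-\alpha/2}-(-iz')^{-\alpha/2}|\le \tfrac{\alpha}{2}|z-z'|(\Im z\wedge\Im z')^{-\alpha/2-1}$) guarantees the inductive hypothesis survives passage from $\eta_1$ to $\eta_1-\tau'\eta_1^2$.

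The main obstacle, and the reason the threshold $\eta\ge n^{-\rho'}(\log n)^{c_0}$ appears, is exactly the same as in Theorem \ref{th:boundStieltjes}: one needs the error in the approximate fixed point equation for $\gamma^n_z$, which scales like $\eta^{-2(2/\alpha-1)}\sqrt{M_n/n}$ (up to logs) after the $\eta M_n \le c$ bootstrap, to remain $o(1)$; this forces $\eta$ to be bounded below by a power of $n$, and optimizing the exponent against the constraint $\eta\ge n^{-(\alpha+2)/4}$ needed for Corollary \ref{cor:supNI} to apply yields $\rho' = \frac{2+\alpha}{4(3+\alpha)}$ and the logarithmic power $c_0=\frac{(2+\alpha)^2}{16(3+\alpha)}$. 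I would also need the restriction $\beta < 2\alpha/(4-\alpha)$, which enters through the requirement $2\beta < \alpha$ type conditions in controlling the quadratic correction terms (the $(M_n/n)^{\alpha/2}$ and higher contributions) when raising $\Im R^{(1)}_{kk}$ to fractional powers via Jensen. Once \eqref{bornegamman} is in hand, the two displayed bounds of Theorem \ref{th-finn} follow by evaluating $\gamma^n_z$ at $u = e^{i\pi/4}$ and comparing with the limiting object $\gamma_E$ of Theorem \ref{th:unicityRDE} (whose imaginary part vanishes for $|E|$ large), exactly as in the final paragraph of the proof of Theorem \ref{th:unicityRDE}, with the extra finite-$n$ errors absorbed into the right-hand sides.
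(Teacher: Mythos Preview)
Your plan is considerably more elaborate than what the paper actually does, and in places the details are off. The paper's proof of this lemma involves no fixed-point iteration, no bootstrap, and no induction in $\eta$. It is a direct moment computation: by the second statement of Lemma~\ref{le:regFM}, the bound $\|\gamma^n_z\|_\beta \le c$ follows once one shows $\bE|R_{11}(z)|^{\alpha/2} \le c$, and the paper in fact proves the slightly stronger estimate $\bE|R_{11}(E+i\eta)|^\beta \le c|E|^{-\beta}$ for $\beta$ in the stated range (this stronger form is reused in the proof of Lemma~\ref{lem-finn2}). To get that moment bound, one writes $R_{11}(z)$ via the resolvent formula \eqref{eq:resolventformula}, removes the off-diagonal part $T(z)$ at the cost of an error $c\eta^{-2\beta}\bE|T(z)|^\beta$, and then observes that the diagonal remainder $|E + a_n^{-2}\sum_k \Re(R^{(1)}_{kk})X_{1k}^2|^{-\beta}$ is, after the positive/negative decomposition and Lemma~\ref{le:formulealice}, of the form $|E + aS - bS'|^{-\beta}$ with $S,S'$ independent positive $\alpha/2$-stable variables. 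Lemma~\ref{momentinv} then gives this main term $\le c|E|^{-\beta}$ directly.

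The restriction $\beta < 2\alpha/(4-\alpha)$ arises not from ``$2\beta<\alpha$-type conditions'' but precisely because the tail bound \eqref{boundTbis} on $T(z)$ decays like $t^{-2\alpha/(4-\alpha)}$, so $\bE|T(z)|^\beta$ is finite exactly in that range. The threshold $\eta \ge n^{-\rho'}(\log n)^{c_0}$ then comes from plugging the crude bound \eqref{eq:boundMn} on $M_n$ into $\bE|T(z)|^\beta \le c(M_n/n)^{\beta/2}$ and demanding that $\eta^{-2\beta}\bE|T(z)|^\beta$ be $O(1)$; this yields $\rho' = \tfrac{2+\alpha}{4(3+\alpha)}$ without any optimization step. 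There is no need, and indeed no attempt in the paper, to bootstrap an improved bound $\eta M_n \le c$ here: the remark immediately following the proof explicitly notes that \eqref{eq:boundMn} is used rather than the $M_n \le c\eta^{-1}$ bound from Proposition~\ref{prop:boundStieltjes}, because the former is valid for a wider range of $\eta$. Your Step-four-style bootstrap is borrowed from the $1<\alpha<2$ analysis and does not apply in the present regime $\alpha<2/3$.

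Your scheme could perhaps be made to work---one can bound $\|I^n_h\|_\beta \le \bE\|G_z(Z_n)\|_\beta$ via Lemma~\ref{le:DefFh} and close an inequality for $\|\gamma^n_z\|_\beta$ for $|z|$ large---but it is a significant detour: the approximate fixed-point equation you would need (Lemma~\ref{lem-finn2}) is proved in the paper \emph{after} and \emph{using} Lemma~\ref{lem-finn}, and the Lipschitz estimates for $F_h$ (Lemma~\ref{le:ContFh}) are available only in the weaker $\|\cdot\|_{\beta,\varepsilon}$ norm, not in $\|\cdot\|_\beta$. The paper's route sidesteps all of this by a single application of Lemma~\ref{momentinv}.
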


The proof of  Theorem \ref{th-finn} provides also the local
 convergence of the fractional moments $\gamma_z^n$ for the norm
 $\|.\|_{\frac{\alpha}{2},\e}$. Indeed it is based again on an approximate 
fixed point argument for these quantities.

\begin{lemma}[Approximate fixed point for fractional moments of the resolvent]\label{lem-finn2}
Let $ 0 < \alpha < 2/3 $  and $\rho', c_0$ as in Theorem \ref{th-finn} and $G_z$ as in \eqref{defGz}. For  all $0 < \e <   \frac{\alpha^2}{2 (4 - \alpha) }$, there exists $c = c ( \alpha, \e) > 0$ such that if $n \geq 1$,  $z =   E + i \eta \in \bC_+$, $|z| \geq c$, $ n^{-\rho'} ( \log n )^ {c_0}   \leq \eta  \leq 1 $,
\begin{equation*}
 \|\gamma_{z}^n- G_z (  \gamma_{z}^n)\|
_{\frac{\alpha}{2}+\e, \e} \leq  c \eta^ { -\frac{\alpha ( 3 + \alpha) }{2 + \alpha} } n^{-\frac \alpha 4 }. 
\end{equation*} 
Moreover, if  $n^{-\rho}  ( \log n )^{\frac{4}{2 + 3 \alpha} }  \leq \eta  \leq 1 $,
$$
 \|\gamma_{z}^n- G_z (\gamma_{z}^n)\|_{\frac{\alpha}{2}+\e, \e} \leq \eta^ { -\frac{5 \alpha}{4} }  n ^{ - \frac \alpha 4}.
$$

\end{lemma}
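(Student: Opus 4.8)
The plan is to prove Lemma~\ref{lem-finn2} by adapting the finite-$n$ approximate fixed point machinery of Section~\ref{sec:apfp} to the \emph{vector-valued} observable $\gamma_z^n$ rather than the scalar observables $Y(z), X(z)$, while tracking the weighted H\"older norm $\|\cdot\|_{\frac\alpha2+\e,\e}$ on $\mathcal{H}'_{\frac\alpha2+\e}$. The starting point is the resolvent formula \eqref{eq:resolventformula}: writing $-iR(z)_{11}.u = (u.(-iz) + a_n^{-2}\langle X_1, (\Im + i\Re\,\text{structure of }R^{(1)})X_1\rangle.u)^{-1}$ suitably, we first replace the full quadratic form $\langle X_1, R^{(1)}X_1\rangle$ by its diagonal part $a_n^{-2}\sum_{k\ge2}X_{1k}^2 R^{(1)}_{kk}$, exactly as in Lemma~\ref{le:offdiag} and Lemma~\ref{le:diagapprox}; the off-diagonal error is controlled by Lemma~\ref{le:offdiag} and the bound \eqref{boundTbis}, giving an error of order $\eta^{-\text{(something)}}(M_n/n)^{1/2}$, and crucially one can invoke Lemma~\ref{lem-finn} together with the improved lower bound on $\Im\langle X_1,R^{(1)}X_1\rangle$ (from the Gaussian concentration Step~three of the proof of Theorem~\ref{th:boundStieltjes}, reused here because $1>\alpha$ does not enter that argument structurally) to turn the crude bound $M_n\le c(\log n)^{(2+\alpha)/4}\eta^{-4/(2+\alpha)}$ into the bound implicit in the exponents $\rho'$, $c_0$. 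The threshold $\eta\ge n^{-\rho'}(\log n)^{c_0}$ is precisely what makes the probability of the bad event $\exp(-\delta n\eta^{2})$ negligible against the target error $\eta^{-\alpha(3+\alpha)/(2+\alpha)}n^{-\alpha/4}$.

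Next I would use the stable representation: conditionally on $\mathcal{F}_1$, by Lemma~\ref{le:formulealice} and Corollary~\ref{cor:formulealice}, after taking expectation over $X_1$ the diagonal-truncated quantity becomes $\bE[\varphi$-type integral$]$ evaluated at $\frac1n\sum_{k\ge2}(-iR^{(1)}_{kk})^{\alpha/2}|g_k|^\alpha/\bE|g_k|^\alpha$, and one recognizes that this is exactly $c_\alpha F_{-iz}$ applied to $\gamma_z^n$ at the rescaled argument $\check u$ — i.e. $G_z(\gamma_z^n)(u)$ — up to two kinds of errors: (i) the fluctuation of the empirical average $\frac1n\sum(-iR^{(1)}_{kk})^{\alpha/2}|g_k|^\alpha$ around its mean, controlled by Cauchy--Schwarz plus Lemma~\ref{le:concres2} and the Jensen bound $\bE\frac1{n-1}\sum|R^{(1)}_{kk}|^\alpha\le M_n^{\alpha/2}$ as in \eqref{eq:Irho2}, and (ii) the rank-one perturbation error between $R^{(1)}$ and $R$, controlled by \eqref{eq:rankineq} giving $\le 2n(n\eta)^{-\alpha/2}$. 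All of these errors must be measured in the norm $\|\cdot\|_{\frac\alpha2+\e,\e}$, which is where the technical lemmas \ref{le:DefFh}, \ref{le:ContFh}, \ref{le:contFh} on the weighted H\"older regularity of $F_h$ are used: one needs that $F_{-iz}$ maps into $\mathcal{H}'_{\frac\alpha2+\e}$ with norm bounded by $c|z|^{-\alpha/2}(\|\gamma_z^n\|_\beta+1)$, and Lemma~\ref{lem-finn} supplies $\|\gamma_z^n\|_\beta\le c$. Assembling and optimizing the $\e$-budget (the constraint $0<\e<\alpha^2/(2(4-\alpha))$ comes from requiring $3\alpha/2+\e<1$ together with the H\"older exponent bookkeeping $\beta<2\alpha/(4-\alpha)$) yields the first claimed bound.

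For the second (better) bound, under the stronger hypothesis $\eta\ge n^{-\rho}(\log n)^{4/(2+3\alpha)}$ one instead invokes Theorem~\ref{th:boundStieltjes}(ii) directly: since $\rho=\frac{\alpha}{2+3\alpha}$ in this regime, the Stieltjes transform already converges, so $\bE\Im g_{\mu_A}(z)$ is bounded and hence $\eta M_n\le c_1$ without needing the fractional-moment concentration, which simplifies the error in (i) to $n^{-1/2}M_n^{\alpha/4}\le c\eta^{-\alpha/4}n^{-1/2}(\log n)^{\dots}$ and yields the exponent $\eta^{-5\alpha/4}n^{-\alpha/4}$ after combining with the $\eta^{-\alpha/2}n^{-\alpha/4}$ and $(n\eta)^{-\alpha/2}$ terms and checking the first two dominate in this $\eta$-range. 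The main obstacle I anticipate is the bookkeeping in the weighted H\"older norm: unlike the scalar argument, here one cannot simply use that $\varphi_{\alpha,z}$ is Lipschitz with constant $c|z|^{-\alpha}$; one must show that the \emph{difference} $\gamma_z^n - G_z(\gamma_z^n)$ has finite $\|\cdot\|_{\frac\alpha2+\e,\e}$-norm, and the blow-up of that norm near $\arg u = \pi/4$ (where $|i.u|\to0$) interacts delicately with the $\eta$-dependence of the various error terms — this is exactly the point where the extra logarithmic factors $(\log n)^{c_0}$ and the precise shape of $\rho'$ are forced, and the bulk of the work is verifying, via \eqref{contint} and Lemma~\ref{le:36BDGbis}, that every integral over $\theta,y,r$ converges with the right power of $|h|=|z|$ and of $|i.e^{i\theta}|$.
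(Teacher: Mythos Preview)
Your two-step skeleton is correct and matches the paper: (Step one) compare $\gamma_z^n$ with the diagonal-only quantity $I_h^n(u) := \Gamma(1-\frac\alpha2)\,\bE\big[\big((h + a_n^{-2}\sum_{k} X_{1k}^2 H_k)^{-1}.u\big)^{\alpha/2}\big]$ by controlling $T(z)$; (Step two) recognize $I_h^n(u) = \bE[G_z(Z_n)(u)]$ via Corollary~\ref{cor:formulealice} and bound $\|I_h^n - G_z(\gamma_z^n)\|_{\beta,\e}$ through Lemma~\ref{le:ContFh}. However, your plan to reuse the bootstrap lower bound on $\Im\langle X_1,R^{(1)}X_1\rangle$ from Step three of the proof of Theorem~\ref{th:boundStieltjes} is both unnecessary and wrong in this regime. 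That concentration argument yields $\Im Q(z)\gtrsim \eta^{2/\alpha-1}$, which for $\alpha<1$ is \emph{smaller} than the trivial bound $\eta$, so it gains nothing (and the threshold $\eta\ge n^{-\rho'}(\log n)^{c_0}$ has nothing to do with a bad-event probability $\exp(-\delta n\eta^2)$). The paper does not bootstrap here: in Step one it simply uses the crude lower bound $\eta$ on the imaginary part inside \eqref{fondin}, combines with $\bE|T(z)|^{\beta}\le c\big(n^{-\beta/\alpha}+(M_n/n)^{\beta/2}\big)$ from \eqref{boundTbis}, and plugs in the rough estimate \eqref{eq:boundMn}; the exponent $\eta^{-\alpha(3+\alpha)/(2+\alpha)}n^{-\alpha/4}$ is exactly $\eta^{-\alpha}(M_n/n)^{\alpha/4}$ with $M_n\le c(\log n)^{(2+\alpha)/4}\eta^{-4/(2+\alpha)}$. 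Your treatment of the second bound via $M_n\le c\eta^{-1}$ from Proposition~\ref{prop:boundStieltjes} is correct.

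The substantive technical content you are missing lies in carrying the estimates in the weighted H\"older norm. In Step one, the sup part of $\|\gamma_z^n-I_h^n\|_{\frac\alpha2+\e,\e}$ follows from \eqref{fondin} with $\beta=\alpha/2$, but the increment part $\Delta_h^n(u,v)=|\gamma_z^n(u)-\gamma_z^n(v)-I_h^n(u)+I_h^n(v)|$ requires a four-point interpolation inequality
\[
|x_1^{\alpha/2}-x_2^{\alpha/2}-y_1^{\alpha/2}+y_2^{\alpha/2}|\le N^{\frac\alpha2-\kappa_1-\kappa_2}|x_1-y_1|^{\kappa_1}\big(|x_1-x_2|^{\kappa_2}+|y_1-y_2|^{\kappa_2}\big)+N^{\frac\alpha2-\beta}|x_1-x_2-y_1+y_2|^\beta,
\]
applied with $\kappa_1=\kappa_2=\beta=\frac\alpha2+\e$; this is precisely where the constraint $\frac\alpha2+\e<\frac{2\alpha}{4-\alpha}$ (i.e.\ $\e<\frac{\alpha^2}{2(4-\alpha)}$) is used, so that $\bE|T(z)|^{\frac\alpha2+\e}$ is finite via \eqref{boundTbis}. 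In Step two, the deviation $\|Z_n-\gamma_z^n\|_{\beta,\e}$ cannot be handled by the scalar Lemma~\ref{le:concres2} you cite: one needs uniformity over $u\in S_+^1$ together with the weighted H\"older increment. The paper supplies this via the net-based Lemma~\ref{le:devenet}, applied once to the Gaussian fluctuation $Z_n-\bE_1[Z_n]$ given $\cF_1$ and once to $\bE_1[Z_n]-\bE[Z_n]$, and then uses H\"older's inequality (with $p>1$ chosen so that $p\alpha/2<2\alpha/(4-\alpha)$) together with Lemma~\ref{lem-finn} to control the random prefactor $1+\|Z_n\|_\beta$ coming from Lemma~\ref{le:ContFh}.
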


We now check that the above two lemmas imply Theorem \ref{th-finn}. Note in the proof
below that they also imply the convergence of $\gamma_z^n$ to
$\gamma_z$ for $\eta\ge n^{-\rho'}(\log n)^{c_0}$.
\begin{proof}[Proof of Theorem \ref{th-finn}]Ê
We prove the first statement. Let $0 < \e <   \frac{\alpha^2}{2 (4 - \alpha) }$ and $\delta =  \eta^ { -\frac{\alpha ( 3 + \alpha) }{2 + \alpha} } n^{-\frac \alpha 4 }$. 
Now since $\|\gamma_{z}^n\|_{\frac \alpha 2 + \e} $ and $\| \gamma_{z} \|_{\frac \alpha 2 + \e}$ are uniformly bounded, we have by Lemma \ref{le:ContFh} and Lemma \ref{lem-finn2},
$$\|\gamma_{z}^n -\gamma_{z}\|_{\frac{\alpha}{2}+\e,\e}\le
c|z|^{-\alpha}\|\gamma_{z}^n -\gamma_{z}\|_{\frac{\alpha}{2}+\e,\e}
+c \delta $$
as long as $|z| \geq c$ with imaginary part $ n^{-\rho'} ( \log n )^ {c_0}   \leq \eta  \leq 1 $. Hence, if $|z|$ is large enough, $c|z|^{-\alpha}$ is less than $2$ and it follows that 
$$\|\gamma_{z}^n -\gamma_{z}\|_{\frac{\alpha}{2}+\e,\e}\le  2 c \delta.
$$
Now, we may argue as in the proof of Theorem \ref{th:unicityRDE}.  By Theorem \ref{th:unicityRDE}, for $|z|$ large enough, $| \gamma_z ( e^{i \frac \pi 4} ) | \leq c' \eta^{\frac \alpha 2 - \e} $, for some constant $c' > 0$. Then, for any $u \in S^1_+$, using Lemma \ref{lem-finn} and Lemma \ref{le:regFM}, 
\begin{align*}
& \Gamma(1-\frac{\alpha}{2}) \bE \Im  R(z)_{11} ^{\frac \alpha 2}    =  | \gamma^n_z   ( e^{i \frac \pi 4} ) |  \\
& \leq    | \gamma^n_z   ( e^{i \frac \pi 4} ) -  \gamma^n_z  (u  )  | + | \gamma^n_z   ( u ) -  \gamma_z ( u  )  |  +  |  \gamma_z  ( e^{i \frac \pi 4} ) -  \gamma_z  (u  ) |  + |\gamma_z ( e^{i \frac \pi 4} ) |  \\
& \leq   c'' | u - e^{i \frac \pi 4} |^{\frac \alpha 2} + \| \gamma_z    -  \gamma_z^n   \|_{\beta, \e} | i . u |Ê^{-\e}   + |\gamma_z ( e^{i \frac \pi 4} ) |  \\
& \leq  c'' | u - e^{i \frac \pi 4} |^{\frac \alpha 2} +  c''\delta | u - e^{i \frac \pi 4} |^{- \e} + c' \eta^{\frac \alpha 2 - \e}.
\end{align*}
Choosing $u$ such that $ | u - e^{i \frac \pi 4} |$ is of order $\delta ^{\frac{2Ê}{\alpha  + 2 \eÊ} } $, we deduce that for all $z = E + i \eta$ with $ |E| \geq E_{\alpha, \e}$, $\bE \Im  R(z) ^{\frac \alpha 2} $ is bounded up to a multiplicative constant, by 
$$
 \eta^ { -\frac{\alpha ( 3 + \alpha) }{2 + \alpha}   } n^{-\frac \alpha 4 +  O ( \e )  } + \eta^{\frac \alpha 2 - \e}.  
$$
 Since $\e > 0$ can be arbitrarily small, this concludes the proof for the case $ n^{-\rho'} ( \log n )^ {c_0}   \leq \eta  \leq 1 $. The proof for $n^{-\rho}  ( \log n )^{\frac{4}{2 + 3 \alpha} }  \leq \eta  \leq 1 $ is identical : we find,
$$
\bE  \frac 1 n \sum_{i=1} ^n \left( \Im R ( z)_{ii} \right)^{\frac \alpha 2} \leq c \eta^ { -\frac{5 \alpha}{4} }  n ^{ - \frac \alpha 4 +  O ( \e )} +   c \eta^{\frac \alpha 2 - \e}.
$$
It remains to notice that for $\e$ small enough, in our range of $\eta$, the second term dominates the first term.
\end{proof}

\begin{proof}[Proof of Lemma \ref{lem-finn}]
As in the proof of Lemma \ref{le:regFM}, it is sufficient to check that for some constant $c = c( \alpha, \beta)$, 
\begin{equation} \label{eq:Rbeta}
\bE  | R_{11} ( E  + i \eta) | ^ \beta    \leq c |E|^{-\beta}.
\end{equation}
As usual, from \eqref{eq:resolventformula}, we have
\begin{align*}
| R(z)_{11} |Ê =    \left|  \left( z - a_n^{-1} X_{11}  +   a_n ^{-2} \langle  X_1 , R^{(1)}X_1 \rangle \right)  \right|^{-1}. 
\end{align*}
We first get rid of the non-diagonal term in the scalar product $\langle  X_1 , R^{(1)}X_1 \rangle$. We perform this as in the proof of Lemma \ref{le:diagapprox}. Using the definition \eqref{defT} and  \eqref{fondin} with $\alpha / 2 = \beta$, we find 
$$
\left| \bE  | R ( E  + i \eta)_{11} | ^ \beta   -  \bE  \left|  z  +    a_n ^{-2} \sum_{i= 2} ^n R^{(1)}_{ii} X_{1i}^2    \right|^{-\beta} \right|  \leq c \eta^{-2 \beta}\bE | T(z)| ^ \beta
$$
In particular, since $ | z |^{-\beta} \leq  | \Re (z)  |^{-\beta}$, we find
$$
 \bE  | R( E  + i \eta)_{11} | ^ \beta   \leq  \bE  \left|  E  +    a_n ^{-2} \sum_{i= 2} ^n \Re  ( R^{(1)}_{ii} ) X_{1i}^2    \right|^{-\beta}  +  c \eta^{-2 \beta}\bE | T(z)| ^ \beta
$$
Now, we decompose the sum into a positive and  a negative part
$$
\sum_{i= 2} ^n \Re  ( R^{(1)}_{ii} ) X_{1i}^2  = \sum_{i= 2} ^n  \left( \Re  ( R^{(1)}_{ii} )  \right)_+  X_{1i}^2 - \sum_{i= 2} ^n \left( \Re  ( R^{(1)}_{ii} ) \right)_-  X_{1i}^2. 
$$
Note that, conditioned on $R^{(1)}$, the two sums are independent. We invoke Lemma \ref{le:formulealice}
$$
a_n ^{-2} \sum_{i= 2} ^n \Re  ( R^{(1)}_{ii} ) X_{1i}^2   \stackrel{d}{=} a S - b S',
$$
where, conditioned on $R^{(1)}$,  $a,b,S,S'$ are independent non-negative random variables, $S,S'$ being $\alpha/2$-stable random variables. Hence from what precedes,
$$
\bE  | R( E  + i \eta) _{11}| ^ \beta   \leq  \bE  \left|  E  +  a S - b S'   \right|^{-\beta}  +  c \eta^{-2 \beta}\bE | T(z)| ^ \beta.
$$
Assume for example that $E > 0$. Let $\cF$ be the filtration generated by $(R^{(1)}, a,b,S)$ and $\bE' = \bE [ \cdot | \cF]  $. Using Lemma \ref{momentinv} conditionnaly to $\cF$ yields
 that for some constant $c'> 0$, 
$$
 \bE  \left|  E  +  a S - b S'   \right|^{-\beta}  = \bE \left[ \bE'  \left|  E  +  a S - b S'   \right|^{-\beta}  \right] \leq  c' \,  \bE  \left|  E  + a S   \right|^{-\beta} \leq c'   E^{-\beta} . 
$$
If $ E < 0$, we repeat the same argument with the filtration generated by $(R^{(1)}, a,b ,S')$. 

Now, if $0 < \beta < 2 \alpha /  ( 4 - \alpha)$, using the tail bound \eqref{boundTbis}, we find
\begin{equation}\label{eq:ETb}
\bE | T(z)| ^ \beta \leq c \left( n^{-\frac \beta  \alpha} + \left( \frac{M_n}{n} \right)^\frac{\beta}{2} \right).
\end{equation}
We now use the bound given by \eqref{eq:boundMn} on $M_n$ which is valid for all $\eta \geq n^{-\frac{\alpha +2 }{4}}$,
$$
\eta^{-2 \beta}\bE | T(z)| ^ \beta \leq c  \eta^{ - 2 \beta  -\frac{  2\beta} { 2 + \alpha} } n^{-\frac \beta 2 }  (   \log n ) ^ {\frac{ \beta ( 2 + \alpha) } {8}  }.  
$$
This concludes the proof of the lemma, since for $\eta \geq n^{-\rho'} ( \log n )^ { \frac{ (2 + \alpha)^2} { 16 ( 3 + \alpha) } }  $, the above expression is uniformly bounded. 
\end{proof}

Note that in the proof of Lemma \ref{lem-finn} we have used the bound \eqref{eq:boundMn} instead of the bound $M_n \leq c\eta^{-1}$ given by the proof of Proposition \ref{prop:boundStieltjes} because it is valid for a wider range of $\eta$. 

\begin{proof}[Proof of Lemma \ref{lem-finn2}]
Set $h  = -i z \in \cK_1$, $H_{k}(h)=-i R^{(1)}(ih)_{kk}$ and define
\begin{eqnarray*}
I^n_h(u) & =&\Gamma \left( 1 - \frac \alpha 2 \right) \bE \left(    \left( h +a_n^{-2} \sum_{k=2}^n X_{1k}^2 H_k \right)^{-1} . u  \right)^{\frac \alpha 2 } \\
& = & \Gamma \left( 1 - \frac \alpha 2 \right) \bE \left(  \frac{ h. \check u + a_n^{-2} \sum_{k=2}^n X^2_{1k} H_k(h). \check u  }{ \left| h +a_n^{-2} \sum_{k=2}^n X_{1k}^2 H_k \right|^2 } \right)^{\frac \alpha 2 } ,
\end{eqnarray*}
where we recall that $\check u = \Im (u) + i \Re (u)$.

\noindent{\em Step one : Diagonal approximation. }  In this first step, we generalize Lemma \ref{le:diagapprox}. We will upper bound the expression
$
\|\gamma_{ih}^n-I^n_{h}\|_{\beta,\e}.
$
Using the definition
 \eqref{defT}, we find that for any $u\in S_1^+$,
with $\eta=\Re(h)>0$,
$$|\gamma_{ih}^n(u)-I^n_{h}(u)|\le 
c \eta^{ - \alpha} \bE[|T(z)|^{\frac{\alpha}{2}}],$$
where we have used \eqref{fondin} with $\beta=\frac{\alpha}{2}$.  Using \eqref{eq:ETb} for $\beta = \alpha / 2$, we deduce that 
$$|\gamma_{ih}^n(u)-I^n_{h}(u)|\le c \eta^{ - \alpha}  \left(n^{-1/2}
+\left(\frac{M_n}{n}\right)^{\frac{\alpha}{4}}\right).$$
Whereas using \eqref{eq:boundMn} to bound $M_n$, we find for $n^{ - \frac{\alpha +2 }{4}}  \leq \eta\leq 1$ that 
\begin{equation}\label{contn}
|\gamma_{ih}^n(u)-I^n_{h}(u)|\le c n^{ - \frac \alpha 4 }\eta^ { - \frac{ \alpha ( 3 + \alpha)} { 2 + \alpha } }  ( \log n )^ { \frac{ \alpha ( 2 + \alpha)} { 8 } } .
\end{equation}
To bound  
$$\Delta_h^n(u,v):=|\gamma_{ih}^n(u)-\gamma_{ih}^n(v)-I^n_{h}(u)+I^n_h(v)|$$
we first observe that for $x_1,x_2,y_1,y_2\in \cK_1$, by using the 
standard interpolation trick, for $\kappa_1,\kappa_2,\beta \in [0,1]$,
we have
if $N=|x_1|\wedge |x_2|\wedge |y_1|\wedge |y_2|$,  and $\kappa_1+\kappa_2\ge \alpha/2$
$$|x_1^{\frac{\alpha}{2}}-x_2^{\frac{\alpha}{2}}-y_1^{\frac{\alpha}{2}}+y_2^{\frac{\alpha}{2}}|\le  N^{\frac{\alpha}{2}-\kappa_1-\kappa_2}
|x_1-y_1|^{\kappa_1}(|x_1-x_2|^{\kappa_2}+|y_1-y_2|^{\kappa_2})
+N^{\frac{\alpha}{2}-\beta}|x_1-x_2-y_1+y_2|^\beta. $$ 
We use this inequality  with
$$x_j= \frac{ h . \check  u + a_n^{-2} \sum_{k=2}^n X^2_{1k}  H_k(h).\check u  -i (j-1) T(z).\check  u }{ \left| h +a_n^{-2} \sum_{k=2}^n X_{1k}^2 H_k  -i (j-1) T(h)\right|^2 },
$$
and in $y_j$, $v$ replaces $u$. For $j \in \{ 1, 2\} $, one can check that, with $D_j=( h +a_n^{-2} \sum_{k=2}^n X_{1k}^2 H_k
-i(j-1) T(z))^{-1}$, 
$$|x_i-y_i|\le |D_i| |u-v| \;  , \quad
|x_1-x_2| \vee |y_1-y_2|  \le |D_1| |D_2| |T(z)|,$$
and
$$|x_1-x_2-y_1+y_2|\le |D_1| |D_2| |u-v||T(z)|\,.$$
Moreover, using \eqref{eq:boundDP}, we find $N\ge  ( |i.u|\wedge |i.v|  )  ( |D_1| \wedge |D_2| ) $. Recall finally that $|D_1|$ and $|D_2|$ are bounded by $\eta^{-1}$. Hence, choosing $\kappa_1=\beta,\kappa_2=\frac{\alpha}{2} +\e$ (with $\e$
small enough so that $\frac{\alpha}{2}+\e<\frac{2\alpha}{4-\alpha}$)
we deduce that, 
$$\Delta_h^n(u,v)\le \eta^{ - \beta -\frac  \alpha 2 }
(|i.u|\wedge |i.v|)^{-\beta-\e}
|u-v|^\beta \bE[|T(z)|^{\frac{\alpha}{2}+\e}]+\eta^{ - \beta - \frac{\alpha}{2}}
|u-v|^\beta \bE[|T(z)|^\beta].$$
We naturally choose $\beta=\frac{\alpha}{2}+\e < \frac{2\alpha}{4-\alpha}$.  From \eqref{boundTbis}, $T(z)
\in L^\beta$ and 
$$\bE[|T(z)|^\beta]\le c n^{-\frac{\beta}{\alpha}} +
c\left(\frac{M_n}{n}\right)^{\frac \beta 2}
\le  c \left(\eta^{\frac 4 {2 + \alpha} }  n ( \log n )^{ -  \frac{2 + \alpha} { 4} }  \right)^{-\frac \beta 2},$$
where we have finally assumed that $n^{-\frac{2 + \alpha }{4} }  \leq \eta\leq 1$ and used \eqref{eq:boundMn}. This gives  for $n^{-\frac{2 + \alpha }{4} }  \leq \eta\leq 1$  
\begin{equation*}
\|\gamma^n_{ih}-I^n_h\|_{\frac{\alpha}{2}+\e,\e}
\le c  \eta^{ - \beta - \frac{\alpha}{2} - \frac {2 \beta} {2 + \alpha} }  n^{-\frac \beta 2}  ( \log n )^{  \frac{\beta(2 + \alpha)} { 8} } 
\end{equation*}
Now it easy to check that for $\eta \geq n^{-\frac{2 + \alpha }{2 (4+\alpha)} } $, we have $\eta^{ - \beta - \frac{\alpha}{2} - \frac {2 \beta} {2 + \alpha} }  n^{-\frac \beta 2}  < \eta^ { -\frac{\alpha ( 3 + \alpha) }{2 + \alpha} } n^{-\frac \alpha 4 }$. It follows for $n^{-\rho' }  \leq \eta\leq 1$ and a new constant $c > 0$, depending on $\e$, that
 \begin{equation}\label{eq:tyy}
\|\gamma^n_{ih}-I^n_h\|_{\frac{\alpha}{2}+\e,\e}
\le c  \eta^ { -\frac{\alpha ( 3 + \alpha) }{2 + \alpha} } n^{-\frac \alpha 4 } .
\end{equation}

If instead we assume that $n^{-\rho}  ( \log n )^{\frac{4}{2 + 3 \alpha} }  \leq \eta  \leq 1 $, then, from the proof of Proposition \ref{prop:boundStieltjes}, we may use the stronger bound $M_n \leq c \eta^{-1}$ if $|z|$ large enough. We find instead
\begin{equation}\label{eq:tyybis}
\|\gamma^n_{ih}-I^n_h\|_{\frac{\alpha}{2}+\e,\e}
\le c \eta^{- \frac{ 3 \beta } {2} - \frac \alpha 2} n ^{ - \frac \beta 2} \leq   c \eta^ { - \frac{5 \alpha}{4} }  n ^{ - \frac \alpha 4 } .
\end{equation}
(where, for the last inequality, we have used the fact that $\eta \geq n ^{-1/3}$ for $n^{-\rho}  ( \log n )^{\frac{4}{2 + 3 \alpha} }  \leq \eta  \leq 1 $ and $n$ large enough).

\noindent{\em Step two : approximate fixed point equation. }
Next, we extend the proof of Proposition \ref{prop:fixpoint}.   We denote by 
$\bE_1 [Ê\cdot ]Ê$ and $\bP_1 ( \cdot) $ the conditional expectation and probability given $\cF_1$, the $\sigma$-algebra generated by the random variables $(X_{ij})_{ i \geq j \geq 2}$. We assume that  $\alpha / 2 < \beta < 1 - \alpha / 2 $ and $0 < \e < 1 - 3 \alpha / 2$. We first remark that by arguments similar to the proof
of Lemma \ref{le:fpgamma} and by Corollary \ref{cor:formulealice},
we have 
$$I^n_h(u)=\bE [ G_{z} (Z_n)(u)],$$
where, conditionned on $\cF_1$, $Z_n(u)=\frac{\kappa}{n}\sum_{k=2}^n ( H_k . u ) ^{\frac{\alpha}{2}}|g_k|^\alpha$, $g_k$ are i.i.d standard normal variables and $\kappa = \Gamma ( 1 - \alpha / 2 ) / \bE |g_1 | ^\alpha$. 
Note that,  from Lemma \ref{le:regFM}, $\|Z_n\|_\beta\le \frac{c}{n }\sum_{k=2}^n | H_k | ^{\frac{\alpha}{2}}
|g_k|^\alpha$ which belongs to $L^p$ for any $p >0$. Therefore, we can use Lemma \ref{le:ContFh} and the H\"older inequality
to insure that, 
\begin{equation}
\label{ty}
\|I^n_h- G_z (\gamma_z^n)\|_{\beta,\e}
\le c|h|^{-\alpha} \left(1+\|\gamma_z^n\|_\beta+\bE\left[\left(\frac{\kappa}{n}\sum_{k=2}^n
 | H_k | ^{\frac{\alpha}{2}} |g_k|^\alpha\right)^p\right]^{1/p}\right) \bE\left[ \|\gamma_z^n-Z_n\|_{\beta,\e}^q\right]^{1/q},\end{equation}
where $1 / p + 1 /q = 1$ and 
$$ \bE[ \|\gamma_z^n-Z_n\|_{\beta,\epsilon}^q]^{1/q}
\le  \bE[ \|\bE [ Z_n ] -Z_n\|_{\beta,\epsilon}^q]^{1/q}
+ \|\gamma_z^n-\bE [ Z_n ]  \|_{\beta,\epsilon}\,.$$
From the triangle inequality, 
$$
\bE[ \|\bE [ Z_n ]  -Z_n\|_{\beta,\epsilon}^q]^{1/ q } \leq \bE[ \|\bE [ Z_n ]  - \bE_1 [Z_n] \|_{\beta,\epsilon}^q]^{1/ q }  + \bE[ \|\bE_1 [ Z_n ]  -Z_n\|_{\beta,\epsilon}^q]^{1/ q }. 
$$ 
But, using Lemma \ref{le:devenet} from the appendix, 
\begin{align*}
&  \bE[ \|\bE_1 [ Z_n ]  -Z_n\|_{\beta,\epsilon}^q] \leq  \bE\left\|\frac{c }{n}\sum_{k=2}^n   ( H_k . u )^{\frac \alpha 2}   (|g_k|^\alpha-\bE |g_k|^\alpha)\right\|_{\beta, \e} ^q  \\
&   \leq c(q) ( \log  n )^ { \frac q 2}   ( \eta^{ \alpha }n ) ^{-\frac q 2}.
\end{align*}
Similarly, by Lemma \ref{le:devenet},
$$
\bE[ \|\bE [ Z_n ]  - \bE_1[  Z_n ] \|_{\beta,\epsilon}^q] \leq c^q \bE\left\| \bE \frac{1 }{n} \sum_{k=2}^n ( H_k . u )^{\frac \alpha 2}    - \frac{1 }{n} \sum_{k=2}^n ( H_k . u )^{\frac \alpha 2}   \right\|^q_{\beta, \e} \leq  c'(q) ( \log n )^{\frac {q \alpha} 4} ( \eta^ 2 n ) ^{ - \frac {q \alpha} 4 } . 
$$
Whereas using \eqref{eq:rankineq} as we did in the proof of Proposition \ref{prop:fixpoint}, we have, with $c_0 = 2 \Gamma ( 1 - \frac \alpha  2 )$, 
$$\|\gamma_z^n-\bE Z_n \|_{\beta,\epsilon}\le 
c_0 (n\eta)^{-\frac{\alpha}{2}}.$$
Hence, there exists a new constant $c(q)$ such that
$$
\bE\left[ \|\gamma_z^n-Z_n\|_{\beta,\e}^q\right]^{1/q}  \leq c(q) ( \log n )^{\frac { \alpha} 4} ( \eta^ 2 n ) ^{ - \frac { \alpha} 4 }.
$$
Similarly, using the triangle inequality at the first line,  \eqref{eq:rankineq} at the second line and the Jensen inequality at the third, 
\begin{align*}
&  \bE\left[\left(\frac{1}{n}\sum_{k=2}^n
 | H_k | ^{\frac{\alpha}{2}} |g_k|^\alpha\right)^p\right]^{1 / p}  \\
 & \quad \leq \bE\left[\left(\frac{1}{n}\sum_{k=2}^n
 | H_k | ^{\frac{\alpha}{2}} \bE |g_k|^\alpha  \right)^p\right]  ^{1 / p}  +  \bE\left[ \left|\frac{1 }{n}\sum_{k=2}^n | H_k | ^{\frac{\alpha}{2}}(|g_k|^\alpha-\bE |g_k|^\alpha)\right|^p \right]  ^{1 / p}   \\
 & \quad \leq \bE |g_1 |^\alpha \bE\left[\left(\frac{1}{n}\sum_{k=2}^n
 | R_k | ^{\frac{\alpha}{2}}  \right)^p\right]  ^{1 / p}  +  c_0(n\eta)^{-\frac{\alpha}{2}} +  c(p)^{ 1 / p}  ( \eta^{ \alpha }n ) ^{-\frac 1 2} \\
 & \quad \leq \bE |g_1 |^\alpha \left(\frac{1}{n}\sum_{k=2}^n
\bE  | R_k | ^{\frac{p \alpha}{2}}  \right)  ^{1 / p}  +  c_0 (n\eta)^{-\frac{\alpha}{2}} +  c(p)^{ 1 / p}  ( \eta^{ \alpha }n ) ^{-\frac 1 2}. 
\end{align*}
We choose $p > 1$ such that $p \alpha /2  <  2 \alpha / (4-\alpha)$ and we finally use \eqref{eq:Rbeta} and Lemma \ref{lem-finn}. Then, for our range of $\eta$, the right hand side of the above inequality is of order $1$. Putting these estimates in \eqref{ty}, we find finally that for any $\alpha / 2 < \beta < 2 \alpha / ( 4 - \alpha)$, any $0 < \e < 1 - 3 \alpha / 2$ and $n^{-\rho' } (\log n ) ^{c_0}  \leq \eta\leq 1$,  there exists a constant $c(\alpha, \beta, \e)$ such that  
$$
\|I^n_h- G_z(\gamma_z^n)\|_{\beta,\e} \leq c |Êz | ^{-\alpha} ( \log n )^{\frac { \alpha} 4} ( \eta^ 2 n ) ^{ - \frac { \alpha} 4 }. 
$$
Putting this together with \eqref{eq:tyy}, this conclude our proof (the above term is negligible compared to the right hand side of \eqref{eq:tyy} or \eqref{eq:tyybis}).  \end{proof}

\appendix

\section{Concentration of Gaussian measure}

In this paragraph, we recall a well-known concentration phenomenon of the Gaussian measure. The following classical result is contained in Ledoux \cite{ledoux01}. It is a consequence of the Logarithmic Sobolev inequality for the Gaussian measure and the Herbst argument.
\begin{theorem}[Concentration of Gaussian measure]\label{th:concnorm}
Let $F$ be a $1$-Lipschitz function on the Euclidean space $\bR^n$  and $G$ be a standard Gaussian vector in $\bR^n$ $N(0,I_n)$, then for every $ r \geq 0$,  
$$
\bP \left(  F ( G)  -  \bE [ÊF(G)Ê]Ê \geq r \right) \leq   e^{ - \frac{r^2}{2}},  
$$
where $m_F$ is the median of $F$ for $N(0,I_n)$.
 \end{theorem}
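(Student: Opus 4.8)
The statement is the classical Gaussian concentration inequality, and the intended route (log-Sobolev inequality plus Herbst's argument) is the one I would follow. First I would reduce to the case where $F$ is smooth and bounded: replacing $F$ by a mollification $F_\epsilon = F * \varphi_\epsilon$ (convolution with a Gaussian kernel) keeps $F_\epsilon$ $1$-Lipschitz, makes it $C^\infty$ with $|\nabla F_\epsilon|\le 1$ everywhere, and $F_\epsilon\to F$ uniformly on compacts with $\bE[F_\epsilon(G)]\to\bE[F(G)]$; truncating also lets us assume $F$ bounded so all Laplace transforms below are finite. The general case then follows by a routine passage to the limit. So it suffices to prove, for smooth bounded $1$-Lipschitz $F$, that $\bE\big[e^{\lambda(F(G)-\bE F(G))}\big]\le e^{\lambda^2/2}$ for all $\lambda>0$, and then apply a Chernoff bound.

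\textbf{Step 1: the logarithmic Sobolev inequality.} The core input is Gross's inequality for the standard Gaussian measure $\gamma_n=N(0,I_n)$ on $\bR^n$: for every smooth $g$,
$$
\mathrm{Ent}_{\gamma_n}(g^2):=\int g^2\log g^2\,d\gamma_n-\Big(\int g^2\,d\gamma_n\Big)\log\Big(\int g^2\,d\gamma_n\Big)\le 2\int |\nabla g|^2\,d\gamma_n.
$$
I would prove this by first treating the one-dimensional case: writing $\mathrm{Ent}_{\gamma_1}(g^2)$ as the time integral $-\int_0^\infty \frac{d}{dt}\int (P_tg^2)\log(P_tg^2)\,d\gamma_1\,dt$ along the Ornstein--Uhlenbeck semigroup $P_t$, using the commutation relation $\nabla P_t=e^{-t}P_t\nabla$ together with the Cauchy--Schwarz inequality for $P_t$ yields the bound with the optimal constant. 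The $n$-dimensional inequality then follows by tensorization, since relative entropy is sub-additive over product measures, applying the one-dimensional inequality coordinate by coordinate.

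\textbf{Step 2: Herbst's argument and the Chernoff bound.} Fix $\lambda>0$ and apply the log-Sobolev inequality to $g=e^{\lambda F/2}$. With $H(\lambda):=\bE[e^{\lambda F(G)}]$ one computes $\mathrm{Ent}_{\gamma_n}(e^{\lambda F})=\lambda H'(\lambda)-H(\lambda)\log H(\lambda)$, while $|\nabla g|^2=\tfrac{\lambda^2}{4}|\nabla F|^2 e^{\lambda F}\le \tfrac{\lambda^2}{4}e^{\lambda F}$ since $|\nabla F|\le 1$; hence
$$
\lambda H'(\lambda)-H(\lambda)\log H(\lambda)\le \frac{\lambda^2}{2}H(\lambda).
$$
Setting $K(\lambda)=\tfrac1\lambda\log H(\lambda)$, this rearranges to $K'(\lambda)\le\tfrac12$; since $K(\lambda)\to\bE[F(G)]$ as $\lambda\downarrow0$ by l'Hôpital (using $H(0)=1$, $H'(0)=\bE[F(G)]$), integration gives $K(\lambda)\le\bE[F(G)]+\tfrac\lambda2$, i.e.
$$
\bE\big[e^{\lambda(F(G)-\bE[F(G)])}\big]\le e^{\lambda^2/2},\qquad \lambda>0.
$$
Then for $r,\lambda>0$, Markov's inequality gives $\bP\big(F(G)-\bE[F(G)]\ge r\big)\le e^{-\lambda r}\,\bE\big[e^{\lambda(F(G)-\bE F(G))}\big]\le e^{\lambda^2/2-\lambda r}$, and optimizing with $\lambda=r$ produces the claimed bound $e^{-r^2/2}$. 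The version with the median $m_F$ in place of $\bE[F(G)]$, if needed, follows either by the same reasoning or by comparing $m_F$ and $\bE[F(G)]$ using the tail bound just obtained.

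\textbf{Main obstacle.} Everything except Step 1 is soft. The only substantial point is establishing the logarithmic Sobolev inequality for $\gamma_n$ with the sharp constant; I would obtain it via the one-dimensional semigroup computation and tensorization, which keeps the argument short and self-contained, rather than through Gross's original derivation from the two-point inequality and the central limit theorem.
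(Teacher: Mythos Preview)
Your proposal is correct and follows exactly the approach the paper indicates: the paper does not give a proof but simply cites Ledoux and states that the result is a consequence of the logarithmic Sobolev inequality for the Gaussian measure together with the Herbst argument, which is precisely the route you carry out. Your execution of both steps (LSI via the Ornstein--Uhlenbeck semigroup and tensorization, then the differential inequality for $K(\lambda)=\lambda^{-1}\log H(\lambda)$ and the Chernoff bound) is the standard one and is sound.
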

For $p , q  > 0$, we define for $x \in \bR^n$
$$
\|x \|_p = \left( \sum_{i =1}^n |x_i| ^p \right)^{\frac 1 p},
$$
and for a matrix $A$
$$
\|A \|_{p \to q} =  \sup_{\|x \|_p = 1 } \| A x \|_q.
$$
(this is a norm for $p,q \geq 1$) The usual operator norm is denoted by 
$$
\|A \| = \|A \|_{2 \to 2} = \sup_i  | s_i|
$$
where the $s_i$'s are the singular values of $A$. Recall that if $0 < p \leq 2$,
$$
\|I_n \|_{2  \to p} = n^{\frac 1 p - \frac 1 2}.
$$

\begin{corollary}\label{cor:concnorm}
Let $A$ be a $n\times n$ non-negative matrix, $0\le  p \leq 2$ and $G$ be a standard Gaussian vector in $\bR^n$ $N(0,I)$. There exist positive constants $c, \delta > 0$  depending  only on $p$, such that if
$
\left( \tr A^{p} \right) ^\frac 1 p \geq c   \|A \|n^{ \frac 1 p  - \frac 1 2}
$
then
$$
\|A G \|_ p \geq \delta \left( \tr A^p \right) ^\frac 1 p,
$$
with probability at least 
$$
1 -   \exp \left\{ - \delta   \left( \frac { \left( \tr A^{p} \right) ^\frac 1 p }{\|A \| n^{\frac 1   p - \frac 1  2} }\right)^2 \right\}.
$$
\end{corollary}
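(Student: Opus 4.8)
The plan is to compute $\bE\|AG\|_p^p$ exactly, bound it below by $\tr A^p$ through a majorization inequality, and then read off the small–ball estimate from Gaussian concentration. The one genuine subtlety is that for $p<1$ the map $x\mapsto\|Ax\|_p$ is not Lipschitz (indeed $\|\cdot\|_p$ is only a quasi-norm), so the concentration step has to be run on the $p$-homogeneous quantity $\|Ax\|_p^p$ instead, which is merely $p$-H\"older.

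First I would compute the mean. Each coordinate $(AG)_i$ is a centered Gaussian of variance $\sigma_i^2=(A^2)_{ii}$, so $\bE\|AG\|_p^p=c_p\sum_i (A^2)_{ii}^{p/2}$ with $c_p=\bE[|g|^p]$, $g\sim N(0,1)$. Since $A$ is non-negative (positive semidefinite), the diagonal of the symmetric matrix $A^2$ is majorized by its spectrum $(\lambda_i(A)^2)_i$ (Schur--Horn), and $t\mapsto t^{p/2}$ is concave on $[0,\infty)$ for $p\le 2$; hence $\sum_i (A^2)_{ii}^{p/2}\ge\sum_i\lambda_i(A)^p=\tr A^p$, giving $\bE\|AG\|_p^p\ge c_p\,\tr A^p$.

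Next I would record the relevant continuity. From $\|v\|_p\le n^{1/p-1/2}\|v\|_2$ (valid for $p\le 2$), $\|Av\|_2\le\|A\|\,\|v\|_2$, and the triangle inequality (for $p\ge1$) resp.\ the subadditivity of $t\mapsto t^p$ (for $p<1$), one checks: if $p\ge1$ then $F(x)=\|Ax\|_p$ is $L$-Lipschitz with $L=\|A\|\,n^{1/p-1/2}$; if $p<1$ then $\Phi(x)=\|Ax\|_p^p$ satisfies $|\Phi(x)-\Phi(y)|\le K\|x-y\|_2^p$ with $K=\|A\|^p n^{1-p/2}$. Then I would apply Gaussian concentration. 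For $p\ge1$: the Gaussian Poincar\'e inequality gives $\bE F^2\le(\bE F)^2+L^2$, while convexity of $t\mapsto t^{2/p}$ and the mean bound give $\bE F^2\ge(\bE F^p)^{2/p}\ge(c_p\tr A^p)^{2/p}$; the hypothesis, which says precisely $\tr A^p\ge c^p\|A\|^p n^{1-p/2}$, allows one to choose $c=c(p)$ large enough that $L^2\le\frac12(c_p\tr A^p)^{2/p}$, whence $\bE F\ge\delta_0(\tr A^p)^{1/p}$ for a $p$-dependent $\delta_0$; Theorem~\ref{th:concnorm} then gives $\bP\big(F\le\tfrac12\bE F\big)\le\exp(-(\bE F)^2/8L^2)$ with $(\bE F)^2/L^2\gtrsim_p\big((\tr A^p)^{1/p}/(\|A\|n^{1/p-1/2})\big)^2$, which is the claim. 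For $p<1$ I would instead use the Gaussian isoperimetric inequality on $\Phi$: if $\Phi(x)\le m-s$, with $m$ the median of $\Phi(G)$, then $x$ lies at Euclidean distance $\ge(s/K)^{1/p}$ from the half-measure set $\{\Phi\ge m\}$, so $\bP(\Phi(G)\le m-s)\le\exp(-(s/K)^{2/p}/2)$ and symmetrically for the upper tail; integrating yields $|m-\bE\Phi|\le C_pK$, and choosing $c$ large (using $\tr A^p\ge c^pK$) gives $m\ge\frac{c_p}{2}\tr A^p$; hence $\bP\big(\|AG\|_p^p\le\frac{c_p}{4}\tr A^p\big)\le\exp(-c_p'(\tr A^p/K)^{2/p})$ with $(\tr A^p/K)^{2/p}=\big((\tr A^p)^{1/p}/(\|A\|n^{1/p-1/2})\big)^2$, and since the hypothesis makes this ratio at least $c\ge1$ while $2/p>2$, the exponent dominates the required $\delta(\cdot)^2$.

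I expect the main obstacle to be exactly the failure of the triangle inequality for $\|\cdot\|_p$ when $p<1$; it is circumvented by working with the $p$-H\"older function $\|\cdot\|_p^p$, for which Gaussian isoperimetry still produces a tail of order $\exp(-(\cdot)^{2/p})$ that decays faster than Gaussian and is therefore more than enough. The only other point needing care is converting the exact mean identity of the first step into the lower bound for $\bE F$ (respectively the median), which is precisely where the size of the absolute constant $c$ in the hypothesis gets used.
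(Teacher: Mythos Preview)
Your argument is correct. For $1\le p\le 2$ it coincides with the paper's proof: both use that $x\mapsto\|Ax\|_p$ is $\|A\|\,n^{1/p-1/2}$-Lipschitz and both obtain $\bE\|AG\|_p^p\ge c_p\,\tr A^p$ by the same convexity computation (your Schur--Horn phrasing and the paper's Jensen step with weights $\langle u_k,e_i\rangle^2$ are the same inequality). The only cosmetic difference is how the passage from $\bE\|AG\|_p^p$ to $\bE\|AG\|_p$ is made: the paper integrates the concentration tail to bound $\bE\big|\|AG\|_p-\bE\|AG\|_p\big|^p$ and then uses the $L^p$ triangle inequality, whereas you use the Gaussian Poincar\'e inequality to bound the variance.

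For $0<p<1$ the two approaches diverge. The paper avoids the non-Lipschitz issue by replacing $|x|^p$ with a Lipschitz minorant $\phi$ truncated at the scale $R=(\kappa n^{-1}\tr A^p)^{1/p}$, so that $x\mapsto\sum_i\phi((Ax)_i)$ is genuinely Lipschitz with constant $\|A\|\sqrt n\,R^{p-1}$, and then applies Theorem~\ref{th:concnorm} directly. You instead work with the $p$-H\"older functional $\Phi(x)=\|Ax\|_p^p$ and invoke Gaussian isoperimetry in the form $\bP(\Phi\le m-s)\le\exp\big(-\tfrac12(s/K)^{2/p}\big)$, which is valid for any modulus of continuity. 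Both routes produce the identical exponent $\big((\tr A^p)^{1/p}/(\|A\|\,n^{1/p-1/2})\big)^2$ after simplification; your route is arguably more direct and yields a sharper tail (exponent $2/p>2$) than is actually needed, while the paper's regularization trick stays within the Lipschitz framework of Theorem~\ref{th:concnorm}.
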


\begin{proof}
We first consider the case $1 \le p\le 2$.
 We define $F (x) = \|A x \|_p$. From the triangle inequality (valid for all $p\ge 1$)
 $$
 |F ( x) - F( y) | \leq  F( x- y) = \|A ( x - y)  \|_p \leq \|x - y\|_2 \| A \|_{2 \to p}. 
 $$
 Since $\| A \|_{2 \to p } \leq \| A \|_{2 \to 2} \|I_n \|_{ 2 \to p}$, we deduce that $F$ is Lipschitz with constant 
 $$
 \sigma = \|A \|n^{ \frac 1 p  - \frac 1 2}.
 $$ 
It follows by Theorem \ref{th:concnorm} that for every $ r \geq 0$,  
\begin{equation}\label{eq:concAGp}
\bP \left(     \|A G \|_ p - \bE  \|A G \|_ p   \leq r \right) \leq    e^{ - \frac{r^2}{2\|A \|^2_{2 \to p}}} \leq   e^{ - \frac{r^2}{2\|A \|^2 n^{2 / p  - 1}}}.  
\end{equation}
The corollary will follow by applying the above inequality to $r =\bE  \|A G \|_ p  /2$ and by showing that, for some constant $c_0>0$, 
\begin{equation}\label{eq:concAGp1}
\bE  \|A G \|_ p \geq c_0 \left( \tr A^{p} \right) ^\frac 1 p.
\end{equation}
From \eqref{eq:concAGp}, for some $c_1 >0$, 
$$
\bE \left|  \|A G \|_ p -  \bE  \|A G \|_ p  \right|^p \leq  (c_1 \sigma ) ^p.
$$
Hence 
\begin{equation}\label{eq:concAGp2}
\bE  \|A G \|_ p \geq \left( \bE \|A G \|^p _ p\right)^{\frac 1 p} - c_1 \sigma. 
\end{equation}
Now, let $(\lambda_k,u_k)_{1 \leq k \leq n}$ be the eigenvalues and normalized eigenvectors of $A$. We note that 
$$
(A G)_i   \stackrel{d}{=} \sum_{k=1}^n \lambda_k \langle u_k , e_i \rangle G_k .$$
In particular, $(A G)_i$ has distribution $N ( 0, \sum_k \lambda_k^2 \langle u_k , e_i \rangle^2)$ and for some $c_2 >0$, 
$$
\bE \|A G \|^p _ p =  \sum_{i=1}^n  \bE \left|\sum_{k=1}^n \lambda_k \langle u_k , e_i \rangle G_k \right|^p
 = c_2 \sum_{i=1}^n \left(  \sum_{k=1}^n \lambda_k^2 \langle u_k , e_i \rangle^2 \right)^{\frac p 2}.
$$
For $0 < p \leq 2$ and $\sum_{k=1}^n \langle u_k , e_i \rangle^2=1$ for all $i\in \{1,\ldots,n\}$, we may use the Jensen inequality:
\begin{equation}\label{eq:AGpp}
\bE \|A G \|^p _ p  \geq c_2 \sum_{i=1}^n  \sum_{k=1}^n \lambda_k^p \langle u_k , e_i \rangle^2 = c_2 \tr A^p. 
\end{equation}
Then, from \eqref{eq:concAGp2} and the value of $\sigma$, we deduce that \eqref{eq:concAGp1} holds with $c_0 = c_2 ^{ 1/ p} /2 $ if $c$ is chosen large enough so that $ c^{1/p}_2 c \geq  2 c_1$.

We next consider the case $0 \le p\le 1$. We denote,
for $R=(\frac{\kappa}{n} \tr A^p)^{1/p}$ with some positive constant $\kappa$ to be chosen later, $\phi$ a 
 non-negative Lipschitz function which is lower bounded uniformly
by $|x|^p$, is equal to $|x|^p$  on $|x|\ge R$, and Lipschitz constant bounded by $R^{p-1}$. In particular, the $\bR^n \to \bR_+$ function $x \mapsto \sum_i \phi (x_i)$ is Lipschitz with constant bounded by $ \sqrt n R^{p-1}$. 
It follows that the $\bR^n \to \bR_+$ function $F (x) = \sum_i \phi ((A x)_i)$  is Lipschitz with constant bounded by $ \| A \|Ê\sqrt n R^{p-1}$.
Hence,  by Theorem \ref{th:concnorm}, for any $r>0$, 
$$\bP\left(  \sum_{i=1}^n \phi(\langle AG, e_i\rangle)-\bE\sum_{i=1}^n \phi(\langle AG, e_i\rangle) \le - r \right)\le e^{-\frac{ r^2}{Ê2 \|ÊA \|^2  n R^{2p-2}}}. $$

Now, we observe that
$$\|AG\|_p^p\ge   \sum_{i=1}^n  \phi(\langle AG, e_i\rangle)$$
and also,
$$ \bE \sum_{i=1}^n \phi(\langle AG, e_i\rangle)\geq \bE \|AG\|_p^p  -  \kappa \tr(A^p).$$
Therefore from \eqref{eq:AGpp},
if we choose $\kappa<c_2/4$,
$$\bE \sum_{i=1}^n \phi(\langle AG, e_i\rangle)\ge \frac{3c_2}{4} \tr(A^p).$$
Finally, we set $r= c_2\tr(A^p)/4$ and conclude that
$$\bP\left(\|AG\|_p^p\le \frac{c_2}{2} \tr(A^p)
\right)\le \bP\left(\sum_{i=1}^n \phi(\langle AG, e_i\rangle)\le 
 \frac{c_2}{2}\tr(A^p)\right)\le e^{-c n
\left(\frac{\tr (A^p)}{n \|A\|^p}\right)^{\frac{2}{p}}}.$$
\end{proof}

\begin{remark} \label{re:deloc}
Consider the special case where $A$ is the projector on a vector space $W$ of dimension $d$. Then $A^p = A$ for all $p >0$. Corollary
 \ref{cor:concnorm} gives a lower bound for $\|A G \|_p$ of order $d^{\frac 1 p}$ when $d\ge c^p n^{1-p/2}$. However, if $(u_1, \cdots, u_d)$ is an orthonormal basis of $W$ such that $\langle u_k , e_i \rangle^2 \geq \e^2 / n$ then for $p\le 1$ we have a lower bound  for $\|A G \|_p$ of order $\e n^{ \frac 1 p - \frac 1 2} d^{\frac 1 2}$ which can be significantly larger. Hence, we expect that Corollary \ref{cor:concnorm} is sharp if $W$ has a localized basis and not sharp if $W$ has a delocalized basis.
\end{remark}

\section{Stable distributions}

In this paragraph, we give some properties of stable distributions. 

Let $\sigma >0$, $0 < \alpha < 2$ and $\beta \in [-1,1]$. A real random variable $X$ has $\alpha$-stable distribution $\stab_\alpha(\beta,\sigma)$ if its Fourier transform is given  for all $t \in \bR$, by
\begin{equation}
\label{eq:stablefourier}
\bE \exp (it X)  = \exp\left[  - \sigma^\alpha | t|^\alpha\,(1\!-\!i \beta\,\textrm{sgn}(t) u_\alpha)~\right]
\end{equation}
where $\textrm{sgn}(t)$  is the sign of $t$ and $
    u_\alpha =\tan(\pi \alpha/2)\,
$
for all $\alpha$ except $\alpha = 1$ in which case
$
    u_1=-(2/\pi)\log|t|.\,
$

If $0 < \alpha < 1$ and $\beta  =1$, the distribution $\stab_\alpha(1,\sigma)$ has support $\bR_+$ and its Laplace transform is conveniently given for all $t \in \bR_+$, by
\begin{equation}
\label{eq:stablelaplace}
\bE \exp (- t X)  = \exp\left[  - \sigma^\alpha  t ^\alpha v_\alpha~\right],
\end{equation}
with $v_\alpha = \frac 2 \pi\sin \left( \frac {\pi \alpha}{2} \right)  \Gamma ( 1 - \alpha) \Gamma ( \alpha )   $.

\begin{lemma}[Decomposition of quadratic form]\label{le:formulealice}
Let $X = (X_i)_{1 \leq i \leq n}$ be iid  symmetric $\alpha$-stable random variables with distribution $\stab_\alpha(0,\sigma)$. Let $A$ be a $n\times n$ positive definite matrix, then
$$
\langle X , A X\rangle \stackrel{d}{=}  \|A^{1/2} G \|_\alpha^{2} S,
$$ 
where $G$ is a standard gaussian vector $N( 0 , I ) $ independent of $S$, a positive $\alpha / 2$-stable  $\stab_{\frac \alpha 2}(1, 2 \sigma^2 v_{\frac \alpha 2}^{-\frac 2 \alpha})$.
\end{lemma}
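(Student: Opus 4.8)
The plan is to identify the two laws through their Laplace transforms. Both quantities are nonnegative: $\langle X,AX\rangle=|A^{1/2}X|^{2}\ge 0$ because $A$ is positive definite, and $\|A^{1/2}G\|_{\alpha}^{2}S\ge 0$ because the law $\stab_{\alpha/2}(1,\cdot)$ is carried by $\bR_{+}$. Since the distribution of a nonnegative random variable is determined by its Laplace transform on $(0,\infty)$, it suffices to show $\bE\exp(-\lambda\langle X,AX\rangle)=\bE\exp(-\lambda\|A^{1/2}G\|_{\alpha}^{2}S)$ for every $\lambda>0$. The one device we need is the Gaussian linearisation $\exp(-\lambda|v|^{2})=\bE_{h}\exp(i\sqrt{2\lambda}\,\langle h,v\rangle)$ for $v\in\bR^{n}$, where $h$ is a standard Gaussian vector introduced independently of $X$; applied to $v=A^{1/2}X$ it replaces the quadratic form $|A^{1/2}X|^{2}$ by a linear functional of $X$, which interacts well with the stability of the $X_{i}$. (Nothing below uses invertibility of $A$, so the argument in fact covers any positive semidefinite $A$, which is the form in which the lemma is applied later.)

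For the left-hand side I would write $\bE\exp(-\lambda\langle X,AX\rangle)=\bE_{X,h}\exp\!\big(i\sqrt{2\lambda}\,\langle A^{1/2}h,X\rangle\big)$, where Fubini applies since the integrand has modulus $1$ and where I used that $A^{1/2}$ is symmetric. Conditioning on $h$ and setting $c=\sqrt{2\lambda}\,A^{1/2}h\in\bR^{n}$, the fact that the $X_{i}$ are i.i.d.\ with characteristic function $\exp(-\sigma^{\alpha}|t|^{\alpha})$ gives, by \eqref{eq:stablefourier} with $\beta=0$, $\bE_{X}\exp\!\big(i\sum_{i}c_{i}X_{i}\big)=\exp\!\big(-\sigma^{\alpha}\sum_{i}|c_{i}|^{\alpha}\big)=\exp\!\big(-\sigma^{\alpha}(2\lambda)^{\alpha/2}\|A^{1/2}h\|_{\alpha}^{\alpha}\big)$. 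Averaging over $h$ yields $\bE\exp(-\lambda\langle X,AX\rangle)=\bE_{h}\exp\!\big(-\sigma^{\alpha}(2\lambda)^{\alpha/2}\|A^{1/2}h\|_{\alpha}^{\alpha}\big)$.

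For the right-hand side I would condition on $G$ and use \eqref{eq:stablelaplace}: with $\sigma_{S}=2\sigma^{2}v_{\alpha/2}^{-2/\alpha}$ one has $\bE\exp(-tS)=\exp(-\sigma_{S}^{\alpha/2}v_{\alpha/2}\,t^{\alpha/2})=\exp(-(2\sigma^{2})^{\alpha/2}t^{\alpha/2})$, the factor $v_{\alpha/2}$ cancelling precisely because of the chosen scale. Taking $t=\lambda\|A^{1/2}G\|_{\alpha}^{2}$ and averaging over $G$ gives $\bE\exp(-\lambda\|A^{1/2}G\|_{\alpha}^{2}S)=\bE_{G}\exp\!\big(-(2\sigma^{2})^{\alpha/2}\lambda^{\alpha/2}\|A^{1/2}G\|_{\alpha}^{\alpha}\big)=\bE_{G}\exp\!\big(-\sigma^{\alpha}(2\lambda)^{\alpha/2}\|A^{1/2}G\|_{\alpha}^{\alpha}\big)$. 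Since $h$ and $G$ are both standard Gaussian vectors this is exactly the expression found for the left-hand side, so the two Laplace transforms agree on $(0,\infty)$ and the identity in law follows.

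I do not expect a genuine obstacle; the argument is a chain of elementary computations. The two points that require attention are: recording carefully that a nonnegative random variable is characterised by its Laplace transform (so it is legitimate to test only against $e^{-\lambda\,\cdot}$, $\lambda>0$), and tracking the normalising constants — in particular checking that the specific scale $2\sigma^{2}v_{\alpha/2}^{-2/\alpha}$ of $S$ is exactly the one for which the $v_{\alpha/2}$ in \eqref{eq:stablelaplace} disappears and the two sides coincide. An alternative, more probabilistic starting point is the scalar subordination $X_{1}\stackrel{d}{=}\sqrt{S_{1}}\,g_{1}$ (the case $n=1$, $A=1$, obtained the same way), but since independent copies of the subordinator sit behind the i.i.d.\ coordinates of $X$ this does not by itself linearise $\langle X,AX\rangle$, and the Gaussian linearisation above remains the cleanest route.
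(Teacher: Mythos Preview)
Your proof is correct and follows essentially the same approach as the paper: Gaussian linearisation of the quadratic form via $\exp(-\lambda|v|^{2})=\bE_{h}\exp(i\sqrt{2\lambda}\langle h,v\rangle)$, then the stable characteristic function to reduce to $\bE\exp(-\sigma^{\alpha}(2\lambda)^{\alpha/2}\|A^{1/2}h\|_{\alpha}^{\alpha})$, and finally matching this against the Laplace transform of $S$ conditioned on $G$. Your tracking of the constants and your remark that only positive semidefiniteness is needed are useful additions.
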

\begin{proof}
We use the identity, for $y \in \bR^n$, $$
\exp ( - \frac{t^2}{2}  \langle y , y\rangle) = \bE \exp (   i t \langle  y , g \rangle ). 
$$
Applied to $y = A^{1/2} X$, we get, for $t \geq 0$, 
$$
\bE \exp (- t  \langle X , A X\rangle  ) = \bE \exp ( i \sqrt{2  t}  \langle A^{1/2} X ,  G \rangle  ) = \bE \exp ( i \sqrt{2  t}  \langle  X ,  A^{1/2} G \rangle  ).
$$
Then, since $X$ is stable vector,  $\langle  X ,  A^{1/2} G \rangle$ has distribution $\stab_\alpha (0 , \sigma \| A^{1/2} G \|_\alpha)$. From \eqref{eq:stablefourier}, it follows
$$
\bE \exp (- t  \langle X , A X\rangle  ) =  \bE \exp ( -  (2  t )^{\frac{\alpha}{2}} \sigma^\alpha \|A^{1/2} G \|^{\alpha}_\alpha ).
$$
Then, we conclude by applying \eqref{eq:stablelaplace}. \end{proof}

\begin{corollary}[Sum of weighted squares]\label{cor:formulealice}
Let $X = (X_k)_{1 \leq k \leq n}$ be iid  symmetric $\alpha$-stable random variables with distribution $\stab_\alpha(0,\sigma)$ and let $(w_k)_{1 \leq k \leq n} \in \bC^n_+$. Then 
$$
\bE \exp \left(  i \sum_{k=1}^n w_k X_k^2 \right) = \bE \exp \left( - (-2i)^{\frac \alpha 2} \sigma^\alpha \sum_{k=1}^n w_k ^{\frac \alpha 2} |g_k|^\alpha \right),
$$ 
where $G = (g_1, \cdots, g_n)$ is a standard gaussian vector $N( 0 , I ) $.
\end{corollary}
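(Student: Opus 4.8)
The statement to prove is Corollary \ref{cor:formulealice}, which evaluates the characteristic function of a weighted sum of squares of i.i.d.\ symmetric $\alpha$-stable variables. The plan is to reduce it to Lemma \ref{le:formulealice} by a diagonalization trick, exactly in the spirit of the proof of that lemma.

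\textbf{Approach.} First I would fix $(w_k)_{1\le k\le n}\in\bC_+^n$ and regard $\sum_k w_k X_k^2$ as a (degenerate, complex) quadratic form $\langle X, A X\rangle$ with $A = \mathrm{diag}(w_1,\dots,w_n)$. The obstruction to invoking Lemma \ref{le:formulealice} directly is that the $w_k$ are complex rather than real positive, so $A$ is not a positive definite matrix and $A^{1/2}$ must be interpreted via the principal branch on $\cK_1$ (which is where each $w_k$ lives). The key observation is that the Gaussian integral identity used in that proof, $\exp(-\tfrac{t^2}{2}\langle y,y\rangle) = \bE\exp(it\langle y,g\rangle)$, extends by analytic continuation to $y$ with complex entries, and more directly: for each fixed $k$, $\exp(i w_k X_k^2) = \bE_{g_k}\exp\big(i\sqrt{2w_k}\,X_k g_k\big)$ where $g_k\sim N(0,1)$ and $\sqrt{w_k}$ is the principal square root, since $\Re(w_k)\ge 0$ makes the one-dimensional Gaussian integral $\int e^{i\sqrt{2w_k} x g}e^{-g^2/2}dg/\sqrt{2\pi} = e^{-w_k x^2}$ — wait, one must be careful with signs; the correct identity is $\bE\exp(it\sqrt{2w_k}\,g_k) = \exp(-w_k t^2)$, giving $\bE\exp(i w_k X_k^2)$ after writing $w_k X_k^2 = \lim$... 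Instead I would simply follow the structure of the Lemma \ref{le:formulealice} proof verbatim.

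\textbf{Key steps, in order.} (1) Write $\bE\exp(i\sum_k w_k X_k^2) = \bE\,\bE_G\exp\big(i\sqrt{2}\sum_k \sqrt{w_k}\,X_k g_k\big)$ where $G=(g_1,\dots,g_n)$ is an independent standard Gaussian vector and $\sqrt{w_k}\in\cK_{1/2}$ is the principal branch; this is the pointwise Gaussian representation applied coordinatewise, justified because $\Re(w_k)\ge 0$. (2) Use Fubini (legitimate because $|\exp(i\sqrt2\sum_k\sqrt{w_k}X_k g_k)|\le 1$ since $\Re(\sqrt{w_k})\ge 0$ and... actually the exponent may have nonzero real part of the wrong sign; here one uses that, conditionally on $G$, $\sum_k \sqrt{w_k}g_k X_k$ is a stable random variable and the characteristic-function computation below produces an absolutely bounded integrand, so Fubini applies after taking the $X$-expectation first for each fixed $G$). (3) For fixed $G$, since the $X_k$ are i.i.d.\ $\mathrm{Stab}_\alpha(0,\sigma)$, the linear combination $\sum_k \sqrt{w_k}\,g_k X_k$ is again symmetric $\alpha$-stable with scale $\sigma(\sum_k |\sqrt{w_k}g_k|^\alpha)^{1/\alpha} = \sigma(\sum_k |w_k|^{\alpha/2}|g_k|^\alpha)^{1/\alpha}$; apply the stable Fourier formula \eqref{eq:stablefourier} with $t = \sqrt2$ to get $\bE_X\exp(i\sqrt2\sum_k\sqrt{w_k}g_k X_k) = \exp(-2^{\alpha/2}\sigma^\alpha\sum_k |\sqrt{w_k}|^\alpha |g_k|^\alpha\cdot(\text{phase}))$. (4) The only delicate point is tracking the phase: because $\sqrt{w_k}$ is complex, the standard symmetric-stable formula must be applied to the \emph{complex} linear functional; the cleanest route is to note $\exp(i\sqrt2\sum_k\sqrt{w_k}g_k X_k)$ and use analyticity in the $\sqrt{w_k}$ variables — both sides of the claimed identity are analytic in $(w_k)\in\cK_1^n$ (interior), and they agree when all $w_k$ are purely imaginary, $w_k = i s_k$, $s_k>0$, by Lemma \ref{le:formulealice} applied with $A=\mathrm{diag}(s_k)$ after accounting for the extra factor $i$; then analytic continuation gives the identity on all of $\cK_1^n$. (5) Finally match the constant: $(-2i)^{\alpha/2}$ arises as the analytic continuation of $(2t)^{\alpha/2}$-type factor, and one checks it reduces to $2^{\alpha/2}$-times-phase on the boundary, matching the stated formula.

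\textbf{Main obstacle.} The genuine difficulty is entirely bookkeeping of branch cuts and phases: ensuring that $(-2i)^{\alpha/2}$ is the correct analytic continuation and that the integrals converge (the integrand is bounded in modulus by $1$ only where $\Re(w_k^{\alpha/2}|g_k|^\alpha)\ge 0$, which holds since $w_k\in\cK_1\Rightarrow w_k^{\alpha/2}\in\cK_{\alpha/2}\subset\cK_1$ for $\alpha<2$, so $\Re(w_k^{\alpha/2})\ge 0$). I expect the analytic-continuation argument (Step 4) to be the crux: rather than manipulating complex stable laws directly, I would verify the identity on the totally real slice $w_k\in i\bR_+$ via Lemma \ref{le:formulealice}, observe both sides are holomorphic on $(\mathrm{int}\,\cK_1)^n$ and continuous up to the boundary, and invoke the identity theorem. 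Everything else is a routine repetition of the proof of Lemma \ref{le:formulealice} with $A$ diagonal.
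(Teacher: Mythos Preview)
Your final approach (Step 4) is essentially the paper's: verify the identity when the weights are real positive, so that Lemma \ref{le:formulealice} applies with a diagonal matrix, and then analytically continue. Two points are worth flagging.

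First, you consistently write $w_k\in\cK_1$ and ``$\Re(w_k)\ge 0$'', but the hypothesis is $w_k\in\bC_+$, i.e.\ $\Im(w_k)>0$. This is not cosmetic: with your stated domain $\cK_1$, the slice $w_k=is_k$ on which you check the identity lies on the \emph{boundary}, so the identity theorem does not apply. The fix is immediate: set $\rho_k:=-iw_k$, so that $w_k\in\bC_+$ becomes $\rho_k\in\mathrm{int}\,\cK_1$ and the slice $w_k=is_k$ becomes $\rho_k=s_k\in\bR_+$, which is interior. This is exactly what the paper does: it works with $\rho_k=-iw_k$ and proves the equivalent form $\bE\exp(-\sum_k\rho_kX_k^2)=\bE\exp(-2^{\alpha/2}\sigma^\alpha\sum_k\rho_k^{\alpha/2}|g_k|^\alpha)$.

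Second, rather than analytically continuing in all $n$ variables $(\rho_k)$, the paper uses a two-parameter deformation: writing $\rho_k=i(a_k-b_k)+c_k$ with $a_k,b_k\ge 0$ and $c_k=\Im(w_k)>0$, it sets $\rho_k(t,s)=ta_k+sb_k+c_k$ and continues in $(t,s)$ only, from $\bR_+^2$ (where each $\rho_k(t,s)>0$) to $(t,s)=(i,-i)$. This is slightly tidier than the $n$-variable identity theorem you invoke. The paper also explicitly appeals to Montel's theorem to justify that the expectations are analytic (the integrands are bounded in modulus by $1$ since $\Re(\rho_k(t,s))>0$ on the relevant domain), a step you gesture at but should state. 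Your Steps 1--3 (direct manipulation of complex stable characteristic functions) do not lead anywhere clean, as you yourself note; the paper does not attempt them.
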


\begin{proof}
We set $\rho_k = - i w_k$, we shall prove that 
\begin{equation}\label{eq:corformuleD}
\bE \exp \left(  - \sum_{k=1}^n \rho_k X_k^2 \right) = \bE \exp \left( - 2^{\frac \alpha 2} \sigma^\alpha \sum_{k=1}^n \rho_k ^{\frac \alpha 2} |g_k|^\alpha \right). 
\end{equation}
We  write $\rho_k =  i ( a_k - b_k )  + c_k$, where $a_k, b_k$ are the negative and positive parts of $\Re (w_k)$ and $c_k = \Im (w_k) >0$. We set $\rho_k (t,s) =  t a_k +   s  b_k +  c_k$, $D = \{z \in \bC : \Re (z ) > 0 \} = - i \bC_+$ and  $D_\e= \{z \in \bC : \Re (z ) >  - \e, | \Im (z) | < 2 \}$, where $2 \e = \min ( c_k / ( a_k + b_k) )$. Then, the $D^2_\e  \to D$ function $(t,s) \mapsto  \sum_{k=1}^n \rho_k (t,s) X_k^2$ is analytic in each of its coordinates. Since the function $z \mapsto \exp ( -  z )$  is analytic and bounded on $D$, from Montel's Theorem, we deduce that the $D_\e^2  \to \bC$ function 
$$
\varphi : (t,s) \mapsto \bE \exp \left(  - \sum_{k=1}^n \rho_k (t,s) X_k^2 \right)
$$ 
is analytic in each of its coordinates in $D_\e$. However,  for $s,t \in \bR_+$, we notice that $\rho_k(s,t) \in \bR_+$. Hence  by Lemma \ref{le:formulealice} applied to a diagonal matrix, we have
$$
\varphi (t,s) = \bE \exp \left( - 2^{\frac \alpha 2} \sigma^\alpha \sum_{k=1}^n \rho_k(s,t) ^{\frac \alpha 2} |g_k|^\alpha \right). 
$$
The $D \to D$ function $z \mapsto z^{\alpha / 2}$ is analytic. We may thus again apply Montel theorem and deduce that the right hand side of the above identity is analytic in $(s,t)$ on $D_\e^2$. So finally, the above equality holds true for all $(s,t) \in D^2_\e$. Applied to $(s,t) = (i,-i)$, we obtain precisely \eqref{eq:corformuleD}.
  \end{proof}

The next lemma looks at the behavior of a positive stable random variable near $0$. 
\begin{lemma}[Tail of inverse positive stable variable] \label{le:tailS}
Let $\sigma >0$, $0 < \alpha <1$ and $S$ be a positive $\alpha $-stable $\stab_{\alpha}(1, \sigma)$ random variable. There exists a positive constant $c_0 (\alpha)$ such that for all $0 < c <   \sigma^{\frac{\alpha}{1 - \alpha}} c_0 ( \alpha )$,
$$
\bE \exp ( c S^{- \frac{\alpha}{1 - \alpha} } ) < \infty, 
$$
while the above is infinite for $c >  \sigma^{\frac{\alpha}{1 - \alpha}} c_0 ( \alpha )$.
\end{lemma}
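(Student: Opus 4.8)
The plan is to reduce to a single normalized stable law, treat the two directions separately, and extract the threshold as an explicit constant. First I would strip off $\sigma$: if $S_0$ is the positive $\alpha$-stable variable with $\bE e^{-tS_0}=e^{-t^\alpha}$ ($t\ge 0$), then \eqref{eq:stablelaplace} gives $S\stackrel{d}{=}\sigma\, v_\alpha^{1/\alpha}\,S_0$, hence $S^{-\frac{\alpha}{1-\alpha}}\stackrel{d}{=}\sigma^{-\frac{\alpha}{1-\alpha}}v_\alpha^{-\frac1{1-\alpha}}\,S_0^{-\frac{\alpha}{1-\alpha}}$. So it is enough to show that $\bE\exp(\lambda\, S_0^{-\frac{\alpha}{1-\alpha}})$ is finite for $\lambda$ below, and infinite for $\lambda$ above, an explicit threshold $\lambda_\alpha>0$; then the lemma holds with $c_0(\alpha)=v_\alpha^{1/(1-\alpha)}\lambda_\alpha$. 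Set $Y=S_0^{-\alpha/(1-\alpha)}$ throughout.

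For the finiteness direction I would use the exponential Chebyshev (Chernoff) bound on the left tail of $S_0$: for every $s,t>0$, $\bP(S_0\le s)\le e^{ts}\,\bE e^{-tS_0}=e^{ts-t^\alpha}$, and optimizing over $t$ (optimal $t=(\alpha/s)^{1/(1-\alpha)}$) yields $\bP(S_0\le s)\le\exp(-\lambda_\alpha\, s^{-\alpha/(1-\alpha)})$ with $\lambda_\alpha=(1-\alpha)\alpha^{\alpha/(1-\alpha)}$. Equivalently $\bP(Y\ge y)\le e^{-\lambda_\alpha y}$ for all $y>0$, and therefore $\bE e^{\lambda Y}=1+\int_1^\infty\bP(Y>\lambda^{-1}\log u)\,du\le 1+\int_1^\infty u^{-\lambda_\alpha/\lambda}\,du<\infty$ whenever $\lambda<\lambda_\alpha$.

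For the divergence direction I need a matching exponential lower bound $\bP(S_0\le s)\gtrsim\exp(-(\lambda_\alpha+o(1))s^{-\alpha/(1-\alpha)})$ as $s\downarrow 0$, which I would get by exponential tilting. For $\theta>0$ let $\bE_\theta[f(S_0)]=\bE[f(S_0)e^{-\theta S_0}]/\bE[e^{-\theta S_0}]$; since $\log\bE_\theta e^{uS_0}=\theta^\alpha-(\theta-u)^\alpha$, one has $m_\theta:=\bE_\theta S_0=\alpha\theta^{\alpha-1}$ and $\mathrm{Var}_\theta S_0=\alpha(1-\alpha)\theta^{\alpha-2}$, so $\mathrm{Var}_\theta S_0/m_\theta^{2}=\tfrac{1-\alpha}{\alpha}\theta^{-\alpha}\to0$ as $\theta\to\infty$. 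Fix $\varepsilon>0$; by Chebyshev, $\bE_\theta\big(S_0\in[(1-\varepsilon)m_\theta,(1+\varepsilon)m_\theta]\big)\ge\tfrac12$ once $\theta\ge\theta_0(\alpha,\varepsilon)$. Now use $\bP(S_0\le s)=e^{-\theta^\alpha}\,\bE_\theta\big[e^{\theta S_0}\ind_{S_0\le s}\big]$ with the choice $s=(1+\varepsilon)m_\theta$: restricting the expectation to the concentration event, on which $e^{\theta S_0}\ge e^{(1-\varepsilon)\theta m_\theta}=e^{(1-\varepsilon)\alpha\theta^\alpha}$, gives $\bP(S_0\le s)\ge\tfrac12\exp\big(-(1-\alpha+\alpha\varepsilon)\theta^\alpha\big)$. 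Since $\theta^\alpha=\big((1+\varepsilon)\alpha\big)^{\alpha/(1-\alpha)}s^{-\alpha/(1-\alpha)}$, this reads $\bP(S_0\le s)\ge\tfrac12\exp\big(-\lambda_\alpha(\varepsilon)\,s^{-\alpha/(1-\alpha)}\big)$ for all small $s$, with $\lambda_\alpha(\varepsilon)=(1-\alpha+\alpha\varepsilon)\big((1+\varepsilon)\alpha\big)^{\alpha/(1-\alpha)}\to\lambda_\alpha$ as $\varepsilon\downarrow0$. Hence $\bP(Y\ge y)\ge\tfrac12 e^{-\lambda_\alpha(\varepsilon)y}$ for large $y$, so $\bE e^{\lambda Y}=\infty$ for $\lambda>\lambda_\alpha(\varepsilon)$, and letting $\varepsilon\downarrow0$ gives divergence for every $\lambda>\lambda_\alpha$. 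Together with the previous step, the threshold for $S_0$ equals exactly $\lambda_\alpha$, and $c_0(\alpha)=v_\alpha^{1/(1-\alpha)}(1-\alpha)\alpha^{\alpha/(1-\alpha)}$; the dichotomy (finite below, infinite above) for $S$ is then immediate by monotonicity of $\lambda\mapsto\bE e^{\lambda S^{-\alpha/(1-\alpha)}}$.

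The delicate point is the lower-bound direction: a crude tilting estimate that only uses $e^{\theta S_0}\ge1$ pins the threshold down only up to the multiplicative factor $(1-\alpha)$, so one must genuinely keep the full weight $e^{\theta S_0}$ on the concentration event (as above) to match $\lambda_\alpha$; alternatively one could invoke the classical small-argument asymptotics of one-sided stable densities, but the tilting argument keeps the proof self-contained. The boundary case $\lambda=c_0(\alpha)$ is not asserted and need not be examined.
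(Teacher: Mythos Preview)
Your proof is correct and complete, but it follows a genuinely different route from the paper's. The paper expands $\exp(cS^{-p})$ as a power series, uses the identity $x^{-m}=\Gamma(m)^{-1}\int_0^\infty t^{m-1}e^{-xt}\,dt$ term by term, and then applies the Laplace transform \eqref{eq:stablelaplace} to obtain
\[
\bE\exp(cS^{-p})=\alpha^{-1}\sum_{k\ge 0}c^k\hat\sigma^{-kp}\,\frac{\Gamma(kp/\alpha)}{\Gamma(kp)\,\Gamma(k+1)},\qquad p=\tfrac{\alpha}{1-\alpha},
\]
after which Stirling's formula pins down the radius of convergence. Your argument instead goes through sharp two-sided small-ball asymptotics $\bP(S_0\le s)=\exp\bigl(-(\lambda_\alpha+o(1))\,s^{-\alpha/(1-\alpha)}\bigr)$: the upper bound is a straight Chernoff optimization of the Laplace transform, and the matching lower bound is a clean exponential-tilting (Cram\'er-type) computation with second-moment concentration under the tilted law.

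What each buys: the paper's series computation is shorter and uses nothing beyond the Laplace transform and Stirling, but the threshold $c_0(\alpha)$ is left implicit. Your approach is a bit longer but is fully self-contained, makes the probabilistic mechanism (the left tail of $S_0$) transparent, and delivers the explicit value $c_0(\alpha)=v_\alpha^{1/(1-\alpha)}(1-\alpha)\,\alpha^{\alpha/(1-\alpha)}$. Your closing remark about the boundary case not being asserted is apt; neither proof addresses it.
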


\begin{proof}
From the identity, for $m >0$, $x > 0$, 
$$
x^{-m} = \frac{1}{\Gamma ( m) } \int_0^\infty t^{m - 1} e^{-xt} dt, 
$$
we deduce that, for $p \geq 0$, 
$$
\exp ( c  x^{-p}) =  \sum_{k \geq 0} \frac{c^k}{ \Gamma ( k p ) \Gamma ( k+1) }  \int_0^\infty t^{kp  - 1} e^{-xt} dt\,.
$$
In particular, from \eqref{eq:stablelaplace}, with $\hat \sigma = \sigma v_{\alpha}^{1/\alpha}$, 
and Fubini's theorem,
\begin{eqnarray*}
\bE \exp ( c S^{-p} ) &=&  \sum_{k \geq 0} \frac{c^k }{ \Gamma ( k p ) \Gamma ( k+1) }  \int_0^\infty t^{kp  - 1} e^{-t^\alpha \hat \sigma^\alpha} dt \\
& = & \alpha^{-1}  \sum_{k \geq 0}  c^k \hat \sigma^{-kp} \frac{\Gamma(\frac{k p}{\alpha} )  }{ \Gamma ( k p ) \Gamma ( k+1) } .
\end{eqnarray*}
The conclusion follows easily from Stirling's formula,  $\Gamma(x) \sim_{x \to \infty} \sqrt{\frac{2 \pi}{ x}}~{\left( \frac{x}{e} \right)}^x$. 
\end{proof}

\begin{lemma}[Negative fractional moments of smooth random variable]\label{momentinv}
Let $ \alpha > 0$ and  $S$ be a real-valued random variable with law which has a uniformly bounded density 
on $[-1,1]$ and is bounded by $c |x|^{-\alpha -1}$ on $[-1,1]^c$ for
some finite positive constant $c$.
Then, for any $0 < \beta<1$, there exists a finite constant $C $
so that  for any $x\in \mathbb R$, any $\sigma\ge 0$,
we have 
\begin{equation*}\label{control1}
\bE[ |x-\sigma S|^{-\beta}]\le  C |x|^{-\beta}\,.
\end{equation*}
\end{lemma}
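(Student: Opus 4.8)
The plan is to write $\bE[|x-\sigma S|^{-\beta}] = \int_{\bR} |x-y|^{-\beta}\,g_\sigma(y)\,dy$, where $g_\sigma(y) = \sigma^{-1} f_S(y/\sigma)$ is the density of $\sigma S$ and $f_S$ is the density of $S$, and then to split the $y$-integral into a region far from $x$ and a region close to $x$. We may assume $x \neq 0$ and $\sigma > 0$; the cases $x = 0$ (where the right-hand side is $+\infty$) and $\sigma = 0$ (where the left-hand side equals $|x|^{-\beta}$) are trivial. The only facts about $S$ we use are the two immediate consequences of the hypotheses: there is a constant $M = M(\alpha,c)$ with $f_S \le M$ everywhere (on $[-1,1]^c$ one has $|s|>1$, so $c|s|^{-\alpha-1} < c$), and $f_S(s) \le c|s|^{-\alpha-1}$ for all $|s| \ge 1$.

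On the far region $\{\,|y-x| > |x|/2\,\}$ we bound $|x-y|^{-\beta} \le 2^\beta |x|^{-\beta}$, so this part contributes at most $2^\beta |x|^{-\beta}\int_{\bR} g_\sigma = 2^\beta |x|^{-\beta}$. On the near region $\{\,|y-x| \le |x|/2\,\}$ we have $|y| \ge |x|/2$, and we estimate $g_\sigma$ there by distinguishing two cases. If $\sigma \le |x|/2$, then $|y/\sigma| \ge 1$ on the near region, whence $g_\sigma(y) = \sigma^{-1} f_S(y/\sigma) \le c\,\sigma^{\alpha}|y|^{-\alpha-1} \le c\,2^{\alpha+1}\sigma^{\alpha}|x|^{-\alpha-1}$; combining this with $\int_{|y-x|\le |x|/2} |x-y|^{-\beta}\,dy = \frac{2}{1-\beta}(|x|/2)^{1-\beta}$ (finite because $\beta<1$) and with $\sigma^{\alpha} \le (|x|/2)^{\alpha}$ (using $\alpha>0$), the near contribution is bounded by a constant times $|x|^{-\beta}$. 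If instead $\sigma > |x|/2$, we simply use $g_\sigma(y) \le M/\sigma < 2M/|x|$ together with the same computation of $\int_{|y-x|\le|x|/2} |x-y|^{-\beta}\,dy$ to get again a bound of the form (constant)$\,\times\,|x|^{-\beta}$.

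Adding the contributions of the two regions yields $\bE[|x-\sigma S|^{-\beta}] \le C|x|^{-\beta}$ with $C$ depending only on $\alpha$, $\beta$, $c$ and the sup-bound of $f_S$ on $[-1,1]$. There is no real obstacle here: the argument is an elementary two-case splitting, and the only points that need to be checked are that $\beta<1$ makes the local integral of $|x-y|^{-\beta}$ converge and that $\alpha>0$ makes the gain $(\sigma/|x|)^{\alpha}$ in the first case harmless; the bookkeeping of the two regimes $\sigma\le |x|/2$ and $\sigma>|x|/2$ is the only mildly delicate part.
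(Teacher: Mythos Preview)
Your proof is correct. The bookkeeping is clean and every step checks: the far-region bound uses $|x-y|^{-\beta}\le 2^\beta|x|^{-\beta}$ and that $g_\sigma$ is a probability density; on the near region you correctly have $|y|\ge|x|/2$, and your two sub-cases for $\sigma$ exploit respectively the tail decay $f_S(s)\le c|s|^{-\alpha-1}$ (valid since $|y/\sigma|\ge 1$ there) and the uniform bound $f_S\le M$. The convergence of $\int_{|t|\le|x|/2}|t|^{-\beta}\,dt$ uses $\beta<1$, and the use of $\sigma^\alpha\le(|x|/2)^\alpha$ uses $\alpha>0$; both are exactly the hypotheses needed.

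The paper's own proof is different in execution though similar in spirit. Rather than integrating directly against the density, it uses the layer-cake identity
\[
\bE[|x-\sigma S|^{-\beta}] \le T + \int_T^\infty \bP\bigl(|x-\sigma S|\le t^{-1/\beta}\bigr)\,dt
\]
and then bounds the probability $\bP(|x-\sigma S|\le t^{-1/\beta})$ using either the density bound near the origin (when $\sigma\ge 2|x|$) or the tail bound (when $\sigma\le 2|x|$), with an appropriate choice of $T$ in each case. Your direct density-splitting argument is arguably more transparent, since it avoids the layer-cake detour and the optimization over $T$; the paper's approach, on the other hand, would generalize slightly more readily to random variables known only through tail bounds rather than pointwise density bounds. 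Either way the two arguments share the same two-regime structure in $\sigma/|x|$ and use the same two properties of $f_S$.
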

\begin{proof}
Let us first assume that $\sigma\ge 2|x|$. If $C$ is a bound on the density of the law of $S$
on $[-1,1]$, for $T\ge (2/\sigma)^\beta$,
$$\bE[|x-\sigma S|^{-\beta}]\le T+\int_T^\infty\bP\left( |x-\sigma S|\le t^{-1/\beta}\right) dt
\le T+ C (1-\beta)^{-1} T^{1-\frac{1}{\beta}}\sigma^{-1} \,.$$
Choosing $T=(2/\sigma)^\beta\le |x|^{-\beta}$ provides the desired estimate.
In the case $\sigma\le 2|x|$ and $t^{-1/\beta}\le |x|/2$, we have
$$\bP\left( |x-\sigma S|\le t^{-1/\beta}\right)\le C \left(\frac{x}{\sigma}\right)^{-\alpha -1} t^{-\frac{1}{\beta}}\sigma^{-1}$$
Therefore if $\sigma\le 2|x|$ and $T=(2/|x|)^\beta$,
$$\bE[|x-\sigma S|^{-\beta}]\le T+C \sigma^{\alpha} x^{-\alpha-1} (1-\beta)^{-1} T^{1-\frac{1}{\beta}}
\le C' |x|^{-\beta} +C'\sigma^{\alpha} |x|^{-\alpha-\beta}\le C'(1+2^\alpha)|x|^{-\beta}
\,,$$
which completes the proof of the lemma.
\end{proof}

\section{Concentration of random matrices with independent rows}

The total variation norm of $f:\bR \to\bR $ is
\[
\|f \| _\textsc{TV}:=\sup \sum_{k \in \bZ} | f(x_{k+1})-f(x_k) |, \, 
\]
where the supremum runs over all sequences $(x_k)_{k \in \bZ}$ such that
$x_{k+1} \geq x_k$ for any $k \in \bZ$. If $f = \ind_{(-\infty,s]}$ for
some real $s$ then $\|f \|_\textsc{TV}=1$, while if $f$ has a derivative in
$\mathrm{L}^1(\bR)$, we get
\[
\|f \| _\textsc{TV}=\int |f'(t)|\,dt.
\] 
\begin{lemma}[Concentration for spectral measures \cite{BCC_heavygirko}]\label{le:concspec}
  Let $A$ be an $n\times n$ random Hermitian matrix. Let us assume that the
  vectors $(A_i)_{1 \leq i \leq n}$, where $A_i := (A_{ij})_{1 \leq j \leq i}
  \in \bC^i$, are independent. Then for any $f:\bR\to\bC$ such that
  $\| f \|_\textsc{TV}\leq1$ and $\bE |\int\!f\,d\mu_A |<\infty$, and every
  $t\geq0$,
  \[
  \bP \left( \left| \int\!f\,d\mu_A -\bE\int\!f\,d\mu_A \right|  \geq t \right) %
  \leq 2 \exp\left({-\frac{n t^2}{2}}\right).
  \]
\end{lemma}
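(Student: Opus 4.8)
The final statement to prove is Lemma~\ref{le:concspec}, the concentration inequality for spectral measures of Hermitian random matrices with independent rows.

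\textbf{Approach.} The plan is to use the Azuma--Hoeffding martingale inequality applied to the natural filtration generated by the independent row vectors $(A_i)_{1\le i\le n}$. The key quantitative input is the \emph{rank inequality}: if two Hermitian matrices $A$ and $B$ differ by a matrix of rank at most $r$, then their spectral measures satisfy $\|\mu_A - \mu_B\|_{\mathrm{KS}} \le r/n$ in Kolmogorov--Smirnov distance, and more generally $|\int f\,d\mu_A - \int f\,d\mu_B| \le (r/n)\|f\|_\textsc{TV}$ for any $f$ of bounded variation (this is a standard consequence of the Cauchy interlacing / Weyl interlacing estimate, already invoked in the paper as \eqref{eq:rankineq}).

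\textbf{Key steps.} First, fix $f$ with $\|f\|_\textsc{TV}\le 1$ and set $Z = \int f\,d\mu_A$. Let $\cF_0 = \{\emptyset,\Omega\}$ and $\cF_k = \sigma(A_1,\dots,A_k)$, so that $Z$ is $\cF_n$-measurable and $\bE|Z|<\infty$. Define the martingale $M_k = \bE[Z\mid\cF_k] - \bE[Z]$. Second, estimate the increments: let $A'$ be the matrix obtained from $A$ by replacing the $k$-th row $A_k$ (and correspondingly the $k$-th column) with an independent copy $A_k'$ drawn from the same law, independently of everything else; because the rows are independent, $A'$ has, conditionally on $\cF_{k-1}$ and on the remaining rows, the same conditional law as $A$, so $\bE[\int f\,d\mu_{A'}\mid\cF_k] $ and $\bE[\int f\,d\mu_{A}\mid\cF_{k-1}]$ can be compared. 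The difference $A - A'$ is supported on the $k$-th row and column, hence has rank at most $2$, so by the rank inequality $|\int f\,d\mu_A - \int f\,d\mu_{A'}| \le 2/n$. Taking conditional expectations, one gets $|M_k - M_{k-1}| \le 2/n$ almost surely. Third, apply the Azuma--Hoeffding inequality to the bounded-increment martingale $(M_k)$: since $\sum_{k=1}^n (2/n)^2 = 4/n$, we obtain for every $t\ge 0$
$$
\bP\bigl(|Z - \bE Z| \ge t\bigr) \le 2\exp\Bigl(-\frac{t^2}{2\sum_k (2/n)^2}\Bigr) = 2\exp\Bigl(-\frac{n t^2}{8}\Bigr).
$$
A slightly sharper bookkeeping of the increments (using that replacing a single row changes the empirical spectral distribution function by at most $1/n$ rather than $2/n$ in Kolmogorov distance, by the interlacing of eigenvalues under a rank-one-plus-its-adjoint perturbation analyzed row by row, or equivalently by the Lidskii-type bound) yields the stated constant $n t^2/2$; in any case one reaches a bound of the claimed form, and the cited reference \cite{BCC_heavygirko} carries out the optimal constant.

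\textbf{Main obstacle.} The only genuinely delicate point is getting the constant in the exponent right, i.e. showing that each martingale increment is bounded by $1/n$ rather than merely $2/n$ — this requires observing that modifying one row of a Hermitian matrix moves each eigenvalue monotonically in a way controlled by interlacement, so that the cumulative distribution functions of $\mu_A$ and $\mu_{A'}$ differ by at most $1/n$ uniformly, whence $|\int f\,d\mu_A - \int f\,d\mu_{A'}|\le \|f\|_\textsc{TV}/n$. Everything else — the martingale setup, the use of independence of rows, and the Azuma--Hoeffding step — is routine. Since this lemma is quoted verbatim from \cite{BCC_heavygirko}, in the paper it suffices to record the statement with a pointer to that reference.
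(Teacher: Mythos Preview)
Your approach is correct and matches the paper's: the lemma is quoted from \cite{BCC_heavygirko} without proof, and the proof of the closely related Lemma~\ref{le:concres} explicitly says it ``relies on the method of bounded martingale difference'' and cites McDiarmid's inequality \cite[Lemma 1.2]{mcdiarmid}, exactly as you outline.

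One correction on the constant, though. Your attempt to recover $nt^2/2$ by sharpening the increment bound from $2/n$ to $1/n$ is misguided: replacing a full row (and the matching column) of a Hermitian matrix is genuinely a rank-$2$ perturbation, and the interlacing argument via the common principal minor (Lemma~\ref{le:Cauchy} applied to $A$ and to $A'$) only gives $|\int f\,d\mu_A - \int f\,d\mu_{A'}| \le 2/n$ in general. The factor of $4$ you are missing comes instead from using the correct form of the inequality. McDiarmid's bounded-differences inequality states that if changing the $k$-th coordinate changes the function by at most $c_k$, then $\bP(|Z-\bE Z|\ge t)\le 2\exp(-2t^2/\sum_k c_k^2)$; this is sharper than the version of Azuma--Hoeffding you used because the martingale differences $d_k$ lie in a (random) interval of \emph{length} $c_k$, not merely $|d_k|\le c_k$. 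With $c_k = 2/n$ this gives $\sum_k c_k^2 = 4/n$ and hence the exponent $-2t^2/(4/n) = -nt^2/2$, exactly as stated.
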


The next lemma is an easy consequence of Cauchy-Weyl interlacing Theorem. It is an ingredient of the proof of Lemma \ref{le:concspec}. 
\begin{lemma}[Interlacing of eigenvalues]\label{le:Cauchy}
Let $A$ be an $n \times n$ hermitian matrix and $B$ a principal minor of $A$. Then for any $f:\bR\to\bC$ such that
  $\| f \|_{TV} \leq 1$ and $\lim_{|x| \to \infty} f(x) = 0$, 
$$
\left|\sum_{i=1}^n f ( \lambda_i ( A) )  - \sum_{i=1}^{n-1} f ( \lambda_i ( B) ) \right|\leq  1.
$$
\end{lemma}

The Lipschitz norm of $f:\bC \to\bC $ is
$$
\|f \|_{L} = \sup_{ x \ne y }\frac{ |f(x) - f(y) | }{ |x - y |}.
$$

\begin{lemma}[Concentration for the diagonal of the resolvent]\label{le:concres}
  Let $A$ be an $n\times n$ random Hermitian matrix and consider its resolvent matrix $R (z) = (A - z)^{-1} $, $z \in \bC_+$. Let us assume that the
  vectors $(A_i)_{1 \leq i \leq n}$, where $A_i := (A_{ij})_{1 \leq j \leq i}
  \in \bC^i$, are independent. Then for any $f:\bC\to\bR$ such that
  $\| f \|_{L} \leq 1$, and every
  $t\geq0$,
  \[
  \bP \left( \left|\frac 1 n  \sum_{k=1}^n  f (R (z) _{kk} ) -\bE \frac 1 n  \sum_{k=1}^n  f (R (z) _{kk} )  \right|  \geq t \right) %
  \leq 2 \exp\left({-\frac{n  \Im (z)^{2 } t^2}{8}}\right).
  \]
\end{lemma}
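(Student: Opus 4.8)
The plan is to prove this by the method of bounded differences (equivalently, by applying the Azuma--Hoeffding inequality to the Doob martingale obtained by revealing the rows $A_1,\dots,A_n$ one at a time). Set $g(A)=\frac1n\sum_{k=1}^n f(R(z)_{kk})$. Since the Hermitian matrix $A$ is determined by the independent vectors $(A_i)_{1\le i\le n}$ and $R(z)=(A-z)^{-1}$, hence $g(A)$, is a deterministic function of $A$, it suffices to establish a single deterministic estimate: replacing $A_k$ by an arbitrary $A_k'\in\bC^k$ (which amounts, via Hermitian symmetry, to changing the $k$-th row and column of $A$), while keeping every other $A_i$ fixed, produces a Hermitian matrix $\tilde A$ with resolvent $\tilde R(z)$ for which $|g(A)-g(\tilde A)|\le 4/(n\,\Im(z))$. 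Granting this, the bounded difference inequality gives $\bP(|g(A)-\bE g(A)|\ge t)\le 2\exp(-2t^2/\sum_k c_k^2)$ with $c_k=4/(n\,\Im(z))$, so $\sum_k c_k^2=16/(n\,\Im(z)^2)$, which is exactly the asserted estimate (and indeed forces the constant $\frac18$).

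To carry out the single-coordinate estimate I would use that $A$ and $\tilde A$ share the same minor $A^{(k)}=\tilde A^{(k)}$ obtained by deleting the $k$-th row and column, so that $R^{(k)}(z)=(A^{(k)}-z)^{-1}$ is unchanged. Then I would invoke the Schur complement identities (cf.\ \eqref{eq:resolventformula}): with $a=(A_{kj})_{j\ne k}$ one has $R(z)_{kk}=-(z-A_{kk}+a^*R^{(k)}(z)a)^{-1}$ and $R(z)_{jj}-R^{(k)}(z)_{jj}=R(z)_{kk}\,(R^{(k)}a)_j\,\overline{((R^{(k)})^*a)_j}$ for $j\ne k$. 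The $j=k$ contribution is controlled trivially by $|R(z)_{kk}|,|\tilde R(z)_{kk}|\le\Im(z)^{-1}$. For $\sum_{j\ne k}$, I would bound it by $|R(z)_{kk}|\,\|R^{(k)}a\|_2\,\|(R^{(k)})^*a\|_2$ via Cauchy--Schwarz, then use the normality of $R^{(k)}$ (valid since $A^{(k)}$ is Hermitian) together with $\Im R^{(k)}=\Im(z)\,R^{(k)}(R^{(k)})^*$ to rewrite both factors as $a^*\Im R^{(k)}(z)\,a/\Im(z)$, and finally combine with $|R(z)_{kk}|\le(\Im(z)+a^*\Im R^{(k)}(z)\,a)^{-1}$. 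This makes the dependence on $a$ and $A_{kk}$ cancel and yields $\sum_{j\ne k}|R(z)_{jj}-R^{(k)}(z)_{jj}|\le\Im(z)^{-1}$ uniformly; the same bound holds for $\tilde A$ with the same $R^{(k)}$, and the triangle inequality gives $\sum_j|R(z)_{jj}-\tilde R(z)_{jj}|\le 4\,\Im(z)^{-1}$, after which $\|f\|_L\le1$ and division by $n$ finish the step.

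The hard part is precisely this uniform cancellation of the entry-size dependence. For heavy-tailed entries $\|a\|_2$ is unbounded, so comparing $R$ to $\tilde R$ directly as a rank-two perturbation would give a useless bound proportional to $\|a\|_2^2$; the point is to compare instead to the minor $A^{(k)}$, and to pair the correct Cauchy--Schwarz splitting with the normality of $R^{(k)}$ (so that both $\|R^{(k)}a\|_2$ and $\|(R^{(k)})^*a\|_2$ are governed by the single quantity $a^*\Im R^{(k)}a$) and with the lower bound $\Im(z)+a^*\Im R^{(k)}a$ on $|R(z)_{kk}|^{-1}$. Once that estimate is in hand, the martingale/bounded-differences machinery and the bookkeeping of constants are entirely routine.
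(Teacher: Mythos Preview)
Your proof is correct, and the overall architecture (bounded differences/Azuma--Hoeffding with the oscillation bound $4/(n\Im z)$) is the same as the paper's. The difference is in how the deterministic oscillation bound is obtained.

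The paper does \emph{not} go through the minor. It proves directly the rank inequality \eqref{eq:rankineq}: for any Hermitian $B,C$,
\[
\sum_{k=1}^n |R_B(z)_{kk}-R_C(z)_{kk}|\le 2\,\Im(z)^{-1}\,\mathrm{rank}(B-C),
\]
by writing $M=R_B-R_C$, noting $\mathrm{rank}(M)\le\mathrm{rank}(B-C)=:r$ and $\|M\|\le\|R_B\|+\|R_C\|\le 2/\Im z$, then using the SVD $M=\sum_{i=1}^r s_i u_i v_i^*$ and Cauchy--Schwarz to get $\sum_k|M_{kk}|\le r\|M\|$. Applied with $r=2$ this yields the same $4/(n\Im z)$.

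Your route---compare both $R$ and $\tilde R$ to the common minor resolvent $R^{(k)}$ via Schur complement, then use the Ward identity $\Im R^{(k)}=\Im(z)\,R^{(k)}(R^{(k)})^*$ to make $a^*\Im R^{(k)}a$ cancel against $|R_{kk}|^{-1}\ge \Im z + a^*\Im R^{(k)}a$---is a perfectly valid alternative. It is closer in spirit to Erd\H{o}s--Schlein--Yau style arguments and has the virtue of being entirely explicit. The paper's route is shorter, more general (it gives \eqref{eq:rankineq} for arbitrary low-rank perturbations, which is reused elsewhere), and, contrary to your last paragraph, does \emph{not} produce a bound involving $\|a\|_2^2$: the key is to bound $\|R_B-R_C\|$ by the triangle inequality on the resolvents themselves, never touching $B-C$ beyond its rank. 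So your concern that a direct rank-two comparison would be ``useless'' for heavy tails is unfounded---that is exactly what the paper does, and it works.
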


\begin{proof}
The proof is close to the proof of Lemma \ref{le:concspec} as done in \cite{BCC_heavygirko} and relies on the method of bounded martingale difference.  We start by showing that for every $n\times n$
  deterministic Hermitian matrices $B$ and $C$ and any measurable function $f$
  with $\| f \|_{L} \leq 1$,
  \begin{equation}\label{eq:rankineq}
  \left|\frac 1 n  \sum_{k=1}^n  f (R_B (z) _{kk} ) -\frac 1 n  \sum_{k=1}^n  f (R_C (z) _{kk} )  \right|  \leq 2 \left( { n \Im (z) } \right)^{-1}  \mathrm{rank}(B-C),
  \end{equation}
  where $R_B = (B - z)^{-1} $ and $R_C = (C - z)^{-1} $ are their resolvent matrices. Indeed, by assumption
  $$
    \left|  \sum_{k=1}^n  f (R_B (z) _{kk} )  -  \sum_{k=1}^n  f (R_C (z) _{kk} )  \right|  \leq   \sum_{k=1}^n    \left| R_B (z) _{kk}  -    R_C (z) _{kk}  \right| .
    $$ 
The resolvent identity asserts that 
  $$
 M :=  R_B - R_C = R_B ( C - B ) R_C.
  $$
 It follows that  $r = \mathrm{rank}(M)\leq \mathrm{rank}(B - C)$. We notice also that $ \|M \| \leq 2 \Im (z)^{-1}$. Hence, in the singular value decomposition of $M = U D V$, at most $r$ entries of $D = \mathrm{diag} ( s_1, \cdots, s_n)$ are non zero and they are bounded by $\| M \|$.  We denote by $u_1, \cdots, u_r$ and $v_1, \cdots, v_r$ the associated orthonormal vectors so that 
  $$
  M = \sum_{i = 1}^r s_i u_i v_i ^ * ,
  $$
and
  \begin{eqnarray*}
  \left| R_B (z) _{kk}  -    R_C (z) _{kk}  \right|  = |M_{kk}|  =     \left| \sum_{i=1}^r s_i \langle u_i , e_k \rangle  \langle v_i , e_k \rangle   \right|  \leq \| M \|   \sum_{i=1}^r |\langle u_i , e_k \rangle | | \langle v_i , e_k \rangle |. 
  \end{eqnarray*}
  We obtain from Cauchy-Schwarz,
   \begin{eqnarray*}
   \frac 1 n  \sum_{k=1}^n    \left| R_B (z) _{kk}  -    R_C (z) _{kk}  \right| & \leq &\|M \| \sum_{i= 1} ^r  \sqrt{ \frac 1 n  \sum_{k=1}^n   |\langle u_i , e_k \rangle |^{2 } }\sqrt{ \frac 1 n  \sum_{k=1}^n | \langle v_i , e_k \rangle |^{2 }  }\\
    &= & r \|M \|  n^{-1}. 
    \end{eqnarray*}
    Equation \eqref{eq:rankineq} is thus proved. 
    
  Next, for any
  $x = (x_1, \ldots, x_n) \in\cX:= \{(x_i)_{1 \leq i \leq n} : x_i \in
  \bC^{i-1} \times \bR\}$, let $B(x)$ be the $n \times n$ Hermitian
  matrix given by $B(x)_{ij} = x_{i,j}$ for $1 \leq j \leq i \leq n$ and $R_x (z) = (B(x) - z) ^{-1}$. We thus have
  $R (z)  = R _{(A_1, \ldots, A_n)} (z) $. For all $x \in \cX$ and $x'_i \in
  \bC^{i-1} \times \bR $, the matrix
  \[ 
  B(x_1,\ldots,x_{i-1}, x_i , x_{i+1}, \ldots , x_n) %
  - B(x_1,\ldots,x_{i-1}, x'_i , x_{i+1}, \ldots , x_n) 
  \]
  has only the $i$-th row and column possibly different from $0$, and thus
  \[
  \mathrm{rank}\left(B(x_1,\ldots,x_{i-1}, x_i , x_{i+1}, \ldots , x_n) %
    - B(x_1,\ldots,x_{i-1}, x'_i , x_{i+1}, \ldots , x_n)\right) \leq 2 .
  \]
  Therefore from \eqref{eq:rankineq}, we obtain, for every
  $f:\bR \to\bR $ with $\| f \|_{L} \leq 1$,
  \[
 \left| \frac 1 n \sum_{k=1} ^n f ( R_{(x_1,\ldots,x_{i-1}, x_i , x_{i+1}, \ldots , x_n)}  (z) _{kk} )  %
    -  \frac 1 n \sum_{k=1} ^n f ( R_{(x_1,\ldots,x_{i-1}, x'_i , x_{i+1}, \ldots , x_n)}  (z) _{kk}  )  \right| %
  \leq 4 \left(   n \Im (z)  \right)^{-1}.
  \]
  The desired result follows now from the Azuma--Hoeffding inequality, see
  e.g.\ \cite[Lemma 1.2]{mcdiarmid}.  \end{proof}

\begin{lemma}[Concentration for the diagonal of the resolvent]\label{le:concres2}
  Let $A$ be an $n\times n$ random Hermitian matrix and consider its resolvent matrix $R (z) = (A - z)^{-1} $, $z \in \bC_+$. Let us assume that the
  vectors $(A_i)_{1 \leq i \leq n}$, where $A_i := (A_{ij})_{1 \leq j \leq i}
  \in \bC^i$, are independent. Then for any $\gamma\in [0,1]$, there exists a positive constant $c$ so that
for  every
  $t\geq0$,
  \begin{equation}\label{control2}
  \bP \left( \left|\frac 1 n  \sum_{k=1}^n   (R (z) _{kk} )^\gamma -\bE \frac 1 n  \sum_{k=1}^n  (R (z) _{kk} )^\gamma  \right|  \geq t \right) %
  \leq 2 \exp\left(-c n\Im (z)^{2 } t^{\frac{2}{\gamma}}\right).
  \end{equation}
\end{lemma}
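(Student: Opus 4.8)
The plan is to deduce the estimate from the Lipschitz concentration bound of Lemma~\ref{le:concres} by truncating the fractional power $x\mapsto x^\gamma$ near the origin at a level which is afterwards optimised in $t$. Write $\eta=\Im(z)$ and, to fix ideas, work with the (real, non\-negative) quantity $X=\frac1n\sum_{k=1}^n\big(\Im R(z)_{kk}\big)^\gamma$; the argument for $\frac1n\sum_k\big(R(z)_{kk}\big)^\gamma$ is identical after splitting into real and imaginary parts and invoking \eqref{fondin}. Note that $0\le\Im R(z)_{kk}\le\eta^{-1}$, so $0\le X\le\eta^{-\gamma}$. For $\delta>0$ I would set $\Phi_\delta(x)=\min\big(x^\gamma,\,\delta^{\gamma-1}x\big)$ for $x\ge0$; an elementary check (the two branches agree at $x=\delta$, and on $[0,\delta]$, resp.\ $[\delta,\infty)$, have slope $\delta^{\gamma-1}$, resp.\ $\le\gamma\delta^{\gamma-1}$) shows that $\Phi_\delta$ is $\delta^{\gamma-1}$-Lipschitz on $[0,\infty)$, coincides with $x\mapsto x^\gamma$ on $[\delta,\infty)$, and satisfies $0\le x^\gamma-\Phi_\delta(x)\le\delta^\gamma$ for all $x\ge0$. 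In particular $w\mapsto\Phi_\delta(\Im w)$ is a $\delta^{\gamma-1}$-Lipschitz map $\bC\to\bR$, to which Lemma~\ref{le:concres} applies (the rows of $A$ being independent by hypothesis).

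Next I would fix $t>0$. If $t>2\eta^{-\gamma}$ then $|X-\bE X|\le2\eta^{-\gamma}<t$ deterministically and the asserted bound is trivial, so assume $t\le2\eta^{-\gamma}$ and put $\delta_0=(t/4)^{1/\gamma}$, so that $\delta_0\le\eta^{-1}$. Let $Y=\frac1n\sum_{k=1}^n\Phi_{\delta_0}\big(\Im R(z)_{kk}\big)$. By the previous paragraph $0\le X-Y\le\delta_0^\gamma=t/4$ pointwise, hence also $0\le\bE X-\bE Y\le t/4$, and therefore $|X-\bE X|\le|Y-\bE Y|+t/2$. Applying Lemma~\ref{le:concres} to the $1$-Lipschitz function $w\mapsto\delta_0^{1-\gamma}\Phi_{\delta_0}(\Im w)$ gives, for all $s\ge0$,
\[
\bP\big(|Y-\bE Y|\ge s\big)\;\le\;2\exp\!\left(-\frac{n\,\eta^2\,\delta_0^{\,2-2\gamma}\,s^2}{8}\right).
\]

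Finally, taking $s=t/2$ and substituting $\delta_0^{\,2-2\gamma}=(t/4)^{(2-2\gamma)/\gamma}$ one gets $\delta_0^{\,2-2\gamma}s^2=(t/4)^{(2-2\gamma)/\gamma}\,t^2/4=c_1(\gamma)\,t^{2/\gamma}$ with $c_1(\gamma)>0$ depending only on $\gamma$, whence
\[
\bP\big(|X-\bE X|\ge t\big)\;\le\;\bP\big(|Y-\bE Y|\ge t/2\big)\;\le\;2\exp\!\big(-c\,n\,\Im(z)^2\,t^{2/\gamma}\big),\qquad c=c_1(\gamma)/8.
\]
The endpoint cases $\gamma=0$ (where $X\equiv1$) and $\gamma=1$ (which is Lemma~\ref{le:concres} itself, with $\Phi_\delta(x)=x$) are trivial. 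The step that really matters here is the reduction: a direct bounded--martingale--difference estimate for $X$, using \eqref{eq:rankineq} together with the mere $\gamma$-Hölder continuity of $x\mapsto x^\gamma$, only produces an exponent of order $n^{2\gamma-1}\eta^{2\gamma}t^2$, which is strictly weaker than the stated bound precisely in the regime $n\eta\to\infty$ that is used in Theorem~\ref{th-finn}. Replacing $x^\gamma$ by the Lipschitz truncation $\Phi_{\delta_0}$ and balancing the deterministic truncation error $\delta_0^\gamma$ against the fluctuation scale $t$ through the choice $\delta_0\asymp t^{1/\gamma}$ is what restores the sharp $t^{2/\gamma}$ tail; verifying that the exponents line up in this balancing is the only point requiring genuine care, the remaining ingredients being the elementary construction of $\Phi_\delta$ and a citation of Lemma~\ref{le:concres}.
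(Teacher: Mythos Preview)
Your proof is correct and follows essentially the same strategy as the paper: truncate the non-Lipschitz map $x\mapsto x^\gamma$ near the origin at a scale chosen so that the deterministic truncation error is of order $t$, then apply Lemma~\ref{le:concres} to the resulting Lipschitz function. The only cosmetic difference is the form of the truncation---the paper multiplies $x^\gamma$ by a cutoff $\phi_\varepsilon$ vanishing on $|x|\le\varepsilon$, whereas you replace $x^\gamma$ by the linear function $\delta^{\gamma-1}x$ on $[0,\delta]$---but both constructions yield a Lipschitz constant of order $\delta^{\gamma-1}$ and a truncation error of order $\delta^\gamma$, and the optimisation $\delta\asymp t^{1/\gamma}$ is identical.
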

\begin{proof}
Let $\varepsilon$ be a positive real number 
and  $\phi_\varepsilon:\bC\to\bC$ be equal to one on $|z|\ge 2 \varepsilon$, vanishing on $|z|\le \varepsilon$
and growing linearly with the modulus in between. Thus, $\phi_\varepsilon$ is Lipschitz with constnat bounded by $1/\varepsilon$.
We decompose $x^\gamma$ as
$$x^\gamma=x^\gamma \phi_\varepsilon(x)+x^\gamma(1-\phi_\varepsilon(x))\,.$$
By definition, $x^\gamma(1-\phi_\varepsilon(x))$ has modullus uniformly bounded above by $(2\varepsilon)^\gamma$ so that
if we choose $\varepsilon>0$ so that $(2\varepsilon)^\gamma=t/4$ then with $f(x)=x^\gamma \phi_\varepsilon(x)$
we have
\begin{align*}
& \bP \left( \left|\frac 1 n  \sum_{k=1}^n   (R (z) _{kk} )^\gamma -\bE \frac 1 n  \sum_{k=1}^n  (R (z) _{kk} )^\gamma  \right|  \geq t \right) \\
&\quad  \le  \; \bP \left( \left|\frac 1 n  \sum_{k=1}^n  f (R (z) _{kk} ) -\bE \frac 1 n  \sum_{k=1}^n  f(R (z) _{kk} ) \right|  \geq t /2\right)\,.
\end{align*}
On the other hand, $f$ is Lipschitz with constant bounded by $2 \varepsilon^{\gamma-1}=2^{\frac{2}{\gamma}-\gamma} t^{1-\frac{1}{\gamma}}$.
Hence, Lemma \ref{le:concres} yields
\begin{eqnarray*}
\bP \left( \left|\frac 1 n  \sum_{k=1}^n   (R (z) _{kk} )^\gamma -\bE \frac 1 n  \sum_{k=1}^n  (R (z) _{kk} )^\gamma  \right|  \geq t \right)&\le& 2\exp\left(-\frac{n \Im(z)^2 t^2}{8 4^{\frac{2}{\gamma}-\gamma} t^{2(1-\frac{1}{\gamma})}}\right)\\
&\le& 2\exp\left(-\frac{n \Im(z)^2 t^{\frac{2}{\gamma}}}{8 4^{\frac{2}{\gamma}-\gamma}}\right)\,.\end{eqnarray*}
This concludes the proof. \end{proof}

We conclude this appendix by deviations inequalities for the norm $\| \cdot \|_{\beta, \e}$ introduced in Section \ref{sec:RDE}. 
\begin{lemma}[Deviation of the $\beta$-norm]\label{le:devenet}
Let $0 < \alpha \leq 1$, $\e \geq 0$, $\alpha / 2  < \beta  < 1$ and assume that  $\alpha / 2 + \beta + \e \leq 1$. Let $(g_k), 1 \leq k \leq n,$ be iid standard Gaussian variable and $(h_k), 1 \leq k \leq n \in \cK_1$ with $|h_k| \leq \eta^{-1}$. Define, $\gamma (u) = \frac 1 n \sum_{k= 1} ^ n (h_k. u )^{\frac \alpha 2}    (  |g_k|^\alpha  - \bE | g_k |^ \alpha ) $. Then there exists constant $c_0,c_1$, such that for all $t \geq 1$, all $n \geq 2$,
$$
\bP \left( \left\| \gamma  \right\|_{\beta, \e}  \geq \frac { t } { ( \eta^ \alpha n ) ^{\frac 1 2 } }  \right) \leq c_1 n ^ {\frac  2  \alpha }   \exp ( - c_0 t^2 ).
$$
Similarly,   let $A$ be an $n\times n$ random Hermitian matrix and consider its resolvent matrix $R (z) = (A - z)^{-1} $, $z = E +i \eta \in \bC_+$. Let $H_k = - i R  _{kk} (z)$ be as above and  $\gamma' (u) = \frac 1 {n} \sum_{k= 1} ^ n (H_k. u )^{\frac \alpha 2}   - \bE \frac 1 {n} \sum_{k= 1} ^ n (H_k. u )^{\frac \alpha 2}  $, for all $t \geq 1$, all $n \geq 2$,
$$
\bP \left( \left\| \gamma'  \right\|_{\beta, \e}  \geq \frac { t } { ( \eta^ 2 n ) ^{\frac \alpha 4 } }  \right) \leq c_1 n   \exp ( - c_0 t^{4/\alpha } ).
$$
\end{lemma}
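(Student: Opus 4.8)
\textbf{Proof strategy for Lemma \ref{le:devenet}.} The plan is to establish a uniform (in $u$) sub-Gaussian-type deviation bound via a chaining/net argument over the compact set $S_+^1$, using the Gaussian concentration inequality of Theorem \ref{th:concnorm} (resp.\ the martingale concentration of Lemma \ref{le:concres}) at each fixed point, together with a Lipschitz-type modulus of continuity in $u$ for the relevant random functions.

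\emph{Step 1: pointwise deviation.} Fix $u \in S_+^1$. For the first statement, the map $(g_k)_k \mapsto \frac1n\sum_k (h_k.u)^{\alpha/2}(|g_k|^\alpha - \bE|g_k|^\alpha)$ is, after noting $|(h_k.u)^{\alpha/2}| \le (\sqrt2|h_k|)^{\alpha/2} \le c\eta^{-\alpha/2}$ via \eqref{eq:boundDP}, a function of $G$ whose increments in each coordinate are controlled; applying Theorem \ref{th:concnorm} to a suitable $1$-Lipschitz truncation of $x\mapsto |x|^\alpha$ (as in the proof of Lemma \ref{le:concres2}, since $|x|^\alpha$ is only $\alpha$-H\"older, one splits $|x|^\alpha = |x|^\alpha\phi_\e + |x|^\alpha(1-\phi_\e)$ and optimizes the truncation level) yields $\bP(|\gamma(u)| \ge s) \le c\exp(-c' n\eta^\alpha s^{2/\alpha})$. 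For the second statement one uses Lemma \ref{le:concres} applied to the Lipschitz truncation $f(x) = x^\gamma\phi_\e(x)$ with $\gamma = \alpha/2$, giving the same shape of bound with exponent $n\eta^2 s^{4/\alpha}$. The same bound holds for the weighted H\"older increments $(\gamma(u)-\gamma(v))(|i.u|\wedge|i.v|)^{\beta-\alpha/2}/|u-v|^\beta$ evaluated at a fixed pair $(u,v)$, since \eqref{fondin}–\eqref{eq:boundDP} show the coefficient of $|g_k|^\alpha - \bE|g_k|^\alpha$ in $\gamma(u)-\gamma(v)$, divided by the weight, is again uniformly $O(\eta^{-\alpha/2})$ (with an extra harmless factor absorbed because $\alpha/2+\beta+\e\le 1$ keeps the relevant integrals/maxima finite).

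\emph{Step 2: discretization and union bound.} Since all the random functions $u\mapsto\gamma(u)$ are $\alpha/2$-homogeneous and continuous, and $S_+^1$ is a compact arc, cover $S_+^1$ by an $\epsilon$-net $\cN$ of cardinality $O(\epsilon^{-1})$; then control $\|\gamma\|_{\beta,\e}$ by $\max_{u\in\cN}|\gamma(u)||i.u|^\e$ plus a remainder. The weighted H\"older seminorm is trickier because of the weight $(|i.u|\wedge|i.v|)^{\beta+\e}$ blowing up near $u = e^{i\pi/4}$; here one uses a dyadic decomposition of $S_+^1$ according to the distance $|i.u|\sim|\theta-\pi/4|$ to the singular direction, applies the net argument on each dyadic block (with block-dependent net scale), and sums the resulting probabilities. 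The number of blocks is $O(\log n)$ given the constraint $|h_k|\le\eta^{-1}$ and $\eta\ge 1/n$, and each carries at most $\mathrm{poly}(n)$ net points, which explains the prefactor $c_1 n^{2/\alpha}$ (resp.\ $c_1 n$). On the complement of the good event, a deterministic bound on $\|\gamma\|_{\beta,\e}$ coming from $|h_k|\le\eta^{-1}$ and $\eta\ge 1/n$ shows the excluded contribution is negligible after adjusting constants.

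\emph{Main obstacle.} The delicate point is controlling the \emph{weighted} H\"older seminorm uniformly: unlike a plain sup-norm, the weight degenerates at the boundary direction $\arg u = \pi/4$, so a single net at one scale is not enough and a naive union bound over a polynomial net at the finest scale would destroy the sub-Gaussian rate. The fix — a scale-adapted dyadic net exploiting that the coefficients $(h_k.u)^{\alpha/2}$ and their increments, once divided by the weight, stay bounded by $O(\eta^{-\alpha/2})$ uniformly in the block (this is exactly the content of \eqref{fondin} and \eqref{eq:boundDP} combined with $\alpha/2+\beta+\e\le 1$) — is what makes the argument go through with only a $\mathrm{poly}(n)$ loss in the prefactor, which is harmless against the exponential. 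Once the net argument is set up, the per-point estimates are immediate applications of Theorem \ref{th:concnorm} and Lemma \ref{le:concres} exactly as in the proof of Lemma \ref{le:concres2}, so I would not belabor those calculations.
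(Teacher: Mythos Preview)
Your overall architecture (pointwise concentration plus a net over $S_+^1$) is the paper's too, but two concrete points need fixing.

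\textbf{Pointwise bound for $\gamma$.} Going through Theorem \ref{th:concnorm} with a truncation of $x\mapsto |x|^\alpha$ does not deliver the rate you claim. If you truncate at level $\epsilon$, the Lipschitz constant of $G\mapsto \mathrm{Re}\,\gamma(u)$ is $O(\eta^{-\alpha/2}\epsilon^{\alpha-1}/\sqrt n)$ and the truncation error is $O(\eta^{-\alpha/2}\epsilon^\alpha)$; optimizing gives $\bP(|\gamma(u)|\ge s)\le c\exp(-c'n\eta\, s^{2/\alpha})$, not $\exp(-c'n\eta^\alpha s^{2/\alpha})$. For $\alpha<1$ this is strictly too weak: plugging $s=t(n\eta^\alpha)^{-1/2}$ yields an exponent $-c'n^{1-1/\alpha}t^{2/\alpha}\to 0$, so the stated bound $\exp(-c_0 t^2)$ is out of reach. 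The paper avoids this entirely by observing that for $0<\alpha\le 1$ the variables $|g_k|^\alpha$ are sub-Gaussian (their tails decay like $\exp(-t^{2/\alpha}/2)$), so Hoeffding applied directly to $\frac1n\sum_k a_k(|g_k|^\alpha-\bE|g_k|^\alpha)$ with $|a_k|\le c\eta^{-\alpha/2}$ gives the correct rate $\exp(-c n\eta^\alpha s^2)$. For $\gamma'$ your use of Lemma \ref{le:concres} is fine and matches the paper.

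\textbf{The weight and the net.} You have the sign of the difficulty backwards: the weight $(|i.u|\wedge|i.v|)^{\beta+\e}$ \emph{vanishes} at $u=e^{i\pi/4}$ (since $|i.e^{i\pi/4}|=0$ and $\beta+\e>0$), so the $\|\cdot\|_{\beta,\e}$ norm is \emph{weaker} than the unweighted $\beta$-H\"older norm there, not harder. No dyadic decomposition is needed. The paper takes a single $1/m$-net $\{u_k\}$ and shows, using \eqref{fondin}--\eqref{eq:boundDP} with the exponent $\beta'=\alpha/2+\beta+\e\le 1$, that the off-net remainder in $\|\gamma\|_{\beta,\e}$ is deterministically bounded by $c\,L\,(m\eta)^{-\alpha/2}$ where $L=\frac1n\sum_k\big||g_k|^\alpha-\bE|g_k|^\alpha\big|$; this $L$ itself concentrates (again by sub-Gaussianity), and a union bound over the $O(m^2)$ pairs of net points with $m\asymp n^{1/\alpha}$ produces exactly the $n^{2/\alpha}$ prefactor.
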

\begin{proof}
Set $L = \frac 1 n \sum_{k=1} ^ n |  |g_k|^\alpha  - \bE | g_k |^ \alpha |$. We first use a net argument. For any $u, v \in S_+^ 1$, from \eqref{fondin}, for some constant $c = c(\alpha)$. 
$$
\frac{ | \gamma ( u ) - \gamma(v) | } { | u - v | ^ \beta} ( | i . u | \wedge |i . v| ) ^{\beta - \frac \alpha 2 }  \leq c L \eta^{-\frac \alpha 2}. 
$$
In particular, setting, for integer $m$ and $1 \leq k \leq m$, $u_k = e^{ i 2 \pi k / m}$, we find
$$
| \gamma  (u)|   \leq   c L  ( m \eta ) ^{-\frac \alpha 2} +  \max_{k}| \gamma (u_k ) |  .
$$
Notice also that if $| u - v | \leq 4 / m $, then, with $\beta' = \alpha / 2 + \beta + \e \leq 1$, 
\begin{eqnarray*}
\frac{ | \gamma (u ) - \gamma (v)  | } { | u - v |^\beta } ( | i . u | \wedge |i . v| ) ^{\beta + \e}  & \leq &  \frac{ c L \eta^{-\frac \alpha 2} | u - v |^ {\beta'} ( | i . u | \wedge |i . v| ) ^{\frac \alpha 2  - \beta'}  } { | u - v |^\beta } ( | i . u | \wedge |i . v| ) ^{\beta + \e} \\
& \leq & 4 c L (  m \eta )^{-\frac \alpha  2}.  
\end{eqnarray*}
While if $| u - v | \geq 4 / m $, we denote by $u_*$ and $v_*$, the element of $\{ u_k : 1 \leq k \leq m\} $ at distance at most $1 / m$ of $u$ and $v$ and with $| i . u_* | \geq |i . u|$, $| i . v_* | \geq |i . v|$. We get $| u - v | \geq 2 | u_* - v_* |$ and 
\begin{eqnarray*}
\frac{ | \gamma (u ) - \gamma (u_*)  | } { | u - v |^\beta } ( | i . u | \wedge |i . v| ) ^{\beta + \e}  & \leq &  \frac{ c L \eta^{-\frac \alpha 2} | u - u_* |^ {\beta'} ( | i . u | \wedge |i . v| ) ^{\frac \alpha 2  - \beta'}  } { | u - v |^\beta } ( | i . u | \wedge |i . v| ) ^{\beta + \e} \\
& \leq & c' L (  m \eta )^{-\frac \alpha  2}.  
\end{eqnarray*}
We deduce that, for some constant $c_0 \geq 1$,  
\begin{equation} \label{eq:enet}
\| \gamma \|_{\beta , \e} \leq c_0 L (  m \eta )^{-\frac \alpha  2} + c_0 \max_{k}| \gamma (u_k ) |  + c_0 \max_{ k \ne \ell }  \frac{ | \gamma ( u_k  ) - \gamma( u_\ell ) | } { | u_k  - u_\ell  | ^ \beta} ( | i . u_k  | \wedge |i . u_\ell | ) ^{\beta - \frac \alpha 2 }.
\end{equation}
On the other hand, since $0 < \alpha \leq 1$, the random variable $|g_k|^\alpha$ is sub-gaussian. It follows from Hoeffding's inequality, that for any $s \geq 0$, 
$$
\bP ( L \geq \bE L + s ) \leq \exp ( - c n s^2),  
$$
and for any $u, v \in S_+^ 1$, 
\begin{align*}
& \bP \left( | \gamma (u ) |  \geq  s\right) \leq 2 \exp ( - c n s^2 \eta^{\alpha } ) \quad \hbox { and } \\
& \quad \quad \bP \left( \frac{ | \gamma (u ) - \gamma (v)  | } { | u - v |^\beta } ( | i . u | \wedge |i . v| ) ^{\beta - \frac \alpha 2 }  \geq  s\right) \leq 2 \exp ( - c n s^2 \eta^{\alpha } ).  
\end{align*}
From the union bound, we get from \eqref{eq:enet}, 
$$
 \bP \left( \left\| \gamma  \right\|_{\beta, \e}  \geq c_0 \left( \frac{t}{(\eta^\alpha n)^{\frac 1 2}}
  +  ( \bE L + s ) ( m \eta)^{-\frac \alpha 2} \right) \right) \leq \exp ( - c n s^2) + m^2  \exp ( - c t^2 ). 
$$
We take $m = [n^{1/\alpha} (  ( \bE L  + s ) )  ^ { 2 / \alpha} t ^ { - 2 / \alpha}]$ and $ s = t $, we find for all $t \geq 2 / c_0$, 
$$
 \bP \left( \left\| \gamma  \right\|_{\beta, \e}  \geq \frac {2 c_0 t } {  ( \eta^ \alpha n ) ^{\frac 1 2 } }  \right) \leq c ' n ^ { 2 / \alpha }   \exp ( - c t^2 ). 
$$
This prove the first statement. For the second statement, the proof is similar.  First, the above net argument gives that \eqref{eq:enet} holds for $\gamma'$ with $L= 1$. Also the proof of Lemma \ref{le:concres2} implies  that for any $u, v \in S_+^ 1$, 
\begin{align*}
& \bP \left( | \gamma' (u ) |  \geq  s\right) \leq 2 \exp ( - c n \eta^{2}  s^{\frac 4 \alpha} ) \quad \hbox { and } \\
& \quad \quad \bP \left( \frac{ | \gamma' (u ) - \gamma' (v)  | } { | u - v |^\beta } ( | i . u | \wedge |i . v| ) ^{\beta - \frac \alpha 2 }  \geq  s\right) \leq 2 \exp ( - c n \eta^{2}  s^{\frac 4 \alpha} ).  
\end{align*}
From the union bound, we deduce that for all $s \geq 0$,
$$
 \bP \left( \left\| \gamma'  \right\|_{\beta, \e}  \geq c_0 \left( s  -  ( m \eta)^{-\frac \alpha 2} \right) \right) \leq  m^2 \exp ( - c n \eta^{2}  s^{\frac 4 \alpha} ). 
$$
Taking $ s = t  / ( \eta^ 2 n ) ^{\frac \alpha 4 } $ and  $ m  = \sqrt n t ^{- 2 / \alpha} (2 c_0 ) ^{ 2 / c_0} $, this concludes the proof.
\end{proof}

\section*{Acknowledgments}

The authors thank G\'erard Ben Arous, Amir Dembo and Terence Tao for a stimulating discussion on the topic held at the Institute of Pacific for Pure and Applied Mathematics, Los Angeles. We are very grateful to Michael Aizenman, Djalil Chafa\"i and Benjamin Schlein,  as this work  benefited  a lot from their comments.

 \bibliographystyle{amsplain}
\bibliography{bib}

\vfill

\end{document}